\newcommand{\R}{\mathbb{R}}
\newcommand{\N}{\mathbb{N}}
\newcommand{\C}{\mathbb{C}}
\newcommand{\ee}{\mathrm{e}}
\DeclareDocumentCommand\dd{ o g d() }{
	\IfNoValueTF{#2}{
		\IfNoValueTF{#3}
			{\mathrm{d}\IfNoValueTF{#1}{}{^{#1}}}
			{\mathinner{\mathrm{d}\IfNoValueTF{#1}{}{^{#1}}\argopen(#3\argclose)}}
		}
		{\mathinner{\mathrm{d}\IfNoValueTF{#1}{}{^{#1}}#2} \IfNoValueTF{#3}{}{(#3)}}
	}
\newcommand{\dx}{\dd{x}}
\newcommand{\dy}{\dd{y}}
\newcommand{\dz}{\dd{z}}
\newcommand{\dr}{\dd{r}}
\newcommand{\dq}{\dd{q}}
\newcommand{\deta}{\dd{\eta}}
\newcommand{\dsig}{\dd{\sigma}}
\newcommand{\dxi}{\dd{\xi}}
\newcommand{\ds}{\dd{s}}
\newcommand{\dt}{\dd{t}}
\newcommand{\dtau}{\dd{\tau}}
\newcommand{\del}{\partial}
\newcommand{\eps}{\varepsilon}
\newcommand{\im}{\mathrm{i}}
\newcommand{\sgn}{\operatorname{sgn}}
\renewcommand{\Re}{\operatorname{Re}}
\renewcommand{\Im}{\operatorname{Im}}
\newcommand{\vcc}{\vcentcolon}
\newcommand{\cut}{\eta}
\newcommand{\coup}{\gamma}
\newcommand{\esp}{\mathcal{Y}}
\newcommand{\dmu}{\Delta\mu}
\newcommand{\LL}{\mathcal{L}}
\newcommand{\T}{{\mathcal{T}}}
\newcommand{\Y}{y}
\newcommand{\abs}[1]{\left\lvert #1 \right\rvert}
\newcommand{\snorm}[2]{\left[ #2 \right]_{\left( #1 \right)}}
\newcommand{\fnorm}[2]{\left\lVert #2 \right\rVert_{\left( #1 \right)}}
\newcommand{\lnorm}[2]{\left\llbracket #2 \right\rrbracket_{\left( #1 \right)}}
\newcommand{\pnorm}[2]{\left\lVert #2 \right\rVert_{\left( #1 \right)}}
\newcommand{\psnorm}[2]{\left[ #2 \right]_{\left( #1 \right)}}
\newcommand{\B}{\mathcal{B}}
\theoremstyle{plain}
\newtheorem{theorem}{Theorem}[section]
\newtheorem{lemma}[theorem]{Lemma}
\newtheorem{proposition}[theorem]{Proposition}
\theoremstyle{definition}
\theoremstyle{remark}
\newtheorem{remark}[theorem]{Remark}
\numberwithin{equation}{section}
\title{A revised proof of uniqueness of self-similar profiles to Smoluchowski's coagulation equation for kernels close to constant}
 \author[1]{B. Niethammer\thanks{\texttt{niethammer@iam.uni-bonn.de}}}
 \author[2]{S. Throm\thanks{\texttt{throm@ma.tum.de}}}
 \author[1]{J.~J.~L.~Vel\'azquez\thanks{\texttt{velazquez@iam.uni-bonn.de}}}
 \affil[1]{Institute of Applied Mathematics, University of Bonn, Endenicher Allee 60, 53115 Bonn, Germany}
 \affil[2]{Faculty of Mathematics, Research Unit 
``Multiscale and Stochastic Dynamics'', Technical University of Munich, 85748 Garching b.~M\"unchen, Germany}
\date{}
\begin{document}
 
\maketitle

\begin{abstract}
In this article we correct the proof of a uniqueness result for self-similar solutions to Smoluchowski's coagulation equation for kernels $K=K(x,y)$ that are homogeneous of degree zero and close to constant in the sense that
\begin{equation*}
 -\eps \leq K(x,y)-2 \leq \eps \left( \Big(\frac{x}{y}\Big)^{\alpha} + \Big(\frac{y}{x}\Big)^{\alpha}\right)
\end{equation*}
for $\alpha \in [0,\frac 1 2)$. Assuming in addition that $K$ has an analytic extension to $\C\setminus (-\infty,0]$ and prescribing the precise asymptotic behaviour of $K$ at the origin, we prove that self-similar solutions with given mass are unique if $\eps$ is sufficiently small.
\end{abstract}

\section{Introduction}

In Smoluchowski's classical mean-field model a system of coagulating particles is described via the number density $\phi(\xi,t)$ of particles of size $\xi>0$ at time $t$. 
Under the assumption that only binary coagulation needs to be taken into account and that correlations can be neglected, $\phi$ satisfies 
the following non-local integral equation
\begin{equation}\label{smolu1}
 \partial_t \phi(\xi,t) = \frac 1 2 \int_0^\xi K(\xi-\eta,\eta) \phi(\xi-\eta,t) \phi(\eta,t)\deta - \phi(\xi,t) \int_0^{\infty} K(\xi,\eta) 
 \phi(\eta,t)\deta =\vcc Q[\phi](\xi)\,.
\end{equation}
Here $K(\xi,\eta)$ denotes the so-called rate kernel, a non-negative and symmetric function, 
that describes the rate at which particles of size $\xi$ and $\eta$ coagulate. 
An important example is  Smoluchowski's classical kernel
\begin{equation}\label{smolukernel}
 K(\xi,\eta) = K_0 \Big( \xi^{1/3} + \eta^{1/3}\Big) \Big( \xi^{-1/3} + \eta^{-1/3}\Big),
\end{equation}
that describes spherical particles in three dimensions that coagulate quickly if they get close to another particle and  otherwise diffuse by Brownian motion.

It is well known, under certain regularity assumptions on $K$, that if $K$ grows at most linearly at infinity, 
that the initial value problem for \eqref{smolu1} is well posed for non-negative locally integrable data with finite mass and that
the mass is conserved for all times. A fundamental hypothesis is the one of dynamic scaling, that presumes that
a solution to \eqref{smolu1} converges to a uniquely determined self-similar solution as $t \to \infty$. This question is only well-understood for the solvable
kernels, such as the constant one (see \cite{MePe04} and the references therein). For non-solvable kernels such as \eqref{smolukernel} so far only existence results
are available \cite{FouLau05,EMR05}, while uniqueness of self-similar solutions remained unknown. For further background and references on this topic we refer to our 
previous article \cite{NV14}.

In the article \cite{NV14} two of us attempted to prove uniqueness of self-similar solutions with finite mass for kernels that are perturbations of the constant one. Unfortunately we realized later that there is a serious gap in the proof. It is the goal of the present paper to give a complete proof of uniqueness of self-similar solutions
with finite mass for kernels that are close to constant. 

More precisely,  we are interested in self-similar solutions of \eqref{smolu1} with finite first moment for kernels with 
homogeneity zero.
Such self-similar solutions are given by
\begin{equation*}
 \phi(\xi,t) = t^{-2} f(x) \qquad \mbox{ with } x = \frac{\xi}{t},
\end{equation*}
where $f$ satisfies
\begin{equation}\label{eq1}
 -xf'(x) - 2f(x) =Q[f](x)\,
\end{equation}
with
  \begin{equation}\label{eq:norm:mass}
  \int_{0}^{\infty}xf\left(x\right)\dx=M
 \end{equation}
for some given positive constant $M$. It is convenient to rewrite equation \eqref{eq1}  as 
\begin{equation}\label{eq1b}
 -\big(x^2 f(x)\big)'=x Q[f](x) = -\partial_x \int_0^x \int_{x-y}^{\infty} K(y,z) y f(z)f(y)\dz\dy
\end{equation}
and by integrating \eqref{eq1b} to reformulate \eqref{eq1} as   
\begin{equation}\label{eq:selfsim}
 x^{2}f\left(x\right)=\int_{0}^{x}\int_{x-y}^{\infty}yK\left(y,z\right)f\left(y\right)f\left(z\right)\dz\dy.
\end{equation}
We call $f$ a self-similar profile with finite mass  if $f \in L^1_{\mathrm{loc}}(\R)$,  $f \geq 0$, $\int x f(x)\,dx < \infty$  and if $f$ satisfies \eqref{eq:selfsim}
for almost all $x \in \R$. 
Notice also, that if $f$ is a solution to \eqref{eq:selfsim}, and if $K$ has homogeneity zero, then the rescaled function
\begin{equation}\label{eq:rescaling}
g(x)=af(ax)\quad \text{for } a>0 
\end{equation}
is also a solution. We can fix the parameter $a$ by fixing $M$ in \eqref{eq:norm:mass}.

Our goal in this paper is to show that solutions to \eqref{eq:selfsim} and \eqref{eq:norm:mass} are unique if the kernel $K$ is close to the constant one in the following sense.

We assume for the kernel $K\colon (0,\infty)^2 \to [0,\infty)$ that it is symmetric, 
and that \begin{equation}\label{kernel1}
 K \mbox{ is homogeneous of degree zero, that is } K(\lambda x,\lambda y) = K(x,y) \; \mbox{ for all } x,y,\lambda>0\,.
\end{equation}
Furthermore we assume that there exists $\eps>0$ and $\alpha \in [0,\frac 1 2)$ such that 
\begin{equation*}
K(x,y)=2+\eps W(x,y)
\end{equation*}
with a symmetric function $W$ that satisfies
\begin{equation}\label{kernel3}
0\leq  W(x,y) \leq  \Big( \Big(\frac{x}{y}\Big)^{\alpha} + \Big (\frac{y}{x}\Big)^{\alpha}\Big)\qquad \mbox{ for all } x,y>0\,.
\end{equation}
Notice that the assumption that $W$ is non-negative is not a restriction since otherwise we could take a different
constant kernel. In addition we need a regularity assumption for $K$. In this paper we assume an analyticity condition on $W$, that is we assume that
\begin{equation}\label{kernel0}
\begin{split}
W(\cdot,1)& \mbox{ has an analytic extension to } \C\setminus (-\infty,0],\\
\Re\left( W\left( \xi ,1\right) \right)& \geq 0\, \quad\ \mbox{ in }  \Re\left( \xi \right) \geq 0,\\
\left\vert W\left( \xi ,1\right) \right\vert &\leq C\left( \frac{1}{\left\vert \xi \right\vert ^{\alpha }}+\left\vert \xi \right\vert
^{\alpha }\right) \,,\qquad  \xi \in \C\setminus\left( -\infty ,0\right]\,. 
 \end{split}
\end{equation}
\begin{remark}
 As a consequence of~\eqref{kernel0} we also have the relation
 \begin{equation}\label{eq:complex:sym}
  W\left(\xi,1\right)=W\left(\xi^{-1},1\right)\quad\text{for }\xi\in\C\setminus(-\infty,0].
 \end{equation}
 This can be seen if one defines $\widehat{W}(\xi)\vcc=W(\xi^{-1},1)$ and notes that~\eqref{eq:complex:sym} holds for $\xi\in(0,\infty)$ due to the symmetry and homogeneity of $W$. The identity theorem of complex analysis then ensures that $\widehat{W}$ and $W(\cdot,1)$ coincide on $\C\setminus(-\infty,0]$.
\end{remark}
Furthermore, if $W_{+}(\xi)=W(\xi,1)$ for $\Im(\xi)>0$ and $W_{-}(\xi)=W(\xi,1)$ for $\Im(\xi)<0$, we assume that
                      \begin{align}
                       W_{+}\; \mbox{ and }& W_{-} \mbox{ have a } C^{1,\gamma}\mbox{-extension to }\{ \Im(\xi) \geq 0 \}\setminus\left\{0\right\} \mbox{ and }
                       \{\Im(\xi) \leq 0 \}\setminus\left\{0\right\} \mbox{ respectively }\nonumber\\
                       &\mbox{ for some } \gamma >0\,. \label{kernel0b}
                      \end{align}
The precise meaning of this is that $W_{+}$ and $W_{-}$ are differentiable and for $\abs{z_1-z_2}\leq \frac{1}{2}\min\left\{\abs{z_1},\abs{z_2}\right\}$ it holds
\begin{equation}\label{eq:hoelder}
 \frac{\abs{W_{\pm}'\left(z_1\right)-W_{\pm}'\left(z_2\right)}}{\abs{z_1-z_2}^{\gamma}}\leq C\begin{cases}
                                                                                              \min\left\{\abs{z_1},\abs{z_2}\right\}^{-\alpha-1-\gamma} &\min\left\{\abs{z_1},\abs{z_2}\right\}\leq 1\\
                                                                                              \min\left\{\abs{z_1},\abs{z_2}\right\}^{\alpha-1-\gamma} &\min\left\{\abs{z_1},\abs{z_2}\right\}\geq 1.
                                                                                            \end{cases}
\end{equation}
This means $W'_{\pm}$ satisfies a uniform local Hölder condition. Furthermore we assume a growth condition on $W_{+}$ and $W_{-}$ and their derivatives. Precisely, we need
\begin{equation}\label{eq:W:decay}
 \abs{W_{\pm}\left(z\right)}\leq C\left(\abs{z}^{-\alpha}+\abs{z}^{\alpha}\right)\quad \text{and}\quad \abs{W_{\pm}'\left(z\right)}\leq C\left(\abs{z}^{-\alpha-1}+\abs{z}^{\alpha-1}\right).
\end{equation} 
\begin{remark}
 Note that the estimate on $W_{\pm}$ in~\eqref{eq:W:decay} is a consequence of~\eqref{kernel0} and also follows from that one on $W'_{\pm}$ by integration.
\end{remark}
In addition, for $\alpha>0$, we have to assume some precise asymptotic behaviour of the kernel $W$ near the origin in the complex plane, i.e.\ we assume there exists some constant $C_W>0$ such that
\begin{equation}\label{eq:asymp:W}
 W\left(\xi,1\right)\sim C_W\xi^{-\alpha}\quad \text{as }\xi\to 0,
\end{equation}
while the precise meaning of the notation $\sim$ here is the following. Given some function $\varphi$ that is analytic in  $\C\setminus\left(-\infty,0\right]$, we write  
\begin{equation*}
 \varphi\left(z\right)\sim A z^{-\alpha} \quad \text{as } z\to 0
\end{equation*}
 if there exists some real function $\eta\geq 0$ such that
 \begin{equation}\label{eq:def:asymp}
  \begin{split}
   \abs{z^{\alpha}\varphi\left(z\right)-A}\leq \eta\left(z\right)\quad \text{and}\quad \lim_{\rho\to 0^{+}}\sup_{\stackrel{\abs{\xi}=\rho}{\xi\in \C\setminus\left(-\infty,0\right]}}\eta\left(\xi\right)=0.
  \end{split}
 \end{equation}
These rather strong assumptions are necessary due to our strategy for the proof. We will explain this point in more detail
below, but want to mention here, that this condition is satisfied by most kernels that 
one typically encounters in applications as for example~\eqref{smolukernel}.

Our result can now be formulated as follows.
\begin{theorem}\label{Thm:uniqueness}
 Assume that $K$ satisfies the assumptions~\eqref{kernel1}\textendash \eqref{eq:asymp:W} and let $f_1$ and $f_2$ be two self-similar 
 profiles that satisfy~\eqref{eq:norm:mass}. Then  $f_1=f_2$ if $\eps$ is sufficiently small.
\end{theorem}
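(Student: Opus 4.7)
The plan is a perturbation argument around the unique explicit profile $f_{*}(x)=\ee^{-x}$ of the constant-kernel equation. By the scaling invariance~\eqref{eq:rescaling} we may normalise $M=1$, and for $\eps=0$ direct substitution into~\eqref{eq:selfsim} shows that $f_{*}$ is a solution; its uniqueness in the relevant class is the classical solvable case. The overall strategy is then: (i) show that every self-similar profile $f$ with $M=1$ is $O(\eps)$-close to $f_{*}$ in a norm adapted both to the exponential decay at infinity and to the $\xi^{-\alpha}$ singular behaviour at the origin dictated by~\eqref{eq:asymp:W}; (ii) derive a linear equation for the difference $g=f_{1}-f_{2}$ and invert the associated linear operator, using the common mass constraint to eliminate the expected one-dimensional cokernel.

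For Step~(i) I would exploit~\eqref{eq:selfsim} together with~\eqref{kernel3} and standard moment and regularity bounds (in the spirit of~\cite{NV14}) to obtain pointwise estimates of the form $\abs{f(x)-\ee^{-x}}\leq C\eps\,\omega(x)$ with an explicit weight $\omega$. For Step~(ii), subtracting the two copies of~\eqref{eq:selfsim} and exploiting bilinearity yields
\begin{equation*}
 x^{2}g(x)=\LL_{0}[g](x)+\eps\,\mathcal{R}_{\eps}[g](x),
\end{equation*}
where $\LL_{0}$ is the linearisation of the constant-kernel nonlinearity at $f_{*}$ and $\mathcal{R}_{\eps}$ has coefficients involving $f_{1}+f_{2}-2f_{*}$ and $W$, bounded in our norm uniformly in $\eps$.

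The heart of the argument is to invert $x^{2}\mathrm{Id}-\LL_{0}$ on the codimension-one subspace $\bigl\{\int_{0}^{\infty}xg(x)\dx=0\bigr\}$. For this I would take the Laplace transform of $g$, extend it analytically to $\C\setminus(-\infty,0]$ using~\eqref{kernel0}, and reformulate the equation as a Riemann\textendash Hilbert type problem: a functional relation on the cut together with prescribed asymptotics at $0$ (governed by~\eqref{eq:asymp:W}) and at $\infty$ (where the restriction $\alpha<\tfrac12$ is crucial so that the Laplace transform of $g$ decays fast enough). The boundary regularity~\eqref{kernel0b}\textendash\eqref{eq:hoelder} and the pointwise bounds~\eqref{eq:W:decay} allow one to work in a Banach space of analytic functions on which the associated operator is Fredholm; its one-dimensional kernel is spanned by the mass mode $\partial_{M}f_{*}$, which is killed by the constraint $\int xg\dx=0$. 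Invertibility on the complement, combined with the remainder estimate from Step~(ii), gives $\|g\|\leq C\eps\|g\|$ in our norm, so that $g\equiv 0$ for $\eps$ sufficiently small.

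The principal obstacle\textemdash and presumably the point where~\cite{NV14} had its gap\textemdash is the inversion step: showing that the linearised Riemann\textendash Hilbert problem is well posed with the sharp asymptotic behaviour~\eqref{eq:asymp:W} preserved through inversion, and with a remainder bound that depends on $\eps$ in a way compatible with closing the contraction. All of the auxiliary hypotheses\textemdash analytic extension, $C^{1,\gamma}$ regularity up to the cut with the growth~\eqref{eq:hoelder}, and the precise singular asymptotic~\eqref{eq:asymp:W}\textemdash are tailored to this purpose, and the delicate matching between the branch-point behaviour at $0$ and the decay at $\infty$ is what restricts the argument to the range $\alpha\in[0,\tfrac12)$.
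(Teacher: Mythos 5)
The broad skeleton of your plan---normalise, show closeness to $e^{-x}$, linearise, invert the linearised constant-kernel operator, and close a contraction---does match the paper. But two of your concrete claims are incorrect and would block the argument as written. You propose in Step~(i) to obtain \emph{pointwise} estimates $\abs{f(x)-\ee^{-x}}\leq C\eps\,\omega(x)$. No such bound is available: the only closeness result that can be extracted from~\eqref{eq:selfsim} by the methods of~\cite{NV13a,NV14} is the weak estimate~\eqref{S2E5}, which controls the Laplace transform of $f-\bar f$ in a weighted $L^\infty$ norm on $(-1,\infty)$. This is precisely why the paper introduces the spaces $\esp_{k,\chi}$, $X_{k,\chi}$ and reformulates every functional (including $W$ itself via the representation measure $\Gamma$ of Proposition~\ref{P.repkernels}) purely on the Laplace-transform side: the closeness is only measured there, so the entire contraction must live there. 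Moreover the paper normalises via the exponential decay rate (Remark~\ref{Rem:rescaling}), not the mass; after that rescaling the linearised operator $\widehat{\LL}$ is directly invertible on $X_{k,\chi}$ with an explicit formula obtained by solving an ODE (Proposition~\ref{Prop:Lin:inv}). There is no Fredholm cokernel to quotient out and no Riemann--Hilbert problem; invoking one here would introduce machinery the hypotheses were not designed for.

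More seriously, you do not engage with the boundary layer at $x\to 0$, which the paper identifies as the dominant obstacle (Section~\ref{Ss.boundarylayer}) and to which Sections~\ref{Sec:bl:proof}--\ref{Sec:asymptotics} are devoted. For $\alpha>0$ the profile deviates from $e^{-x}$ on an $\eps$-dependent scale near the origin, and the difference $m$ of two solutions must be split into a small-$x$ part $\zeta m$ and a large-$x$ part $(1-\zeta)m$; the localisation estimate $\fnorm{2}{\zeta m}\leq\delta_*\fnorm{2}{m}+C_{\delta_*}\fnorm{0}{(1-\zeta)m}$ (Proposition~\ref{Prop:splitting:2}) requires an integrating factor $\Phi$, a Paley--Wiener representation of the resulting kernel $H_0$ as a Laplace transform (Proposition~\ref{Prop:rep:H0}), and the precise singular asymptotics~\eqref{eq:asymp:W}. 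This is also where the restriction $\alpha<\tfrac12$ actually enters---through Lipschitz continuity of $f\mapsto\exp\bigl(-\tfrac{\eps}{\alpha}x^{-\alpha}\int z^{\alpha}f\,\dz\bigr)$---rather than through Laplace-transform decay at $\infty$ as you suggest. Without a replacement for this boundary-layer analysis your contraction would not close in the only norm in which the a priori closeness is known.
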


\begin{remark}\label{Rem:alpha:pos}
 Since the statement of Proposition~\ref{P.repkernels} does not hold in this form for $\alpha=0$ the proof of Theorem~\ref{Thm:uniqueness} would in principle have to be adapted in this case. However, we can reduce the case $\alpha=0$ to that one with small positive $\tilde{\alpha}$ by noting that if $K$ satisfies~\eqref{kernel1}\textendash \eqref{eq:W:decay} with $\alpha=0$ the same assumptions also hold with any $\tilde{\alpha}\in(0,1/2)$. On the other hand, \eqref{eq:asymp:W} does not hold necessarily for a kernel $K$ satisfying~\eqref{kernel1}\textendash \eqref{eq:W:decay} with $\alpha=0$. However, \eqref{eq:asymp:W} is only used in the proof of Proposition~\ref{Prop:Q0:estimate}, and the result of this Proposition holds if $\alpha=0$ even if~\eqref{eq:asymp:W} is not assumed. For this, we have to choose $\tilde{\alpha}$ sufficiently small. Thus, to simplify the presentation, we will assume throughout this work that $\alpha\in(0,1/2)$ while we only give some comments on the necessary adaptations in the proof of Proposition~\ref{Prop:Q0:estimate} for the case that~\eqref{eq:asymp:W} does not hold. 
\end{remark}

The paper is organized as follows. In Section~\ref{Sec:main:ideas} we collect several results that were obtained in~\cite{NV13a,NV14} and we introduce some weighted $L^{\infty}$-norm on the Laplace transform on the real line. The reason for this choice is that we can show that any self-similar profile is close to the one for the constant kernel in this norm. Unfortunately this norm is very weak and thus, in order to show uniqueness by some contraction argument, it is necessary to represent all functionals that appear in terms of the Laplace transform. This is the main reason for the analyticity assumption on $W$ as well as~\eqref{kernel0b} which allows for a representation of $W$ as the Laplace transform of some function $\Gamma$ (see Section~\ref{Sec:rep:W}). Another main difficulty is due to the possible singularity of $W$ at the origin that induces a boundary layer. More precisely, in the region close to zero the self-similar profile for the perturbated kernel $K$ deviates significantly from the one for the constant kernel. This is the reason why we have to restrict ourselves to exponents $\alpha<1/2$ and why we need the assumption~\eqref{eq:asymp:W}. On the other hand, many kernels appearing in applications such as~\eqref{smolukernel} exhibit such a singular behaviour at the origin.

Furthermore in Section~\ref{Sec:prefactor} we also consider uniqueness for the prefactor equation, which is a related but much simpler problem, arising in the study of the asymptotic behaviour of solutions to~\eqref{eq:selfsim} for large cluster sizes.

In Section~\ref{Sec:proof:unique} we collect the key estimates to show uniqueness of self-similar profiles that are contained in Propositions~\ref{Prop:uniform:bound:muk}\textendash\ref{Prop:splitting:2}. Using these estimates we then give the proof of Theorem~\ref{Thm:uniqueness} (see Section~\ref{Sec:proof:alpha}).

Sections~\ref{Sec:est:m:large:x}\textendash\ref{Sec:asymptotics} are then dedicated to the proofs of these key estimates, while the hardest part is the boundary layer estimate (see Proposition~\ref{Prop:splitting:2}) and Sections~\ref{Sec:bl:proof}\textendash\ref{Sec:asymptotics} deal exclusively with this issue.

Finally in the Appendix we collect several properties concerning the norm (Section~\ref{Sec:prop:norm}) as well as the proof of the representation formula for $W$ together with some useful estimates (Section~\ref{Sec:appendix:rep:ker}) and the proof of the uniqueness result for the prefactor equation (Section~\ref{Sec:proof:prefactor}).

\section{Main ideas, Basic Definitions and Notation}\label{Sec:main:ideas}

\subsection{Rewriting the equation}

In order to prove uniqueness via a contraction type argument, it is first necessary to prove that any 
solution to \eqref{eq:selfsim} is close in a suitable norm to one of the solutions $\bar{f}_{a}=ae^{-ax}$ of the kernel $\bar{K}=2$. 
 
 In \cite{NV14} we have established this fact in a weighted $L^{\infty}$-norm of the corresponding Laplace transforms (see~\eqref{S2E5}).
 It turns out  that instead of fixing the mass it is more convenient to normalize the function $f$ such that it decays as $e^{-x}$ as $x\to \infty$. Accordingly we define $\mu(x)=f(x)e^x$ such that $\mu$ solves the equation
\begin{equation}\label{eq:mu}
 x^{2}\mu\left(x\right)=\int_{0}^{x}\int_{x-y}^{\infty}yK\left(y,z\right)\mu\left(y\right)\mu\left(z\right)\ee^{x-\left(y+z\right)}\dz\dy.
\end{equation}
In the case $K=2$ we have the explicit solution $\bar{\mu}=1$.

For a fixed kernel $K$ we furthermore define the bilinear form $B_{K}$ given by
\begin{equation*}
 B_{K}\left(\mu_1,\mu_2\right)\vcc=\frac{1}{x^2}\int_{0}^{x}\int_{x-y}^{\infty}yK\left(y,z\right)\mu_{1}\left(y\right)\mu_{2}\left(z\right)\ee^{x-\left(y+z\right)}\dz\dy.
\end{equation*}
Rewriting $\mu$ again as $\mu\left(x\right)=1+m\left(x\right)=\bar{\mu}\left(x\right)+m\left(x\right)$ we can rewrite equation~\eqref{eq:mu} as
\begin{equation}\label{eq:mu2}
 m\left(x\right)-B_{2}\left(\bar{\mu},m\right)-B_{2}\left(m,\bar{\mu}\right)=B_{2}\left(m,m\right)+\eps B_{W}\left(\bar{\mu}+m,\bar{\mu}+m\right).
\end{equation}

We furthermore denote in the following by 
\begin{equation*}
 \zeta\left(x\right)\vcc=\ee^{-x}
\end{equation*}
a cut-off function that is suitable since we work with Laplace transforms.

\subsection{Results of previous articles}\label{Sec:prev:res}

In this section we collect several results and estimates obtained in~\cite{NV13a} and~\cite{NV14}. Furthermore, here and in the rest of the paper all constants are independent of $\eps$.

 We have proved in \cite{NV13a} (cf. Theorem 1.2 and 1.3) that for any self-similar profile $f$
there exist positive constants $c_1$ and $c_2$ such that 
\begin{equation}\label{fdecay}
 0<e^{-c_1x} \leq f(x) \leq e^{-c_2x} \,, \qquad \mbox{ for all } x \geq 1
\end{equation}
and if $f(x)=e^{-xa(x)}$ that $\lim_{x \to \infty} a(x)$ exists.

\begin{remark}\label{Rem:rescaling}
 Note that in the following, instead of fixing the mass as done in~\eqref{eq:norm:mass}, we will always assume that all solutions to~\eqref{eq:selfsim} are rescaled in such a way that $\lim_{x \to \infty} a(x)=1$. More precisely, suppose that we have two solutions $f_1$ and $f_2$ of~\eqref{eq:selfsim} with the normalisation~\eqref{eq:norm:mass}. Then it might be shown that choosing $a_1,a_2$ (close to one if $\eps$ is small) we obtain that the Laplace transforms of the functions $g_1$ and $g_2$, defined by means of~\eqref{eq:rescaling} with $a_1$ and $a_2$, are analytic in $\Re\left(p\right)>-1$ and they are singular at $p=-1$. Notice that this is a way of stating (in Laplace transform variables) that $\frac{\log\left(g_{k}\left(x\right)\right)}{x}\to -1$ as $x\to\infty$ for $k=1,2$. We will always denote from now on the rescaled functions $g_1,g_2$ by $f_1,f_2$.
\end{remark}

In \cite{NV14} it was moreover proved (see \cite[Lemma 3.1]{NV14}) that for any given $\delta>0$ we have
\begin{equation}\label{S2E5}
\sup_{p>-1} \frac{1+p}{|p|}\Big| \int_0^{\infty} \big(1-e^{-px}\big) (f(x) - \bar f(x))\dx \Big| \leq \delta
\end{equation}
if $\eps$ is sufficiently small. Here $\bar f(x)=e^{-x}$ is the self-similar profile for $K=2$ with exponential decay.

In addition, we have also established a control on $f$ for small values of $x$. In fact, given $\eta>0$
there exists $R_0>0$ such that 
\begin{equation}
 \label{S2E3}
 \int_{R}^{2R} f(x)\dx \leq C_{\eta} R^{1-\eta} \,, \qquad \mbox{ for all } R \in (0,R_0],
\end{equation}
which implies $\int_0^1 x^{-\delta} f(x) \dx \leq C_{\delta}$ for all $\delta \in [0,1)$ (cf.~\cite[Lemma 2.4]{NV14}).

Furthermore it was also shown (see \cite[Lemma~{2.3}]{NV14}) that there exists a constant $C>0$ independent of $f$ such that
\begin{equation}\label{eq:uniform:est:f}
 \int_{0}^{\infty}f\left(x\right)\dx\leq C
\end{equation}
and in~\cite[Lemma~5.2]{NV13a} that for any $A>$ there exists some constant $C_A>0$ such that for each $\mu$ solving~\eqref{eq:mu} it holds
\begin{equation}\label{eq:apriori:large}
 \int_{1}^{AR}\mu\left(x\right)\dx\leq C_A R \quad \text{for all } R\geq 2. 
\end{equation}
From~\eqref{S2E3}, \eqref{eq:uniform:est:f} and~\eqref{eq:apriori:large} it follows that there exists $C_{\eta}>0$ such that 
\begin{equation}\label{eq:mean:est}
 \int_{R}^{2R}\mu\left(x\right)\dx\leq C_{\eta}\max\left\{R^{1-\eta},R\right\} \quad \text{and}\quad \int_{R}^{2R}\abs{m\left(x\right)}\dx\leq C_{\eta}\max\left\{R^{1-\eta},R\right\} 
\end{equation}
for all $R>0$, $\eta\in\left(0,1\right)$ and all solutions $\mu$ of~\eqref{eq:mu} and $m$ of~\eqref{eq:mu2}.

\subsection{Defining suitable norms and spaces}\label{Sec:norm:space}

In the following we will denote by $\mathcal{M}$ the space of signed Radon measures. We define suitable norms on the Laplace transforms in the following way. For a given measure $\omega\in \mathcal{M}$ we denote by $\Omega$ the Laplace transform of $\omega$, i.e.\ 
\begin{equation*}
 \Omega\left(p\right)\vcc=\int_{0}^{\infty}\omega\left(x\right)\ee^{-px}\dx.
\end{equation*}
To be precise, we have to restrict to the subclass $\mathcal{M}_t\subset\mathcal{M}$ of those measures whose Laplace transform exists in the sense that if we split $\omega=\omega_{+}-\omega_{-}$ it holds
\begin{equation*}
 \begin{split}
  \int_{0}^{\infty}\ee^{-px}\omega_{+}\left(x\right)\dx<\infty\quad \text{and}\quad \int_{0}^{\infty}\ee^{-px}\omega_{-}\left(x\right)\dx<\infty\quad \text{for all } p>0.
 \end{split}
\end{equation*}
Then for any weight $\chi>0$ and $k\in\left\{0,1,2\right\}$ we define the (semi-) norms
\begin{align}
 \snorm{k,\chi}{\omega}&\vcc=\sup_{p>0}\left(\frac{p^{1+k}}{\left(1+p\right)^{1-\chi}}\abs{\del_{p}^{k}\Omega\left(p\right)}\right)\label{eq:def:snorm}
\end{align}
and the corresponding norms
\begin{align*}
 \fnorm{k,\chi}{\omega}&\vcc=\sum_{\ell=0}^{k}\snorm{\ell,\chi}{\omega}.
\end{align*}
From this definition it directly follows for $k=0,1,2$ that
\begin{equation}\label{eq:norm:order}
 \fnorm{k,\chi_1}{\omega}\leq \fnorm{k,\chi_2}{\omega} \quad \text{and}\quad \snorm{k,\chi_1}{\omega}\leq\snorm{k,\chi_2}{\omega}\quad \text{if } \chi_1\leq \chi_2.
\end{equation}
We furthermore define the following subspace of $\mathcal{M}$
\begin{equation*}
 \esp_{k,\chi}\vcc=\left\{\omega\in \mathcal{M}_t\;\left|\; \fnorm{k,\chi}{\omega}<\infty\right.\right\}.
\end{equation*}
Additionally, we introduce the following norms on the level of Laplace transforms, that correspond to the norms $\fnorm{k,\chi}{\cdot}$: for a function $G\in C^k\left(0,\infty\right)$ and a weight $\chi>0$ we define 
\begin{equation*}
 \begin{split}
  \lnorm{0,\chi}{G}&\vcentcolon=\sup_{p>0}\frac{p}{\left(1+p\right)^{1-\chi}}\abs{G\left(p\right)},\\
  \lnorm{1,\chi}{G}&\vcc=\lnorm{0,\chi}{G}+\sup_{p>0}\frac{p^2}{\left(1+p\right)^{1-\chi}}\abs{\del_p G\left(p\right)},\\
  \lnorm{2,\chi}{G}&\vcc=\lnorm{1,\chi}{G}+\sup_{p>0}\frac{p^3}{\left(1+p\right)^{1-\chi}}\abs{\del_p^2G\left(p\right)}.
 \end{split}
\end{equation*}
For $k=0,1,2$ we then also define the Banach spaces
\begin{equation*}
 X_{k,\chi}\vcc=\left.\left\{G\in C^k\left(0,\infty\right)\;\right|\;\lnorm{k,\chi}{G}<\infty\right\}.
\end{equation*}
The relation between the two norms and spaces is then the following. Denoting by $\T$ the operator mapping a measure $\omega$ to its Laplace transform $\Omega$, it holds for $k=0,1,2$
\begin{equation*}
 \begin{split}
  \T\colon \esp_{k,\chi}&\to X_{k,\chi}\\
   \omega&\mapsto \Omega
 \end{split}
\end{equation*}
and $\T$ is then an isometry, i.e.\ we have
\begin{equation}\label{eq:isometry}
 \fnorm{k,\chi}{\omega}=\lnorm{k,\chi}{\T \omega}.
\end{equation}
The reason why we have to introduce these different norms is the following. The space $\esp_{k,\chi}$ is not a Banach space as its completion contains for example derivatives of Dirac-measures, i.e.\ considering for some $a> 0$ the sequence $\omega_n\vcc=n\left(\delta_{a+1/n}-\delta_a\right)$, where $\delta_x$ denotes the Dirac-measure at $x$, it easily follows
\begin{equation*}
 \omega_n\longrightarrow \delta_a'\quad \text{as } n\longrightarrow \infty \text{ with respect to the } \fnorm{k,\chi}{\cdot}\text{-norm}.
\end{equation*}
Instead, the completion of $\esp_{k,\chi}$ with respect to $\fnorm{k,\chi}{\cdot}$ is a space of generalised distributions, i.e.\ acting on a family of test functions that are analytic functions on the real line.

On the other hand the space $X_{k,\chi}$ with the corresponding norm is a Banach space. In Section~\ref{Sec:Inversion} we will have to invert the linearised coagulation operator (see~\eqref{eq:lin:coag} and~\eqref{eq:lin:coag:lap}) and this will be done by computing an explicit formula in terms of the Laplace transform. The problem is, that it is not immediately clear, if, starting with some right-hand side in $\esp_{k,\chi}$, this inverse again corresponds to some measure in $\esp_{k,\chi}$, i.e.\ that the operator can also be inverted on $\esp_{k,\chi}$. However we can obtain estimates for the Laplace transforms of the required quantities. 

 To simplify notation we also define the following function
\begin{equation*}
 \Lambda_{\chi}\left(s\right)\vcc=\begin{cases}
                         s^{-1} & s\leq 1\\
                         s^{-\chi} & s\geq 1
                        \end{cases}
\end{equation*}
and remark that it holds
\begin{equation}\label{eq:Lambda}
 \Lambda_{\chi}\left(s\right)\leq \frac{\left(1+s\right)^{1-\chi}}{s}\leq 2^{1-\chi}\Lambda_{\chi}\left(s\right).
\end{equation}
Furthermore, in the following the exponent $\chi$ in the weight will mostly be some fixed value
\begin{equation*}
 \theta\in\left(\alpha,1/2\right).
\end{equation*}
Therefore, to shorten the notation we will drop in the following the index if $\chi=\theta$ and just write
\begin{equation*}
 \fnorm{k}{\cdot}\vcc=\fnorm{k,\theta}{\cdot}, \qquad \snorm{k}{\cdot}\vcc=\snorm{k,\theta}{\cdot}, \qquad \lnorm{k}{\cdot}\vcc=\lnorm{k,\theta}{\cdot}\qquad \text{and} \qquad \Lambda\vcc=\Lambda_{\theta} \qquad \text{for } k\in\left\{0,1,2\right\}
\end{equation*}
as well as
\begin{equation*}
\esp_{k}\vcc=\esp_{k,\theta},\qquad \esp\vcc=\esp_{2,\theta},\qquad X_{k}\vcc=X_{k,\theta}\qquad \text{and}\qquad X\vcc=X_{2,\theta}\qquad \text{for } k\in\left\{0,1,2\right\}.  
\end{equation*}

\subsection{Representation formula for the kernel $W$}\label{Sec:rep:W}

In view of the norms given in~\eqref{eq:def:snorm} it will be necessary to express $W$ as the Laplace transform of some function. In fact, it will be more convenient to have this representation for $W\left(y,z\right)/(y+z)$ which is the content of the following Proposition.

\begin{proposition}\label{P.repkernels}
 Suppose that $W$ satisfies the assumptions~\eqref{kernel1}\textendash\eqref{kernel0b} with $\alpha\in(0,1)$.
Then there exists a measure $\Gamma$ on $\mathbb{R}_{+}\times \mathbb{R}_{+}$ that is homogeneous of degree $-1$ and satisfies
\begin{equation*}
\frac{W(y,z)}{y+z}=\int_{0}^{\infty}\int_{0}^{\infty}\Gamma(\xi ,\eta) e^{-\xi y-\eta z}\dxi\deta.
\end{equation*}
Furthermore, $\Gamma$ can be decomposed as $\Gamma(\xi,\eta)=\widetilde{\Gamma}(\xi,\eta)+W_{\pm}(-1)\delta(\xi-\eta)$, with a function $\widetilde{\Gamma} \colon \mathbb{R}_{+}\times \mathbb{R}_{+}\rightarrow \mathbb{R}$ satisfying
\begin{equation*}
 \abs{\widetilde{\Gamma}( \xi ,\eta )} \leq \frac{C}{(\xi +\eta )^{1-\alpha }}\left( \frac{1}{\xi^{\alpha}}+\frac{1}{\eta^{\alpha}}\right).
\end{equation*}
\end{proposition}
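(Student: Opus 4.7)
The plan is to reduce the construction of $\Gamma$ to a one-variable dispersion (Stieltjes-type) relation for $W(\cdot,1)$, then turn that integral into a double Laplace transform by elementary changes of variables. The delta on the diagonal will appear as the correction for a subtraction that one is forced to perform in order to tame the behaviour of the integrand at the point $u=-1$.

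First, from $(-1\pm i0)^{-1}=-1\mp i0$ and the symmetry~\eqref{eq:complex:sym}, one obtains $c\vcc=W_{+}(-1)=W_{-}(-1)$. I would set
\[
g(u)\vcc=\frac{W(u,1)-c}{u+1},
\]
which by~\eqref{kernel0} is analytic on $\C\setminus(-\infty,0]$. The $C^{1,\gamma}$-extensions from~\eqref{kernel0b}, together with $W_{\pm}(-1)=c$, yield $g_{\pm}(u)\to W_{\pm}'(-1)$ as $u\to -1$, so the pole that $1/(u+1)$ would produce is compensated; the estimate~\eqref{eq:W:decay} then gives $|g(u)|\le C(|u|^{-\alpha}+|u|^{\alpha-1})$ on $\C\setminus(-\infty,0]$. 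Applying Cauchy's formula on $|v|=R$ and collapsing to a Hankel contour around $(-\infty,0]$ — the circle at infinity contributes $O(R^{\alpha-1})\to 0$ since $\alpha<1$, and the boundary values remain integrable across the cut both at $v=0$ and at $v=-1$ — one arrives at
\[
g(u)=\int_{0}^{\infty}\frac{\rho(s)}{s+u}\,\ds,\qquad \rho(s)\vcc=\frac{W_{-}(-s)-W_{+}(-s)}{2\pi i\,(1-s)}.
\]
Combining~\eqref{eq:W:decay} away from $s=1$ with the Taylor expansion $W_{\pm}(-s)=c-W_{\pm}'(-1)(s-1)+O(|s-1|^{1+\gamma})$ near $s=1$ then gives $|\rho(s)|\le C(1+s)^{\alpha-1}(1+s^{-\alpha})$.

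To convert this Stieltjes integral into a double Laplace transform I would write $1/(s+u)=\int_{0}^{\infty}e^{-t(s+u)}\dt$ with $u=y/z$ and apply Fubini (permissible thanks to the above bound on $\rho$). The change of variables $\xi=t$, $\eta=ts$ has Jacobian $\xi$ and yields
\[
\frac{W(y,z)-c}{y+z}=\frac{g(y/z)}{z}=\int_{0}^{\infty}\!\!\int_{0}^{\infty}\frac{\rho(\eta/\xi)}{\xi}\,e^{-\xi y-\eta z}\,\dxi\deta.
\]
Setting $\widetilde\Gamma(\xi,\eta)\vcc=\rho(\eta/\xi)/\xi$, which is manifestly homogeneous of degree $-1$, and substituting $s=\eta/\xi$ in the bound on $\rho$ produces exactly $|\widetilde\Gamma(\xi,\eta)|\le C(\xi+\eta)^{\alpha-1}(\xi^{-\alpha}+\eta^{-\alpha})$. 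The subtraction is then restored via $c/(y+z)=c\int_{0}^{\infty}e^{-\tau(y+z)}\dtau=c\iint\delta(\xi-\eta)e^{-\xi y-\eta z}\dxi\deta$, giving the decomposition $\Gamma=\widetilde\Gamma+c\,\delta(\xi-\eta)$ with $c=W_{\pm}(-1)$.

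The main obstacle will be the contour deformation and its integrability bookkeeping: both the circle at infinity must give vanishing contribution — which is exactly why the restriction $\alpha<1$ is unavoidable — and the boundary values $g_{\pm}$ must remain integrable along the \emph{whole} cut including $v=-1$, where $W$ itself is bounded but $1/(v+1)$ blows up. It is precisely to handle the latter that the constant $c$ must be subtracted from $W$ before dividing by $u+1$, and that the $C^{1,\gamma}$-regularity in~\eqref{kernel0b} is essential; the remaining estimates on $\rho$ and on $\widetilde\Gamma$ are then direct computations from~\eqref{eq:W:decay} and the expansion of $W_{\pm}$ near $-1$.
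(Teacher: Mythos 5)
Your proposal is correct and arrives at the same density as the paper: the $\rho(s)=\frac{W_{-}(-s)-W_{+}(-s)}{2\pi i\,(1-s)}$ you define is exactly the paper's $\phi$, and the resulting $\widetilde\Gamma(\xi,\eta)=\rho(\eta/\xi)/\xi$ agrees with the paper's $\phi(\xi/\eta)/\eta$ by the symmetry $\phi(1/u)=u\,\phi(u)$, which follows from $W(\xi,1)=W(\xi^{-1},1)$ (see~\eqref{eq:complex:sym}). Where you genuinely differ is in \emph{how} the one-variable Stieltjes representation $g(u)=\int_0^\infty\frac{\rho(s)}{s+u}\,\ds$ is proved. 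You push a Cauchy integral onto a Hankel contour around the cut, using $|g(v)|\le C(|v|^{-\alpha}+|v|^{\alpha-1})$ to kill the large and small circular arcs (both need $\alpha<1$) and the $C^{1,\gamma}$ regularity from~\eqref{kernel0b} to make the boundary traces near $v=-1$ well defined and integrable. The paper instead posits $\phi$ via the Sokhotski--Plemelj jump, shows that the difference $\Phi(z)=g(z)-\int_0^\infty\frac{\phi(\eta)}{z+\eta}\deta$ extends analytically across $(-\infty,0)$ (the jumps cancel by construction), bounds it by $C\max\{|z|^{-\alpha},|z|^{\alpha-1}\}$, and concludes $\Phi\equiv 0$ by Riemann's removable singularity theorem plus Liouville. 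The two routes rely on the same ingredients — the decay~\eqref{eq:W:decay}, the H\"older regularity~\eqref{kernel0b}, and the subtraction of $W_{\pm}(-1)$ to remove the pole at $u=-1$ — and are of comparable length; your version is more constructive, the paper's sidesteps the justification of pinching the contour onto the boundary. One small bookkeeping slip: after reaching $g(y/z)/z$ you should first absorb the prefactor, writing $\frac{1}{z}\cdot\frac{1}{s+y/z}=\frac{1}{sz+y}=\int_0^\infty e^{-t(sz+y)}\dt$; only then does the substitution $\xi=t$, $\eta=ts$ (Jacobian $\xi$) produce $e^{-\xi y-\eta z}$ as claimed — as written, $e^{-t(s+u)}$ with $u=y/z$ gives $e^{-\eta-\xi y/z}$, not the stated exponent.
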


\begin{remark}\label{Rem:W:minus1}
 From~\eqref{eq:complex:sym} together with~\eqref{kernel0b} it follows that $W_{\pm}\left(-1\right)$ is a well-defined constant, i.e. $W_{+}(-1)=W_{-}(-1)=W_{\pm}(-1)$.
\end{remark}
The proof of Proposition~\ref{P.repkernels} can be found in the Appendix (Section~\ref{Sec:appendix:rep:ker}) together with several estimates on integrals involving $\Gamma$.
\begin{remark}
If  $W\left( x,y\right) =\big( \frac{x}{y}\big) ^{\alpha }+\big( 
\frac{y}{x}\big) ^{\alpha }$, then $\Gamma(\xi,\eta)= \frac{\sin(\pi\alpha)}{\pi} \frac{ \big( \frac{\xi}{\eta}\big)^{\alpha} - \big( \frac{\eta}{\xi}\big)^{\alpha}}{\xi-\eta}+2\cos(\pi\alpha)\delta(\xi-\eta)$.
\end{remark}

\subsection{The boundary layer for $x\to 0$.}\label{Ss.boundarylayer}

Another main difficulty, that originates from the fact that we allow for unbounded kernels $W$, concerns the situation that 
solutions of~\eqref{eq:selfsim} for small $x$ for unbounded kernels behave in general differently from solutions of~\eqref{eq:selfsim} with $K=2$. Indeed, $\bar{f}_{a}(x) $ converges to constant as $x\rightarrow 0^{+}$.
However, if we take $W( y,z) =\big( \frac{y}{z}\big) ^{\alpha}
+\big( \frac{z}{y}\big)^{\alpha }$ with $\alpha >0$ the expected
behaviour of the solutions of~\eqref{eq:selfsim} for small $x$ is $f\left(
x\right) \rightarrow 0$ as $x\rightarrow 0^{+}$ for any $\varepsilon >0$ 
and as a consequence we have  the onset of a boundary layer near $x=0$.
In order to examine this effect it is convenient to reformulate~\eqref{eq:selfsim} as
\[
x^{2}f(x) =\int_{0}^{x} yf(y) \int_{0}^{\infty
}K(y,z) f(z)\dz\dy
-\int_{0}^{x}dy\int_{0}^{x-y}dz\,K(y,z) yf(y)f(z)\dz\dy. 
\]
It turns out that the leading order terms of this equation for small $x$ are 
\[
x^{2}f(x) =\int_{0}^{x}yf(y) \int_{0}^{\infty
}K(y,z) f(z) \dz\dy.
\]
This equation can be reduced to a linear ODE whose solution is given by
\[
x^{2}f(x) =C\exp \left( -G\left( x;f\right) \right) \,, \qquad \mbox{ with } \quad
G\left( x;f\right) =\int_{x}^{1}\frac{\int_{0}^{\infty }K(t,z)
f(z) \dz}{t}\dt\,
\]
for a suitable real constant $C.$ If $W( y,z) =\big( \frac{y}{z}
\big) ^{\alpha }+\big( \frac{z}{y}\big) ^{\alpha }$,  we
have the following asymptotics for $G\left( x;f\right) $ as $
x\rightarrow 0$
\[
G( x ;f) \sim -\beta ( f) \log \left( x\right) +\frac{\varepsilon }{\alpha }\frac{1}{x^{\alpha }}\int_{0}^{\infty }z^{\alpha
}f(z) \dz-L( x;f)\,, \qquad \mbox{ with }\;
\beta( f) =2\int_{0}^{\infty }f(z) \dz\,,
\]
where we include in $L( x;f)$ all bounded terms. Notice that
the operator $L( x;f) $ contains functionals like $\int_{0}^{\infty }z^{-\alpha }f(z) dz.$
Then, near $x=0$, equation~\eqref{eq:selfsim} has similar
properties as
\begin{equation*}
f(x) =Cx^{\beta ( f) -2}\exp \left( -\frac{\varepsilon }{\alpha }\frac{1}{x^{\alpha }}\int_{0}^{\infty }z^{\alpha
}f(z)\dz +L\left( x;f\right) \right).
\end{equation*}
If $\varepsilon $ is small, we  expect $f$ to be close to one of the
functions $\bar{f}_{a}$ except in a neighbourhood of $x=0.$ Then we 
have $\beta(f) $ close to $\beta (\bar{f}_{a}) =2.$
Therefore, the main change of $f$ near $x=0$ due to the singular character
of $W$ is due to the exponential factor $\exp \left( -\frac{1}{\alpha }\frac{1}{x^{\alpha }}\int_{0}^{\infty }z^{\alpha }f( z) dz\right)$ and
we need
to use norms in spaces containing objects for which $\int_{0}^{\infty
}dz\,z^{-\alpha }f(z)$ is well defined, such as for example a weighted $L^1$-norm. 
Then the functional
\[
f\rightarrow \exp \left( -\frac{\varepsilon }{\alpha }\frac{1}{x^{\alpha }}\int_{0}^{\infty }z^{\alpha }f(z)\dz \right) 
\]
is Lipschitz continuous  if $\alpha <\frac{1}{2}.$ This is the reason for our  additional restriction on $W$ in 
\eqref{kernel3} and it seems that this problem can also not be removed using
different types of norms.

\subsection{The prefactor equation}\label{Sec:prefactor}

We would also like to point out another related but simpler problem, namely the uniqueness
of solutions to 
\begin{equation}\label{S1E4}
x\mu ( x) =\tfrac{1}{2}\int_{0}^{x} K( y,x-y) \mu( y) \mu ( x-y) \dy   
\end{equation}
where $K$ also satisfies \eqref{kernel1}\textendash\eqref{kernel0b}. This equation arises naturally in the study of the asymptotics of solutions of \eqref{eq:selfsim}. Indeed, it has been proved in \cite{NV13a} that solutions of \eqref{eq:selfsim} have the following property. We define $m( x) =\ee^{ax}f( x)$. Then, there exists $a>0$ such that, up to subsequences, we have that
$\lim_{R\rightarrow \infty }m( Rx) =\mu ( x)$, where $\mu $ solves \eqref{S1E4}. Moreover, it has been shown, that there exist constants $d_{*},D_{*}>0$ such that
\begin{equation}\label{eq:prefactor:scaling}
 d_{*}x\leq \int_{0}^{x}\mu(y)\dy\leq D_{*}x\quad \text{for all }x>0.
\end{equation}
 If we define, for $k=1,2$, in analogy with~\eqref{eq:def:snorm} the (semi-) norms
 \begin{equation}\label{norm1def}
\psnorm{k}{\omega}\vcc=\sup_{p>0}\left( p^k\abs{ \Omega( p)}\right) =\sup_{p>0} \left (p^k \abs{\int_{0}^{\infty }\omega( x)
\ee^{-xp}\dx}\right )\quad \text{and} \quad \pnorm{k}{\omega}\vcc=\sum_{\ell=0}^{k}\psnorm{\ell}{\omega}.
\end{equation}
then it turns out that for $K=2$ the only solution of (\ref{S1E4}) satisfying $\pnorm{1}{\mu}<\infty $ is $\bar{\mu}=1.$ The
results in \cite{NV13a} imply that $\pnorm{0}{\mu -\bar{\mu}}$ can be made small for small $\varepsilon $. Then we have the following uniqueness result.

\begin{lemma}\label{L.uniqueprefactor}
Let $K$ satisfy the assumptions~\eqref{kernel1}\textendash\eqref{kernel0b}, where here $\alpha \in [0,1)$.
Then, if $\eps>0$ is sufficiently small, the only solution to~\eqref{S1E4} which satisfies~\eqref{eq:prefactor:scaling} and which has finite norm $\pnorm{0}{\cdot}$ 
is the constant one.
\end{lemma}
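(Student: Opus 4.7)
The plan is to pass to Laplace transforms, in which setting the prefactor equation~\eqref{S1E4} becomes a Riccati-type ODE, and then to exploit the smallness of $\mu_i-\bar{\mu}$ in the norm $\pnorm{0}{\cdot}$ furnished by~\cite{NV13a} to set up a contraction-type argument for the difference $h=\mu_1-\mu_2$ of two candidate solutions. Writing $M_i(p)=\int_0^\infty\mu_i(x)\ee^{-px}\dx$, the convolution structure of~\eqref{S1E4} together with the identity $\mathcal{L}[x\mu](p)=-M'(p)$ yields
$$-M_i'(p) = M_i(p)^2 + \eps G_i(p),$$
where $G_i$ is the Laplace transform of $\tfrac12\int_0^x W(y,x-y)\mu_i(y)\mu_i(x-y)\dy$; using Proposition~\ref{P.repkernels} and the convolution theorem, $G_i$ can be written explicitly as an integral of $\Gamma(\xi,\eta)$ against $M_i(p+\xi)$, $M_i(p+\eta)$ and their derivatives. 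When $\eps=0$ the general solution is $M(p)=1/(p+c)$, and the scaling~\eqref{eq:prefactor:scaling}, through $\int_0^\infty F(x)\ee^{-px}\dx=M(p)/p$ with $F(x)=\int_0^x\mu(y)\dy$, forces $d_{\ast}/p\leq M(p)\leq D_{\ast}/p$ and hence $c=0$; this gives $\bar{\mu}=1$.

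Setting $\widetilde{M}_i:=M_i-1/p$ (uniformly small by~\cite{NV13a}) and $H:=M_1-M_2=\widetilde{M}_1-\widetilde{M}_2$, subtracting the two Riccati-type equations produces the linear inhomogeneous ODE
$$H'(p)+(M_1(p)+M_2(p))H(p)=-\eps\bigl(G_1(p)-G_2(p)\bigr),$$
in which $G_1-G_2$ depends linearly on $H$ and $H'$ with coefficients controlled by $M_i$ and $M_i'$. The decay of $\widetilde{\Gamma}$ in Proposition~\ref{P.repkernels}, combined with the bounds $d_{\ast}/p\leq M_i(p)\leq D_{\ast}/p$ and analogous estimates for $M_i'$, yields a pointwise bound of the form $\abs{G_1(p)-G_2(p)}\leq C\,\omega(p)\pnorm{0}{h}$ for a suitable weight $\omega$. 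Using the integrating factor $\Phi(p):=\exp\bigl(\int_1^p(M_1(s)+M_2(s))\ds\bigr)$, which by~\eqref{eq:prefactor:scaling} satisfies $\Phi(p)\sim p^a$ for some $a$ close to $2$ (so $\Phi(0^+)=0$), the ODE integrates to
$$H(p)=-\eps\,\Phi(p)^{-1}\int_0^p\Phi(s)\bigl(G_1(s)-G_2(s)\bigr)\ds,$$
where the boundary condition $\Phi(p)H(p)\to 0$ as $p\to 0^+$ is forced by $\pnorm{0}{h}<\infty$ together with $\Phi(0^+)=0$.

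Combining the representation with the pointwise bound on $G_1-G_2$ gives $\pnorm{0}{h}\leq C\eps\pnorm{0}{h}$, which forces $h=0$ once $\eps$ is small enough, proving that $\mu_1=\mu_2$; since the argument applies with $\mu_2$ replaced by $\bar{\mu}$, this in fact yields $\mu_i=\bar{\mu}$. The main obstacle is controlling the integral representation uniformly in $p$: near $p=0$ one must combine the polynomial growth of $\Phi$ with the smallness of $\widetilde{M}_i$ to absorb the $1/p^2$-type singularity of the linearised operator, whereas at $p=\infty$ sufficient decay of $G_1-G_2$ is required, coming from the decay of $M_i'$ at infinity together with the integrability bounds on $\widetilde{\Gamma}$ provided by Proposition~\ref{P.repkernels}.
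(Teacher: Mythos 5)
Your route differs from the paper's in one structural respect but ends up at essentially the same estimate. The paper first linearises around the \emph{exact} constant solution $c_\eps$ (which solves the prefactor equation precisely, not $\bar\mu=1$), obtaining the single linear ODE $M'(p)+\tfrac{2c_\eps}{p}M(p)=R(p)$ with the explicit integrating factor $p^{2c_\eps}$ and then bootstraps bounds on $\psnorm{0}{m}$ and $\psnorm{1}{m}$. You instead subtract the full Riccati equations for two candidate solutions and use the solution-dependent integrating factor $\Phi(p)=\exp\bigl(\int_1^p(M_1+M_2)\bigr)$; the scaling~\eqref{eq:prefactor:scaling} together with the smallness of $M_i-1/p$ makes $\Phi(p)$ behave like $p^a$ with $a$ near $2$, which plays the same role as $p^{2c_\eps}$. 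Your version avoids identifying the constant solution explicitly, at the cost of handling a variable integrating factor; the paper's version is cleaner because $p^{2c_\eps}$ is explicit.

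There are two points that need tightening. First, your claimed pointwise bound $\abs{G_1(p)-G_2(p)}\leq C\,\omega(p)\pnorm{0}{h}$ is too strong as stated: after the $\Gamma$-representation and differentiation in $p$, $G_1-G_2$ involves $H'$ at shifted arguments (terms like $H'(\xi+p)M_i(\eta+p)$), so what you actually get is a bound in terms of $\pnorm{1}{h}$, not just $\pnorm{0}{h}$. To close the loop you therefore need the same bootstrap the paper carries out explicitly: derive from the integrated ODE a pointwise bound on $M(p)$, feed it back into the equation to bound $M'(p)$, deduce $\psnorm{1}{h}\leq C\delta\pnorm{0}{h}$ for $\eps$ small, and only then obtain $\pnorm{0}{h}\leq C\delta\pnorm{0}{h}$. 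Skipping this step leaves the argument formally circular. Second, the last sentence claiming $\mu_i=\bar\mu$ is not right: for $\eps>0$ the function $\bar\mu=1$ does not solve~\eqref{S1E4}; the unique constant solution is $c_\eps=\bigl(1+\tfrac{\eps}{2}\int_0^1 W(s,1-s)\ds\bigr)^{-1}$, and the lemma's conclusion is $\mu=c_\eps$, not $\mu=1$. Neither issue undermines the overall strategy, but both need to be addressed for the proof to be complete.
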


The methods used to prove this result are similar to the ones indicated above, although much simpler. We do not need to assume that $\alpha <\frac{1}{2}$ since no boundary layer analysis is necessary here. In particular we just need to prove suitable
Lipschitz properties for the right-hand side of (\ref{S1E4}) in the norm $\pnorm{1}{\cdot}$. The interested reader can find the proof in Section~\ref{Sec:proof:prefactor}.  

\section{Uniqueness of self-similar profiles}\label{Sec:proof:unique}

In this section we will first outline the strategy to show uniqueness and collect the necessary estimates that are partly quite technical and will be proved in the subsequent sections. Furthermore we remark that all constants $C$ occurring are independent of $\eps$, while they might depend on $W$.

\subsection{Main estimates}\label{Sec:main:est}

To begin with we state two a-priori estimates, where the first one gives a uniform bound on solutions $\mu_k$ of~\eqref{eq:mu2}, while the second one shows that each solution $m_k$ of~\eqref{eq:mu3} is small in the $\fnorm{2}{\cdot}$-norm for $\eps\to 0$.

\begin{proposition}[Uniform bound on $\mu_k$]\label{Prop:uniform:bound:muk}
  For $\eps>0$ sufficiently small there exists some constant $C>0$ such that for each solution $\mu_k$ of~\eqref{eq:mu} it holds
 \begin{equation*}
  \fnorm{2}{\mu_k}<C.
 \end{equation*}
\end{proposition}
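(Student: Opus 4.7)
The plan is to exploit the isometry~\eqref{eq:isometry} and reduce the bound on $\fnorm{2}{\mu_k}$ to a bound on $\lnorm{2}{\T\mu_k}$, which can in turn be read off directly from the a priori estimates of Section~\ref{Sec:prev:res} without invoking the nonlinear equation~\eqref{eq:mu}. Since $\mu_k\geq 0$, the successive derivatives of its Laplace transform admit the representation
\begin{equation*}
 (-1)^{j}\del_{p}^{j}(\T\mu_k)(p)=\int_{0}^{\infty} x^{j}\mu_k(x)\ee^{-px}\dx,\qquad j\in\{0,1,2\},
\end{equation*}
with non-negative integrands that are finite for every $p>0$ thanks to~\eqref{eq:mean:est}. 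The task is then to control the three quantities $\frac{p^{j+1}}{(1+p)^{1-\theta}}\int_{0}^{\infty}x^{j}\mu_k(x)\ee^{-px}\dx$ uniformly in $p>0$ and in the particular solution $\mu_k$.

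First I would dyadically decompose $(0,\infty)$ into the annuli $I_{n}=[R_n,2R_n]$ with $R_n\vcc=2^n/p$ for $n\in\Z$, so that $\ee^{-px}\leq \ee^{-2^{n}}$ and $x^j\leq (2R_n)^{j}$ on $I_n$. On each $I_n$ I apply~\eqref{eq:mean:est}: when $R_n\leq 1$ this yields $\int_{I_n}\mu_k\dx\leq C_\eta R_n^{1-\eta}$ for any $\eta\in(0,1)$, while when $R_n\geq 1$ it yields $\int_{I_n}\mu_k\dx\leq C_\eta R_n$. Fix now an auxiliary exponent $\eta\in(0,1-\theta)$; this is possible because $\theta<\tfrac{1}{2}$.

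Putting things together, each derivative of $\T\mu_k$ satisfies
\begin{equation*}
 \left|\del_p^{j}(\T\mu_k)(p)\right|\leq C\sum_{n\,:\,R_{n}\leq 1}R_{n}^{j+1-\eta}\ee^{-2^{n}}+C\sum_{n\,:\,R_{n}\geq 1}R_{n}^{j+1}\ee^{-2^{n}}.
\end{equation*}
Each geometric series is controlled by its maximal term, giving $\leq Cp^{\eta-j-1}$ for $p\geq 1$ (the second sum then being exponentially small) and $\leq Cp^{-j-1}$ for $p\leq 1$ (the first sum contributing only $O(1)$). Multiplying by the weight $\frac{p^{j+1}}{(1+p)^{1-\theta}}$, the small-$p$ regime produces the cancellation $p^{j+1}\cdot p^{-j-1}=1$ because $(1+p)^{\theta-1}\sim 1$, while the large-$p$ regime produces the exponent $p^{\theta+\eta-1}\leq C$ exactly by the choice $\eta<1-\theta$. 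Summing over $j\in\{0,1,2\}$ then yields $\fnorm{2}{\mu_k}\leq C$.

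The main obstacle is purely bookkeeping: one has to verify that the dyadic sums converge to constants that are independent of $\eps$ and of the particular solution $\mu_k$. Since the constants in~\eqref{S2E3}, \eqref{eq:uniform:est:f}, \eqref{eq:apriori:large} (and hence in~\eqref{eq:mean:est}) are uniform for $\eps$ sufficiently small, the resulting bound inherits this uniformity, and no use of the nonlinear equation~\eqref{eq:mu} itself is required.
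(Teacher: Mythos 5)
Your proposal is correct, and it takes a genuinely different route from the paper. The paper argues in two steps: first it invokes Lemma~\ref{Lem:m:small:0} (which itself relies on~\eqref{S2E5} and hence on the smallness of $\eps$) to conclude $\fnorm{0}{\mu_k-\bar\mu}\leq\delta$, hence $\fnorm{0}{\mu_k}<\infty$; it then lifts the $\fnorm{0}{\cdot}$-bound to a $\fnorm{2}{\cdot}$-bound using the positivity trick of Lemma~\ref{Lem:norm:est:pos}, namely the elementary inequality $\int_0^\infty x\,\omega(x)\ee^{-px}\dx\leq\frac{1}{p}\int_0^\infty\omega(x)\ee^{-px/2}\dx$ for $\omega\geq 0$. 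Your proof bypasses both lemmas and works directly on each derivative $\int_0^\infty x^j\mu_k\ee^{-px}\dx$ with a dyadic decomposition and~\eqref{eq:mean:est}. The dyadic argument is essentially the same computation that the paper performs in Lemma~\ref{Lem:lap:est:mu}, except that you apply it for all $p>0$ and to $\mu_k$ rather than to $m_k$; the paper restricts it to large $p$ precisely because it is after the sharper statement of \emph{smallness} of $\fnorm{0}{m_k}$, which forces it to resort to~\eqref{S2E5} in the small-$p$ regime where the dyadic bound gives only $O(1)$, not $o(1)$.

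What each approach buys: yours is more elementary and self-contained — it yields the uniform bound on $\fnorm{2}{\mu_k}$ from the moment estimate~\eqref{eq:mean:est} alone, with no detour through the smallness result for $m_k$, so the smallness of $\eps$ enters only through the uniformity of the constants in the a priori bounds and never as a qualitative ingredient. The paper's route is more economical in the global structure of the argument, because Lemma~\ref{Lem:m:small:0} and Lemma~\ref{Lem:norm:est:pos} are needed independently elsewhere (notably in the proof of Proposition~\ref{Prop:smallness:mk}), so deriving Proposition~\ref{Prop:uniform:bound:muk} as a two-line corollary of those lemmas avoids repeating the dyadic computation. Both proofs crucially exploit the non-negativity of $\mu_k$: the paper via Lemma~\ref{Lem:norm:est:pos}, you via the sign-definiteness of the integrands $(-1)^j\del_p^j(\T\mu_k)=\int x^j\mu_k\ee^{-px}\dx\geq 0$, which is what allows the dyadic pieces to be estimated by the mean-value bound~\eqref{eq:mean:est} without cancellation issues. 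One small bookkeeping remark: when you apply~\eqref{eq:mean:est} for $R\leq 1$ you of course also use that $\mu_k(x)=f_k(x)\ee^x\leq\ee\,f_k(x)$ on $(0,2]$ to pass from the bound~\eqref{S2E3} on $f_k$ to the corresponding bound on $\mu_k$, but this is exactly what the paper does to obtain~\eqref{eq:mean:est} in the first place, so the step is already baked in.
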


\begin{proposition}[Smallness of $m_k$]\label{Prop:smallness:mk}
  Given $\delta>0$ there exists $\eps>0$ such that for every solution $m_k$ of~\eqref{eq:mu2} it holds
 \begin{equation*}
  \fnorm{2}{m_k}\leq \delta.
 \end{equation*}
\end{proposition}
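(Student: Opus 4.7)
The plan is to combine three ingredients: the uniform a priori bound $\fnorm{2}{m_k}\leq C$ from Proposition~\ref{Prop:uniform:bound:muk}, the weak-norm smallness~\eqref{S2E5}, and the equation~\eqref{eq:mu2} itself. Writing $M_k$ for the Laplace transform of $m_k$, and using $m_k(x)=\ee^x(f_k-\bar{f})(x)$, a direct substitution shows that~\eqref{S2E5} reads
\begin{equation*}
\sup_{p>0}\,\frac{p}{\abs{p-1}}\,\abs{M_k(p)-M_k(1)}\leq \delta,
\end{equation*}
and sending $p\to\infty$ also yields $\abs{M_k(1)}\leq\delta$. In particular $\abs{M_k(p)}\leq 2\delta$ for $p\geq 1$ and $p\abs{M_k(p)}\leq 2\delta$ for $p\leq 1$.

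The first step is to upgrade this into smallness of the weaker seminorm $\snorm{0,\chi}{m_k}$ for some $\chi\in(\alpha,\theta)$ strictly less than $\theta$. From Proposition~\ref{Prop:uniform:bound:muk} one has the decay $\abs{M_k(p)}\leq C\,(1+p)^{1-\theta}/p$, which for $p\geq 1$ gives $\abs{M_k(p)}\leq C\,p^{-\theta}$. Combining this decay with the pointwise bound $\abs{M_k(p)}\leq 2\delta$ and splitting the supremum at the crossover threshold $p_{\ast}\sim(C/\delta)^{1/\theta}$ yields the elementary interpolation estimate
\begin{equation*}
\snorm{0,\chi}{m_k}\leq C\,\delta^{\,1-\chi/\theta},
\end{equation*}
which tends to zero as $\delta\to 0$ for any $\chi<\theta$. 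This is the structural reason one cannot work with $\chi=\theta$ directly: the a priori bound supplies only the matching decay rate, not the strict improvement needed to overcome the weight.

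To conclude the desired $\fnorm{2}{m_k}\leq\delta$ I would then invoke the equation~\eqref{eq:mu2}, rewritten as $(I-L_2)\,m_k=B_2(m_k,m_k)+\eps\,B_W(\bar\mu+m_k,\bar\mu+m_k)$ with $L_2\,m\vcc= B_2(\bar\mu,m)+B_2(m,\bar\mu)$. Assuming the boundedness of $(I-L_2)^{-1}$ on $X$ (part of the linearised analysis carried out elsewhere in the paper) together with bilinear estimates of the weak--strong form $\fnorm{2}{B_{\cdot}(u,v)}\leq C\bigl(\snorm{0,\chi}{u}\fnorm{2}{v}+\fnorm{2}{u}\snorm{0,\chi}{v}\bigr)$, one obtains
\begin{equation*}
\fnorm{2}{m_k}\leq C\,\snorm{0,\chi}{m_k}\,\fnorm{2}{m_k}+C\,\eps\,(1+\fnorm{2}{m_k})^2,
\end{equation*}
which, combined with the uniform a priori bound and the smallness of $\snorm{0,\chi}{m_k}$ from the previous step, gives $\fnorm{2}{m_k}=O(\eps)$. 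The central difficulty is the proof of the weak--strong bilinear estimates above: one factor in each product must be placed in the weak norm while the other carries full regularity, and this is precisely where the kernel representation of Proposition~\ref{P.repkernels}, and ultimately the restriction $\alpha<1/2$ with the boundary-layer analysis of Section~\ref{Ss.boundarylayer}, become essential.
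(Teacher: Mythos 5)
Your reduction of \eqref{S2E5} to the inequality $\sup_{p>0}\frac{p}{|p-1|}|M_k(p)-M_k(1)|\leq\delta$ and the deduction of smallness in a weaker-weighted seminorm $\snorm{0,\chi}{\cdot}$ with $\chi<\theta$ are correct. But there are two genuine gaps in the rest, and also a misconception about what is structurally possible.

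First, your central claim that ``one cannot work with $\chi=\theta$ directly'' is false, because you are only interpolating between \eqref{S2E5} and the a~priori bound $|M_k(p)|\lesssim(1+p)^{1-\theta}/p$. The paper has a separate and much stronger ingredient for large $p$: the mean estimates \eqref{eq:mean:est}, which through a dyadic decomposition give $|\int_0^\infty x^n\ee^{-px}m_k\,\dx|\lesssim C_\eta(p^{\eta-1-n}+p^{-1-n})$ for all $p\geq 2$ (Lemma~\ref{Lem:lap:est:mu}). Since $\eta$ can be taken below $1-\theta$, this decay strictly beats $p^{-n-\theta}$ for large $p$, so the $\theta$-weighted seminorms are indeed small for $p\geq p_*(\delta)$ --- no weakening of the weight is required. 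This closes the large-$p$ region cleanly, and combined with \eqref{S2E5} on compact $p$-intervals gives $\fnorm{0}{m_k}\leq\delta$ directly (Lemma~\ref{Lem:m:small:0}).

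Second, and more seriously, your bootstrap step hinges on ``weak--strong'' bilinear estimates of the form $\fnorm{2}{B_\cdot(u,v)}\leq C\bigl(\snorm{0,\chi}{u}\fnorm{2}{v}+\fnorm{2}{u}\snorm{0,\chi}{v}\bigr)$, which you acknowledge as ``the central difficulty'' but do not prove. These do not appear in the paper and are not obviously true: the Laplace-transform formula $\int x^2\ee^{-px}B_2(\omega_1,\omega_2)\dx=\frac{2}{p-1}\Omega_1'(p)\bigl(\Omega_2(p)-\Omega_2(1)\bigr)$ has the first factor showing up as a derivative $\Omega_1'$, which $\snorm{0,\chi}{\omega_1}$ does not control. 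The paper's bilinear estimates (Proposition~\ref{Prop:bilinear}) are of a different flavour --- products of $\fnorm{1}{\cdot}$ or $\fnorm{2}{\cdot}$ --- and the nonlinearity is tamed not through a weak--strong estimate but by the observation that $\fnorm{1}{m_k}^2\lesssim\fnorm{1}{m_k}$ once the a~priori bound is imposed, combined with the Landau interpolation $\fnorm{1}{m_k}\leq C_{\delta_1}\fnorm{0}{m_k}+\delta_1\snorm{2}{m_k}$ (Lemma~\ref{Lem:interpolation}). This scalar interpolation in the norm scale replaces your structural bilinear interpolation, and the inversion $\widehat{\LL}^{-1}$ is never invoked in this a~priori step --- one simply bounds $\int x^2\ee^{-px}m_k\,\dx$ by the sum of all terms of \eqref{eq:mu2}, including the linear ones, which carry $\fnorm{1}{m_k}$.

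Finally, your remark that the boundary-layer analysis and the restriction $\alpha<1/2$ become ``essential'' here is misplaced: Proposition~\ref{Prop:smallness:mk} is a purely a~priori smallness estimate, and the boundary-layer machinery (Sections~\ref{Sec:bl:proof}--\ref{Sec:asymptotics}) enters only in the regularising-effect and localisation propositions used for the uniqueness argument proper.
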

The proofs may be found in Sections~\ref{Sec:proof:uniform:bd:muk} and~\ref{Sec:proof:smallness} respectively.

We have furthermore the following estimates on the bilinear forms $B_K$ and $B_W$, which are shown in Section~\ref{Sec:est:m:large:x}.

\begin{proposition}[Estimates for the quadratic terms]\label{Prop:bilinear}
 For any $\theta\in\left(\alpha,1/2\right)$ there exists $C>0$ such that for all functions $\omega_1$ and $\omega_2$ with $\fnorm{2}{\omega_{i}}<\infty$ it holds
 \begin{align}
  \snorm{2}{B_{2}\left(\omega_1,\omega_2\right)}&\leq C\fnorm{1}{\omega_1}\fnorm{1}{\omega_2} \label{eq:bil:1}\\
  \fnorm{2,\theta-\alpha}{B_W\left(\omega_1,\omega_2\right)}&\leq C\fnorm{2}{\omega_1}\fnorm{2}{\omega_2}\label{eq:bil:2}.
 \end{align}
 \end{proposition}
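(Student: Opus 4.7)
The plan is to transfer the problem to the Laplace side, where the inconvenient prefactor $x^{-2}$ in $B_K$ is absorbed by taking two $p$-derivatives. Writing $\Psi_K(p)\vcc=\int_0^\infty B_K(\omega_1,\omega_2)(x)\ee^{-px}\dx$ for the Laplace transform of $B_K(\omega_1,\omega_2)$, an interchange of the order of integration in $\int_0^\infty x^2 B_K(\omega_1,\omega_2)(x)\ee^{-px}\dx$ gives
\begin{equation*}
\Psi_K''(p)=\int_0^\infty\!\!\int_0^\infty yK(y,z)\omega_1(y)\omega_2(z)\ee^{-(y+z)}\,\frac{\ee^{-(p-1)y}-\ee^{-(p-1)(y+z)}}{p-1}\dz\dy.
\end{equation*}
For~\eqref{eq:bil:1} only this second derivative is needed; for~\eqref{eq:bil:2} the lower-order semi-norms $\snorm{0}$ and $\snorm{1}$ will be recovered from the identities $\Psi_W'(p)=-\int_p^\infty\Psi_W''(s)\ds$ and $\Psi_W(p)=\int_p^\infty(s-p)\Psi_W''(s)\ds$ by integrating the pointwise estimate on $\Psi_W''$.

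For~\eqref{eq:bil:1} the kernel $K\equiv 2$ gives the closed form
\begin{equation*}
\Psi_2''(p)=\frac{-2\Omega_1'(p)\bigl(\Omega_2(1)-\Omega_2(p)\bigr)}{p-1},
\end{equation*}
whose $(p-1)^{-1}$-singularity is only apparent (the limit at $p=1$ being $2\Omega_1'(1)\Omega_2'(1)$). The bound $\tfrac{p^3}{(1+p)^{1-\theta}}\abs{\Psi_2''(p)}\leq C\fnorm{1}{\omega_1}\fnorm{1}{\omega_2}$ then follows by a three-regime case analysis on $p$: for $p\geq 2$ or $p\leq 1/2$ the factor $(p-1)^{-1}$ is harmless and one applies the $\snorm{0,\theta}$- and $\snorm{1,\theta}$-bounds to $\Omega_2$ and $\Omega_1'$ directly; for $1/2\leq p\leq 2$ the difference quotient is replaced by the mean-value bound $\abs{\Omega_2(1)-\Omega_2(p)}/\abs{p-1}\leq\sup_{s\in[1/2,2]}\abs{\Omega_2'(s)}\leq C\fnorm{1}{\omega_2}$, while $\abs{\Omega_1'(p)}\leq C\fnorm{1}{\omega_1}$.

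For~\eqref{eq:bil:2} the representation of Proposition~\ref{P.repkernels} gives $yW(y,z)=(y^2+yz)\iint\Gamma(\xi,\eta)\ee^{-\xi y-\eta z}\dxi\deta$, so substituting and repeating the computation above on the shifted measures $\ee^{-\xi\cdot}\omega_1$ and $\ee^{-\eta\cdot}\omega_2$ yields
\begin{equation*}
\Psi_W''(p)=\iint\Gamma(\xi,\eta)\,\frac{\Omega_1''(p+\xi)\bigl[\Omega_2(1+\eta)-\Omega_2(p+\eta)\bigr]+\Omega_1'(p+\xi)\bigl[\Omega_2'(1+\eta)-\Omega_2'(p+\eta)\bigr]}{p-1}\dxi\deta.
\end{equation*}
A pointwise estimate of the integrand via $\fnorm{2}{\omega_i}$, with the same $p\lessgtr 1$ analysis to handle the removable $(p-1)^{-1}$-factor, produces a bound of the form $C\fnorm{2}{\omega_1}\fnorm{2}{\omega_2}\,\Lambda_\theta(p+\xi)\Lambda_\theta(p+\eta)\abs{\Gamma(\xi,\eta)}$. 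Inserting the decomposition $\Gamma=\widetilde\Gamma+W_\pm(-1)\delta(\xi-\eta)$ and the estimate $|\widetilde\Gamma(\xi,\eta)|\leq C(\xi+\eta)^{\alpha-1}(\xi^{-\alpha}+\eta^{-\alpha})$ of Proposition~\ref{P.repkernels}, the $\snorm{2,\theta-\alpha}$-bound on $B_W(\omega_1,\omega_2)$ then reduces to the weighted double integral estimate
\begin{equation*}
\iint_{\R_+\times\R_+}\Lambda_\theta(p+\xi)\Lambda_\theta(p+\eta)(\xi+\eta)^{\alpha-1}(\xi^{-\alpha}+\eta^{-\alpha})\dxi\deta\leq C\,\Lambda_{\theta-\alpha}(p),
\end{equation*}
which is exactly where the $\alpha$-loss of the weight arises; the $\delta$-contribution $W_\pm(-1)\delta(\xi-\eta)$ collapses to a single integral in $\xi$ and is handled similarly but more easily.

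The main obstacle is this last double integral. It requires subdividing the $(\xi,\eta)$-plane according to the break points of $\Lambda_\theta$ at $p+\xi=1$ and $p+\eta=1$ and balancing the singularity $(\xi+\eta)^{\alpha-1}(\xi^{-\alpha}+\eta^{-\alpha})$ against the weights; the sharp outcome $\Lambda_{\theta-\alpha}(p)$ is realised when one of $\xi,\eta$ is of order $\max\{p,1/p\}$. The assumption $\theta\in(\alpha,1/2)$ enters precisely here: $\theta>\alpha$ ensures convergence of the $(\xi,\eta)$-integrals near the diagonal, while $\theta-\alpha<1/2$ keeps the output weight admissible in the class of norms introduced in Section~\ref{Sec:norm:space}.
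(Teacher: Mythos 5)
Your treatment of \eqref{eq:bil:1} is correct and follows the same route as the paper: the same closed formula for $\Psi_2''$ (namely $\frac{2}{p-1}\Omega_1'(p)(\Omega_2(p)-\Omega_2(1))$), the same observation that the $(p-1)^{-1}$ singularity is removable, and the same three-regime case analysis in $p$ with a mean-value bound in the middle regime. Your representation of $\Psi_W''$ for \eqref{eq:bil:2} also matches the paper's decomposition $L = L_1 + L_2$ of~\eqref{eq:BW:Laplace}.

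The gap is in the two central claims for \eqref{eq:bil:2}. You assert that the integrand of $\Psi_W''$ is controlled by $C\fnorm{2}{\omega_1}\fnorm{2}{\omega_2}\,\Lambda_\theta(p+\xi)\Lambda_\theta(p+\eta)\abs{\Gamma(\xi,\eta)}$, but this is false in $p$-scaling. By definition of $\snorm{2,\theta}{\cdot}$ and~\eqref{eq:Lambda}, $\abs{\Omega_1''(p+\xi)}\lesssim \fnorm{2}{\omega_1}\,\Lambda_\theta(p+\xi)/(p+\xi)^2$, i.e.\ the $\Omega_1''$-factor carries two \emph{additional} powers of $(p+\xi)^{-1}$ beyond $\Lambda_\theta(p+\xi)$, and the difference quotient $(p-1)^{-1}(\Omega_2(p+\eta)-\Omega_2(\eta+1))$ carries (for $p\leq 2$) an extra $(\eta+p)^{-1}$ as in~\eqref{eq:loss:diff:1}. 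Your proposed bound systematically underestimates the integrand by roughly $(p+\xi)^{-2}(p+\eta)^{-1}$ in $L_1$ and $(p+\xi)^{-1}(p+\eta)^{-2}$ in $L_2$, so the reduction to the double integral $\iint\Lambda_\theta(p+\xi)\Lambda_\theta(p+\eta)\abs{\widetilde\Gamma}\leq C\Lambda_{\theta-\alpha}(p)$ does not follow. Moreover, even if both displayed claims were granted, the conclusion would be $\abs{\Psi_W''(p)}\lesssim\Lambda_{\theta-\alpha}(p)\approx(1+p)^{1-(\theta-\alpha)}/p$, whereas the semi-norm $\snorm{2,\theta-\alpha}{\cdot}$ requires $\abs{\Psi_W''(p)}\lesssim(1+p)^{1-(\theta-\alpha)}/p^3$; the two differ by a factor $p^{-2}$, which makes your bound fail as $p\to\infty$. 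The missing $p^{-2}$ is precisely the extra singular powers your pointwise estimate drops, and recovering them forces the full region-by-region analysis in $(\xi,\eta,p)$ that the paper carries out in Lemmas~\ref{Lem:est:L:small:p} and~\ref{Lem:est:L:large:p:loss}, exploiting the degree $-1$ homogeneity of $\Gamma$ to scale out the correct powers of $p$. Your final remark that $\theta-\alpha<1/2$ is needed "to keep the output weight admissible" has no counterpart in the proof of this proposition; only $\theta>\alpha$ enters here.
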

 
 \begin{remark}\label{Rem:closedness:bilinear}
  Assuming $\omega_1,\omega_2\in \esp_{2}$ it holds
  \begin{equation*}
   B_2\left(\omega_1,\omega_2\right)\in \esp_{2}\quad \text{and}\quad B_W\left(\omega_1,\omega_2\right)\in \esp_{2,\theta-\alpha}.
  \end{equation*}
 \end{remark}

 We will also need to invert the linearised coagulation operator that is given by
\begin{equation}\label{eq:lin:coag}
 \LL\colon\omega\mapsto \omega-\left(B_2\left(\bar{\mu},\omega\right)+B_2\left(\omega,\bar{\mu}\right)\right)
\end{equation}
and reads in Laplace variables as
\begin{equation}\label{eq:lin:coag:lap}
 \widehat{\LL}\left(M\right)\vcc=M-2\int_{p}^{\infty}\int_{q}^{\infty}\frac{M\left(r\right)-M\left(1\right)}{\left(1-r\right)r^2}-\frac{M'\left(r\right)}{r}\dr\dq.
\end{equation}
We have the following result.
\begin{proposition}[Properties of $\widehat{\LL}^{-1}$]\label{Prop:Lin:inv}
 For $k=1,2$ and $\chi\in\left(0,1\right)$ the operator $\widehat{\LL}\colon X_{k,\chi}\to X_{k,\chi}$ is well-defined, bounded and invertible with bounded inverse, i.e.\ there exists $\widehat{\LL}^{-1}$ and constants $C_1$, $C_2$ such that 
 \begin{itemize}
  \item $\lnorm{k,\chi}{\widehat{\LL} M}\leq C_1\lnorm{k,\chi}{M}$
  \item $\lnorm{k,\chi}{\widehat{\LL}^{-1}G}\leq C_2 \lnorm{k,\chi}{G}$
 \end{itemize}
 for all $M,G\in X_{k,\chi}$. Furthermore we have the following commutator relation
 \begin{itemize}
  \item $\T \LL \omega=\widehat{\LL}\T\omega$ for all $\omega\in\esp_{\ell,\chi}$ with $\ell=1,2$.
 \end{itemize}
\end{proposition}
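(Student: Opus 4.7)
The plan is to (i) verify the commutator relation by directly computing the Laplace transforms of $B_2(\bar\mu,\omega)$ and $B_2(\omega,\bar\mu)$, (ii) derive the boundedness of $\widehat{\LL}$ by pointwise estimates on its integrand, and (iii) invert $\widehat{\LL}$ by converting the equation $\widehat{\LL}M=G$ into an explicit second-order ODE. For the commutator, I would start from the formulas $B_2(\bar\mu,\omega)(x) = (2/x^2)\int_0^x y\int_{x-y}^\infty \omega(z)\ee^{x-y-z}\dz\dy$ and $B_2(\omega,\bar\mu)(x) = (2/x^2)\int_0^x y\omega(y)\dy$. The substitution $u=x-y$ followed by Fubini (using $\omega\in\esp_{k,\chi}$ so that $\Omega=\T\omega$ exists) gives $\T(x^2 B_2(\bar\mu,\omega))(p) = 2(\Omega(p)-\Omega(1))/(p^2(1-p))$ and $\T(x^2 B_2(\omega,\bar\mu))(p) = -2\Omega'(p)/p$. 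Since $\T(x^2 g)(p) = \partial_p^2 \T(g)(p)$, integrating twice from $p$ to $\infty$ (with vanishing boundary terms, as $\Omega$ and its derivatives decay at $\infty$) reproduces precisely the double integral appearing in $\widehat{\LL}$.

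For the boundedness estimate $\lnorm{k,\chi}{\widehat{\LL}M}\leq C\lnorm{k,\chi}{M}$, the integrand $(M(r)-M(1))/((1-r)r^2) - M'(r)/r$ is controlled in three regimes: near $r=1$ the mean value bound $|M(r)-M(1)|\leq |r-1|\sup_{s\;\text{between}\;r,1}|M'(s)|$ cancels the $1/(1-r)$ singularity; for $r$ bounded away from $1$ one uses $|M(r)|+|M(1)|\leq C\lnorm{0,\chi}{M}(1+r)^{1-\chi}/r$; and the term $|M'(r)/r|$ is estimated by $C\lnorm{1,\chi}{M}(1+r)^{1-\chi}/r^3$. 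Splitting the integrals at $r=1/2$ and $r=3/2$ and performing the elementary integrations yields the required bound for $k=0$. The cases $k=1,2$ follow by differentiating $\widehat{\LL}M$ under the integral sign, which removes one or two outer integrations, and then repeating the same estimates with the appropriately shifted weights $p^{k+1}/(1+p)^{1-\chi}$.

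The main obstacle is the invertibility. Applying $\partial_p^2$ to $\widehat{\LL}M=G$ converts it into the linear second-order ODE
\begin{equation*}
 M''(p) + \frac{2M'(p)}{p} - \frac{2\bigl(M(p)-M(1)\bigr)}{p^2(1-p)} = G''(p),
\end{equation*}
in which the constant $M(1)$ must be determined self-consistently. One explicit homogeneous solution is $M_0(p)=(p-1)/p^2$; it corresponds on the time side to $\omega_0(x)=1-x$, the tangent vector at $a=1$ to the one-parameter family $a\ee^{(1-a)x}$ of solutions of the unperturbed problem. A second homogeneous solution $M_1$ is produced by reduction of order from the Wronskian $p^{-2}$ and involves a logarithmic singularity at $p=1$. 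Both $M_0$ and $M_1$ behave like $p^{-2}$ as $p\to 0$, so neither lies in $X_{k,\chi}$; this already gives that $\widehat{\LL}$ has trivial kernel on $X_{k,\chi}$. Surjectivity is established by writing the general solution via variation of parameters and then fixing the two free constants and the value $M(1)$ so as to cancel the singular contributions at $p=0$. The norm bound $\lnorm{k,\chi}{\widehat{\LL}^{-1}G}\leq C\lnorm{k,\chi}{G}$ then follows by splitting the variation-of-parameters integrals at $p=1$ and carefully tracking the interplay between the weights $p^{k+1}/(1+p)^{1-\chi}$ and the singularities of $M_0$ and $M_1$ near $0$, $1$, and $\infty$; this tracking of weights is the most delicate part of the proof.
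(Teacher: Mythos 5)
Your plan is essentially the same as the paper's: derive the second-order ODE by twice differentiating $\widehat{\LL}M=G$, solve it explicitly, fix $M(1)$ by requiring consistency at $p=1$, and then estimate the weighted norms of the resulting solution formula. Where you use variation of parameters with the homogeneous solution $M_0(p)=(p-1)/p^2$ (and a second solution $M_1$ from reduction of order), the paper instead rewrites the once-integrated equation as $(1-p)H'+(\tfrac{2}{p}-1)H=\ldots$ with $H=M-M(1)$ and applies the integrating factor $p^2/(1-p)^2$ to get $(\tfrac{p^2}{1-p}H)'=\ldots$; these are the same computation dressed differently, and your identification of $M_0$ as the Laplace transform of $1-x$, the tangent to the family $a\,\ee^{(1-a)x}$, is correct and a nice check.

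One small inaccuracy: $M_1$ does \emph{not} behave like $p^{-2}$ at the origin. Reduction of order gives $M_1(p)=\frac{p-1}{p^2}\bigl[(p-1)+2\log\lvert p-1\rvert-\tfrac{1}{p-1}\bigr]$, and the bracket has a third-order zero at $p=0$, so $M_1(p)=O(p)$ there. What actually excludes $M_1$ is twofold: it has a $\log\lvert p-1\rvert$ singularity at $p=1$ (so $M_1\notin C^1$), and $M_1(1)=-1\neq 0$, so the self-consistency constraint $M(1)=M(1)$ already forces its coefficient to vanish. Similarly, a purely constant kernel element $c$ is excluded because $\lnorm{0,\chi}{1}=\sup_p p/(1+p)^{1-\chi}=\infty$. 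Your conclusion that the kernel is trivial stands, but the mechanism for $M_1$ is the constraint at $p=1$ and the $\log$ singularity, not blow-up at $p=0$. The rest of your proposal (the commutator by direct Laplace calculation, the three-regime estimates for boundedness of $\widehat{\LL}$, and the weighted estimates on the solution formula split at $p=1$) is sound and consistent with what the paper does.
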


The proof of this Proposition is given in Section~\ref{Sec:Inversion}. Furthermore, for $\alpha>0$, we have the following regularising effect for the difference of two solutions of~\eqref{eq:mu3} that is proven in Section~\ref{Sec:boundary:layer}.
 
\begin{proposition}[Regularising effect]\label{Prop:reg}
 Assume $\alpha\in\left(0,1/2\right)$. There exists some constant $C>0$ such that for any two solutions $m_1$ and $m_2$ of~\eqref{eq:mu2} the difference $m=m_1-m_2$ satisfies
 \begin{equation*}
  \fnorm{2}{m}\leq C\fnorm{0}{m}.
 \end{equation*}
\end{proposition}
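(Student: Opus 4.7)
The strategy is a perturbative bootstrap: derive a closed equation for $m = m_1 - m_2$, apply the bounded inverse $\widehat{\LL}^{-1}$ to pass to Laplace space, and absorb the pieces of the right-hand side that still carry $\fnorm{2}{m}$ using the smallness of $\eps$ and of the $m_k$. Subtracting~\eqref{eq:mu2} for $m_1$ and $m_2$ and using the bilinearity of $B_2$ and $B_W$ together with $\mu_k = \bar{\mu} + m_k$, one obtains
\begin{equation*}
 \LL m = B_2(m, m_1) + B_2(m_2, m) + \eps\bigl[B_W(m, \mu_1) + B_W(\mu_2, m)\bigr].
\end{equation*}
By the isometry~\eqref{eq:isometry} and Proposition~\ref{Prop:Lin:inv}, it suffices to bound the right-hand side in $\fnorm{2}{\cdot}$.

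For the $B_2$ contributions I would combine Proposition~\ref{Prop:bilinear} with the smallness $\fnorm{1}{m_k}\leq\delta$ from Proposition~\ref{Prop:smallness:mk}. Decomposing the bilinear estimate so as to separate the factor carrying $m$, one piece yields $C\delta\fnorm{2}{m}$, which absorbs into the LHS once $\delta$ is small, while the other is bounded by $C\fnorm{0}{m}$ times a bounded norm of $m_k$ and contributes the desired weak-norm term on the right.

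The essential difficulty lies with the $B_W$ terms, since Proposition~\ref{Prop:bilinear} only gives control in the weaker norm $\fnorm{2,\theta-\alpha}{\cdot}$. The gap between the $\theta$- and $(\theta-\alpha)$-weights concerns large values of $p$, which in physical variables is exactly the boundary layer $x\approx 0$ analysed in Section~\ref{Ss.boundarylayer}. To recover the missing weight I would unfold $B_W(m,\mu_1)$ via the representation of $W$ in terms of $\Gamma$ from Proposition~\ref{P.repkernels}, turning $B_W$ into an explicit iterated Laplace integral whose large-$p$ asymptotics can be extracted. The leading boundary-layer piece is driven by negative moments of the form $\int x^{-\alpha}m(x)\dx$, which — precisely because $\alpha<1/2$ and thanks to the small-$x$ control~\eqref{S2E3} transferred to $m$ — are bounded linear functionals controlled by $\fnorm{0}{m}$. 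The residual is estimated by $\fnorm{2,\theta-\alpha}{B_W(m,\mu_1)}\leq C\fnorm{2}{m}\fnorm{2}{\mu_1}$ which, after multiplication by $\eps$ and using Proposition~\ref{Prop:uniform:bound:muk}, becomes $C\eps\fnorm{2}{m}$ and can be absorbed.

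Putting the pieces together,
\begin{equation*}
 \fnorm{2}{m} \leq C\fnorm{0}{m} + C(\delta+\eps)\fnorm{2}{m}.
\end{equation*}
Choosing $\eps$ sufficiently small and invoking Proposition~\ref{Prop:smallness:mk} to make $\delta$ small, one absorbs the last term and concludes. The main obstacle is the third step — extracting the boundary-layer correction for the $B_W$ term so as to bridge the $(\theta-\alpha)$- and $\theta$-weights — and it is exactly here that the restriction $\alpha<1/2$ becomes indispensable, since without it the relevant negative-moment functionals would cease to be bounded in the weak norm and the absorption argument would break down.
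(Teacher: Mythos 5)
Your proposal correctly identifies the core difficulty --- the loss of weight from $\theta$ to $\theta-\alpha$ in the $B_W$ estimate of Proposition~\ref{Prop:bilinear} --- but it does not close the argument, and the gesture toward a fix is not one the paper (or any short argument) actually carries out.

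The paper's proof of Proposition~\ref{Prop:reg} is essentially a one-liner once one has the two localized estimates: decompose $m = \zeta m + (1-\zeta)m$, then combine Proposition~\ref{Prop:m:large:x} (which gives $\fnorm{2}{(1-\zeta)m}\leq C\fnorm{0}{m}+\delta\fnorm{2}{m}$) with Proposition~\ref{Prop:splitting:2} (which gives $\fnorm{2}{\zeta m}\leq \delta_*\fnorm{2}{m}+C_{\delta_*}\fnorm{0}{(1-\zeta)m}$) and Lemma~\ref{Lem:elem:est:norm}, and absorb. The localization is not decorative --- it is the device by which the $\alpha$-loss is repaired. When one passes to $(1-\zeta)m$, the Laplace transform becomes the \emph{difference} $M(p)-M(p+1)$, and Lemmas~\ref{Lem:est:L2:diff:large:p}, \ref{Lem:est:L1:diff:large:p}, \ref{Lem:est:L} show that $L(p)-L(p+1)$ does obey the full $\theta$-weighted bound, unlike $L(p)$ itself: the difference structure cancels exactly the extra $p^{\alpha}$ factor at large $p$. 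For $\zeta m$, there is no such cancellation, and one must run the entire boundary layer programme of Sections~\ref{Sec:bl:proof}--\ref{Sec:asymptotics}: reformulate the equation as an ODE with integrating factor $\ee^{\Phi(x)}$, invoke Paley--Wiener to represent $H_0$, and carry out delicate asymptotics of $\Phi$ near the origin under the structural hypothesis~\eqref{eq:asymp:W}.

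Your argument tries to run the bootstrap on $m$ directly without the $\zeta$--$(1-\zeta)$ split, and then handles the $B_W$ weight loss by an appeal to negative moments $\int x^{-\alpha}m\,\dx$ being controlled by $\fnorm{0}{m}$. This is the genuine gap. First, $\fnorm{2,\theta-\alpha}{B_W(m,\mu_1)}\leq C\fnorm{2}{m}\fnorm{2}{\mu_1}$ is a bound in a \emph{strictly weaker} norm (by~\eqref{eq:norm:order}, since $\theta-\alpha<\theta$), so even with the $\eps$ prefactor the term does not sit on the same scale as $\fnorm{2}{m}$ and cannot be absorbed; the inequality $\fnorm{2}{m}\leq C\fnorm{0}{m}+C(\delta+\eps)\fnorm{2}{m}$ you write down does not follow. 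Second, the negative moments are not obviously bounded by $\fnorm{0}{m}$ (the naive attempt $\int_0^\infty p^{\alpha-1}|M(p)|\,\dd p\leq \fnorm{0}{m}\int_0^\infty p^{\alpha-2}(1+p)^{1-\theta}\,\dd p$ diverges near $p=0$), and~\eqref{S2E3} is an a-priori bound on each $f_k$, not on the difference $m$. The restriction $\alpha<1/2$ and the asymptotics~\eqref{eq:asymp:W} are indeed what make the boundary layer analysis go through, but the mechanism is the Paley--Wiener representation of $Q_0$ and the integral estimate of Proposition~\ref{Prop:Q0:estimate}, not a bound on negative moments.
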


Finally we have the following boundary layer estimate which we need for $\alpha>0$ and whose proof is the most technical part of the paper and will be given in Sections~\ref{Sec:boundary:layer}\textendash\ref{Sec:asymptotics}.

\begin{proposition}[Boundary layer estimate]\label{Prop:splitting:2}
  Assume $\alpha\in\left(0,1/2\right)$. For any $\delta_{*}>0$ it holds, if $\eps>0$ is sufficiently small,  for the difference $m=m_1-m_2$ of two solutions $m_1$ and $m_2$
 \begin{equation*}
  \fnorm{2}{\zeta m}\leq \delta_{*}\fnorm{2}{m}+C_{\delta_{*}}\fnorm{0}{\left(1-\zeta\right)m}.
 \end{equation*}
\end{proposition}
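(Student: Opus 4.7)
The starting point is the equation satisfied by the difference $m = m_1 - m_2$, obtained by subtracting two copies of~\eqref{eq:mu2}:
\[
\LL(m) = \bigl[B_2(m_1,m_1) - B_2(m_2,m_2)\bigr] + \eps\bigl[B_W(\bar\mu+m_1,\bar\mu+m_1) - B_W(\bar\mu+m_2,\bar\mu+m_2)\bigr].
\]
On the Laplace side, Proposition~\ref{Prop:Lin:inv} gives a bounded inverse $\widehat\LL^{-1}$, so the Laplace transform $M$ of $m$ is expressed as $\widehat\LL^{-1}$ acting on the Laplace transform of the right-hand side. Note that the Laplace transform of $\zeta m$ is $M(p+1)$; hence $\fnorm{2}{\zeta m}$ probes the Laplace-side function on $\{\Re(p+1)\geq 1\}$, which translates into a sensitivity to the small-$x$ behaviour of $m$ through the large-$p$ tails of the Laplace transforms of the nonlinear terms.

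My plan is to decompose each argument of every bilinear form in the right-hand side as $m_i = \zeta m_i + (1-\zeta) m_i$ and to analyse how each mixed term contributes to $\zeta \LL^{-1}(\cdot)$. Terms involving only $(1-\zeta) m_i$ give contributions that, via Proposition~\ref{Prop:bilinear} and the uniform bound of Proposition~\ref{Prop:uniform:bound:muk}, are controlled by a product of $\fnorm{2}{m_i}$ and $\fnorm{0}{(1-\zeta) m}$; this yields the $C_{\delta_{*}} \fnorm{0}{(1-\zeta) m}$ contribution. Terms carrying at least one factor of $\zeta m_i$ acquire, via Proposition~\ref{Prop:bilinear} and the smallness provided by Proposition~\ref{Prop:smallness:mk}, a small prefactor for small $\eps$; bounding $\fnorm{2}{\zeta m_i}\leq C\fnorm{2}{m_i}$ and applying the same smallness to $m_1,m_2$ then produces a term of the form $\delta_{*}\fnorm{2}{m}$.

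The hard part is the $\eps B_W$ contribution, which carries the boundary layer identified in Section~\ref{Ss.boundarylayer}. There, each $f_i$ develops near $x=0$ an exponential factor of the form $\exp\bigl(-\tfrac{\eps}{\alpha x^\alpha}A_i + L(x;f_i)\bigr)$ with $A_i = \int_0^\infty z^\alpha f_i(z)\dz$, which is \emph{not} Lipschitz in $f_i$ in the weak Laplace-type norm $\fnorm{0}{\cdot}$. To handle this I would subtract explicitly the leading-order boundary-layer ansatz and work with the resulting correction: crucially, the moment $A_i$ couples $f_i$ against $z^\alpha$, and since $\alpha < 1/2$ the small-$x$ estimate~\eqref{S2E3} together with the exponential decay of $f_i$ for large $x$ yields $|A_1-A_2|\leq C\fnorm{0}{(1-\zeta) m} + o(1)\fnorm{2}{m}$. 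In Laplace variables, the boundary-layer correction must be expanded using the kernel representation of Proposition~\ref{P.repkernels} and the precise asymptotic~\eqref{eq:asymp:W}: both are needed to show that the difference of the two exponentials, integrated against $\Gamma$, produces an $\eps$-small multiple of $\fnorm{2}{m}$.

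Combining the three sources of contributions gives a master inequality of the form
\[
\fnorm{2}{\zeta m} \;\leq\; \bigl(C\delta + \eta(\eps)\bigr)\,\fnorm{2}{m} + C_{\delta_{*}}\,\fnorm{0}{(1-\zeta) m},
\]
where $\delta\to 0$ by Proposition~\ref{Prop:smallness:mk} as $\eps\to 0$ and $\eta(\eps)\to 0$ via the asymptotic~\eqref{eq:def:asymp}. Choosing $\eps$ small enough so that $C\delta + \eta(\eps) \leq \delta_{*}$ delivers the desired estimate. The main obstacle, and the reason Sections~\ref{Sec:bl:proof}\textendash\ref{Sec:asymptotics} are devoted exclusively to this estimate, is precisely the asymptotic extraction of the leading boundary-layer profile from the $B_W$ term; the constraint $\alpha < 1/2$, the analyticity~\eqref{kernel0}, the Hölder regularity~\eqref{eq:hoelder}, the sharp asymptotic~\eqref{eq:asymp:W}, and the representation of Proposition~\ref{P.repkernels} are all used in concert to carry out this extraction rigorously on the Laplace side.
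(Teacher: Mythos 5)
The paper's proof of Proposition~\ref{Prop:splitting:2} is a one-liner: it follows from Proposition~\ref{Prop:est:zeta:m} plus the interpolation Lemmas~\ref{Lem:interpolation} and~\ref{Lem:elem:est:norm}. The real content is therefore the proof of Proposition~\ref{Prop:est:zeta:m}, and your outline diverges from the paper's approach in a way that contains a genuine gap.

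Your first step --- invert $\widehat{\LL}$ on the Laplace side, split into $\zeta m$ and $(1-\zeta)m$ parts, and absorb the $\eps B_W$ contribution by Proposition~\ref{Prop:bilinear} plus smallness --- does not close, and the reason is precisely the weight loss inherent in $B_W$. Proposition~\ref{Prop:bilinear} only gives $\fnorm{2,\theta-\alpha}{B_W(\omega_1,\omega_2)}\leq C\fnorm{2}{\omega_1}\fnorm{2}{\omega_2}$: the bound is in the strictly weaker $\theta-\alpha$ weighted norm. When the paper estimates $\fnorm{0}{(1-\zeta)m}$ in Section~\ref{Sec:proof:alpha}, the Laplace transform is the \emph{difference} $M(\cdot)-M(\cdot+1)$, and Remark~\ref{Rem:reg:weight} buys back a full unit of weight from that difference (at the cost of one derivative), which compensates the loss of $\alpha<1$; this is why the $\widehat{\LL}^{-1}$ route works for the $(1-\zeta)m$ half. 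For $\fnorm{2}{\zeta m}$, however, the Laplace transform is the \emph{shift} $M(\cdot+1)$, not a difference, and there is no corresponding gain: for $p$ large, $p^3/(1+p)^{1-\theta}\sim p^{2+\theta}$ grows strictly faster than $p^3/(1+p)^{1-(\theta-\alpha)}\sim p^{2+\theta-\alpha}$, so a bound on $\lnorm{2,\theta-\alpha}{M}$ does not control $\lnorm{2,\theta}{M(\cdot+1)}$, no matter how small $\eps$ is. A related issue: your claimed $C_{\delta_*}\fnorm{0}{(1-\zeta)m}$ contribution from the terms carrying $(1-\zeta)m$ is unjustified --- the bilinear estimate delivers $\fnorm{2}{(1-\zeta)m}$, and the reduction to $\fnorm{0}$ is exactly Proposition~\ref{Prop:reg}, which the paper deduces \emph{from} Proposition~\ref{Prop:splitting:2}, so invoking it here is circular.

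You do recognize the boundary-layer difficulty and propose subtracting a leading-order ansatz of the form $\exp(-\tfrac{\eps A_i}{\alpha x^\alpha})$, but this remains a plan, not an argument: there is no derivation of the equation satisfied by the corrector, no verification of its contractivity in the norms at hand, and no demonstration that the Laplace transform of the subtracted exponential (which is genuinely non-polynomial and develops essential singularities) has the required decay. The paper instead abandons $\widehat{\LL}^{-1}$ entirely for this part. It rewrites~\eqref{eq:mu} as the scalar ODE~\eqref{nuequation}, solves with the integrating factor $\exp(\Phi(x;\mu_j))$ to obtain the \emph{closed-form} representation~\eqref{eq:mu3} of $\zeta\mu_j$, takes the difference in $j$, and collects all the boundary-layer information into the kernel $H(\Y,p)$. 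The hard terms $K_4$, $J_3$ are then handled by proving, via Paley--Wiener (Proposition~\ref{Prop:rep:H0}), that $H_0$ is the Laplace transform of an $L^2$ function $Q_0$, and by establishing the weighted integral estimate of Proposition~\ref{Prop:Q0:estimate} through careful contour deformation, integration by parts, and the precise asymptotics of $\Phi$ near zero (Lemmas~\ref{Lem:asymptotics:beta}--\ref{Lem:asymptotics:p:leq:xi}, which is where~\eqref{eq:asymp:W} and $\alpha<1/2$ enter). Your sketch gestures at the right assumptions but does not supply the mechanism that makes them operative.
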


The main idea behind this is a localisation principle, i.e.\ the quantity $\zeta m$ contains the information of $m$ for small values of $x$, while the information about large values of $x$ is contained in $\left(1-\zeta\right)m$. 

We come now to the proof of uniqueness of self-similar profiles. As in the case of bounded kernels $W$, i.e.\ $\alpha=0$, the boundary layer estimates are not needed we first consider the case $\alpha>0$, which is the complicated one and we will treat $\alpha=0$ afterwards separately.

\subsection{Proof of Theorem~\ref{Thm:uniqueness}}\label{Sec:proof:alpha}
Taking the difference $m=m_1-m_2$ of two solutions $m_1,m_2$ of~\eqref{eq:mu2} it satisfies
\begin{equation}\label{eq:diff:m}
 \begin{split}
  m-\left(B_{2}\left(\bar{\mu},m\right)+B_{2}\left(m,\bar{\mu}\right)\right)=\left(B_{2}\left(m,m_1\right)+B_{2}\left(m_2,m\right)\right)+\eps\left(B_{W}\left(\mu_1,m\right)+B_{W}\left(m,\mu_2\right)\right).
 \end{split}
\end{equation}

Applying the Laplace transform to this equation and recalling the definition of the linearised operator $\widehat{\LL}$ we obtain from this
\begin{equation*}
 \begin{split}
  M\vcc=\T m=\widehat{\LL}^{-1}\T\left(B_2\left(m_1,m\right)+B_2\left(m,m_2\right)\right)+\eps\widehat{\LL}^{-1}\T\left(B_W\left(\mu_1,m\right)+B_W\left(m,\mu_2\right)\right).
 \end{split}
\end{equation*}
Then, recalling~\eqref{eq:isometry}, and noting also the relation $\T\left(\left(1-\zeta\right)m\right)=M\left(\cdot\right)-M\left(\cdot+1\right)$ we obtain after taking the $\fnorm{0}{\cdot}$-norm 
\begin{equation*}
 \begin{split}
  &\quad \fnorm{0}{\left(1-\zeta\right)m}=\lnorm{0}{\T\left(\left(1-\zeta\right)m\right)}\\
  &\leq \lnorm{0}{\widehat{\LL}^{-1}\left(\T\left(B_2\left(m_1,m\right)+B_2\left(m,m_2\right)\right)\left(\cdot\right)\right)-\widehat{\LL}^{-1}\left(\T\left(B_2\left(m_1,m\right)+B_2\left(m,m_2\right)\right)\left(\cdot+1\right)\right)}\\
  &\quad +\eps \lnorm{0}{\widehat{\LL}^{-1}\left(\T\left(B_W\left(\mu_1,m\right)+B_W\left(m,\mu_2\right)\right)\left(\cdot\right)\right)-\widehat{\LL}^{-1}\left(\T\left(B_W\left(\mu_1,m\right)+B_W\left(m,\mu_2\right)\right)\left(\cdot+1\right)\right)}\\
  &\leq \lnorm{0}{\widehat{\LL}^{-1}\left(\T\left(B_2\left(m_1,m\right)+B_2\left(m,m_2\right)\right)\right)}+\lnorm{0}{\widehat{\LL}^{-1}\left(\T\left(B_2\left(m_1,m\right)+B_2\left(m,m_2\right)\right)\left(\cdot+1\right)\right)}\\
  &\quad +\eps\lnorm{0,1+\theta-\alpha}{\widehat{\LL}^{-1}\left(\T\left(B_W\left(\mu_1,m\right)+B_W\left(m,\mu_2\right)\right)\left(\cdot\right)\right)-\widehat{\LL}^{-1}\left(\T\left(B_W\left(\mu_1,m\right)+B_W\left(m,\mu_2\right)\right)\left(\cdot+1\right)\right)}\\
  &\leq \lnorm{0}{\widehat{\LL}^{-1}\left(\T\left(B_2\left(m_1,m\right)+B_2\left(m,m_2\right)\right)\right)}+\lnorm{0}{\widehat{\LL}^{-1}\left(\T\left(B_2\left(m_1,m\right)+B_2\left(m,m_2\right)\right)\left(\cdot+1\right)\right)}\\
  &\quad +\eps\lnorm{1,\theta-\alpha}{\widehat{\LL}^{-1}\T\left(B_W\left(\mu_1,m\right)+B_W\left(m,\mu_2\right)\right)},
 \end{split}
\end{equation*}
where we used $\lnorm{0}{\cdot}\leq \lnorm{0,1+\theta-\alpha}{\cdot}$ (see~\eqref{eq:norm:order}) in the second estimate as well as Remark~\ref{Rem:reg:weight} in the last one.

Using then the boundedness of $\widehat{\LL}^{-1}$ as shown in Proposition~\ref{Prop:Lin:inv} as well as Proposition~\ref{Prop:bilinear} we then obtain, recalling also~\eqref{eq:isometry} as well as Remark~\ref{Rem:elem:norm:est}, that
\begin{equation*}
 \begin{split}
  &\quad \fnorm{0}{\left(1-\zeta\right)m}\\
  &\leq C\left(\lnorm{1}{\T\left(B_2\left(m_1,m\right)+B_2\left(m,m_2\right)\right)}+\lnorm{1}{\T\left(B_2\left(m_1,m\right)+B_2\left(m,m_2\right)\right)\left(\cdot+1\right)}\right)\\
  &\quad +C\eps\lnorm{1,\theta-\alpha}{\T\left(B_W\left(\mu_1,m\right)+B_W\left(m,\mu_2\right)\right)}\\
  &\leq C\left(\fnorm{1}{B_2\left(m_1,m\right)}+\fnorm{1}{B_2\left(m,m_2\right)}\right)+C\eps\left(\fnorm{1,\theta-\alpha}{B_W\left(\mu_1,m\right)}+\fnorm{1,\theta-\alpha}{B_W\left(m,\mu_2\right)}\right)\\
  &\leq C\left(\fnorm{1}{m_1}+\fnorm{1}{m_2}\right)\fnorm{1}{m}+C\eps\left(\fnorm{2}{\mu_1}+\fnorm{2}{\mu_2}\right)\fnorm{2}{m}.
 \end{split}
\end{equation*}
Finally Propositions~\ref{Prop:uniform:bound:muk},~\ref{Prop:smallness:mk} and~\ref{Prop:reg} yield
\begin{equation}\label{eq:m:localised}
 \begin{split}
  \fnorm{0}{\left(1-\zeta\right)m}\leq C\delta\fnorm{1}{m}+C\eps\fnorm{2}{m}\leq C\left(\eps+\delta\right)\fnorm{0}{m}
 \end{split}
\end{equation}
for any $\delta>0$ if $\eps$ is sufficiently small. On the other hand from Proposition~\ref{Prop:splitting:2} together with the estimate just shown and Proposition~\ref{Prop:reg} we get
\begin{equation}\label{eq:m:loc:2}
 \begin{split}
  \fnorm{2}{\zeta m}&\leq \delta_{*}\fnorm{2}{m}+C_{\delta_{*}}\fnorm{0}{\left(1-\zeta\right)m}\leq \widehat{C}\delta_{*}\fnorm{0}{m}+CC_{\delta_{*}}\left(\eps+\delta\right)\\
  &=\left(\widehat{C}\delta_{*}+CC_{\delta_{*}}\left(\eps+\delta\right)\right)\fnorm{0}{m}.
 \end{split}
\end{equation}
Taking now~\eqref{eq:m:localised} and~\eqref{eq:m:loc:2} together we get
\begin{equation*}
 \begin{split}
  \fnorm{0}{m}&\leq C\left(\delta+\eps\right)\fnorm{0}{m}+\left(\widehat{C}\delta_{*}+CC_{\delta_{*}}\left(\eps+\delta\right)\right)\fnorm{0}{m},
 \end{split}
\end{equation*}
which implies uniqueness by choosing first $\delta_{*}$ and then $\delta$ and $\eps$ sufficiently small.

\section{Estimates for $B_2$ and $B_W$ - Proof of Proposition~\ref{Prop:bilinear}}\label{Sec:est:m:large:x}

Recalling~\eqref{eq:diff:m} we have that the difference $m\vcc=m_1-m_2$ of two solutions $m_1$ and $m_2$ of equation~\eqref{eq:mu2} satisfies
\begin{equation*}
 \begin{split}
  m-\left(B_{2}\left(\bar{\mu},m\right)+B_{2}\left(m,\bar{\mu}\right)\right)=\left(B_{2}\left(m,m_1\right)+B_{2}\left(m_2,m\right)\right)+\eps\left(B_{W}\left(\mu_1,m\right)+B_{W}\left(m,\mu_2\right)\right).
 \end{split}
\end{equation*}
Multiplying by $x^2\ee^{-px}$, integrating and denoting by $M_k$ and $M$ the Laplace transforms of $m_k$ and $m$ it follows
\begin{multline}\label{eq:M}
  M''\left(p\right)=\int_{0}^{\infty}x^{2}\ee^{-px}B_{2}\left(\bar{\mu},m\right)\dx+\int_{0}^{\infty}x^{2}\ee^{-px}B_{2}\left(m,\bar{\mu}\right)\dx\\
  +\int_{0}^{\infty}x^{2}\ee^{-px}B_{2}\left(m,m_1\right)\dx+\int_{0}^{\infty}x^{2}\ee^{-px}B_{2}\left(m_2,m\right)\dx\\
  +\eps\int_{0}^{\infty}x^{2}\ee^{-px}B_{W}\left(m,\mu_1\right)\dx+\eps\int_{0}^{\infty}x^{2}\ee^{-px}B_{W}\left(\mu_{2},m\right)\dx.
\end{multline}

\subsection{Estimates on $B_2$ - Proof of~\eqref{eq:bil:1}}\label{Sec:proof:B2}

\begin{proof}[Proof of~\eqref{eq:bil:1} in Proposition~\ref{Prop:bilinear}]
 By the definition of $B_2$ and using Fubini's theorem it holds
 \begin{equation*}
  \begin{split}
   \int_{0}^{\infty}x^{2}\ee^{-px}B_{2}\left(\omega_{1},\omega_{2}\right)\dx&=2\int_{0}^{\infty}\int_{0}^{x}\int_{x-y}^{\infty}\ee^{-px}y\omega_{1}\left(y\right)\omega_{2}\left(z\right)\ee^{x-\left(y+z\right)}\dz\dy\dx\\
   &=2\int_{0}^{\infty}\int_{0}^{\infty}y\omega_{1}\left(y\right)\omega_{2}\left(z\right)\ee^{-\left(y+z\right)}\int_{y}^{y+z}\ee^{\left(1-p\right)x}\dx\\
   &=\frac{2}{p-1}\int_{0}^{\infty}\int_{0}^{\infty}y\omega_1\left(y\right)\omega_{2}\left(z\right)\ee^{-p y}\left(\ee^{-z}-\ee^{-p z}\right)\dz\dy.
  \end{split}
 \end{equation*}
Rearranging and denoting as before by $\Omega_{i}$ the Laplace transform of $\omega_i$ we obtain
\begin{equation}\label{eq:bil1}
 \begin{split}
  \int_{0}^{\infty}x^{2}\ee^{-px}B_{2}\left(\omega_{1},\omega_{2}\right)\dx&= \frac{2}{p-1}\Omega_{1}'\left(p\right)\left(\Omega_{2}\left(p\right)-\Omega_{2}\left(1\right)\right).
 \end{split}
\end{equation}
To estimate the right-hand side we first consider $p\geq 2$ and note that due to the definition of $\snorm{1}{\cdot}$ we have
\begin{equation*}
 \begin{split}
  \abs{\Omega_2\left(p\right)-\Omega_{2}\left(1\right)}=\abs{\int_{1}^{p}\Omega_{2}'\left(s\right)\ds}\leq \snorm{1}{\omega_2}\int_{1}^{p}\frac{\left(1+s\right)^{1-\theta}}{s^{2}}\ds\leq C\snorm{1}{\omega_2}\int_{1}^{\infty}s^{-1-\theta}\ds\leq C\frac{\snorm{1}{\omega_{2}}}{\theta}.
 \end{split}
\end{equation*}
On the other hand we have
\begin{equation}\label{eq:sem1}
 \abs{\Omega_1'\left(p\right)}\leq \snorm{1}{\omega_1}\frac{\left(1+p\right)^{1-\theta}}{p^2} \quad \text{for all } p>0.
\end{equation}
Combining these estimates we get
\begin{equation*}
 \begin{split}
  \abs{\int_{0}^{\infty}x^{2}\ee^{-px}B_{2}\left(\omega_{1},\omega_{2}\right)\dx}\leq \frac{C}{p-1}\snorm{1}{\omega_2}\snorm{1}{\omega_1}\frac{\left(1+p\right)^{1-\theta}}{p^2}\leq C\snorm{1}{\omega_1}\snorm{1}{\omega_2}\frac{\left(1+p\right)^{1-\theta}}{p^3},
 \end{split}
\end{equation*}
using also $1/(p-1)\leq 2/p$ for $p\geq 2$. This then shows the claim for $p\geq 2$.

For $p\in\left[1,2\right]$ we get similarly
\begin{equation*}
 \begin{split}
  \abs{\Omega_{2}\left(p\right)-\Omega_{2}\left(1\right)}\leq\int_{1}^{p}\abs{\Omega_{2}'\left(s\right)}\ds\leq C \snorm{1}{\omega_2}\int_{1}^{p} s^{-1-\theta}\ds\leq C\snorm{1}{\omega_{2}}\left(p-1\right)
 \end{split}
\end{equation*}
and
\begin{equation*}
 \abs{\Omega_{1}'\left(p\right)}\leq\snorm{1}{\omega_{1}}\frac{\left(1+p\right)^{1-\theta}}{p^{2}}\leq C\snorm{1}{\omega_1}\frac{\left(1+p\right)^{1-\theta}}{p^{3}} \quad \text{for } p\in \left[1,2\right]. 
\end{equation*}
Using this together with~\eqref{eq:bil1} we obtain
\begin{equation*}
 \begin{split}
  \abs{\int_{0}^{\infty}x^{2}\ee^{-px}B_{2}\left(\omega_{1},\omega_{2}\right)\dx}\leq C\snorm{1}{\omega_{1}}\snorm{1}{\omega_{2}}\frac{\left(1+p\right)^{1-\theta}}{p^{3}}.
 \end{split}
\end{equation*}
This shows the claim for $p\in\left[1,2\right]$. 

We finally consider $p\in\left(0,1\right)$. Similarly as before we can estimate
\begin{equation*}
 \begin{split}
  \abs{\Omega_{2}\left(p\right)-\Omega_{2}\left(1\right)}=\abs{\int_{p}^{1}\Omega_{2}'\left(s\right)\ds}\leq \snorm{1}{\omega_2}\int_{p}^{1}\frac{\left(1+s\right)^{1-\theta}}{s^{2}}\ds\leq 2^{1-\theta}\int_{p}^{1}s^{-2}\ds\leq C\frac{\abs{1-p}}{p}.
 \end{split}
\end{equation*}
Using this together with~\eqref{eq:sem1} and~\eqref{eq:bil1} we obtain
\begin{equation*}
 \begin{split}
  \abs{\int_{0}^{\infty}x^{2}\ee^{-px}B_{2}\left(\omega_{1},\omega_{2}\right)\dx}\leq C\snorm{1}{\omega_1}\snorm{1}{\omega_2}\frac{\left(1+p\right)^{1-\theta}}{p^{3}}.
 \end{split}
\end{equation*}
This shows the claim also for $p\leq 1$ and thus finishes the proof.
\end{proof}

\subsection{Estimates on $B_W$ - Proof of~\eqref{eq:bil:2}}\label{Sec:proof:BW}

We next consider estimates for $B_{W}$ in the $\snorm{2}{\cdot}$-semi-norm and thus define
\begin{equation}\label{eq:def:L:BW}
 L\left(p\right)=L\left[\omega_1,\omega_2\right]\left(p\right)\vcc=\int_{0}^{\infty}x^{2}\ee^{-px}B_{W}\left(\omega_1,\omega_2\right)\dx,
\end{equation}
 where we use the notation $L\left[\omega_1,\omega_2\right]$ if we want to stress the dependence on $\omega_1$ and $\omega_2$. Using the definition of $B_W$ and Proposition~\ref{P.repkernels} together with Fubini's theorem we can rewrite
\begin{equation*}
 \begin{split}
  &\quad L\left(p\right)\\
  &=\int_{0}^{\infty}\int_{0}^{\infty}yW\left(y,z\right)\omega_{1}\left(y\right)\omega_2\left(z\right)\int_{y}^{y+z}\ee^{\left(1-p\right)x}\dx\ee^{-\left(y+z\right)}\dz\dy\\
  &=\int_{0}^{\infty}\int_{0}^{\infty}y\left(y+z\right)\int_{0}^{\infty}\int_{0}^{\infty}\Gamma\left(\xi,\eta\right)\ee^{-\xi y-\eta z}\dxi\deta\omega_{1}\left(y\right)\omega_{2}\left(z\right)\int_{y}^{y+z}\ee^{\left(1-p\right)x}\dx\ee^{-\left(y+z\right)}\dz\dy\\
  &=-\frac{1}{p-1}\int_{0}^{\infty}\int_{0}^{\infty}\Gamma\left(\xi,\eta\right)\int_{0}^{\infty}\int_{0}^{\infty}y\left(y+z\right)\ee^{-\xi y}\ee^{-\eta z}\omega_{1}\left(y\right)\omega_{2}\left(z\right)\left(\ee^{-p\left(y+z\right)}-\ee^{-py}\ee^{-z}\right)\dz\dy\dxi\deta.
 \end{split}
\end{equation*}
Rearranging we get
\begin{equation}\label{eq:BW:Laplace}
 \begin{multlined}
  \quad L\left(p\right)\\
   \shoveleft{=-\frac{1}{p-1}\int_{0}^{\infty}\int_{0}^{\infty}\Gamma\left(\xi,\eta\right)\left[\int_{0}^{\infty}y^{2}\omega_{1}\left(y\right)\ee^{-y\left(\xi+p\right)}\dy\int_{0}^{\infty}\omega_{2}\left(z\right)\left(\ee^{-\left(p+\eta\right)z}-\ee^{-\left(1+\eta\right)z}\right)\dz\right.}\\
   \shoveright{+\left.\int_{0}^{\infty}y\omega_{1}\left(y\right)\ee^{-\left(\xi+p\right)y}\dy\int_{0}^{\infty}z\omega_{2}\left(z\right)\left(\ee^{-\left(p+\eta\right)z}-\ee^{-\left(1+\eta\right)z}\right)\dz\right]\dxi\deta}\\
   \shoveleft{ =-\frac{1}{p-1}\int_{0}^{\infty}\int_{0}^{\infty}\Gamma\left(\xi,\eta\right)\left[\del_{\xi}^{2}\Omega_{1}\left(\xi+p\right)\left(\Omega_{2}\left(p+\eta\right)-\Omega_{2}\left(\eta+1\right)\right)\right]\dxi\deta}\\
 -\frac{1}{p-1}\int_{0}^{\infty}\int_{0}^{\infty}\Gamma\left(\xi,\eta\right)\left[\del_{\xi}\Omega_{1}\left(\xi+p\right)\del_{\eta}\left(\Omega_{2}\left(p+\eta\right)-\Omega_{2}\left(1+\eta\right)\right)\right]\dxi\deta=\vcc L_1\left(p\right)+L_{2}\left(p\right).
  \end{multlined}
\end{equation}

\begin{lemma}\label{Lem:est:L:small:p}
 There exists some constant $C>0$ such that for all $\omega_1, \omega_2\in\esp_{2}$ it holds
 \begin{equation*}
  \sup_{0<p\leq 2}\frac{p^3}{\left(1+p\right)^{1-\theta}}\abs{L\left(p\right)}\leq C\fnorm{2}{\omega_1}\fnorm{2}{\omega_2},
 \end{equation*}
 where $L$ is given by~\eqref{eq:def:L:BW}.
\end{lemma}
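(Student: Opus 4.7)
The plan is to estimate $L_1$ and $L_2$ separately from the representation~\eqref{eq:BW:Laplace} after splitting $\Gamma(\xi,\eta)=\widetilde{\Gamma}(\xi,\eta)+W_{\pm}(-1)\delta(\xi-\eta)$ as in Proposition~\ref{P.repkernels}. The starting point is to absorb the apparent pole at $p=1$ via the fundamental theorem of calculus, writing
\begin{equation*}
 \frac{\Omega_{2}(p+\eta)-\Omega_{2}(1+\eta)}{p-1} = \int_{0}^{1}\Omega_{2}'\bigl(1+\eta+\tau(p-1)\bigr)\dtau, \quad \frac{\Omega_{2}'(p+\eta)-\Omega_{2}'(1+\eta)}{p-1} = \int_{0}^{1}\Omega_{2}''\bigl(1+\eta+\tau(p-1)\bigr)\dtau.
\end{equation*}
The key point is that these quantities should be kept in this integrated form, rather than estimated pointwise via the mean value theorem.

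Indeed, using the semi-norm bound $|\Omega_{2}^{(j)}(q)|\leq\snorm{j}{\omega_{2}}(1+q)^{1-\theta}/q^{1+j}$ and the change of variable $u=1+\eta+\tau(p-1)$, one computes for $p\in(0,2]$ and $\eta\geq 0$ that
\begin{equation*}
 \left|\frac{\Omega_{2}(p+\eta)-\Omega_{2}(1+\eta)}{p-1}\right| \leq \frac{C\snorm{1}{\omega_{2}}\,(1+\eta)^{-\theta}}{\eta+\min(p,1)},
\end{equation*}
with an analogous estimate acquiring an extra factor $(\eta+\min(p,1))^{-1}$ for the second-derivative version. The averaging gain compared to the pointwise bound $C\snorm{1}{\omega_{2}}(1+\eta)^{1-\theta}/(\eta+\min(p,1))^{2}$ is the additional factor $(1+\eta)$ in the denominator; without it, the subsequent estimates would fall short of the required decay by one factor of $p$.

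With this in hand, the contribution of the diagonal piece $W_{\pm}(-1)\delta(\xi-\eta)$ to $L_{1}$ reduces to a one-dimensional integral. Combining $|\Omega_{1}''(\xi+p)|\leq C\snorm{2}{\omega_{1}}(1+\xi)^{1-\theta}/(\xi+p)^{3}$ (valid for $p\leq 2$) with the bound above at $\eta=\xi$, this contribution is dominated by
\begin{equation*}
 C\fnorm{2}{\omega_{1}}\fnorm{2}{\omega_{2}}\int_{0}^{\infty}\frac{(1+\xi)^{1-2\theta}}{(\xi+p)^{3}(\xi+\min(p,1))}\dxi,
\end{equation*}
which is $\leq C/p^{3}$ after splitting at $\xi=1$ and computing explicitly; the diagonal part of $L_{2}$ is handled analogously, with the derivatives redistributed as $\Omega_{1}',\Omega_{2}''$. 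For the $\widetilde{\Gamma}$-contribution one inserts $|\widetilde{\Gamma}(\xi,\eta)|\leq C(\xi+\eta)^{\alpha-1}(\xi^{-\alpha}+\eta^{-\alpha})$ and performs the $(\xi,\eta)$-integral by splitting $(0,\infty)^{2}$ into the four regions according to whether each coordinate is $\leq 1$ or $\geq 1$; each piece reduces to an elementary one-dimensional integral of the type handled in Section~\ref{Sec:appendix:rep:ker}, and the condition $\theta>\alpha$ ensures integrability at $\xi,\eta\to 0^{+}$.

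The main obstacle is precisely the averaging gain in the second step above: the naive pointwise mean-value estimate would produce $|L(p)|\lesssim p^{-4}$, one full power of $p$ short of the claim. Once this is noticed, the remainder of the proof is careful but elementary bookkeeping of the competing singularities of $\widetilde{\Gamma}$ and of the $\Omega_{1,2}^{(k)}$ near the origin.
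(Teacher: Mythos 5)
Your proposal takes essentially the paper's route. The averaged difference-quotient bound is exactly the paper's~\eqref{eq:loss:diff:1}--\eqref{eq:loss:diff:2} (your denominator $\eta+\min(p,1)$ is comparable to $\eta+p$ for $p\le 2$), you correctly identify it as the crucial gain over a pointwise mean-value estimate, and the quadrant-by-quadrant $(\xi,\eta)$-integration after separating the diagonal and $\widetilde\Gamma$ parts of $\Gamma$ mirrors the paper's treatment of the regions $(I)_a$, $(I)_b$, $(II)$. One small correction to your final remark: the hypothesis $\theta>\alpha$ does not concern integrability near $\xi,\eta\to 0^{+}$, where $\alpha<1$ alone suffices; where it appears (via Lemma~\ref{Lem:est:Gamma:int}) it controls $\widetilde\Gamma$-integrals at infinity, e.g.\ the factor $\eta^{\alpha-1-\theta}$ arising when one variable is bounded and the other is large, and with the extra decay $(\eta+\min(p,1))^{-1}$ supplied by your averaged bound it is in fact not needed in this particular lemma.
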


\begin{proof}
 We will use the following estimates that we will show at the end of the proof. It holds
\begin{align}
 \abs{\frac{\Omega_2\left(\eta+p\right)-\Omega_2\left(\eta+1\right)}{p-1}}\leq C\frac{\fnorm{1}{\omega_2}}{\left(\eta+p\right)\left(\eta+1\right)^{\theta}}, \label{eq:loss:diff:1}\\
 \abs{\frac{\Omega_2'\left(\eta+p\right)-\Omega_2'\left(\eta+1\right)}{p-1}}\leq C\frac{\fnorm{2}{\omega_2}}{\left(\eta+p\right)^2\left(\eta+1\right)^{\theta}}.\label{eq:loss:diff:2}
\end{align}
Furthermore we have for $p\leq 2$ that
\begin{equation*}
 \abs{\del_{\xi}^{k}\Omega_1\left(\xi+p\right)}\leq C\fnorm{k}{\omega_1}\begin{cases}
                                                                         \left(\xi+p\right)^{-k-1} & \xi\leq 1\\
                                                                         \left(\xi+p\right)^{-k-\theta} & \xi\geq 1.
                                                                        \end{cases}                                                                         
\end{equation*}
Thus for $p\leq 2$ we find
\begin{equation*}
 \begin{split}
  \abs{L\left(p\right)}&\leq C\fnorm{2}{\omega_1}\fnorm{2}{\omega_2}\int_{1}^{\infty}\int_{0}^{\infty}\left(\frac{\abs{\Gamma\left(\xi,\eta\right)}}{\left(\xi+p\right)^{2+\theta}\left(\eta+p\right)\left(\eta+1\right)^{\theta}}+\frac{\abs{\Gamma\left(\xi,\eta\right)}}{\left(\xi+p\right)^{1+\theta}\left(\eta+p\right)^2\left(\eta+1\right)^{\theta}}\right)\deta\dxi\\
  &\quad +C\fnorm{2}{\omega_1}\fnorm{2}{\omega_2}\int_{0}^{1}\int_{0}^{\infty}\left(\frac{\abs{\Gamma\left(\xi,\eta\right)}}{\left(\xi+p\right)^3\left(\eta+p\right)\left(\eta+1\right)^{\theta}}+\frac{\abs{\Gamma\left(\xi,\eta\right)}}{\left(\xi+p\right)^2\left(\eta+p\right)^2\left(\eta+1\right)^{\theta}}\right)\deta\dxi\\
  &=\vcc(I)+(II).
 \end{split}
\end{equation*}
We have to consider the different regions with $\xi$ and $\eta$ large or small separately and so first obtain
\begin{equation*}
 \begin{split}
  (I)&\leq \int_{1}^{\infty}\int_{0}^{\infty}\abs{\Gamma\left(\xi,\eta\right)}\left(\frac{1}{\xi^{2+\theta}\left(\eta+p\right)\left(\eta+1\right)^{\theta}}+\frac{1}{\xi^{1+\theta}\left(\eta+p\right)^{2}\left(\eta+1\right)^{\theta}}\right)\deta\dxi\\
  &= \int_{1}^{\infty}\int_{1}^{\infty}\left(\cdots\right)\deta\dxi+\int_{1}^{\infty}\int_{0}^{1}\left(\cdots\right)\deta\dxi=\vcc(I)_a+(I)_b.
 \end{split}
\end{equation*}
Using that for $x\geq 1$ we have $x^{-\ell}\leq C\left(x+1\right)^{-\ell}$ we see from Lemma~\ref{Lem:est:Gamma:int} that $(I)_a$ is just bounded by a constant. For $(I)_b$ we obtain together with Proposition~\ref{P.repkernels} and using that the support of $\delta(\xi-\eta)$ does not intersect $(1,\infty)\times(0,1)$ that
\begin{equation*}
 \begin{split}
  (I)_b&\leq C\int_{1}^{\infty}\int_{0}^{1}\frac{1}{\left(\xi+\eta\right)^{1-\alpha}}\left(\frac{1}{\xi^{\alpha}}+\frac{1}{\eta^{\alpha}}\right)\left(\frac{1}{\xi^{2+\theta}\left(\eta+p\right)\left(\eta+1\right)^{\theta}}+\frac{1}{\xi^{1+\theta}\left(\eta+p\right)^{2}\left(\eta+1\right)^{\theta}}\right)\deta\dxi\\
  &\leq C\int_{1}^{\infty}\frac{1}{\xi^{1-\alpha}\xi^{1+\theta}}\left(\int_{0}^{1}\frac{1}{\xi^{\alpha}}\left(\frac{1}{\eta+p}+\frac{1}{\left(\eta+p\right)^2}\right)\deta+\int_{0}^{1}\frac{1}{\eta^{\alpha}}\left(\frac{1}{\xi+p}+\frac{1}{\left(\xi+p\right)^2}\right)\deta\right)\dxi\\
  &\leq C\int_{1}^{\infty}\frac{1}{\xi^{2+\theta-\alpha}}\left(\frac{1}{\xi^{\alpha}}\left(\frac{1}{p^{\theta}}+\frac{1}{p^{1+\theta}}\right)+\frac{1}{p^{\theta}}+\frac{1}{p^{1+\theta}}\right)\dxi\leq \frac{C}{p^{1+\theta}}.
 \end{split}
\end{equation*}
For $(II)$ the homogeneity of $\Gamma$ and Lemma~\ref{Lem:est:Gamma:int} imply
\begin{equation*}
 \begin{split}
  (II)&\leq \int_{0}^{1}\int_{0}^{\infty}\frac{\abs{\Gamma\left(\xi,\eta\right)}}{\left(\xi+p\right)^2\left(\eta+p\right)}\left(\frac{1}{\xi+p}+\frac{1}{\eta+p}\right)\deta\dxi\\
  &\leq \frac{1}{p^3}\int_{0}^{\infty}\int_{0}^{\infty}\frac{\abs{\Gamma\left(\xi,\eta\right)}}{\left(\xi+1\right)^2\left(\eta+1\right)}\left(\frac{1}{\xi+1}+\frac{1}{\eta+1}\right)\deta\dxi\leq \frac{C}{p^3}.
 \end{split}
\end{equation*}
Thus for $p\leq 2$ we obtain
\begin{equation*}
 \abs{L\left(p\right)}\leq C\frac{\fnorm{2}{\omega_1}\fnorm{2}{\omega_2}}{p^3}
\end{equation*}
and together with the estimate for $p\geq 2$ obtained before this shows the claim. 

It thus remains to show the estimates~\eqref{eq:loss:diff:1} and~\eqref{eq:loss:diff:2}. We first note
\begin{equation*}
 \begin{split}
  \abs{\frac{\Omega_2\left(\eta+p\right)-\Omega_2\left(\eta+1\right)}{p-1}}=\frac{1}{\abs{p-1}}\abs{\int_{\eta+1}^{\eta+p}\Omega_2'\left(s\right)\ds}\leq \frac{\fnorm{1}{\omega_2}}{\abs{p-1}}\abs{\int_{\eta+1}^{\eta+p}\frac{\left(s+1\right)^{1-\theta}}{s^2}\ds}.
 \end{split}
\end{equation*}
For $\eta\leq 1$ and $p\leq 2$ we then obtain that
\begin{equation*}
 \begin{split}
  &\quad \abs{\frac{\Omega_2\left(\eta+p\right)-\Omega_2\left(\eta+1\right)}{p-1}}\leq C\frac{\fnorm{1}{\omega_2}}{\abs{p-1}}\abs{\int_{\eta+1}^{\eta+p}\frac{1}{s^2}\ds}\leq C\frac{\fnorm{1}{\omega_2}}{\abs{p-1}}\abs{\frac{1}{\eta+1}-\frac{1}{\eta+p}}\\
  &\leq C\frac{\fnorm{1}{\omega_2}}{\left(\eta+1\right)\left(\eta+p\right)}\leq C\frac{\fnorm{1}{\omega_2}}{\left(\eta+p\right)\left(\eta+1\right)^{\theta}}.
 \end{split}
\end{equation*}
On the other hand we get for $\eta\geq 1$ that
\begin{equation*}
 \begin{split}
  &\quad\abs{\frac{\Omega_2\left(\eta+p\right)-\Omega_2\left(\eta+1\right)}{p-1}}\leq C\frac{\fnorm{1}{\omega_2}}{\abs{p-1}}\abs{\int_{\eta+1}^{\eta+p}s^{-1-\theta}\ds}\\
  &\leq C\frac{\fnorm{1}{\omega_2}}{\abs{p-1}}\max\left\{\frac{1}{\left(\eta+p\right)^{1+\theta}},\frac{1}{\left(\eta+1\right)^{1+\theta}}\right\}\abs{p-1}\leq C\frac{\fnorm{1}{\omega_2}}{\left(\eta+p\right)\left(\eta+1\right)^{\theta}}.
 \end{split}
\end{equation*}
This shows~\eqref{eq:loss:diff:1}. The proof of~\eqref{eq:loss:diff:2} is similar using that it holds
\begin{equation*}
 \begin{split}
  \abs{\int_{\eta+1}^{\eta+p}\frac{1}{s^3}\ds}=\abs{\frac{\left(\eta+1\right)^2-\left(\eta+p\right)^2}{\left(\eta+1\right)^2\left(\eta+p\right)^2}}=\abs{p-1}\frac{2\eta+\left(1+p\right)}{\left(\eta+1\right)^2\left(\eta+p\right)^2}\leq C\frac{\abs{p-1}}{\left(\eta+p\right)^2\left(\eta+1\right)^{\theta}}
 \end{split}
\end{equation*}
for $\eta\leq 1$ and $p\leq 2$.
\end{proof}

\begin{lemma}\label{Lem:est:L:large:p:loss}
 There exists some constant $C>0$ such that for all $\omega_1,\omega_2\in\esp_{2}$ it holds
\begin{equation*}
 \sup_{p\geq 2}\frac{p^3}{\left(1+p\right)^{1-\left(\theta-\alpha\right)}}\abs{L\left(p\right)}\leq C\fnorm{2}{\omega_1}\fnorm{2}{\omega_2},
\end{equation*}
where $L$ is given in~\eqref{eq:def:L:BW}.
\end{lemma}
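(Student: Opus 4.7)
The plan is to adapt the structure of the proof of Lemma~\ref{Lem:est:L:small:p} to the regime $p\geq 2$, starting from the decomposition $L=L_1+L_2$ in~\eqref{eq:BW:Laplace}. Two simplifications become available. First, since $1/(p-1)\leq 2/p$, no cancellation is required from the differences $\Omega_2(\eta+p)-\Omega_2(\eta+1)$ or $\partial_\eta\Omega_2(\eta+p)-\partial_\eta\Omega_2(\eta+1)$, and the delicate quotient estimates~\eqref{eq:loss:diff:1}--\eqref{eq:loss:diff:2} can be replaced by the triangle-inequality bounds
\begin{equation*}
 \abs{\Omega_2(\eta+p)-\Omega_2(\eta+1)}\leq C\fnorm{0}{\omega_2}(\eta+1)^{-\theta},\quad \abs{\partial_\eta\Omega_2(\eta+p)-\partial_\eta\Omega_2(\eta+1)}\leq C\fnorm{1}{\omega_2}(\eta+1)^{-1-\theta},
\end{equation*}
where the terms at $\eta+p$ are dominated by those at $\eta+1$. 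Second, since $\xi+p\geq 2$, the factors involving $\omega_1$ reduce to pure powers: $\abs{\partial_\xi^k\Omega_1(\xi+p)}\leq C\fnorm{k}{\omega_1}(\xi+p)^{-k-\theta}$ for $k\in\{1,2\}$.

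Inserting these bounds into $L_1$ and $L_2$ reduces the task to estimating double integrals of the schematic form $\frac{C}{p}\int\!\!\int\abs{\Gamma(\xi,\eta)}(\xi+p)^{-k-\theta}(\eta+1)^{-\ell-\theta}\,\dxi\deta$ with $k+\ell=2$. Following Proposition~\ref{P.repkernels}, I would split $\Gamma=\widetilde\Gamma+W_\pm(-1)\delta(\xi-\eta)$. The Dirac contribution collapses to a single $\xi$-integral that, after the natural splits at $\xi=1$ and $\xi=p$, is of order $p^{-1-\theta}$ (and even $p^{-1-2\theta}$ for the $L_1$ piece), hence dominated by the required $p^{-2-\theta+\alpha}$ after the $1/p$ prefactor since $\alpha\geq 0$. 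For the regular part, I would use $\abs{\widetilde\Gamma(\xi,\eta)}\leq C(\xi+\eta)^{\alpha-1}(\xi^{-\alpha}+\eta^{-\alpha})$ and perform first the $\eta$-integration (splitting at $\eta\in\{\xi,1\}$) and then the $\xi$-integration (splitting at $\xi\in\{1,p\}$), streamlined by the $\Gamma$-integral estimates from Lemma~\ref{Lem:est:Gamma:int} already used in the $p\leq 2$ case.

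The main obstacle lies in the interplay between the singular factor $\eta^{-\alpha}$ of $\widetilde\Gamma$ and the intermediate range $1\leq\xi\leq p$. Carrying out the $\eta$-integration of $(\xi+\eta)^{\alpha-1}\eta^{-\alpha}(\eta+1)^{-\ell-\theta}$ produces a factor of order $\xi^{\alpha-1}$ for $\xi\geq 1$ (arising from the subregion $0\leq\eta\leq\xi$), together with an integrable logarithm $\log(1/\xi)$ for $\xi\leq 1$. The subsequent $\xi$-integration over $[1,p]$ then picks up exactly $\int_1^p\xi^{\alpha-1}\dxi\sim p^\alpha/\alpha$, and this is precisely the extra growth that forces the weaker weight $(1+p)^{1-(\theta-\alpha)}$ rather than $(1+p)^{1-\theta}$. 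Using $\theta+\alpha>0$ to absorb the delta part, $\alpha<1$ for integrability of $\xi^{-\alpha}$ near the origin, and the convergence of all remaining integrals (guaranteed by $\theta,\alpha<1/2$), the contributions of all regions sum to $\abs{L(p)}\leq Cp^{-2-\theta+\alpha}\fnorm{2}{\omega_1}\fnorm{2}{\omega_2}$ for $p\geq 2$, which is equivalent to the claimed estimate.
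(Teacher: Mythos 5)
Your proposal is correct and follows essentially the same route as the paper's proof: for $p\geq 2$ you replace the cancellation estimates~\eqref{eq:loss:diff:1}\textendash\eqref{eq:loss:diff:2} by triangle-inequality bounds and the factor $1/(p-1)\lesssim 1/p$, then integrate first in $\eta$ and then in $\xi$, identifying the $\xi\in[1,p]$ region as the source of the $p^{\alpha}$ loss. The only cosmetic difference is that you separate out the Dirac piece of $\Gamma$ by hand, whereas the paper absorbs it directly by invoking Lemma~\ref{Lem:Gamma:eta:int} for the $\eta$-integral; the resulting $\xi$-integral bounds and the final $\abs{L(p)}\leq C p^{-2-\theta+\alpha}$ estimate coincide.
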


\begin{proof}
 From the definition of the norms and~\eqref{eq:Lambda} we have the estimates
\begin{align*}
 \abs{\Omega_1''\left(\xi+p\right)}\leq C\frac{\snorm{2}{\omega_1}}{\left(\xi+p\right)^{2+\theta}}, \qquad \abs{\Omega_1'\left(\xi+p\right)}\leq C\frac{\snorm{1}{\omega_1}}{\left(\xi+p\right)^{1+\theta}},\\
 \abs{\frac{\Omega_2\left(\eta+p\right)-\Omega_2\left(\eta+1\right)}{p-1}}\leq C\frac{\fnorm{0}{\omega_2}}{p}\left(\frac{1}{\left(\eta+p\right)^{\theta}}+\frac{1}{\left(\eta+1\right)^{\theta}}\right)\leq C\frac{\fnorm{0}{\omega_2}}{p\left(\eta+1\right)^{\theta}},\\
 \abs{\frac{\Omega_2'\left(\eta+p\right)-\Omega_2'\left(\eta+1\right)}{p-1}}\leq C\frac{\snorm{1}{\omega_2}}{p}\left(\frac{1}{\left(\eta+p\right)^{1+\theta}}+\frac{1}{\left(\eta+1\right)^{1+\theta}}\right)\leq C\frac{\snorm{1}{\omega_2}}{p\left(\eta+1\right)^{1+\theta}}.
\end{align*}
From this and~\eqref{eq:BW:Laplace} it follows that
\begin{equation}\label{eq:loss:BW:large:p}
 \begin{split}
  \abs{L\left(p\right)}&\leq C\frac{\fnorm{2}{\omega_1}\fnorm{1}{\omega_2}}{p}\int_{0}^{\infty}\int_{0}^{\infty}\abs{\Gamma\left(\xi,\eta\right)}\left(\frac{1}{\left(\xi+p\right)^{2+\theta}}\frac{1}{\left(\eta+1\right)^{\theta}}+\frac{1}{\left(\xi+p\right)^{1+\theta}}\frac{1}{\left(\eta+1\right)^{1+\theta}}\right)\deta\dxi\\
  &\leq C\frac{\fnorm{2}{\omega_1}\fnorm{1}{\omega_2}}{p}\int_{0}^{\infty}\frac{1}{\left(\xi+p\right)^{1+\theta}}\int_{0}^{\infty}\abs{\Gamma\left(\xi,\eta\right)}\left(\frac{1}{\left(\xi+p\right)\left(\eta+1\right)^{\theta}}+\frac{1}{\left(\eta+1\right)^{1+\theta}}\right)\deta\dxi.
 \end{split}
\end{equation}
Using that for $p\geq 2$ it holds $\left(\xi+p\right)^{-1}\leq \min\left\{1,\xi^{-1}\right\}$ we find together with Lemma~\ref{Lem:Gamma:eta:int} that
\begin{equation*}
 \begin{split}
  \int_{0}^{\infty}\abs{\Gamma\left(\xi,\eta\right)}\left(\frac{1}{\xi+p}\frac{1}{\left(\eta+1\right)^{\theta}}+\frac{1}{\left(\eta+1\right)^{1+\theta}}\right)\deta\leq C \min\left\{\xi^{-\alpha}+\log\left(\xi^{-1}\right),\xi^{\alpha-1}\right\}.
 \end{split}
\end{equation*}
Plugging this estimate into~\eqref{eq:loss:BW:large:p} we obtain for $p\geq 2$ that it holds
\begin{equation*}
 \begin{split}
  &\quad \abs{L\left(p\right)}\leq C\frac{\fnorm{2}{\omega_1}\fnorm{1}{\omega_1}}{p}\int_{0}^{\infty}\frac{1}{\left(\xi+p\right)^{1+\theta}}\min\left\{\frac{1}{\xi^{\alpha}}+\log\left(\frac{1}{\xi}\right),\frac{1}{\xi^{1-\alpha}}\right\}\dxi\\
  &\leq C\frac{\fnorm{2}{\omega_1}\fnorm{1}{\omega_2}}{p}\left(\int_{0}^{1}\frac{\xi^{-\alpha}+\log\left(\xi^{-1}\right)}{\left(\xi+p\right)^{1+\theta}}\dxi+\int_{1}^{p}\frac{1}{\left(\xi+p\right)^{1+\theta}\xi^{1-\alpha}}\dxi+\int_{p}^{\infty}\frac{1}{\left(\xi+p\right)^{1+\theta}\xi^{1-\alpha}}\dxi\right)\\
  &\leq C\frac{\fnorm{2}{\omega_1}\fnorm{1}{\omega_2}}{p}\left(\frac{1}{p^{1+\theta}}+\frac{1}{p^{1+\theta-\alpha}}+\frac{1}{p^{1+\theta-\alpha}}\right)\leq C\frac{\fnorm{2}{\omega_1}\fnorm{1}{\omega_2}}{p^{2+\theta-\alpha}}. 
 \end{split}
\end{equation*}
\end{proof}

\begin{proof}[Proof of~\eqref{eq:bil:2} of Proposition~\ref{Prop:bilinear}]
 This is a direct consequence of Lemma~\ref{Lem:est:L:small:p} and Lemma~\ref{Lem:est:L:large:p:loss}.
\end{proof}

\subsection{Estimates of $L\left(p\right)-L\left(p+1\right)$}

We also collect in the following some estimates on the difference $L\left(p\right)-L\left(p+1\right)$ that occur naturally when considering the Laplace transform of $\left(1-\zeta\right)m$ and that will be used later in the proof of Proposition~\ref{Prop:m:large:x}.

\begin{lemma}\label{Lem:est:L2:diff:large:p}
 There exists some constant $C>0$ such that for all $\omega_1, \omega_2\in\esp_{2}$ it holds \begin{equation*}
  \begin{split}
   \sup_{p\geq 2}\frac{p^3}{\left(1+p\right)^{1-\theta}}\abs{L_2\left(p\right)-L_2\left(p+1\right)}\leq C\left(\snorm{2}{\omega_1}\fnorm{1}{\omega_2}+\fnorm{1}{\omega_1}\snorm{2}{\omega_2}+\fnorm{1}{\omega_1}\fnorm{1}{\omega_2}\right),
  \end{split}
 \end{equation*}
 where $L_2$ is as in~\eqref{eq:BW:Laplace}.
 \end{lemma}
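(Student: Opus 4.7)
The idea is that taking the finite difference in $p$ gains one power of $p$ compared with directly estimating $L_2(p)$: in Lemma~\ref{Lem:est:L:large:p:loss} we essentially obtained $|L_2(p)| \lesssim p^{-(2+\theta-\alpha)}$ (with the weaker weight $\theta-\alpha$), so a single extra factor of $1/p$ yields $|L_2(p)-L_2(p+1)| \lesssim p^{-(3+\theta-\alpha)}$, which dominates the desired $p^{-(2+\theta)}$ precisely because $\alpha<1$. In other words, the whole point of the lemma is that one finite difference in $p$ is just enough to absorb the $\alpha$-loss caused by the singularity of $W$ at the origin, so that we recover the full weight $\theta$.

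To realise this, I would write
\[
L_2(p)-L_2(p+1) = -\int_0^1 \partial_p L_2(p+s)\,\ds
\]
and compute $\partial_p L_2$ by applying Leibniz's rule to the integrand
$-\frac{1}{p-1}\,\Omega_1'(\xi+p)\bigl[\Omega_2'(p+\eta)-\Omega_2'(1+\eta)\bigr]$
appearing in~\eqref{eq:BW:Laplace}. This produces three contributions, corresponding to differentiating respectively the prefactor $\tfrac{1}{p-1}$, the $\Omega_1'$-factor (producing $\Omega_1''(\xi+p)$), and the $\Omega_2'$-factor (producing $\Omega_2''(p+\eta)$; note that $\Omega_2'(1+\eta)$ is independent of $p$). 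These three contributions will correspond exactly to the three norm products on the right-hand side, namely $\fnorm{1}{\omega_1}\fnorm{1}{\omega_2}$, $\snorm{2}{\omega_1}\fnorm{1}{\omega_2}$ and $\fnorm{1}{\omega_1}\snorm{2}{\omega_2}$.

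Each of the three integrals would then be estimated using the same machinery as in Lemma~\ref{Lem:est:L:large:p:loss}: the pointwise bounds $|\Omega_1^{(k)}(\xi+p)| \lesssim \snorm{k}{\omega_1}/(\xi+p)^{k+\theta}$ for $k=1,2$, the crude bound $|\Omega_2'(p+\eta)-\Omega_2'(1+\eta)| \lesssim \fnorm{1}{\omega_2}/(1+\eta)^{1+\theta}$ valid for $p\geq 2$, and the integral estimates for $\Gamma$ from Proposition~\ref{P.repkernels} and Lemma~\ref{Lem:Gamma:eta:int}. In the third contribution, which involves $\Omega_2''(p+\eta)$, I would first apply the elementary interpolation $(p+\eta)^{-(2+\theta)} \leq C p^{-1}(1+\eta)^{-(1+\theta)}$, valid for $p\geq 1$, so as to reduce it to the same shape as the other two. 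Uniformly in $s\in[0,1]$ the resulting bound on $\partial_p L_2(p+s)$ is $\lesssim p^{-(3+\theta-\alpha)}$, and integrating in $s$ gives the claim.

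The only real difficulty is bookkeeping the $\xi$-integration against the singular density $\Gamma$, which requires splitting into the regimes $\xi\leq 1$, $1\leq \xi\leq p$ and $\xi\geq p$, and treating separately the diagonal contribution $W_\pm(-1)\delta(\xi-\eta)$ and the absolutely continuous part $\widetilde\Gamma$ from Proposition~\ref{P.repkernels}. These splittings are, however, already carried out in the proof of Lemma~\ref{Lem:est:L:large:p:loss} and transfer here with essentially no changes, so I would expect this to be the main technical (but not conceptual) obstacle.
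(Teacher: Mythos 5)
Your approach is correct, and it is a genuinely different (and arguably more transparent) route than the paper's. The paper manipulates the finite difference directly: writing $L_2(p)-L_2(p+1)$ as a double integral against $\Gamma$, inserting a cross term to split the integrand into a piece containing $\Omega_1'(\xi+p+1)-\Omega_1'(\xi+p)$ (their $I_1$) and a piece involving the awkward second difference quotient $\frac{\Omega_2'(p+1+\eta)-\Omega_2'(1+\eta)}{p}-\frac{\Omega_2'(p+\eta)-\Omega_2'(1+\eta)}{p-1}$ (their $I_2$), which they then rewrite as $\int_{p+\eta}^{p+1+\eta}\del_s\bigl(\tfrac{\Omega_2'(s)}{s-\eta-1}\bigr)\ds+\tfrac{\Omega_2'(1+\eta)}{p(p-1)}$. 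Your version, $L_2(p)-L_2(p+1)=-\int_0^1\partial_pL_2(p+s)\,\ds$ with $\partial_pL_2$ split by Leibniz, produces the same three terms in one stroke, each transparently matched to one of the three norm products in the conclusion; it is cleaner, at the minor cost of a (trivial, by dominated convergence with the $\esp_2$ decay bounds) justification for differentiating under the integral. Your pointwise bounds and the appeal to Lemma~\ref{Lem:Gamma:eta:int} / Proposition~\ref{P.repkernels} are the same as the paper's and close the argument. One remark: the interpolation $(p+\eta)^{-(2+\theta)}\leq Cp^{-1}(1+\eta)^{-(1+\theta)}$ you propose for the $\Omega_2''$-piece is correct but unnecessary and slightly wasteful; keeping $(\eta+p)^{-(2+\theta)}$ and rescaling $\xi\to p\xi$, $\eta\to p\eta$ using the $-1$-homogeneity of $\Gamma$ (as in~\eqref{eq:L2:diff:I2:3} in the paper) gives the sharper $p^{-(3+2\theta)}$ directly, and avoids having to thread the $\alpha$-loss through that term at all. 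Either way, the target bound $p^{-3}(1+p)^{1-\theta}\sim p^{-(2+\theta)}$ is obtained since $\alpha<1$.
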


\begin{proof}
Let $p\geq 2$. Using the definition of $L_2$ we obtain
  \begin{equation}\label{eq:diff:L2}
  \begin{aligned}
   &\quad L_{2}\left(p\right)-L_2\left(p+1\right)\\
   &=\int_{0}^{\infty}\int_{0}^{\infty}\Gamma\left(\xi,\eta\right)\cdot\\
   &\qquad\qquad\cdot\left[\Omega_{1}'\left(\xi+p+1\right)\frac{\Omega_{2}'\left(p+1+\eta\right)-\Omega_{2}'\left(1+\eta\right)}{p}-\Omega_{1}'\left(\xi+p\right)\frac{\Omega_{2}'\left(p+\eta\right)-\Omega_2'\left(1+\eta\right)}{p-1}\right]\dxi\deta\\
     &\begin{aligned}
             =\int_{0}^{\infty}\int_{0}^{\infty}\Gamma\left(\xi,\eta\right)&\left[\left(\Omega_{1}'\left(\xi+p+1\right)-\Omega_{1}'\left(\xi+p\right)\right)\frac{\Omega_{2}'\left(p+1+\eta\right)-\Omega_{2}'\left(1+\eta\right)}{p}\right.\\
             &\quad \left.+\Omega_{1}'\left(\xi+p\right)\left(\frac{\Omega_{2}'\left(p+1+\eta\right)-\Omega_2'\left(1+\eta\right)}{p}-\frac{\Omega_{2}'\left(p+\eta\right)-\Omega_{2}'\left(1+\eta\right)}{p-1}\right)\right]\dxi\deta
            \end{aligned}\\
   &=\vcc I_1+I_2.
  \end{aligned}
 \end{equation}
 $I_1$ can be estimated as
 \begin{equation}\label{eq:L2:diff:I1}
  \begin{split}
   \abs{I_1}\leq \int_{0}^{\infty}\int_{0}^{\infty}\abs{\Gamma\left(\xi,\eta\right)}\abs{\Omega_{1}'\left(\xi+p+1\right)-\Omega_{1}'\left(\xi+p\right)}\frac{\abs{\Omega_{2}'\left(p+1+\eta\right)}+\abs{\Omega_{2}'\left(1+\eta\right)}}{p}\dxi\deta.
  \end{split}
 \end{equation}
 From the definition of $\snorm{2}{\cdot}$ and $\snorm{1}{\cdot}$, using also $p\geq 2$, we have that
 \begin{equation*}
  \begin{split}
   \abs{\Omega_{1}'\left(\xi+p+1\right)-\Omega_{1}'\left(\xi+p\right)}=\abs{\int_{\xi+p}^{\xi+p+1}\Omega_{1}''\left(x\right)\ds}\leq \snorm{2}{\omega_1}\int_{\xi+p}^{\xi+p+1}\frac{\left(1+z\right)^{1-\theta}}{s^{3}}\ds\leq C\frac{\snorm{2}{\omega_{1}}}{\left(\xi+p\right)^{2+\theta}}
  \end{split}
 \end{equation*}
 as well as
 \begin{equation*}
  \begin{split}
   \abs{\Omega_{2}'\left(p+1+\eta\right)}\leq \snorm{1}{\omega_2}\frac{\left(2+p+\eta\right)^{1-\theta}}{\left(1+p+\eta\right)^{2}}\leq C\frac{\snorm{1}{\omega_2}}{\left(\eta+1\right)^{1+\theta}}\quad \text{and} \quad \abs{\Omega_{2}'\left(1+\eta\right)}\leq C\frac{\snorm{1}{\omega_2}}{\left(1+\eta\right)^{1+\theta}}.
  \end{split}
 \end{equation*}
 Using this in~\eqref{eq:L2:diff:I1} we get
 \begin{equation}\label{eq:L2:diff:I1:2}
  \begin{split}
   \abs{I_1}&\leq C\frac{\snorm{2}{\omega_1}\snorm{1}{\omega_2}}{p}\int_{0}^{\infty}\int_{0}^{\infty}\frac{\abs{\Gamma\left(\xi,\eta\right)}}{\left(\xi+p\right)^{2+\theta}\left(1+\eta\right)^{1+\theta}}\dxi\deta.
  \end{split}
 \end{equation}
Together with Lemma~\ref{Lem:Gamma:eta:int} and using $p\geq 2$ we further obtain
\begin{equation}\label{eq:L2:diff:I1:3}
 \begin{split}
  \abs{I_1}&\leq C\frac{\snorm{2}{\omega_1}\snorm{1}{\omega_2}}{p}\left(\int_{0}^{1}\frac{1}{\left(\xi+p\right)^{2+\theta}}\left(\frac{1}{\xi^{\alpha}}+\log\left(\frac{1}{\xi}\right)\right)\dxi+\int_{1}^{\infty}\frac{1}{\left(\xi+p\right)^{2+\theta}}\frac{1}{\xi^{1-\alpha}}\dxi\right)\\
  &\leq C\frac{\snorm{2}{\omega_1}\snorm{1}{\omega_2}}{p}\left(\frac{C}{p^{2+\theta}}+p^{1-\left(1-\alpha\right)-2-\theta}\int_{0}^{\infty}\frac{1}{\left(\xi+1\right)^{2+\theta}}\frac{1}{\xi^{1-\alpha}}\dxi\right)\leq C\frac{\snorm{2}{\omega_1}\snorm{1}{\omega_2}}{p^{3+\theta-\alpha}}.
 \end{split}
\end{equation}
We now proceed by considering $I_2$ and we first rewrite
\begin{equation*}
 \begin{split}
  &\quad\frac{\Omega_{2}'\left(p+1+\eta\right)-\Omega_{2}'\left(1+\eta\right)}{p}-\frac{\Omega_{2}'\left(p+\eta\right)-\Omega_{2}'\left(1+\eta\right)}{p-1}=\frac{\Omega_{2}'\left(p+1+\eta\right)}{p}-\frac{\Omega_{2}'\left(p+\eta\right)}{p-1}+\frac{\Omega_{2}'\left(1+\eta\right)}{p\left(p-1\right)}\\
  &=\int_{p+\eta}^{p+1+\eta}\del_s\left(\frac{\Omega_{2}'\left(s\right)}{s-\eta-1}\right)\ds+\frac{\Omega_{2}'\left(1+\eta\right)}{p\left(p-1\right)}=\int_{p+\eta}^{p+1+\eta}\frac{\Omega_{2}''\left(s\right)}{s-\eta-1}-\frac{\Omega_{2}'\left(s\right)}{\left(s-\eta-1\right)^2}\ds+\frac{\Omega_{2}'\left(1+\eta\right)}{p\left(p-1\right)}.
 \end{split}
\end{equation*}
Thus we can estimate
\begin{equation*}
 \begin{split}
  &\quad \abs{\frac{\Omega_{2}'\left(p+1+\eta\right)-\Omega_{2}'\left(1+\eta\right)}{p}-\frac{\Omega_{2}'\left(p+\eta\right)-\Omega_{2}'\left(1+\eta\right)}{p-1}}\\
  &\leq \snorm{2}{\omega_2}\int_{p+\eta}^{1+p+\eta}\frac{\left(1+s\right)^{1-\theta}}{s^{3}\left(s-\eta-1\right)}\ds+\snorm{1}{\omega_2}\int_{p+\eta}^{1+p+\eta}\frac{\left(1+s\right)^{1-\theta}}{s^2\left(s-\eta-1\right)^2}\ds+\frac{\snorm{1}{\omega_2}}{p\left(p-1\right)}\frac{\left(2+\eta\right)^{1-\theta}}{\left(1+\eta\right)^2}\\
  &\leq C\frac{\snorm{2}{\omega_2}}{p-1}\int_{p+\eta}^{1+p+\eta}s^{-2-\theta}\ds+C\frac{\snorm{1}{\omega_2}}{\left(p-1\right)^2}\int_{p+\eta}^{1+p+\eta}s^{-1-\theta}\ds+C\frac{\snorm{1}{\omega_{2}}}{p\left(p-1\right)}\frac{1}{\left(1+\eta\right)^{1+\theta}}.
 \end{split}
\end{equation*}
Using $p\geq 2$ and elementary estimates for the integrals we further obtain
\begin{equation*}
 \begin{split}
  &\quad \abs{\frac{\Omega_{2}'\left(p+1+\eta\right)-\Omega_{2}'\left(1+\eta\right)}{p}-\frac{\Omega_{2}'\left(p+\eta\right)-\Omega_{2}'\left(1+\eta\right)}{p-1}}\\
  &\leq C\left(\frac{\snorm{2}{\omega_2}}{p}\frac{1}{\left(p+\eta\right)^{2+\theta}}+\frac{\snorm{1}{\omega_2}}{p^2}\frac{1}{\left(p+\eta\right)^{1+\theta}}+\frac{\snorm{1}{\omega_2}}{p^2}\frac{1}{\left(1+\eta\right)^{1+\theta}}\right)\\
  &\leq C\frac{\snorm{2}{\omega_2}}{p}\frac{1}{\left(p+\eta\right)^{2+\theta}}+C\frac{\snorm{1}{\omega_2}}{p^2}\frac{1}{\left(1+\eta\right)^{1+\theta}}.
 \end{split}
\end{equation*}
On the other hand we have
\begin{equation*}
 \begin{split}
  \abs{\Omega_{1}'\left(\xi+p\right)}\leq \snorm{1}{\omega_1}\frac{\left(1+\xi+p\right)^{1-\theta}}{\left(\xi+p\right)^2}\leq C\snorm{1}{\omega_1}\left(\xi+p\right)^{-1-\theta}.
 \end{split}
\end{equation*}
Using the last two estimates in the definition of $I_2$ in~\eqref{eq:diff:L2} gives
\begin{multline}\label{eq:L2:diff:I2:1}
  \abs{I_2}\leq C\frac{\snorm{1}{\omega_1}\snorm{2}{\omega_2}}{p}\int_{0}^{\infty}\int_{0}^{\infty}\frac{\abs{\Gamma\left(\xi,\eta\right)}}{\left(p+\eta\right)^{2+\theta}\left(\xi+p\right)^{1+\theta}}\deta\dxi\\
  +C\frac{\snorm{1}{\omega_1}\snorm{1}{\omega_2}}{p^2}\int_{0}^{\infty}\int_{0}^{\infty}\frac{\abs{\Gamma\left(\xi,\eta\right)}}{\left(1+\eta\right)^{1+\theta}\left(\xi+p\right)^{1+\theta}}\dxi\deta.
\end{multline}
 It thus remains to estimate the two integrals on the right-hand side of~\eqref{eq:L2:diff:I2:1}. To estimate the second term we note that the integral $\int_{0}^{\infty}\abs{\Gamma\left(\xi,\eta\right)}\left(1+\eta\right)^{-\left(1+\theta\right)}\deta$ already occurred in \eqref{eq:L2:diff:I1:2} and thus using Lemma~\ref{Lem:Gamma:eta:int} we get similarly as before
\begin{equation}\label{eq:L2:diff:I2:2}
 \begin{split}
  &\quad\int_{0}^{\infty}\int_{0}^{\infty}\frac{\abs{\Gamma\left(\xi,\eta\right)}}{\left(1+\eta\right)^{1+\theta}\left(\xi+p\right)^{1+\theta}}\dxi\deta\leq \int_{0}^{1}\frac{1}{\left(\xi+p\right)^{1+\theta}}\left(\xi^{-\alpha}+\log\left(\xi^{-1}\right)\right)\dxi+\int_{1}^{\infty}\frac{1}{\left(\xi+p\right)^{1+\theta}\xi^{1-\alpha}}\dxi\\
  &\leq \frac{C}{p^{1+\theta}}+p^{1-\left(1-\alpha\right)-\left(1+\theta\right)}\int_{0}^{\infty}\frac{1}{\left(\xi+1\right)^{1+\theta}\xi^{1-\alpha}}\dxi\leq \frac{C}{p^{1+\theta-\alpha}}.
 \end{split}
\end{equation}
To estimate the first integral in~\eqref{eq:L2:diff:I2:1} we change variables and use Proposition~\ref{P.repkernels} to obtain
\begin{equation}\label{eq:L2:diff:I2:3}
 \begin{split}
  &\quad \int_{0}^{\infty}\frac{\abs{\Gamma\left(\xi,\eta\right)}}{\left(p+\eta\right)^{2+\theta}\left(\xi+p\right)^{1+\theta}}\deta\dxi\leq p^2\frac{1}{p}\frac{1}{p^{2+\theta+1+\theta}}\int_{0}^{\infty}\int_{0}^{\infty}\frac{\abs{\Gamma\left(\xi,\eta\right)}}{\left(1+\eta\right)^{2+\theta}\left(\xi+1\right)^{1+\theta}}\deta\dxi\leq \frac{C}{p^{2+2\theta}},
 \end{split}
\end{equation}
where the last integral is bounded due to Lemma~\ref{Lem:est:Gamma:int}.

Using the estimates obtained in~\eqref{eq:L2:diff:I2:2} and \eqref{eq:L2:diff:I2:3} in~\eqref{eq:L2:diff:I2:1} we get
\begin{equation*}
 \begin{split}
  \abs{I_{2}}\leq C\frac{\snorm{1}{\omega_1}\snorm{2}{\omega_2}}{p^{3+2\theta}}+C\frac{\snorm{1}{\omega_1}\snorm{1}{\omega_2}}{p^{3+\theta-\alpha}}.
 \end{split}
\end{equation*}
Together with~\eqref{eq:diff:L2} and~\eqref{eq:L2:diff:I1:3} and using again $p\geq 2$ this then shows
\begin{equation*}
 \begin{split}
  \abs{L_{2}\left(p\right)-L_{2}\left(p+1\right)}&\leq  C\frac{\snorm{2}{\omega_1}\snorm{1}{\omega_2}}{p^{3+\theta-\alpha}}+C\frac{\snorm{1}{\omega_1}\snorm{2}{\omega_2}}{p^{3+2\theta}}+C\frac{\snorm{1}{\omega_1}\snorm{1}{\omega_2}}{p^{3+\theta-\alpha}}\\
  &\leq C\frac{\left(1+p\right)^{1-\theta}}{p^3}\left(\snorm{2}{\omega_1}\snorm{1}{\omega_2}+\snorm{1}{\omega_1}\snorm{2}{\omega_2}+\snorm{1}{\omega_1}\snorm{1}{\omega_2}\right),
 \end{split}
\end{equation*}
which finishes the proof.
\end{proof}

\begin{lemma}\label{Lem:est:L1:diff:large:p}
 There exists some constant $C>0$ such that for all $\omega_1,\omega_2\in \esp_{2}$ it holds
  \begin{equation*}
  \begin{split}
   \sup_{p\geq 2}\frac{p^3}{\left(1+p\right)^{1-\theta}}\abs{L_1\left(p\right)-L_1\left(p+1\right)}\leq C\left(\snorm{2}{\omega_1}\fnorm{0}{\omega_2}+\fnorm{2}{\omega_1}\snorm{1}{\omega_2}\right),
  \end{split}
 \end{equation*}
 where $L_1$ is as in~\eqref{eq:BW:Laplace}.
\end{lemma}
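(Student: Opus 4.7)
The plan is to follow the strategy of the proof of Lemma~\ref{Lem:est:L2:diff:large:p}, adapted from $L_2$ to $L_1$. Starting from the representation of $L_1$ in~\eqref{eq:BW:Laplace}, for $p\geq 2$ we decompose the difference $L_1(p)-L_1(p+1)=J_1+J_2$, where
\begin{equation*}
 \begin{aligned}
  J_1 &= \int_{0}^{\infty}\int_{0}^{\infty}\Gamma(\xi,\eta)\bigl[\Omega_1''(\xi+p+1)-\Omega_1''(\xi+p)\bigr]\frac{\Omega_2(p+1+\eta)-\Omega_2(\eta+1)}{p}\dxi\deta,\\
  J_2 &= \int_{0}^{\infty}\int_{0}^{\infty}\Gamma(\xi,\eta)\,\Omega_1''(\xi+p)\biggl[\frac{\Omega_2(p+1+\eta)-\Omega_2(\eta+1)}{p}-\frac{\Omega_2(p+\eta)-\Omega_2(\eta+1)}{p-1}\biggr]\dxi\deta.
 \end{aligned}
\end{equation*}

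For $J_1$ the essential difference compared with the $I_1$-term of the $L_2$ proof is that we have no control on a third derivative of $\Omega_1$; hence the trick $|\Omega_1''(\xi+p+1)-\Omega_1''(\xi+p)|=|\int\Omega_1'''|$ is unavailable. Instead I would use only the triangle inequality, which gives $|\Omega_1''(\xi+p+1)-\Omega_1''(\xi+p)|\leq C\snorm{2}{\omega_1}(\xi+p)^{-2-\theta}$ for $p\geq 2$. Similarly, for the $\Omega_2$-difference I would use a triangle bound together with the definition of the norm to obtain $|\Omega_2(p+1+\eta)-\Omega_2(\eta+1)|\leq C\fnorm{0}{\omega_2}(\eta+1)^{-\theta}$. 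Substituting into $J_1$, invoking Lemma~\ref{Lem:Gamma:eta:int} for the $\eta$-integral of $|\Gamma(\xi,\eta)|(\eta+1)^{-\theta}$, and splitting the $\xi$-integral into $\{\xi\leq 1\}$ and $\{\xi\geq 1\}$ exactly as in~\eqref{eq:L2:diff:I1:3}, yields $|J_1|\leq C\snorm{2}{\omega_1}\fnorm{0}{\omega_2}(1+p)^{1-\theta}/p^3$.

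For $J_2$ I would repeat the algebraic identity from the $L_2$ proof with $\Omega_2'$ replaced by $\Omega_2$,
\begin{equation*}
 \frac{\Omega_2(p+1+\eta)-\Omega_2(\eta+1)}{p}-\frac{\Omega_2(p+\eta)-\Omega_2(\eta+1)}{p-1}=\int_{p+\eta}^{p+1+\eta}\left[\frac{\Omega_2'(s)}{s-\eta-1}-\frac{\Omega_2(s)}{(s-\eta-1)^2}\right]\ds+\frac{\Omega_2(\eta+1)}{p(p-1)},
\end{equation*}
and bound the integrand using the standard estimates $|\Omega_2'(s)|\leq C\snorm{1}{\omega_2}s^{-1-\theta}$ and $|\Omega_2(s)|\leq C\fnorm{0}{\omega_2}s^{-\theta}$ valid for $s\geq 2$, combined with $|\Omega_1''(\xi+p)|\leq C\snorm{2}{\omega_1}(\xi+p)^{-2-\theta}$. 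Carrying out the $\Gamma$-integrals precisely as in~\eqref{eq:L2:diff:I2:2}\textendash\eqref{eq:L2:diff:I2:3}, the $\Omega_2'$-contribution produces a $\snorm{2}{\omega_1}\snorm{1}{\omega_2}$ term, which is absorbed into $\fnorm{2}{\omega_1}\snorm{1}{\omega_2}$ via $\snorm{2}{\omega_1}\leq\fnorm{2}{\omega_1}$; the $\Omega_2$-contribution and the boundary term $\Omega_2(\eta+1)/(p(p-1))$ both produce $\snorm{2}{\omega_1}\fnorm{0}{\omega_2}$ with an even better prefactor. All three contributions come with the required $(1+p)^{1-\theta}/p^3$ decay.

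The computation is structurally identical to that of Lemma~\ref{Lem:est:L2:diff:large:p}; the only conceptual novelty is the loss of one derivative of $\omega_1$ (which forces the crude triangle estimate in $J_1$ and is the reason why $\fnorm{0}{\omega_2}$ suffices there) and the correspondingly reduced derivative budget on $\omega_2$ (only $\snorm{1}$ and $\fnorm{0}$ are available, not $\snorm{2}$). The main technical chore will therefore be the careful bookkeeping of the $\Gamma$-integrals needed to obtain the precise $(1+p)^{1-\theta}/p^3$ decay, all of which are directly analogous to those already performed in the previous lemma and rely only on the estimates collected in Section~\ref{Sec:appendix:rep:ker}.
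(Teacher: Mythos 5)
Your proposal is correct and follows essentially the same route as the paper: the same triangle-inequality bound for $\Omega_1''(\xi+p+1)-\Omega_1''(\xi+p)$ (forced by the absence of a third-derivative norm on $\omega_1$), the same $\Gamma$-integral lemmas, and the same $p^{-2-2\theta}$ decay. The only cosmetic difference is the grouping: you split into two terms $J_1,J_2$ mimicking Lemma~\ref{Lem:est:L2:diff:large:p} literally (handling $J_2$ via the fundamental-theorem identity for the difference quotient), whereas the paper rearranges directly into three terms $(I)$, $(II)$, $(III)$; both lead to exactly the same three elementary bounds on $\Omega_2$.
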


\begin{proof}
 From the definition of $L_1$ we first obtain for $p\geq 2$ that
 \begin{equation*}
  \begin{split}
   &\quad L_1\left(p\right)-L_1\left(p+1\right)\\
   &=\int_{0}^{\infty}\int_{0}^{\infty}\Gamma\left(\xi,\eta\right)\left[\left(\Omega_1''\left(\xi+p\right)-\Omega_1''\left(\xi+p+1\right)\right)\frac{\Omega_2\left(\eta+p\right)-\Omega_2\left(\eta+1\right)}{p-1}\right]\dxi\deta\\
   &\quad + \int_{0}^{\infty}\int_{0}^{\infty}\Gamma\left(\xi,\eta\right)\frac{\Omega_1''\left(\xi+p+1\right)}{p-1}\left(\left(\Omega_2\left(\eta+p\right)-\Omega_2\left(\eta+1\right)\right)-\left(\Omega_2\left(\eta+p+1\right)-\Omega_2\left(\eta+1\right)\right)\right)\dxi\deta\\
   &\quad +\int_{0}^{\infty}\int_{0}^{\infty}\Gamma\left(\xi,\eta\right)\Omega_{1}''\left(\xi+p+1\right)\left(\Omega_2\left(\eta+p+1\right)-\Omega_2\left(\eta+1\right)\right)\left(\frac{1}{p-1}-\frac{1}{p}\right)\dxi\deta.
  \end{split}
 \end{equation*}
Rearranging then gives
 \begin{equation*}
  \begin{split}
   &\quad L_1\left(p\right)-L_1\left(p+1\right)\\
   &=\int_{0}^{\infty}\int_{0}^{\infty}\Gamma\left(\xi,\eta\right)\left[\left(\Omega_1''\left(\xi+p\right)-\Omega_1''\left(\xi+p+1\right)\right)\frac{\Omega_2\left(\eta+p\right)-\Omega_2\left(\eta+1\right)}{p-1}\right]\dxi\deta\\
   &\quad+ \frac{1}{p-1}\int_{0}^{\infty}\int_{0}^{\infty}\Gamma\left(\xi,\eta\right)\Omega_1''\left(\xi+p+1\right)\left(\Omega_2\left(\eta+p\right)-\Omega_2\left(\eta+p+1\right)\right)\dxi\deta\\
   & \quad+\frac{1}{p\left(p-1\right)}\int_{0}^{\infty}\int_{0}^{\infty}\Gamma\left(\xi,\eta\right)\Omega_{1}''\left(\xi+p+1\right)\left(\Omega_2\left(\eta+p+1\right)-\Omega_2\left(\eta+1\right)\right)\dxi\deta=\vcc(I)+(II)+(III).
  \end{split}
  \end{equation*}
  We estimate the three terms separately. Using that $p\geq 2$ together with the monotonicity of  $\Lambda$ and~\eqref{eq:Lambda} we get 
\begin{equation*}
 \begin{split}
  \abs{\Omega_{1}''\left(\xi+p\right)}\leq C\frac{\snorm{2}{\omega_1}}{\left(\xi+p\right)^{2+\theta}} \quad \text{and} \quad \abs{\Omega_{1}''\left(\xi+p+1\right)}\leq C\frac{\snorm{2}{\omega_1}}{\left(\xi+p\right)^{2+\theta}}.
 \end{split}
\end{equation*}
Similarly 
\begin{equation*}
 \begin{split}
  \abs{\Omega_2\left(\eta+p\right)}\leq C\frac{\fnorm{0}{\omega_2}}{\left(\eta+p\right)^{\theta}}\leq C\frac{\fnorm{0}{\omega_2}}{\eta^{\theta}} \quad \text{and} \quad \abs{\Omega_2\left(\eta+1\right)}\leq C\frac{\fnorm{0}{\omega_2}}{\left(\eta+1\right)^{\theta}}\leq C\frac{\fnorm{0}{\omega_2}}{\eta^{\theta}}.
 \end{split}
\end{equation*}
Thus altogether we get
\begin{equation*}
 \begin{split}
  \abs{\left(I\right)}&\leq \int_{0}^{\infty}\int_{0}^{\infty}\abs{\Gamma\left(\xi,\eta\right)}\left(\abs{\Omega_{1}''\left(\xi+p\right)}+\abs{\Omega_1''\left(\xi+p+1\right)}\right)\frac{\abs{\Omega_2\left(\eta+p\right)}+\abs{\Omega_2\left(\eta+1\right)}}{p-1}\deta\dxi\\
  &\leq C\frac{\snorm{2}{\omega_1}\fnorm{0}{\omega_2}}{p}\int_{0}^{\infty}\int_{0}^{\infty}\frac{\abs{\Gamma\left(\xi,\eta\right)}}{\left(\xi+p\right)^{2+\theta}\eta^{\theta}}\deta\dxi\\
  &=C\frac{\snorm{2}{\omega_1}\fnorm{0}{\omega_2}}{p}\frac{p^2}{p^{2+\theta+1+\theta}}\int_{0}^{\infty}\int_{0}^{\infty}\frac{\abs{\Gamma\left(\xi,\eta\right)}}{\left(\xi+1\right)^{2+\theta}\eta^{\theta}}\deta\dxi\leq C\frac{\snorm{2}{\omega_1}\fnorm{0}{\omega_2}}{p^{2+2\theta}},
 \end{split}
\end{equation*}
 where we changed variables in the third step and used that the remaining integral on the right-hand side is bounded by a constant due to Lemma~\ref{Lem:est:Gamma:int:2}.

We next estimate the term $(II)$. Similarly as before we have for $p\geq 2$ that
\begin{equation*}
 \begin{split}
  \abs{\Omega_1''\left(\xi+p+1\right)}\leq C\frac{\snorm{2}{\omega_1}}{\left(\xi+p\right)^{2+\theta}}
 \end{split}
\end{equation*}
as well as 
\begin{equation*}
 \begin{split}
  \abs{\Omega_{2}\left(\eta+p\right)-\Omega_2\left(\eta+p+1\right)}\leq \snorm{1}{\omega_2}\int_{\eta+p}^{\eta+p+1}\frac{\left(1+s\right)^{1-\theta}}{s^2}\ds\leq C\frac{\snorm{1}{\omega_2}}{\left(\eta+p\right)^{1+\theta}}.
 \end{split}
\end{equation*}
Using this gives
\begin{equation*}
 \begin{split}
  \abs{(II)}&\leq \frac{1}{p-1}\int_{0}^{\infty}\int_{0}^{\infty}\abs{\Gamma\left(\xi,\eta\right)}\abs{\Omega_{1}''\left(\xi+p+1\right)}\abs{\Omega_{2}\left(\eta+p\right)-\Omega_2\left(\eta+p+1\right)}\dxi\deta\\
  &\leq C\frac{\snorm{2}{\omega_1}\snorm{1}{\omega_1}}{p}\int_{0}^{\infty}\int_{0}^{\infty}\abs{\Gamma\left(\xi,\eta\right)}\frac{1}{\left(\xi+p\right)^{2+\theta}}\frac{1}{\left(\eta+p\right)^{1+\theta}}\dxi\deta\\
  &=C\frac{\snorm{2}{\omega_1}\snorm{1}{\omega_1}}{p}\frac{p^2}{p^{1+2+\theta+1+\theta}}\int_{0}^{\infty}\int_{0}^{\infty}\abs{\Gamma\left(\xi,\eta\right)}\frac{1}{\left(1+\xi\right)^{2+\theta}}\frac{1}{\left(\eta+1\right)^{1+\theta}}\dxi\deta\leq C\frac{\snorm{2}{\omega_1}\snorm{1}{\omega_1}}{p^{3+2\theta}},
 \end{split}
\end{equation*}
as the last integral on the right-hand side is bounded by some constant due to Lemma~\ref{Lem:est:Gamma:int}.

It remains to estimate $(III)$. For this we use similarly as before
\begin{equation*}
 \begin{split}
  \abs{\Omega_1''\left(\xi+p+1\right)}\leq C\frac{\snorm{2}{\omega_1}}{\left(\xi+p\right)^{2+\theta}} \quad \text{and} \quad \abs{\Omega_2\left(\eta+p+1\right)}+\abs{\Omega_2\left(\eta+1\right)}\leq C\frac{\fnorm{0}{\omega_2}}{\eta^{\theta}}
 \end{split}
\end{equation*}
to obtain together with Lemma~\ref{Lem:est:Gamma:int:2} that
\begin{equation*}
 \begin{split}
  \abs{(III)}&\leq \frac{1}{\left(p-1\right)p}\int_{0}^{\infty}\int_{0}^{\infty}\abs{\Gamma\left(\xi,\eta\right)}\abs{\Omega_{1}''\left(\xi+p+1\right)}\left(\abs{\Omega_2\left(\eta+p+1\right)}+\abs{\Omega_2\left(\eta+1\right)}\right)\dxi\deta\\
  &\leq C\frac{\snorm{2}{\omega_1}\fnorm{0}{\omega_2}}{p^2}\int_{0}^{\infty}\int_{0}^{\infty}\abs{\Gamma\left(\xi,\eta\right)}\frac{1}{\left(\xi+p\right)^{2+\theta}}\frac{1}{\eta^{\theta}}\dxi\deta\\
  &=C\frac{\snorm{2}{\omega_1}\fnorm{0}{\omega_2}}{p^{3+2\theta}}\int_{0}^{\infty}\int_{0}^{\infty}\abs{\Gamma\left(\xi,\eta\right)}\frac{1}{\left(\xi+1\right)^{2+\theta}}\frac{1}{\eta^{\theta}}\deta\dxi\leq C\frac{\snorm{2}{\omega_1}\fnorm{0}{\omega_2}}{p^{3+2\theta}}.
 \end{split}
\end{equation*}
Summarizing the estimates obtained for $(I)-(III)$ we have
\begin{equation*}
 \begin{split}
  \abs{L_1\left(p\right)-L_1\left(p+1\right)}&\leq C\left(\frac{\snorm{2}{\omega_1}\fnorm{0}{\omega_2}}{p^{2+2\theta}}+\frac{\snorm{2}{\omega_1}\snorm{1}{\omega_1}}{p^{3+2\theta}}+\frac{\snorm{2}{\omega_1}\fnorm{0}{\omega_2}}{p^{3+2\theta}}\right)\\
  &\leq C\frac{\snorm{2}{\omega_1}\fnorm{0}{\omega_2}+\fnorm{2}{\omega_1}\snorm{1}{\omega_2}}{p^{2+2\theta}}
 \end{split}
\end{equation*}
for $p\geq 2$, finishing the proof.
\end{proof}

As a consequence of the previous estimates we obtain
\begin{lemma}\label{Lem:est:L}
 There exists some constant $C>0$ such that for all $\omega_1,\omega_2\in \esp_{2}$ it holds
 \begin{equation*}
  \begin{split}
   \sup_{p>0}\frac{p^3}{\left(1+p\right)^{1-\theta}}\abs{L\left(p\right)-L\left(p+1\right)}\leq C\fnorm{2}{\omega_1}\fnorm{2}{\omega_2},
  \end{split}
 \end{equation*}
 where $L$ is as in~\eqref{eq:def:L:BW}. 
\end{lemma}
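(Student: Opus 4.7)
The plan is to split the supremum into the regimes $p\geq 2$ and $0<p<2$, and handle each using the estimates already established for $L_1$, $L_2$, and $L$ itself.

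For $p\geq 2$ I would simply write $L=L_1+L_2$ from the decomposition in~\eqref{eq:BW:Laplace} and add the bounds of Lemma~\ref{Lem:est:L1:diff:large:p} and Lemma~\ref{Lem:est:L2:diff:large:p}. Each of these supplies the factor $p^3/(1+p)^{1-\theta}$ required in the statement, and the right-hand sides involve sums of products of semi-norms $\snorm{k}{\omega_i}$ and $\fnorm{k}{\omega_i}$ with $k\in\{0,1,2\}$, all of which are dominated by $\fnorm{2}{\omega_1}\fnorm{2}{\omega_2}$. This takes care of the large-$p$ part of the supremum.

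For $0<p<2$ the trick is that $p^3/(1+p)^{1-\theta}$ is bounded by a universal constant, so it suffices to bound $|L(p)-L(p+1)|$ by $\fnorm{2}{\omega_1}\fnorm{2}{\omega_2}\,(1+p)^{1-\theta}/p^3$. I would apply the triangle inequality $|L(p)-L(p+1)|\leq |L(p)|+|L(p+1)|$. The first term $|L(p)|$ is controlled directly by Lemma~\ref{Lem:est:L:small:p} with exactly the desired weight. For the second term, note that $p+1\in(1,3]$ whenever $p\in(0,2)$: if $p+1\in(1,2]$ I apply Lemma~\ref{Lem:est:L:small:p} again (this time at argument $p+1$, where the weight $(p+1)^3/(1+(p+1))^{1-\theta}$ is bounded below), while for $p+1\in(2,3]$ I invoke Lemma~\ref{Lem:est:L:large:p:loss}; in both subcases $|L(p+1)|$ is bounded by an absolute constant times $\fnorm{2}{\omega_1}\fnorm{2}{\omega_2}$ since $p+1$ varies in a compact set bounded away from zero.

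Combining the two regimes yields the claimed uniform estimate. There is no genuine obstacle here: the statement is purely a bookkeeping corollary of the three preceding lemmas, the only mild point being to handle the transitional range $p+1\in(1,3]$ by splitting at $p+1=2$ so that the appropriate previous lemma applies on each side.
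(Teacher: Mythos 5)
Your proof is correct and follows essentially the same route as the paper's (which is a one-sentence pointer to Lemma~\ref{Lem:est:L:small:p} and Lemmas~\ref{Lem:est:L1:diff:large:p}, \ref{Lem:est:L2:diff:large:p}). You are slightly more careful than the paper in the crossover range $p\in(1,2)$, where $p+1\in(2,3]$ falls outside the stated scope of Lemma~\ref{Lem:est:L:small:p}, so you correctly invoke Lemma~\ref{Lem:est:L:large:p:loss} there as well — a detail the paper's terse proof elides.
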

\begin{proof}
 The proof immediately follows from the estimates in Lemma~\ref{Lem:est:L:small:p}, Lemma~\ref{Lem:est:L2:diff:large:p} and Lemma~\ref{Lem:est:L1:diff:large:p}.
\end{proof}

\subsection{Estimates on $\left(1-\zeta\right)m$}

\begin{lemma}\label{Lem:est:m:loc:large}
 There exists some constant such that for any two solutions $m_1$ and $m_2$ of~\eqref{eq:mu2} the difference $m\vcc=m_1-m_2$ satisfies
 \begin{equation*}
  \begin{split}
   \snorm{2}{\left(1-\zeta\right)m}\leq C\left(\fnorm{1}{m}+\eps\fnorm{2}{m}\right).
  \end{split}
 \end{equation*}
\end{lemma}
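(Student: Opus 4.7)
The plan is to bound $\snorm{2}{(1-\zeta)m}$ directly in terms of the bilinear norms arising from the right-hand side of \eqref{eq:diff:m}. Since $\zeta(x)=\ee^{-x}$, the Laplace transform of $(1-\zeta)m$ at $p$ is $M(p)-M(p+1)$, and therefore
\[
 \snorm{2}{(1-\zeta)m}=\sup_{p>0}\frac{p^3}{(1+p)^{1-\theta}}\bigl|M''(p)-M''(p+1)\bigr|.
\]
I would begin from \eqref{eq:diff:m}, multiply by $x^{2}\ee^{-px}$ and integrate to recover the expression \eqref{eq:M} for $M''(p)$ as a sum of six integrals, four of $B_2$-type (with arguments $(\bar\mu,m),(m,\bar\mu),(m,m_1),(m_2,m)$) and two of $B_W$-type (carrying the factor $\eps$ and arguments $(\mu_1,m)$ and $(m,\mu_2)$). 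Forming the difference at $p$ and $p+1$ and using linearity, the problem reduces to estimating $\snorm{2}{(1-\zeta)B_2(\omega_1,\omega_2)}$ and $\snorm{2}{(1-\zeta)B_W(\omega_1,\omega_2)}$ for each pair.

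For the $B_2$-pieces the elementary bound $\snorm{2}{(1-\zeta)B_2(\omega_1,\omega_2)}\leq C\,\snorm{2}{B_2(\omega_1,\omega_2)}$ is enough: writing $G=\T B_2(\omega_1,\omega_2)$ and using $|G''(p)-G''(p+1)|\leq|G''(p)|+|G''(p+1)|$, together with the uniform comparability $\frac{p^3}{(1+p)^{1-\theta}}\cdot\frac{(2+p)^{1-\theta}}{(p+1)^3}\leq 2^{1-\theta}$, the right-hand side is controlled by a constant times $\snorm{2}{G}$, which in turn is bounded by $C\fnorm{1}{\omega_1}\fnorm{1}{\omega_2}$ thanks to \eqref{eq:bil:1}. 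For the $B_W$-pieces the analogous naive reduction would only yield the weaker weight $\theta-\alpha$ via \eqref{eq:bil:2} and would not match the $\snorm{2}{\cdot}$ appearing on the left-hand side. The crucial improvement is provided precisely by Lemma~\ref{Lem:est:L}, whose whole purpose is to recover the full weight $\theta$ after localizing with $1-\zeta$:
\[
\snorm{2}{(1-\zeta)B_W(\omega_1,\omega_2)}\leq C\fnorm{2}{\omega_1}\fnorm{2}{\omega_2}.
\]
This substitution of one power of $p^{\alpha}$ by the localization factor is the only non-routine ingredient of the argument.

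Adding the six contributions yields
\[
\snorm{2}{(1-\zeta)m}\leq C\fnorm{1}{m}\bigl(\fnorm{1}{\bar\mu}+\fnorm{1}{m_1}+\fnorm{1}{m_2}\bigr)+C\eps\fnorm{2}{m}\bigl(\fnorm{2}{\mu_1}+\fnorm{2}{\mu_2}\bigr).
\]
It then suffices to observe that $\fnorm{1}{\bar\mu}$ is finite (as $\T\bar\mu(p)=1/p$ and the weight $p/(1+p)^{1-\theta}$ is uniformly bounded), that $\fnorm{1}{m_k}\leq\fnorm{2}{m_k}$ is arbitrarily small for small $\eps$ by Proposition~\ref{Prop:smallness:mk}, and that $\fnorm{2}{\mu_k}\leq C$ by Proposition~\ref{Prop:uniform:bound:muk}. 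Absorbing these $\eps$-independent constants yields the claimed inequality $\snorm{2}{(1-\zeta)m}\leq C\bigl(\fnorm{1}{m}+\eps\fnorm{2}{m}\bigr)$. I expect no serious obstacle beyond correctly invoking Lemma~\ref{Lem:est:L} to beat the weight loss of the $B_W$-contribution; the rest is bookkeeping.
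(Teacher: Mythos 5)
Your argument follows the paper's proof essentially step for step: express $\snorm{2}{(1-\zeta)m}$ via the difference $M''(p)-M''(p+1)$ from \eqref{eq:M}, handle the $B_2$ contributions by the naive triangle inequality and \eqref{eq:bil:1} (exploiting the comparability of the weights at $p$ and $p+1$), and use Lemma~\ref{Lem:est:L} to recover the full weight $\theta$ for the $B_W$ contributions, then close with Lemma~\ref{Lem:norm:est:pos} and Proposition~\ref{Prop:uniform:bound:muk}. The only cosmetic difference is that you also cite Proposition~\ref{Prop:smallness:mk}, which the paper does not need here (a uniform bound on $\fnorm{1}{m_k}$ already suffices), but this does not affect correctness.
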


\begin{proof}
 Recalling that the Laplace transform $M$ of $m$ satisfies~\eqref{eq:M}, we have
\begin{equation*}
 \begin{split}
  \snorm{2}{\left(1-\zeta\right)m}&=\sup_{p>0}\frac{p^3}{\left(1+p\right)^{1-\theta}}\abs{\int_{0}^{\infty}x^2\left(1-\ee^{-x}\right)\ee^{-px}m\left(x\right)\dx}=\sup_{p>0}\frac{p^3}{\left(1+p\right)^{1-\theta}}\abs{M''\left(p\right)-M''\left(p+1\right)}.
 \end{split}
\end{equation*}
Using definition~\eqref{eq:def:L:BW}, we can rewrite and estimate
\begin{multline*}
  \quad\abs{M''\left(p\right)-M''\left(p+1\right)}\\
  \shoveleft{\leq \abs{\int_{0}^{\infty}x^{2}\ee^{-px}B_{2}\left(\bar{\mu},m\right)\dx}+\abs{\int_{0}^{\infty}x^{2}\ee^{-\left(p+1\right)x}B_{2}\left(\bar{\mu},m\right)\dx}
  +\abs{\int_{0}^{\infty}x^{2}\ee^{-px}B_{2}\left(m,\bar{\mu}\right)\dx}}\\
  \quad+\abs{\int_{0}^{\infty}x^{2}\ee^{-\left(p+1\right)x}B_{2}\left(m,\bar{\mu}\right)\dx}+\abs{\int_{0}^{\infty}x^{2}\ee^{-px}B_{2}\left(m,m_1\right)\dx}+\abs{\int_{0}^{\infty}x^{2}\ee^{-\left(p+1\right)x}B_{2}\left(m,m_1\right)\dx}\\
  \quad+\abs{\int_{0}^{\infty}x^{2}\ee^{-px}B_{2}\left(m_2,m\right)\dx}+\abs{\int_{0}^{\infty}x^{2}\ee^{-\left(p+1\right)x}B_{2}\left(m_2,m\right)\dx}\\
  \quad+\eps\abs{L\left[m,\mu_1\right]\left(p\right)-L\left[m,\mu_1\right]\left(p+1\right)}+\eps\abs{L\left[\mu_2,m\right]\left(p\right)-L\left[\mu_2,m\right]\left(p+1\right)}.
\end{multline*}
From Proposition~\ref{Prop:bilinear} and Lemma~\ref{Lem:est:L} we now obtain, using also $\frac{\left(2+p\right)^{1-\theta}}{\left(1+p\right)^3}\leq 2^{1-\theta}\frac{\left(1+p\right)^{1-\theta}}{p^3}$, that
\begin{multline*}
   \quad\abs{M''\left(p\right)-M''\left(p+1\right)}\\
   \shoveleft{\leq C\left(\fnorm{1}{\bar{\mu}}\fnorm{1}{m}+\fnorm{1}{m}\fnorm{1}{m_1}+\fnorm{1}{m_2}\fnorm{1}{m}\right)\frac{\left(1+p\right)^{1-\theta}}{p^3}}\\
   +\eps\left(\fnorm{1}{m}\fnorm{2}{\mu_1}+\snorm{2}{m}\fnorm{1}{\mu_1}+\fnorm{1}{\mu_2}\fnorm{2}{m}+\snorm{2}{\mu_2}\fnorm{1}{m}\right)\frac{\left(1+p\right)^{1-\theta}}{p^3}.
\end{multline*}
Using that $\fnorm{2}{m_i}=\fnorm{2}{\mu_i-\bar{\mu}}\leq \fnorm{2}{\mu_i}+\fnorm{2}{\bar{\mu}}$ together with Lemma~\ref{Lem:norm:est:pos} and Proposition~\ref{Prop:uniform:bound:muk} we get
\begin{equation*}
 \begin{split}
  \abs{M''\left(p\right)-M''\left(p+1\right)}\leq C\frac{\left(1+p\right)^{1-\theta}}{p^3}\left(\fnorm{1}{m}+\eps\fnorm{2}{m}\right),
 \end{split}
\end{equation*}
finishing the proof.
\end{proof}

The following result will be used later to show some regularisation effect for the difference $m$ of two solutions to~\eqref{eq:mu2} (see Proposition~\ref{Prop:reg} and the corresponding proof in Section~\ref{Sec:boundary:layer}). It follows directly from Lemma~\ref{Lem:est:m:loc:large} by interpolation.

\begin{proposition}\label{Prop:m:large:x}
 Let $\delta>0$. Then there exists some constant $C>0$ such that for $\eps>0$ sufficiently small and any two solutions $m_1$ and $m_2$ of~\eqref{eq:mu2} the difference $m\vcc=m_1-m_2$ satisfies
 \begin{equation*}
  \fnorm{2}{\left(1-\zeta\right)m}\leq C\fnorm{0}{m}+\delta\fnorm{2}{m}.
 \end{equation*}
\end{proposition}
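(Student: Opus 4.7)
The plan is to expand $\fnorm{2}{(1-\zeta)m} = \snorm{0}{(1-\zeta)m} + \snorm{1}{(1-\zeta)m} + \snorm{2}{(1-\zeta)m}$ and control each semi-norm separately, with Lemma~\ref{Lem:est:m:loc:large} providing the top-order piece $\snorm{2}{(1-\zeta)m} \leq C(\fnorm{1}{m} + \eps\fnorm{2}{m})$. The lower-order semi-norms of $(1-\zeta)m$ should be controllable by the corresponding semi-norms of $m$ alone, using that the Laplace transform of $(1-\zeta)m$ is the shifted difference $M(\cdot) - M(\cdot+1)$.

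Concretely, for $\snorm{0}{(1-\zeta)m}$ I would bound $|M(p)-M(p+1)| \leq |M(p)| + |M(p+1)|$; the first term gives exactly $\snorm{0}{m}$, while for the second one uses the elementary weight-shift estimate $\frac{p}{(1+p)^{1-\theta}}\cdot\frac{(2+p)^{1-\theta}}{1+p} \leq C$ (essentially Remark~\ref{Rem:elem:norm:est}) to obtain $C\snorm{0}{m}$. The analogous argument for $\snorm{1}{(1-\zeta)m}$ bounds $|M'(p)-M'(p+1)| \leq |M'(p)| + |M'(p+1)|$ and again yields $C\snorm{1}{m}$. Combining with Lemma~\ref{Lem:est:m:loc:large} gives
\begin{equation*}
 \fnorm{2}{(1-\zeta)m} \leq C\fnorm{1}{m} + C\eps\fnorm{2}{m}.
\end{equation*}

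To absorb $\fnorm{1}{m}$ into the allowed right-hand side, I would apply an interpolation estimate for the weighted semi-norms, namely that for every $\eta>0$ there is $C_\eta$ with $\fnorm{1}{m} \leq \eta\fnorm{2}{m} + C_\eta\fnorm{0}{m}$. This is a Landau--Kolmogorov type inequality for the family $\snorm{k,\theta}{\cdot}$ and is a standard consequence of the relations among the semi-norms; I would assume it as a norm-property result (Section~\ref{Sec:prop:norm}), or derive it by writing $M'(p) = h^{-1}(M(p+h)-M(p)) - \tfrac{h}{2}M''(\xi)$ and optimising over $h>0$ after multiplying by the weight $p^2(1+p)^{-(1-\theta)}$.

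Putting this together yields $\fnorm{2}{(1-\zeta)m} \leq (C\eta + C\eps)\fnorm{2}{m} + CC_\eta\fnorm{0}{m}$; choosing $\eta$ so that $C\eta \leq \delta/2$ and then $\eps$ small enough that $C\eps \leq \delta/2$ gives the desired bound. The only non-routine step is the interpolation inequality, but its proof is direct once one has verified that the weight $(1+p)^{1-\theta}$ is compatible with standard convexity-type estimates between consecutive derivatives.
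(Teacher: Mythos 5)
Your proposal is correct and follows essentially the same route as the paper: the top-order semi-norm is handled by Lemma~\ref{Lem:est:m:loc:large}, the lower-order pieces are reduced to semi-norms of $m$ via the weight-shift bound (the paper's Lemma~\ref{Lem:elem:est:norm}), and the resulting $\fnorm{1}{m}$ is absorbed by the interpolation inequality (the paper's Lemma~\ref{Lem:interpolation}, proved there via Landau's inequality) with small $\eta$ and $\eps$. The only cosmetic difference is bookkeeping: the paper first interpolates on $(1-\zeta)m$ with $\delta=1$ and then shifts $\fnorm{0}{(1-\zeta)m}$ down to $\fnorm{0}{m}$, whereas you shift each of $\snorm{0}$, $\snorm{1}$ directly and interpolate on $m$ afterwards — same ingredients, same conclusion.
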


\begin{proof}
 Applying first Lemma~\ref{Lem:interpolation} with $\delta=1$ and then Lemma~\ref{Lem:est:m:loc:large} gives (using also Lemma~\ref{Lem:elem:est:norm})
 \begin{equation*}
  \begin{split}
   \fnorm{2}{\left(1-\zeta\right)m}&=\fnorm{1}{\left(1-\zeta\right)m}+\snorm{2}{\left(1-\zeta\right)m}\leq \left(1+C_1\right)\fnorm{0}{\left(1-\zeta\right)m}+2\snorm{2}{\left(1-\zeta\right)m}\\
   &\leq \left(1+C_1\right)\fnorm{0}{\left(1-\zeta\right)m}+2\widehat{C}\left(\fnorm{1}{m}+\eps\fnorm{2}{m}\right)\\
   &\leq 2\left(1+C_1\right)\fnorm{0}{m}+2\widehat{C}C_{\widetilde{\delta}}\fnorm{0}{m}+2\widehat{C}\left(\eps+\widetilde{\delta}\right)\fnorm{2}{m}\\
   &=2\left(1+C_1+\widehat{C}C_{\widetilde{\delta}}\right)\fnorm{0}{m}+2\widehat{C}\left(\eps+\widetilde{\delta}\right)\fnorm{2}{m},
  \end{split}
 \end{equation*}
 where we applied Lemma~\ref{Lem:interpolation} again (with $\widetilde{\delta}$) in the last step. For $\eps,\widetilde{\delta}\leq \delta/(4\widehat{C})$ the claim follows. 
\end{proof}

\section{Inversion of the linearised coagulation operator}\label{Sec:Inversion}

In this section we give the proof of Proposition~\ref{Prop:Lin:inv}, i.e.\ we show that the linearised coagulation operator is well defined and can be inverted in the space $X_{k,\chi}$.

\begin{proof}[Proof of Proposition~\ref{Prop:Lin:inv}]
 We first remark, that the relation $\T\LL=\widehat{\LL}\T$ is just an easy computation.
 
 To show the boundedness of $\widehat{\LL}$ and the existence and boundedness of the inverse, we proceed in three main steps. First we show that $\widehat{\LL}$ is well-defined and bounded, next we show that $\widehat{\LL}$ is invertible on $X_{k,\chi}$ by computing the inverse explicitly and finally, using this explicit solution formula, we show that the inverse is also bounded.
 
 {\bf Step 1:} To show that $\widehat{\LL}$ is well-defined and bounded it suffices to show that for any $M\in X_{k,\chi}$ we have
 \begin{equation*}
  \lnorm{2,\chi}{\int_{p}^{\infty}\int_{q}^{\infty}\left(\frac{M\left(r\right)-M\left(1\right)}{\left(1-r\right)r^2}-\frac{M'\left(r\right)}{r}\right)\dr\dq}\leq C \lnorm{1,\chi}{M}.
 \end{equation*}
while due to the definition of $\lnorm{k,\chi}{\cdot}$ and~\eqref{eq:Lambda} this is equivalent to 
\begin{equation*}
 \abs{\frac{\dd^\ell}{\dd p^\ell}\int_{p}^{\infty}\int_{q}^{\infty}\left(\frac{M\left(r\right)-M\left(1\right)}{\left(1-r\right)r^2}-\frac{M'\left(r\right)}{r}\right)\dr\dq}\leq C\lnorm{1,\chi}{M}\Lambda\left(p\right)p^{-\ell}
\end{equation*}
 for all $p>0$ and $\ell=0,1,2$. We first consider $p>1$ and note that for $r\geq 1$ we have
 \begin{equation*}
  \begin{split}
   \abs{M\left(r\right)-M\left(1\right)}&=\abs{\int_{1}^{r}M'\left(s\right)\ds}\leq \lnorm{1,\chi}{M}\int_{1}^{r}s^{-1-\chi}\ds\leq \lnorm{1,\chi}{M}\left(r-1\right)\\
   \abs{M\left(r\right)-M\left(1\right)}&\leq C\lnorm{0}{M}\left(r^{-\chi}+1\right)\leq C\lnorm{0}{M}.
  \end{split}
 \end{equation*}
Together this gives 
 \begin{equation}\label{eq:est:diff:M}
  \begin{split}
   \abs{M\left(r\right)-M\left(1\right)}\leq C\lnorm{1,\chi}{M}\min\left\{\left(r-1\right),1\right\}.
  \end{split}
 \end{equation}
 Using this, as well as $\frac{\min\left\{\left(r-1\right),1\right\}}{r-1}\leq \frac{2}{r}$ for $r>1$, we find for $p>1$ that
 \begin{equation*}
  \begin{split}
   &\quad\abs{\int_{p}^{\infty}\int_{q}^{\infty}\left(\frac{M\left(r\right)-M\left(1\right)}{\left(1-r\right)r^2}-\frac{M'\left(r\right)}{r}\right)\dr\dq}\leq C\lnorm{1,\chi}{M}\int_{p}^{\infty}\int_{q}^{\infty}\left(\frac{\min\left\{\left(r-1\right),1\right\}}{\left(r-1\right)r^2}+r^{-2-\chi}\right)\dr\\
   &\leq C\lnorm{1,\chi}{M}\int_{p}^{\infty}\int_{q}^{\infty}\left(r^{-3}+r^{-2-\chi}\right)\dr\dq\leq C\lnorm{1}{M}p^{-\chi}.
  \end{split}
 \end{equation*}
 For the first derivative we similarly get
 \begin{equation*}
  \begin{split}
   &\quad \abs{\int_{p}^{\infty}\left(\frac{M\left(r\right)-M\left(1\right)}{\left(1-r\right)r^2}-\frac{M'\left(r\right)}{r}\right)\dr}\leq C\lnorm{1,\chi}{M}\int_{p}^{\infty}\left(\frac{\min\left\{\left(r-1\right),1\right\}}{\left(r-1\right)r^2}+r^{-2-\chi}\right)\dr\\
   &\leq C\lnorm{1,\chi}{M}\int_{p}^{\infty}\left(r^{-3}+r^{-2-\chi}\right)\dr\leq C\lnorm{1,\chi}{M}p^{-1-\chi}.
  \end{split}
 \end{equation*}
 In the same way we obtain for the second derivative
 \begin{equation*}
  \begin{split}
   &\quad \abs{\frac{M\left(p\right)-M\left(1\right)}{\left(p-1\right)p^2}-\frac{M'\left(p\right)}{p}}\leq C\lnorm{1,\chi}{M}p^{-2-\chi}.
  \end{split}
 \end{equation*}

For $p\leq 1$ we proceed similarly first noticing that for $r\leq 1$ we have
\begin{equation}\label{eq:inverse:small:p}
 \begin{split}
  \abs{M\left(1\right)-M\left(r\right)}=\abs{\int_{r}^{1}M'\left(s\right)\ds}\leq C\lnorm{1,\chi}{M}\int_{r}^{1}s^{-2}\ds=C\lnorm{1,\chi}{M}\frac{1-r}{r}.
 \end{split}
\end{equation}
Thus we get by splitting the integral
\begin{equation*}
 \begin{split}
  &\quad \abs{\int_{p}^{\infty}\int_{q}^{\infty}\left(\frac{M\left(r\right)-M\left(1\right)}{\left(1-r\right)r^2}-\frac{M'\left(r\right)}{r}\right)\dr\dq}\\
  &\leq \abs{\int_{p}^{1}\int_{q}^{1}\left(\cdots\right)\dr\dq}+\abs{\int_{p}^{1}\int_{1}^{\infty}\left(\cdots\right)\dr\dq}+\abs{\int_{1}^{\infty}\int_{q}^{\infty}\left(\cdots\right)\dr\dq}\\
  &\begin{multlined}
    \leq C\lnorm{1,\chi}{M}\int_{p}^{\infty}\int_{q}^{\infty}r^{-3}\dr\dq+C\lnorm{1,\chi}{M}\int_{0}^{1}\int_{1}^{\infty}\left(\frac{\min\left\{\left(r-1\right),1\right\}}{\left(r-1\right)r^2}+r^{-2-\chi}\right)\dr\dq\mbox{\qquad\qquad}\\
   +C\lnorm{1,\chi}{M}\int_{1}^{\infty}\int_{1}^{\infty}\left(\frac{\min\left\{\left(r-1\right),1\right\}}{\left(r-1\right)r^2}+r^{-2-\chi}\right)\dr\dq
   \end{multlined}\\
  &\leq C\lnorm{1,\chi}{M}p^{-1}+C\lnorm{1,\chi}{M}\leq C\lnorm{1,\chi}{M}p^{-1}.
 \end{split}
\end{equation*}
Similarly we obtain for the first and second derivative
\begin{equation*}
 \begin{split}
  \abs{\int_{p}^{\infty}\left(\frac{M\left(r\right)-M\left(1\right)}{\left(1-r\right)r^2}-\frac{M'\left(r\right)}{r}\right)\dr}\leq C\lnorm{1,\chi}{M}p^{-2}
 \end{split}
\end{equation*}
and
\begin{equation*}
 \begin{split}
  \abs{\frac{M\left(p\right)-M\left(1\right)}{\left(1-p\right)p^2}-\frac{M'\left(p\right)}{p}}\leq C\lnorm{1,\chi}{M}p^{-3}.
 \end{split}
\end{equation*}

Together this then shows the first step, i.e.\ that $\widehat{\LL}\colon X_{k,\chi}\to X_{k,\chi}$ is well-defined and $\lnorm{k,\chi}{\widehat{\LL} M}\leq C_1 \lnorm{k,\chi}{M}$ for some constant $C_1>0$.

{\bf Step 2:} We next show that the inverse operator $\widehat{\LL}^{-1}\colon X_{k,\chi}\to X_{k,\chi}$ exists. Therefore let $G\in X_{k,\chi}$. We then have to find $M\in X_{k,\chi}$ such that 
\begin{equation*}
 G=\widehat{\LL} M=M-2\int_{p}^{\infty}\int_{q}^{\infty}\left(\frac{M\left(r\right)-M\left(1\right)}{\left(1-r\right)r^2}-\frac{M'\left(r\right)}{r}\right)\dr\dq
\end{equation*}
 holds. Differentiating two times on both sides leads to
 \begin{equation*}
  G''\left(p\right)=M''\left(p\right)-\frac{2}{\left(1-p\right)p^2}\left(M\left(p\right)-M\left(1\right)\right)+\frac{2}{p}M'\left(p\right).
 \end{equation*}
Defining $H\left(p\right)\vcc=M\left(p\right)-M\left(1\right)$ we can rewrite this as
\begin{equation*}
 \left(pH'\right)'-H''+H'-\left(\frac{2}{p}H\right)'=\left(p-1\right)G''.
\end{equation*}
Using the decay properties at infinity of functions in $X_{k,\chi}$ we can integrate this equation on $\left(p,\infty\right)$ to obtain 
\begin{equation*}
 -pH'\left(p\right)+H'\left(p\right)-H\left(p\right)-M\left(1\right)+\frac{2}{p}H\left(p\right)=\int_{p}^{\infty}\left(s-1\right)G''\left(s\right)\ds.
\end{equation*}
 Using $\frac{p^2}{\left(1-p\right)^2}$ as integrating factor we can further rewrite the equation to get
 \begin{equation*}
  \left(\frac{p^2}{1-p}H\right)'-\frac{p^2}{\left(1-p\right)^2}M\left(1\right)=\frac{p^2}{\left(1-p\right)^2}\int_{p}^{\infty}\left(s-1\right)G''\left(s\right)\ds.
 \end{equation*}
Integrating now over $\left(0,p\right)$ gives
\begin{equation*}
 H\left(p\right)=M\left(1\right)\frac{1-p}{p^2}\int_{0}^{p}\frac{\xi^2}{\left(1-\xi\right)^2}\dxi+\frac{1-p}{p^2}\int_{0}^{p}\frac{\xi^2}{\left(1-\xi\right)^2}\int_{\xi}^{\infty}\left(s-1\right)G''\left(s\right)\ds\dxi.
\end{equation*}
Note that due to the singularity $\left(1-\xi\right)^{-2}$ in the integrals this formula is a-priori only well defined for $p<1$. In order to get an expression that is valid for all $p>0$ we have to rearrange the expression for $H$ (or $M$ respectively). We furthermore note that the value $M\left(1\right)$ is part of the problem and has also to be determined. Using the definition of $H$ we now rewrite the solution formula in terms of $M$ to get
\begin{equation}\label{eq:expr:M}
 \begin{split}
  M\left(p\right)=M\left(1\right)+\frac{1-p}{p^2}\int_{0}^{p}\frac{\xi^2}{\left(1-\xi\right)^2}\left(M\left(1\right)+\int_{\xi}^{\infty}\left(s-1\right)G''\left(s\right)\ds\right)\dxi.
 \end{split}
\end{equation}
Considering now the limit $p\to 1$ it turns out, by using for example l'H\^{o}pital's rule, that
\begin{equation*}
 \lim_{p\to 1}\frac{1-p}{p^2}\int_{0}^{p}\frac{\xi^2}{\left(1-\xi\right)^2}\left(M\left(1\right)+\int_{\xi}^{\infty}\left(s-1\right)G''\left(s\right)\ds\right)\dxi=M\left(1\right)+\int_{1}^{\infty}\left(s-1\right)G''\left(s\right)\ds.
\end{equation*}
 Thus, to get a consistent solution, we have to choose the value $M\left(1\right)$ such that 
 \begin{equation*}
  M\left(1\right)=-\int_{1}^{\infty}\left(s-1\right)G''\left(s\right)\ds=-G\left(1\right).
 \end{equation*}
 Using this in~\eqref{eq:expr:M} we obtain 
 \begin{equation}\label{eq:sol:form:M}
  \begin{split}
   M\left(p\right)=-G\left(1\right)+\frac{1-p}{p^2}\int_{0}^{p}\frac{\xi^2}{\left(1-\xi\right)^2}\int_{\xi}^{1}\left(s-1\right)G''\left(s\right)\ds\dxi.
  \end{split}
 \end{equation}
 One can easily see (by arguments similar to the one below) that this expression is valid for all $p>0$. In fact, assuming $G$ to be analytic in $\C\setminus \left[0,-\infty\right)$ one can even show that~\eqref{eq:sol:form:M} defines an analytic function $M$.

 We can further simplify this expression using integration by parts and the decay properties of $G$ to obtain
 \begin{equation*}
  \begin{split}
      &\quad M\left(p\right)=-G\left(1\right)+\frac{1-p}{p^2}\int_{0}^{p}\frac{\xi^2}{\left(1-\xi\right)^2}\int_{\xi}^{1}\left(s-1\right)\frac{\dd}{\ds}\left(G'\left(s\right)-G'\left(1\right)\right)\ds\dxi\\
   &=-G\left(1\right)+\frac{1-p}{p^2}\int_{0}^{p}\frac{\xi^2}{\left(1-\xi\right)^2}\left(-\left(\xi-1\right)\left(G'\left(\xi\right)-G'\left(1\right)\right)-\int_{\xi}^{1}G'\left(s\right)-G'\left(1\right)\ds\right)\dxi\\
   &=-G\left(1\right)+\frac{1-p}{p^2}\int_{0}^{p}\frac{\xi^2}{1-\xi}\left(G'\left(\xi\right)-G'\left(1\right)\right)\dxi-\frac{1-p}{p^2}\int_{0}^{p}\xi^2\int_{\xi}^{1}G'\left(s\right)-G'\left(1\right)\ds\frac{\dd}{\dxi}\left(1-\xi\right)^{-1}\dxi.
  \end{split}
 \end{equation*}
 Integrating by parts again in the last integral on the right-hand side gives
 \begin{multline*}
  M\left(p\right)=-G\left(1\right)+\frac{1-p}{p^2}\int_{0}^{p}\frac{\xi^2}{1-\xi}\left(G'\left(\xi\right)-G'\left(1\right)\right)\dxi-\frac{1-p}{p^2}\left(\frac{p^2}{1-p}\int_{p}^{1}G'\left(s\right)-G'\left(1\right)\ds\right.\\
   \left.-\int_{0}^{p}\frac{2\xi}{1-\xi}\int_{\xi}^{1}G'\left(s\right)-G'\left(1\right)\ds\dxi+\int_{0}^{p}\frac{\xi^2}{1-\xi}\left(G'\left(\xi\right)-G'\left(1\right)\right)\dxi\right).
 \end{multline*}
   Summarizing and rearranging we can simplify this expression to get
   \begin{equation}\label{eq:inverse:1}
   \begin{split}
   &\quad M\left(p\right)=-G\left(1\right)-\int_{p}^{1}G'\left(s\right)-G'\left(1\right)\ds+\frac{1-p}{p^2}\int_{0}^{p}\frac{2\xi}{1-\xi}\int_{\xi}^{1}G'\left(s\right)-G'\left(1\right)\ds\dxi\\
   &=-2G\left(1\right)+G\left(p\right)+G'\left(1\right)\left(1-p\right)+2\frac{1-p}{p^2}\int_{0}^{p}\frac{\xi}{1-\xi}\left(G\left(1\right)-G\left(\xi\right)\right)\dxi-G'\left(1\right)\frac{1-p}{p^2}\int_{0}^{p}2\xi\dxi\\
   &=G\left(p\right)-2G\left(1\right)+2\frac{1-p}{p^2}\int_{0}^{p}\frac{\xi}{1-\xi}\left(G\left(1\right)-G\left(\xi\right)\right)\dxi.
  \end{split}
 \end{equation}
{\bf Step 3:} We now finish the proof by showing that $M$ given by~\eqref{eq:sol:form:M} satisfies $M\in X_{k,\chi}$ and $\lnorm{k,\chi}{M}\leq C_2\lnorm{k,\chi}{G}$, i.e.\ $\lnorm{k,\chi}{\widehat{\LL}^{-1}G}\leq C_2\lnorm{k,\chi}{G}$. To do so we proceed similarly to the first step and again note that it suffices to consider $M\left(p\right)-G\left(p\right)$ or to be be more precise to show
\begin{equation*}
 \begin{split}
  \lnorm{2,\chi}{-2G\left(1\right)+\frac{2\left(1-p\right)}{p^2}\int_{0}^{p}\frac{\xi}{1-\xi}\left(G\left(1\right)-G\left(\xi\right)\right)\dxi}\leq C\lnorm{1,\chi}{G}
 \end{split}
\end{equation*}
which is equivalent to
\begin{equation*}
 \begin{split}
  \abs{\frac{\dd^\ell}{\dd p^\ell}\left(-2G\left(1\right)+\frac{2\left(1-p\right)}{p^2}\int_{0}^{p}\frac{\xi}{1-\xi}\left(G\left(1\right)-G\left(\xi\right)\right)\dxi\right)}\leq C\lnorm{1,\chi}{G}\Lambda_{\chi}\left(p\right)p^{-\ell}
 \end{split}
\end{equation*}
for all $p>0$ and $\ell=0,1,2$. We consider again the cases $p\leq 1$ and $p>1$ separately and begin with $p\leq 1$. For $\ell=0$ we obtain together with~\eqref{eq:inverse:small:p}
\begin{equation*}
 \begin{split}
  &\quad\abs{-2G\left(1\right)+\frac{2\left(1-p\right)}{p^2}\int_{0}^{p}\frac{\xi}{1-\xi}\left(G\left(1\right)-G\left(\xi\right)\right)\dxi}\\
  &\leq 2\abs{G\left(1\right)}+C\frac{1-p}{p^2}\lnorm{1,\chi}{G}\int_{0}^{p}\dxi\leq C\lnorm{0,\chi}{G}+C\lnorm{1,\chi}{G}\frac{1-p}{p}\leq C\lnorm{1,\chi}{G}p^{-1}.
 \end{split}
\end{equation*}
For the first derivative (i.e.\ $\ell=1$) we obtain similarly
\begin{equation*}
 \begin{split}
  &\quad\abs{\frac{\dd}{\dd p}\left(-2G\left(1\right)+\frac{2\left(1-p\right)}{p^2}\int_{0}^{p}\frac{\xi}{1-\xi}\left(G\left(1\right)-G\left(\xi\right)\right)\dxi\right)}\\
  &=\abs{-2\left(\frac{1}{p^2}-\frac{2}{p^3}\right)\int_{0}^{p}\frac{\xi}{1-\xi}\left(G\left(1\right)-G\left(\xi\right)\right)\dxi+\frac{2}{p}\left(G\left(1\right)-G\left(p\right)\right)}\leq C\lnorm{1,\chi}{G}\left(\frac{1}{p^2}+\frac{1}{p}\right)\\
  &\leq C\lnorm{1,\chi}{G}p^{-2}.
 \end{split}
\end{equation*}
For the second derivative ($\ell=2$) we find in the same way
\begin{equation*}
 \begin{split}
  &\quad\abs{\frac{\dd^2}{\dd p^2}\left(-2G\left(1\right)+\frac{2\left(1-p\right)}{p^2}\int_{0}^{p}\frac{\xi}{1-\xi}\left(G\left(1\right)-G\left(\xi\right)\right)\dxi\right)}\\
  &=2\abs{\left(\frac{6}{p^4}-\frac{2}{p^3}\right)\int_{0}^{p}\frac{\xi}{1-\xi}\left(G\left(1\right)-G\left(\xi\right)\right)\dxi-\left(\frac{1}{p^2}-\frac{2}{p^3}\right)\frac{p\left(G\left(1\right)-G\left(p\right)\right)}{1-p}-\frac{G\left(1\right)-G\left(p\right)}{p^2}-\frac{G'\left(p\right)}{p}}\\
  &\leq C\lnorm{1,\chi}{G}\left(\frac{1}{p^3}+\frac{1}{p^2}\right)\leq C\lnorm{1,\chi}{G}p^{-3}.
 \end{split}
\end{equation*}
This shows the expected behaviour for $p\leq 1$. It thus remains to estimate $M$ for $p>1$. Due to the constant $G\left(1\right)$ occurring in~\eqref{eq:inverse:1} we have to exploit some cancellation in this case and it is thus convenient to rewrite the expression for $M$ again. More precisely for $p>1$ by splitting the integral and rearranging we get
\begin{multline*}
  \quad -2G\left(1\right)+\frac{2\left(1-p\right)}{p^2}\int_{0}^{p}\frac{\xi}{1-\xi}\left(G\left(1\right)-G\left(\xi\right)\right)\dxi\\
  \shoveleft{=-2G\left(1\right)+\frac{2\left(1-p\right)}{p^2}\int_{0}^{1}\frac{\xi}{1-\xi}\left(G\left(1\right)-G\left(\xi\right)\right)\dxi+\frac{2\left(1-p\right)}{p^2}\int_{1}^{p}\left(1+\frac{1}{\xi-1}\right)\left(G\left(\xi\right)-G\left(1\right)\right)\dxi}\\
  \shoveleft{=-2G\left(1\right)+2C_G\frac{1-p}{p^2}+\frac{2\left(1-p\right)}{p^2}+\frac{2\left(1-p\right)}{p^2}\int_{1}^{p}\frac{G\left(\xi\right)-G\left(1\right)}{\xi-1}\dxi}\\
  +\frac{2\left(1-p\right)}{p^2}\int_{1}^{p}G\left(\xi\right)\dxi+2G\left(1\right)\frac{\left(1-p\right)^2}{p^2},
\end{multline*}
where we defined $C_G\vcc=\int_{0}^{1}\frac{\xi}{1-\xi}\left(G\left(1\right)-G\left(\xi\right)\right)\dxi$. Summarizing we then get
\begin{equation*}
 \begin{split}
  &\quad -2G\left(1\right)+\frac{2\left(1-p\right)}{p^2}\int_{0}^{p}\frac{\xi}{1-\xi}\left(G\left(1\right)-G\left(\xi\right)\right)\dxi\\  
  &=G\left(1\right)\left(\frac{2}{p^2}-\frac{4}{p}\right)+2C_G\left(\frac{1}{p^2}-\frac{1}{p}\right)+\frac{2\left(1-p\right)}{p^2}\int_{1}^{p}\frac{G\left(\xi\right)-G\left(1\right)}{\xi-1}\dxi+\frac{2\left(1-p\right)}{p^2}\int_{1}^{p}G\left(\xi\right)\dxi.
 \end{split}
\end{equation*}
Noting that due to~\eqref{eq:inverse:small:p} it holds
\begin{equation*}
 \begin{split}
  \abs{C_G}\leq \int_{0}^{1}\frac{\xi}{1-\xi}\abs{G\left(1\right)-G\left(\xi\right)}\dxi\leq C\lnorm{1,\chi}{G}\quad \text{and}\quad \abs{G\left(1\right)}\leq C\lnorm{0,\chi}{G}
 \end{split}
\end{equation*}
and 
\begin{equation*}
 \begin{split}
  \sup_{p>1}\frac{p^{\ell+1}}{\left(1+p\right)^{1-\chi}}\abs{\frac{\dd^\ell}{\dd p^\ell}\left(\frac{1}{p^n}\right)}\leq C=C\left(n,\ell,\chi\right)\quad \text{for all } n\in\N \text{ and } \ell=0,1,2\,,
 \end{split}
\end{equation*}
it suffices to consider only the term
\begin{equation*}
 \frac{2\left(1-p\right)}{p^2}\int_{1}^{p}\frac{G\left(\xi\right)-G\left(1\right)}{\xi-1}\dxi+\frac{2\left(1-p\right)}{p^2}\int_{1}^{p}G\left(\xi\right)\dxi.
\end{equation*}
Defining
\begin{equation*}
 A\left(p\right)\vcc=2\left(\frac{1}{p^2}-\frac{1}{p}\right)
\end{equation*}
we can rewrite this as
\begin{equation}\label{eq:inverse:large:p}
  A\left(p\right)\int_{1}^{p}\frac{G\left(\xi\right)-G\left(1\right)}{\xi-1}\dxi+A\left(p\right)\int_{1}^{p}G\left(\xi\right)\dxi.
\end{equation}
Before we estimate this term we collect some estimates. For $p\geq 1$ we have
\begin{equation*}
 \abs{\frac{\dd^{k}}{\dd p^k}A\left(p\right)}\leq \frac{C}{p^{k+1}} \quad \text{for } k=0,1,2.
\end{equation*}
Furthermore taking also~\eqref{eq:est:diff:M} into account we have for $p\geq 1$
\begin{equation*}
 \abs{\int_{1}^{p}\frac{G\left(\xi\right)-G\left(1\right)}{\xi-1}\dxi}\leq C\lnorm{1,\chi}{G}\int_{1}^{p}\frac{\min\left\{\left(\xi-1\right),1\right\}}{\xi-1}\dxi\leq C\lnorm{1,\chi}{G}\int_{1}^{p}\frac{2}{\xi}\dxi\leq C\lnorm{1,\chi}{G}\log\left(p\right)
\end{equation*}
and similarly
\begin{equation*}
 \abs{\frac{G\left(p\right)-G\left(1\right)}{p-1}}\leq C\lnorm{1,\chi}{G}p^{-1}.
\end{equation*}
Finally for $p\geq 1$ we also have
\begin{equation*}
 \begin{split}
  \abs{\int_{1}^{p}G\left(\xi\right)\dxi}\leq C\lnorm{0,\chi}{G}\int_{1}^{p}\xi^{-\chi}\dxi\leq C\lnorm{1,\chi}{G}p^{1-\chi}\quad \text{and} \quad \abs{\frac{\dd^{k}}{\dd p^k}G\left(p\right)}\leq C\lnorm{k,\chi}{G}p^{-\chi-k}
 \end{split}
\end{equation*}
for $k=0,1,2$. Using these preliminary considerations we now go back to~\eqref{eq:inverse:large:p} and estimate this expression as well as the first and second derivative. We have for $p>1$
\begin{equation*}
 \begin{split}
  \abs{A\left(p\right)\int_{1}^{p}\frac{G\left(\xi\right)-G\left(1\right)}{\xi-1}\dxi+A\left(p\right)\int_{1}^{p}G\left(\xi\right)\dxi}\leq C\lnorm{1,\chi}{G}\frac{1}{p}\left(\log\left(p\right)+p^{1-\chi}\right)\leq C\lnorm{1,\chi}{G}p^{-\chi}.
 \end{split}
\end{equation*}
For the first derivative we get
\begin{equation*}
 \begin{split}
  &\quad \abs{A'\left(p\right)\int_{0}^{p}\frac{G\left(\xi\right)-G\left(1\right)}{\xi-1}\dxi+A\left(p\right)\frac{G\left(p\right)-G\left(1\right)}{p-1}+A'\left(p\right)\int_{0}^{p}G\left(\xi\right)\dxi+A\left(p\right)G\left(p\right)}\\
  &\leq C\lnorm{1,\chi}{G}\left(\frac{\log\left(p\right)}{p^2}+\frac{1}{p^2}+\frac{p^{1-\chi}}{p^2}+\frac{p^{-\chi}}{p}\right)\leq C\lnorm{1,\chi}{G}p^{-1-\chi}.
 \end{split}
\end{equation*}
Finally for the second derivative we have (taking into account also the definition of $A\left(p\right)$)
\begin{equation*}
 \begin{split}
  &\quad \abs{\frac{\dd^2}{\dd p^2}\left(A\left(p\right)\int_{0}^{p}\frac{G\left(\xi\right)-G\left(1\right)}{\xi-1}\dxi+A\left(p\right)\int_{1}^{p}G\left(\xi\right)\dxi\right)}\\
  &\leq \abs{A''\left(p\right)\int_{0}^{p}\frac{G\left(\xi\right)-G\left(1\right)}{\xi-1}\dxi}+2\abs{A'\left(p\right)\frac{G\left(p\right)-G\left(1\right)}{p-1}}+\abs{A\left(p\right)\left(\frac{G'\left(p\right)}{p-1}-\frac{G\left(p\right)-G\left(1\right)}{\left(p-1\right)^2}\right)}\\
  &\qquad+\abs{A''\left(p\right)\int_{0}^{p}G\left(\xi\right)\dxi}+2\abs{A'\left(p\right)G\left(p\right)}+\abs{A\left(p\right)G'\left(p\right)}\\
  &\leq C\lnorm{1,\chi}{G}\left(\frac{\log\left(p\right)}{p^3}+\frac{1}{p^3}\right)+\abs{-\frac{G'\left(p\right)}{p^2}+\frac{G\left(p\right)-G\left(1\right)}{p^2\left(p-1\right)}}+C\lnorm{1,\chi}{G}\left(\frac{p^{1-\chi}}{p^3}+\frac{p^{-\chi}}{p^2}+\frac{p^{-1-\chi}}{p}\right)\\
  &\leq C\lnorm{1,\chi}{G}p^{-1-\chi}.
 \end{split}
\end{equation*}
This then shows that $\widehat{\LL}^{-1}$ is well-defined and bounded in $X_{k,\chi}$ and thus finishes the proof.
\end{proof}

\section{Uniform bounds for $\mu_k$ and smallness for $m_k$}\label{Sec:apriori}

In this section we show that each solution of~\eqref{eq:mu} is uniformly bounded and each solution of~\eqref{eq:mu2} is small in the $\fnorm{2}{\cdot}$-norm under the appropriate rescaling as explained in Remark~\ref{Rem:rescaling}. The first step for this is the following technical Lemma.

\begin{lemma}\label{Lem:lap:est:mu}
 Let $n\in\N_{0}$. For any $\delta>0$ there exists some constant $p_{*}=p_{*}\left(\delta,n\right)>0$ such that it holds for any solution $m_k$ of~\eqref{eq:mu2}
 \begin{equation*}
  p^{n+\theta}\abs{\int_{0}^{\infty}x^n\ee^{-px}m_k\left(x\right)\dx}\leq \delta\quad \text{for any } p>p_{*}.
 \end{equation*}
 \end{lemma}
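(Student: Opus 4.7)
The plan is to convert the quantity in question into one involving the original self-similar profile $f_k$ and to exploit the uniform near-origin integrability provided by~\eqref{S2E3}. Writing $m_k=\mu_k-1$ with $\mu_k(x)=f_k(x)\ee^{x}$, we immediately have
\[
 \int_0^\infty x^n\ee^{-px}m_k(x)\dx=(-1)^n F_k^{(n)}(p-1)-\frac{n!}{p^{n+1}},\qquad F_k(q)\vcc=\int_0^\infty \ee^{-qx}f_k(x)\dx.
\]
Multiplied by $p^{n+\theta}$ the explicit term contributes at most $n!\,p^{\theta-1}$, which vanishes as $p\to\infty$ because $\theta<1$. Therefore the whole problem reduces to showing $p^{n+\theta}|F_k^{(n)}(p-1)|\to 0$, uniformly with respect to the choice of self-similar profile.

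To achieve this I would set $F(x)\vcc=\int_0^x f_k(y)\dy$. By summing~\eqref{S2E3} over dyadic scales we obtain $F(x)\leq C_\eta x^{1-\eta}$ for every $\eta\in(0,1)$ and every $x\in(0,R_0]$, with $C_\eta$ independent of the particular profile; for $x\geq R_0$ the bound~\eqref{eq:uniform:est:f} yields $F(x)\leq C$. Writing $f_k(x)\dx=\dd F(x)$ and integrating by parts gives
\[
 (-1)^n F_k^{(n)}(q)=\int_0^\infty (qx^n-nx^{n-1})\ee^{-qx}F(x)\dx,
\]
with no boundary contributions (the factor $x^n\ee^{-qx}F(x)$ vanishes at both endpoints). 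Inserting the two bounds on $F$, estimating the piece on $(0,R_0)$ with standard Gamma-function integrals and the piece on $(R_0,\infty)$ by exponentially small tails, one arrives at $|F_k^{(n)}(q)|\leq C_\eta q^{-n-1+\eta}$ for $q$ large.

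Combining the two ingredients yields
\[
 p^{n+\theta}\abs{\int_0^\infty x^n\ee^{-px}m_k(x)\dx}\leq C_\eta\,p^{\theta+\eta-1}+n!\,p^{\theta-1}\qquad \text{for $p$ large,}
\]
where both exponents are strictly negative once one chooses $\eta\in(0,1-\theta)$, which is possible because $\theta<1/2$. Given $\delta>0$ it then suffices to pick $p_{*}=p_{*}(\delta,n)$ so large that the right-hand side is at most $\delta$. I do not expect any serious difficulty here: the argument reduces to a single integration by parts, and the fact that the bound does not depend on the particular profile is carried entirely by the uniformity of the constant $C_\eta$ in~\eqref{S2E3}.
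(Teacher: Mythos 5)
Your proof is correct, and its backbone is the same as the paper's: the uniform near-origin estimate~\eqref{S2E3} together with the global integrability of $f_k$ controls the Laplace transform and forces the extra factor $p^{-1+\eta}$ of decay beyond $p^{-n}$. The difference is purely in the execution. The paper works with $\mu_k=f_k\ee^x$, splits $\int_0^\infty=\int_0^{1/p}+\int_{1/p}^\infty$, and handles the second piece by a dyadic decomposition, which requires the extra a priori bound~\eqref{eq:apriori:large} (packaged into~\eqref{eq:mean:est}) to control $\int_R^{2R}\mu_k$ for $R\geq 1$. You instead stay with $f_k$ itself, absorb~\eqref{S2E3} and~\eqref{eq:uniform:est:f} into the primitive $F(x)=\int_0^x f_k\leq C_\eta\min\{x^{1-\eta},1\}$, and get the same $q^{-n-1+\eta}$ bound by a single Stieltjes integration by parts; this is slightly more economical since it never invokes~\eqref{eq:apriori:large}. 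The conclusions agree, and the uniformity over profiles is carried by~\eqref{S2E3} and~\eqref{eq:uniform:est:f} in both arguments, so there is no gap.
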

 
  \begin{proof}
  We first note that we can write $m_k=\mu_k-\bar{\mu}$ with $\bar{\mu}=1$ and $\mu_k$ solving~\eqref{eq:mu}. For $p\geq 2$ we then get
  \begin{equation}\label{eq:smallness:1:n:c}
 \begin{split}
  \abs{\int_{0}^{\infty}x^n\ee^{-px}m_k\left(x\right)\dx}&\leq \int_{0}^{1/p}x^n\ee^{-px}\mu_k\left(x\right)\dx+\int_{1/p}^{\infty}x^n\ee^{-px}\mu_k\left(x\right)\dx+\int_{0}^{\infty}x^n\ee^{-px}\dx\\
  &\leq \int_{0}^{1/p}x^n\ee^{-px}\mu_k\left(x\right)\dx+\int_{1/p}^{\infty}x^n\ee^{-px}\mu_k\left(x\right)\dx+\frac{n!}{p^{n+1}}.
 \end{split}
\end{equation}
We now estimate the integrals on the right-hand side. On the one hand using~\eqref{eq:mean:est} together with a dyadic decomposition we have
\begin{equation}\label{eq:smallness:2:n:c}
 \begin{split}
  \int_{1/p}^{\infty}x^n\ee^{-px}\mu_k\left(x\right)\dx=\sum_{\ell=0}^{\infty}\int_{\frac{2^\ell}{p}}^{\frac{2^{\ell+1}}{p}}x^n\ee^{-px}\mu_{k}\left(x\right)\dx\leq C\sum_{\ell=0}^{\infty}\left(\frac{2^{\ell+1}}{p}\right)^{n+1}\ee^{-2^{\ell}}\leq C_n p^{-n-1}.
 \end{split}
\end{equation}
 On the other hand we get by means of~\eqref{eq:mean:est} that
 \begin{equation}\label{eq:smallness:3:n:c}
  \begin{split}
   \int_{0}^{1/p}x^n\ee^{-px}\mu_k\left(x\right)\dx&\leq p^{-n}\int_{0}^{1/p}\mu_{k}(x)\dx\leq C_{\eta}p^{\eta-1-n}.
  \end{split}
 \end{equation}
 Then plugging~\eqref{eq:smallness:2:n:c} and~\eqref{eq:smallness:3:n:c} into~\eqref{eq:smallness:1:n:c} we obtain
 \begin{equation*}
  \abs{\int_{0}^{\infty}x^n\ee^{-px}m_k\left(x\right)\dx}\leq C_{\eta,n}\left(p^{\eta-n-1}+p^{-n-1}\right)\leq \delta p^{-n-\theta}
 \end{equation*}
 for $\eta$ sufficiently small and $p>p_{*}=p_{*}\left(\delta,n\right)$ sufficiently large.
 \end{proof}

With Lemma~\ref{Lem:lap:est:mu} we can show the smallness of $m_k=\mu_k-\bar{\mu}$ in the $\fnorm{0}{\cdot}$-norm for $k=1,2$.

\begin{lemma}\label{Lem:m:small:0}
 For given $\delta>0$ there exists $\eps_0>0$ such that the following holds for any $\eps<\eps_0$ and any solution $\mu_k$ of~\eqref{eq:mu} it holds
 \begin{equation}\label{eq:S2E1}
  \fnorm{0}{\mu_k-\bar{\mu}}\leq \delta.
 \end{equation}
\end{lemma}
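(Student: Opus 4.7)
The plan is to split $p\in(0,\infty)$ into a large-$p$ regime controlled directly by Lemma~\ref{Lem:lap:est:mu}, and a bounded regime $(0,p_*]$ controlled by the already-known estimate~\eqref{S2E5}. Writing $M_k(p)=\int_0^\infty \ee^{-px}m_k(x)\,\dx$, one has $\fnorm{0}{m_k}=\sup_{p>0}\tfrac{p}{(1+p)^{1-\theta}}\abs{M_k(p)}$, and since $\tfrac{p}{(1+p)^{1-\theta}}\le p^\theta$ for every $p>0$, Lemma~\ref{Lem:lap:est:mu} applied with $n=0$ and target $\delta$ produces a threshold $p_*=p_*(\delta,0)$, independent of $\eps$, for which $\tfrac{p}{(1+p)^{1-\theta}}\abs{M_k(p)}\le\delta$ on $(p_*,\infty)$. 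This immediately takes care of the large-$p$ tail.

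For $p\in(0,p_*]$ the strategy is to translate~\eqref{S2E5} into a pointwise estimate on $M_k$. Using $\mu_k=f_k\ee^{x}$ and $\bar f=\ee^{-x}$, a direct substitution identifies
\begin{equation*}
\int_0^\infty\bigl(1-\ee^{-(p-1)x}\bigr)\bigl(f_k(x)-\bar f(x)\bigr)\,\dx \;=\; M_k(1)-M_k(p),
\end{equation*}
so~\eqref{S2E5} evaluated at $\tau=p-1>-1$ reads $\tfrac{p}{|p-1|}\abs{M_k(p)-M_k(1)}\le\delta_0$, where $\delta_0>0$ can be made as small as desired by taking $\eps$ small. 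Sending $p\to\infty$ and using $M_k(p)\to 0$ (from the large-$p$ step, or directly by dominated convergence against $\ee^{-x}\mu_k$) yields the auxiliary bound $\abs{M_k(1)}\le\delta_0$, and the triangle inequality then gives $\abs{M_k(p)}\le \delta_0\,\tfrac{p+|p-1|}{p}$ for every $p>0$.

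Plugging this back into the weighted norm gives $\tfrac{p}{(1+p)^{1-\theta}}\abs{M_k(p)}\le\delta_0$ for $p\in(0,1]$ and $\tfrac{p}{(1+p)^{1-\theta}}\abs{M_k(p)}\le 2\delta_0\,p_*^\theta$ for $p\in(1,p_*]$. Choosing $\eps_0>0$ small enough that~\eqref{S2E5} applies with $\delta_0\le\delta/(2p_*^\theta)$ closes both sub-regimes and completes the proof. The only mild subtlety is that~\eqref{S2E5} by itself controls only the oscillation $M_k(p)-M_k(1)$, not $M_k(p)$ directly; one genuinely needs the large-$p$ decay provided by Lemma~\ref{Lem:lap:est:mu} both to pin down $\abs{M_k(1)}$ via $p\to\infty$ and to dominate the tail $p>p_*$, where the weight $p^\theta$ grows unboundedly and~\eqref{S2E5} offers no further help.
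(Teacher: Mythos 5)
Your proof is correct and follows the same overall strategy as the paper: control the tail $p>p_*$ via Lemma~\ref{Lem:lap:est:mu} with $n=0$, and control the bounded range $p\in(0,p_*]$ by translating~\eqref{S2E5} into a pointwise bound on $M_k(p)=U_k(p)-\bar U(p)$ through the substitution $\tau=p-1$ and the relation~\eqref{Urelation}. The one place where you genuinely depart from the paper is the intermediate bound on $|M_k(1)|$: the paper argues via smallness of $\int_0^\infty(f_k-\bar f)\psi\,\dx$ against test functions, then invokes the a priori estimates~\eqref{fdecay} and~\eqref{S2E3} to discard the tails and conclude $|\int_0^\infty(f_k-\bar f)\,\dx|\le\delta$, whereas you extract the same bound directly from~\eqref{S2E5} by letting $p\to\infty$, observing $\tfrac{p}{|p-1|}\to 1$ and $M_k(p)\to 0$ (the latter by dominated convergence since $f_k=e^{-x}\mu_k\in L^1$). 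Your route is a bit more economical — it does not require invoking~\eqref{fdecay} or~\eqref{S2E3} at this stage — while the paper's route is perhaps more robust in that it only needs weak convergence against test functions rather than passing to the $p\to\infty$ limit inside the supremum. Both sub-arguments are sound; the rest of your write-up (the identity $\mathcal{F}_k(p-1)-\bar{\mathcal F}(p-1)=M_k(1)-M_k(p)$, the triangle inequality, and the case split $p\le 1$ versus $1<p\le p_*$ with $\tfrac{p}{(1+p)^{1-\theta}}\le p^\theta$) is exactly what the paper does.
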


\begin{proof}
Recalling~\eqref{fdecay}, \eqref{S2E5} and~\eqref{S2E3} we will derive~\eqref{eq:S2E1}. If $U_k$ and $\bar U$ denote the Laplace transforms of $\mu_k$ and $\bar \mu=1$ 
respectively, and ${\cal F}_k$ and $\bar {\cal F}$ the desingularised Laplace transforms of $f_k$ and $\bar f$ respectively, i.e. ${\cal F}_k
=\int_0^{\infty} \big(1-e^{-px}\big) f_k(x)\dx$, then
we have the relation 
\begin{equation}\label{Urelation}
U_k(p)=-{\cal F}_k(p{-}1) + U_k(1)
\end{equation}
 etc., while \eqref{S2E5} means that
$\sup_{p>-1}\frac{1+p}{p}\Big| {\cal F}_k(p)-\bar {\cal F}(p)\Big| \leq \delta$.

We first notice that $U_k-\bar{U}$ is the Laplace transform of $m_k=\mu_k-\bar{\mu}$ and thus due to Lemma~\ref{Lem:lap:est:mu} we have
\[
p^{\theta }\left\vert U_k\left( p\right) -\bar{U}\left( p\right) \right\vert
\leq \delta \, ,\qquad p\geq p_{0 } \,.
\]
On the other hand~\eqref{S2E5} implies that $\Big| \int_0^{\infty} (f_k(x)-e^{-x})\psi(x)\,dx\Big|$ can be made small for 
small $\eps$ for any smooth test function $\psi$ with compact support. Due to \eqref{fdecay} and \eqref{S2E3} the regions for large and small $x$
can be made arbitrarily small and hence it follows that
\begin{equation*}
 \Big| U_k(1)-\bar U_k(1)\Big| = \Big| \int_0^{\infty} \big( f_k(x)-e^{-x}\big)\,dx \Big |\leq \delta
\end{equation*}
if $\eps$ is sufficiently small. Then it also follows from \eqref{S2E5} and \eqref{Urelation} that
\begin{equation*}
 \sup_{p \in (0, p_0)} \frac{p}{(1+p)^{1-\theta}}|U_k(p)-\bar U(p)|\leq \delta
\end{equation*}
for sufficiently small $\eps$.
\end{proof}

\subsection{Proof of Proposition~\ref{Prop:uniform:bound:muk}}\label{Sec:proof:uniform:bd:muk}

\begin{proof}[Proof of Proposition~\ref{Prop:uniform:bound:muk}]
 The proof immediately follows from Lemma~\ref{Lem:m:small:0} together with Lemma~\ref{Lem:norm:est:pos} noting that $\mu_k$ is non-negative.
\end{proof}

\begin{remark}\label{Rem:m:bounded:uniform}
 Noting that $m_k=\mu_k-\bar{\mu}$ and the Laplace transform of $\bar{\mu}=1$ is just $1/p$ we also get from Proposition~\ref{Prop:uniform:bound:muk} that $\fnorm{2}{m_k}$ is uniformly bounded for any $m_k$ solving~\eqref{eq:mu2}.
\end{remark}

\subsection{Proof of Proposition~\ref{Prop:smallness:mk}}\label{Sec:proof:smallness}

We next show that the $\snorm{2}{\cdot}$-norm of any solution $m$ of~\eqref{eq:mu2} can be estimated by the $\fnorm{0}{\cdot}$-norm and some small constant.

\begin{lemma}\label{Lem:m:small:semi}
 For given $\delta>0$ there exists $C>0$ and $\eps_0>0$ such that for any $\eps<\eps_0$ and any $m_k$ solving~\eqref{eq:mu2} it holds
 \begin{equation*}
  \snorm{2}{m_k}\leq C\fnorm{0}{m_k}+\delta.
 \end{equation*}
\end{lemma}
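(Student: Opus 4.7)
The plan is to split the supremum defining $\snorm{2}{m_k}=\sup_{p>0}\frac{p^3}{(1+p)^{1-\theta}}\abs{M_k''(p)}$ at a threshold $p_{\ast}=p_{\ast}(\delta)$: for $p\geq p_\ast$ we would exploit the pointwise Laplace decay of $M_k''$, while for $p\leq p_\ast$ we would feed equation~\eqref{eq:mu2} into the bilinear estimates of Proposition~\ref{Prop:bilinear}. The main obstacle is that the $B_W$-contribution is only controlled in Proposition~\ref{Prop:bilinear} with the weaker weight $\theta-\alpha$, so a direct global estimate of $\snorm{2}{m_k}$ via the Laplace-transformed equation is not available; this forces us to confine the weight loss to a bounded range of $p$, where it becomes a harmless multiplicative constant.

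For the large-$p$ regime, Lemma~\ref{Lem:lap:est:mu} applied with $n=2$ furnishes some $p_\ast=p_\ast(\delta)$ such that $p^{2+\theta}\abs{M_k''(p)}\leq \delta/4$ for all $p\geq p_\ast$. Since $\frac{p^3}{(1+p)^{1-\theta}}\leq p^{2+\theta}$ for every $p>0$, this immediately delivers
\[
\sup_{p\geq p_\ast}\frac{p^3}{(1+p)^{1-\theta}}\abs{M_k''(p)}\leq \delta/4.
\]

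For the bounded-$p$ regime, multiplying~\eqref{eq:mu2} by $x^2\ee^{-px}$ and integrating expresses $M_k''$ as a sum of the four Laplace-transformed bilinear terms. Proposition~\ref{Prop:bilinear}~\eqref{eq:bil:1} bounds the three $B_2$-contributions uniformly in $p>0$ by $C\fnorm{1}{m_k}(1+\fnorm{1}{m_k})$ against the weight $\frac{p^3}{(1+p)^{1-\theta}}$ (using $\fnorm{1}{\bar\mu}\leq C$). The $B_W$-term $\eps L[\mu_k,\mu_k]$, with $L$ as in~\eqref{eq:def:L:BW}, is controlled on $p\in(0,2]$ directly in the $\snorm{2,\theta}{\cdot}$-sense by Lemma~\ref{Lem:est:L:small:p}, while on $p\in[2,p_\ast]$ Lemma~\ref{Lem:est:L:large:p:loss} and the identity $\frac{p^3}{(1+p)^{1-\theta}}=(1+p)^{\alpha}\frac{p^3}{(1+p)^{1-(\theta-\alpha)}}$ produce only the finite loss $(1+p)^{\alpha}\leq (1+p_\ast)^{\alpha}$. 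Using the uniform bound $\fnorm{2}{\mu_k}\leq C$ from Proposition~\ref{Prop:uniform:bound:muk}, this yields
\[
\sup_{0<p\leq p_\ast}\frac{p^3}{(1+p)^{1-\theta}}\abs{M_k''(p)}\leq C_1\fnorm{1}{m_k}\bigl(1+\fnorm{1}{m_k}\bigr)+C_2\eps(1+p_\ast)^\alpha.
\]

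Combining the two regimes and using Remark~\ref{Rem:m:bounded:uniform} (so that $\fnorm{1}{m_k}$ is uniformly bounded) to linearise the quadratic term, we arrive at $\snorm{2}{m_k}\leq C\fnorm{1}{m_k}+C_2\eps(1+p_\ast)^\alpha+\delta/4$. A standard interpolation $\fnorm{1}{m_k}\leq C_\eta\fnorm{0}{m_k}+\eta\fnorm{2}{m_k}$ (Lemma~\ref{Lem:interpolation}), together with $\fnorm{2}{m_k}=\fnorm{1}{m_k}+\snorm{2}{m_k}$, then allows us, for $\eta$ sufficiently small, to solve for $\fnorm{1}{m_k}$ and absorb the resulting $\snorm{2}{m_k}$ contribution into the left-hand side. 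Fixing in order such an $\eta$ (depending only on the constant $C$), then $p_\ast$ via the first step, and finally $\eps_0$ so small that $2C_2\eps_0(1+p_\ast)^\alpha\leq \delta/2$, delivers the claimed bound $\snorm{2}{m_k}\leq C\fnorm{0}{m_k}+\delta$.
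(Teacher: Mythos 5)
Your argument is correct and follows essentially the same route as the paper: split at a threshold $p_\ast(\delta)$, control the large-$p$ tail via Lemma~\ref{Lem:lap:est:mu} with $n=2$, feed equation~\eqref{eq:mu2} into the bilinear estimates with the weight change $(1+p)^{\alpha}\leq(1+p_\ast)^\alpha$ on the bounded range, and close with interpolation. The only cosmetic difference is at the very end: you absorb the interpolation's $\snorm{2}{m_k}$ term into the left-hand side, whereas the paper simply replaces $\delta_1\snorm{2}{m_k}$ by $C\delta_1$ using the uniform bound from Remark~\ref{Rem:m:bounded:uniform}; both are valid and amount to the same thing.
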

\begin{proof}
 We will consider two different regions for $p$; the region where $p$ is large can be controlled by Lemma~\ref{Lem:lap:est:mu}, while for small $p$ we will use~\eqref{eq:mu2} to control the norm of $m_k$.

 We start with the region where $p$ is large and get from Lemma~\ref{Lem:lap:est:mu} that
 \begin{equation}\label{eq:est:m:large:p}
  \abs{\int_{0}^{\infty}x^2\ee^{-px}m_k\left(x\right)\dx}\leq \frac{\delta}{2}p^{-2-\theta}
 \end{equation}
 for sufficiently large $p>p_{*}=p_{*}\left(\delta\right)$.

In order to control the region where $p<p_{*}$, we use~\eqref{eq:mu2} together with Proposition~\ref{Prop:bilinear} to obtain
\begin{equation}\label{eq:est:BW:small:p}
 \begin{split}
  \abs{\int_{0}^{\infty}x^2\ee^{-px}B_W\left(\omega_1,\omega_2\right)\left(x\right)\dx}&\leq C\fnorm{2}{\omega_1}\fnorm{2}{\omega_2}\frac{\left(p+1\right)^{1-\left(\theta-\alpha\right)}}{p^3}\\
  &\leq C\left(1+p_{*}\right)^{\alpha}\fnorm{2}{\omega_1}\fnorm{2}{\omega_2}\frac{\left(1+p\right)^{1-\theta}}{p^3}. 
 \end{split}
\end{equation}
Using $\bar{\mu}+m_k=\mu_k$,~\eqref{eq:est:BW:small:p} and Proposition~\ref{Prop:bilinear}, we obtain from~\eqref{eq:mu2} for $p\in\left(0,p_{*}\right)$ that
\begin{equation*}
 \begin{split}
  \abs{\int_{0}^{\infty}x^2\ee^{-px}m_k\left(x\right)\dx}&\leq \abs{\int_{0}^{\infty}x^2\ee^{-px}B_2\left(\bar{\mu},m_k\right)\dx}+\abs{\int_{0}^{\infty}x^2\ee^{-px}B_2\left(m_k,\bar{\mu}\right)\dx}\\
  &\quad + \abs{\int_{0}^{\infty}x^2\ee^{-px}B_2\left(m_k,m_k\right)\dx}+\eps\abs{\int_{0}^{\infty}x^2\ee^{-px}B_W\left(\mu_k,\mu_k\right)\dx}\\
  &\leq C\left(\fnorm{1}{\bar{\mu}}\fnorm{1}{m_k}+\fnorm{1}{m_k}^2\right)\frac{\Lambda\left(p\right)}{p^2}+C\left(p_{*}\right)\eps\fnorm{2}{\mu_k}^2\frac{\Lambda\left(p\right)}{p^2}.
 \end{split}
\end{equation*}
Using furthermore that $\fnorm{1}{m_k}$, $\fnorm{2}{\mu_k}$ and $\fnorm{1}{\bar{\mu}}$ are uniformly bounded by some constant due to Proposition~\ref{Prop:uniform:bound:muk} and Remark~\ref{Rem:m:bounded:uniform} we further get
\begin{equation*}
 \begin{split}
  \sup_{p\in\left(0,p_{*}\right)}\frac{\left(1+p\right)^{1-\theta}}{p^3}\abs{\int_{0}^{\infty}x^2\ee^{-px}m_k\left(x\right)\dx}\leq C\fnorm{1}{m_k}+C\left(p_{*}\right)\eps.
 \end{split}
\end{equation*}
Applying the interpolation result from Lemma~\ref{Lem:interpolation} with $\delta_1>0$ gives
\begin{equation*}
 \begin{split}
  \sup_{p\in\left(0,p_{*}\right)}\frac{\left(1+p\right)^{1-\theta}}{p^3}\abs{\int_{0}^{\infty}x^2\ee^{-px}m_k\left(x\right)\dx}&\leq C\left(\delta_1\right)\fnorm{0}{m_k}+C\delta_1\snorm{2}{m_k}+C\left(p_{*}\right)\eps\\
  &\leq  C\left(\delta_1\right)\fnorm{0}{m_k}+C\delta_1+C\left(p_{*}\right)\eps.
 \end{split}
\end{equation*}
Choosing then $\delta_1$ and $\eps$ sufficiently small the claim follows with~\eqref{eq:est:m:large:p}.
\end{proof}

\begin{proof}[Proof of Proposition~\ref{Prop:smallness:mk}]
 This follows directly from Lemma~\ref{Lem:m:small:0} and Lemma~\ref{Lem:m:small:semi} taking also Lemma~\ref{Lem:norm:equ} into account.
\end{proof}

\section{Boundary layer estimate and regularising effect}\label{Sec:boundary:layer}

In this section we derive an estimate for $\zeta m$, where $m=m_1-m_2$ is the difference of two solutions $m_1$ and $m_2$ of equation~\eqref{eq:mu2}. Note that from now on we always assume $\alpha>0$ since for $\alpha=0$ the analysis of the boundary layer as well as the regularising effect is not necessary. 

\begin{proposition}\label{Prop:est:zeta:m}
 For any $\delta_{*}>0$ it holds
 \begin{equation*}
  \fnorm{2}{\zeta m}\leq \delta_{*}\fnorm{2}{m}+C_{\delta_{*}}\fnorm{1}{\left(1-\zeta\right)m},
 \end{equation*}
 if $\eps$ is sufficiently small.
\end{proposition}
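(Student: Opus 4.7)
My plan is to work on the Laplace side via the identity $\T(\zeta m)(p) = M(p+1)$. Applying the Laplace transform to equation~\eqref{eq:diff:m} and inverting the linearised operator via Proposition~\ref{Prop:Lin:inv} gives $M = \widehat{\LL}^{-1}G$, where
\[
 G \vcc= \T\bigl(B_2(m,m_1)+B_2(m_2,m)+\eps B_W(\mu_1,m)+\eps B_W(m,\mu_2)\bigr),
\]
so that $\T(\zeta m)(p) = (\widehat{\LL}^{-1}G)(p+1)$. Using the explicit representation~\eqref{eq:inverse:1} and splitting $\int_0^{p+1} = \int_0^1+\int_1^{p+1}$, the substitution $\xi = s+1$ in the second integral expresses $\T(\zeta m)$ as a sum of four terms depending only on $G(p+1)$, the constant $G(1)$, and integrals involving $s\mapsto G(s+1)-G(1)$ over $[0,p]$.

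Next, I would estimate $\fnorm{2}{\zeta m} = \lnorm{2,\theta}{M(\cdot+1)}$ contribution by contribution. In every bilinear term I substitute $m = \zeta m + (1-\zeta)m$. The pieces of the form $B_2(\zeta m, m_k)$ and their symmetric variants are controlled by Proposition~\ref{Prop:bilinear} together with the smallness $\fnorm{1}{m_k}\leq \delta$ from Proposition~\ref{Prop:smallness:mk}, giving bounds of the form $C\delta\fnorm{2}{\zeta m}$ that can be absorbed into the left-hand side for $\delta$ small. The pieces of the form $B_2((1-\zeta)m, m_k)$ give $C\fnorm{1}{(1-\zeta)m}\fnorm{1}{m_k}\leq C_{\delta_*}\fnorm{1}{(1-\zeta)m}$, which feeds directly into the second term of the target inequality.

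The $\eps B_W$ contributions are handled using the bilinear estimate~\eqref{eq:bil:2} (in the weaker weight $\theta-\alpha$) together with the uniform bound $\fnorm{2}{\mu_k}\leq C$ from Proposition~\ref{Prop:uniform:bound:muk}. This produces a contribution of order $\eps\fnorm{2}{m}$ which, for $\eps$ sufficiently small, is absorbed into the $\delta_*\fnorm{2}{m}$ term of the target estimate.

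The hardest part will be reconciling the weight $\theta-\alpha$ from~\eqref{eq:bil:2} with the target weight $\theta$: a naive direct estimate of $\lnorm{2,\theta}{(\widehat{\LL}^{-1}G_W)(\cdot+1)}$ formally loses a factor of $p^\alpha$ at infinity, and the $\eps$ prefactor has to compensate for this loss against the finite a priori bound on $\fnorm{2}{m}$ (provided by Remark~\ref{Rem:m:bounded:uniform}). A secondary subtlety is the control of the constant $G(1) = \int_0^\infty e^{-x}\cdot(\text{bilinear RHS})\dx$, which must itself be estimated by the same splitting of $m$; once this is controlled, all remaining terms follow from formula~\eqref{eq:inverse:1} together with the estimates above.
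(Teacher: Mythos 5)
Your proposal has a genuine and fatal gap, precisely at the step you flag as ``hardest.'' The weight mismatch between $\theta-\alpha$ (from~\eqref{eq:bil:2}) and the target $\theta$ cannot be fixed by the $\eps$ prefactor, because the loss is a power of $p$ at infinity, not a constant. Concretely: Lemma~\ref{Lem:est:L:large:p:loss} bounds the Laplace transform of $B_W$ only by $(1+p)^{1-(\theta-\alpha)}/p^3$ for $p\geq 2$, and the shift $p\mapsto p+1$ provides no extra decay at infinity (it only regularises the behaviour near $p=0$). Thus
$\frac{p^3}{(1+p)^{1-\theta}}\bigl|\partial_p^2\bigl(\widehat{\LL}^{-1}\T(B_W(\mu_1,m))\bigr)(p+1)\bigr|$
behaves like $\eps\,p^{\alpha}$ for large $p$, and the supremum defining $\lnorm{2,\theta}{\cdot}$ is infinite, no matter how small $\eps$ is. This is not a technicality: it is exactly the obstruction that forces the paper to abandon the perturbative argument for $\zeta m$.

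Notice that your strategy is essentially the paper's proof of Proposition~\ref{Prop:m:large:x} --- but that proposition bounds $\fnorm{2}{(1-\zeta)m}$, not $\fnorm{2}{\zeta m}$. The reason the argument works there and not here is structural: $\T((1-\zeta)m)=M(\cdot)-M(\cdot+1)$, and the \emph{subtraction} gains a full power of $p$ at infinity at the cost of one derivative (Lemma~\ref{Lem:reg:weight} / Remark~\ref{Rem:reg:weight}), which converts the weight $\theta-\alpha$ into $1+\theta-\alpha>\theta$. By contrast $\T(\zeta m)=M(\cdot+1)$ has no subtraction structure and hence no gain. The paper's actual proof of Proposition~\ref{Prop:est:zeta:m} (Sections~\ref{Sec:bl:proof}--\ref{Sec:asymptotics}) does not touch $\widehat{\LL}^{-1}$ at all; instead it integrates~\eqref{nuequation} explicitly with the integrating factor $\exp(\Phi_j)$ to obtain the representation~\eqref{eq:mu3} of $\zeta\mu_j$. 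The essential point is that $\Phi(x)\sim C\eps/x^\alpha$ as $x\to 0$ with $\Re\Phi\geq 0$, so $e^{-\Phi}$ is strongly decaying near the origin (Lemma~\ref{Lem:asymptotics:exp:Phi}), and this boundary-layer cancellation is precisely what compensates the singularity of $W$; it is encoded in the function $H_0$ and its inverse Laplace transform $Q_0$ (Propositions~\ref{Prop:rep:H0} and~\ref{Prop:Q0:estimate}). Your proposal contains no mechanism capturing this cancellation, and without it the estimate fails.
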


As the proof of this result is quite technical and the main part of this work we first derive the proofs of Proposition~\ref{Prop:splitting:2} and Proposition~\ref{Prop:reg}, while the proof of Proposition~\ref{Prop:est:zeta:m} will be postponed to Sections~\ref{Sec:bl:proof}\textendash\ref{Sec:asymptotics}.
 
\begin{proof}[Proof of Proposition~\ref{Prop:splitting:2}]
The claim follows directly from Proposition~\ref{Prop:est:zeta:m} by interpolation, i.e.\ using Lemmas~\ref{Lem:interpolation} and~\ref{Lem:elem:est:norm}.
\end{proof}

\begin{proof}[Proof of Proposition~\ref{Prop:reg}]
Writing $m=\left(1-\zeta\right)m+\zeta m$ the claim follows from Propositions~\ref{Prop:splitting:2} and~\ref{Prop:m:large:x} together with Lemma~\ref{Lem:elem:est:norm}.
\end{proof}

\section{The boundary layer equation - Proof of Proposition~\ref{Prop:est:zeta:m}}\label{Sec:bl:proof}

To obtain estimates for $\fnorm{0}{\zeta m}$  we rewrite~\eqref{eq:mu} as
\begin{equation}\label{nuequation}
x\nu_j \left( x\right) =\int_{0}^{x}\beta _{K}\left( y;\mu_j \right) \nu_j \left(
y\right) \dy-R_{K}\left( x\right) 
\end{equation}
with the notations $\nu_j(x)=xe^{-x}\mu_j(x)$,
\begin{align*}
\beta _{K}( y;\mu_j ) &=\int_{0}^{\infty }K( y,z) \mu_j( z) e^{-z}\dz,\\
R_{K}( x) &=\int_{0}^{x}\int_{0}^{x-y}K( y,z)
y\mu_j ( y) \mu_j  (z) e^{-( y+z) }\dz\dy.\nonumber
\end{align*}

In order to cancel singular terms at the origin we need to use the fact that $\beta _{2}( x;\mu_j )=\beta_2(\mu_j) $ is close to $2$. We reformulate~\eqref{nuequation} in a more convenient form by differentiating in \eqref{nuequation} to obtain 
$\left( x\nu_j (x) \right)_x =\beta _{2}( \mu_j) \nu_j (x) +\varepsilon \beta _{W}( x;\mu_j ) \nu_j(x) -\left(R_{K}( x)\right)_x$.
We  split the terms
\begin{align*}
&\left( x\nu_j(x) \right)_x 
 =\frac{\beta _{2}\left( \mu_j
\right) }{x}\left( x\nu_j (x)\right) +\frac{\varepsilon \beta
_{W}( x;\mu_j ) }{x}e^{-x}\left( x\nu_j (x) \right)
+\varepsilon \beta _{W}( x;\mu_j ) \left( 1{-}e^{-x}\right) \nu_j
( x) -\left(R_{K}(x)\right)_x \,.
\end{align*}
Using the integrating factor 
\begin{equation}\label{Phidef}\Phi ( x;\mu_j ) \vcc=\varepsilon \int_{x}^{\infty }\frac{\beta
_{W}( t;\mu_j ) }{t}e^{-t}\dt 
\end{equation}
and that $ x\nu_j(x) \exp \left( \Phi( x;\mu_j) \right) \big| _{x=\infty }=0$ 
we can integrate the equation. Dividing  by $\exp\left( \Phi ( x;\mu_j) \right)$ and
recalling that $\nu_j (x) =xe^{-x}\mu_j \left( x\right)$ it follows that
\begin{align*}
\zeta\left(x\right)\mu_j (x) &=-x^{\beta _{2}\left( \mu_j \right)
-2}\int_{x}^{\infty }\xi ^{-\beta _{2}\left( \mu_j \right) }\exp \left( -\Phi
( x;\mu_j ) +\Phi ( \xi ;\mu_j ) \right) \varepsilon \beta
_{W}\left( \xi ;\mu_j \right) ( 1{-}e^{-\xi }) \xi e^{-\xi }\mu_j
( \xi ) \dxi  \\
&\qquad +x^{\beta _{2}\left( \mu_j \right) -2}\int_{x}^{\infty }\xi ^{-\beta
_{2}\left( \mu_j \right) }\exp \left( -\Phi( x;\mu_j ) +\Phi (\xi ;\mu_j) \right) \left(R_{K}(\xi)\right)_{\xi}
\dxi\,.
\end{align*}
For a shorter notation we introduce
\[
\kappa _{j}=\beta
_{2}\left( \mu _{j}\right) -2\,, \qquad \Phi_j(\cdot)=\Phi( \cdot ;\mu _{j})\,,\qquad j=1,2 \,.
\]
\begin{remark}\label{Rem:smallness:kappa}
 We notice that $\kappa_k=2\left(U_k\left(1\right)-\bar{U}\left(1\right)\right)$ and thus we obtain from Lemma~\ref{Lem:m:small:0} that
 \begin{equation*}
  \abs{\kappa_k}\leq C\fnorm{0}{\mu_k-\bar{\mu}}\to 0\quad \text{as } \eps\to 0.
 \end{equation*}
\end{remark}

Then, also evaluating $\left(R_{K}(\xi)\right)_{\xi}$, we find
\begin{equation}\label{eq:mu3}
 \begin{split}
  \zeta\left(x\right)\mu_j (x)
&=-\varepsilon \int_{x}^{\infty }\left( \frac{x}{\xi }\right) ^{\kappa_j }\frac{e^{\Phi_j ( \xi ) }}{e^{\Phi_j (x) }}\beta _{W}( \xi ;\mu_j ) \left( \frac{1{-}e^{-\xi }}{\xi }
\right) e^{-\xi }\mu_j \left( \xi \right) \dxi  \\
&\qquad \qquad  +\int_{x}^{\infty }\frac{e^{-\xi } }{\xi ^{2}}\left( \frac{x}{\xi }\right) ^{\kappa_j }\frac{e^{\Phi_j ( \xi ) }}{e^{\Phi_j (x) }}\int_{0}^{\xi }K( y,\xi {-}y) y\mu_j (y)\mu_j( \xi {-}y) \dy\dxi.
 \end{split}
\end{equation}

In order to prove Proposition~\ref{Prop:est:zeta:m}, we need to obtain the difference of the Laplace transform of the left-hand side of~\eqref{eq:mu3} for $j=1,2$. We denote by $U_j$ and $\bar{U}$ the Laplace transforms of $\mu_j$ and $\bar{\mu}$ for $j=1,2$ and note that due to the definition of $m_j$ it holds $m_1-m_2=\mu_1-\mu_2$, thus we can consider $\dmu\vcc=\mu_1-\mu_2$ instead of $m$. Then it holds
\begin{equation*}
  U_j\left(p\right)=\int_{0}^{\infty}\mu_j\left(x\right)\ee^{-px}\dx \quad \text{and} \quad U_j''\left(p+1\right)=\int_{0}^{\infty}x^2\zeta\left(x\right)\mu_j\left(x\right)\ee^{-px}\dx.
 \end{equation*}
Multiplying equation~\eqref{eq:mu3} with $\left(x\right)x^2\ee^{-px}$ and integrating we thus find
\begin{equation*}
 \begin{split}
  U_j''\left(p+1\right)&=-\eps\int_{0}^{\infty}\int_{0}^{\xi}x^2\ee^{-px}\left(\frac{x}{\xi}\right)^{\kappa_j}\frac{\ee^{\Phi_j\left(\xi\right)}}{\ee^{\Phi_j\left(\xi\right)}}\beta_W\left(\xi;\mu_j\right)\left(\frac{1-\ee^{-\xi}}{\xi}\right)\ee^{-\xi}\mu_j\left(\xi\right)\dx\dxi\\
  &\quad+\int_{0}^{\infty}\int_{y}^{\infty}\int_{0}^{\xi}\frac{\ee^{-\xi}}{\xi^2}x^2\ee^{-px}\left(\frac{x}{\xi}\right)^{\kappa_j}\frac{\ee^{\Phi_j\left(\xi\right)}}{\ee^{\Phi_j\left(x\right)}}K\left(y,\xi-y\right)y\mu_j\left(y\right)\mu_j\left(\xi-y\right)\dx\dxi\dy\\
  &=-\eps\int_{0}^{\infty}\int_{0}^{\xi}x^2\ee^{-px}\left(\frac{x}{\xi}\right)^{\kappa_j}\frac{\ee^{\Phi_j\left(\xi\right)}}{\ee^{\Phi_j\left(\xi\right)}}\beta_W\left(\xi;\mu_j\right)\left(\frac{1-\ee^{-\xi}}{\xi}\right)\ee^{-\xi}\mu_j\left(\xi\right)\dx\dxi\\
  &\quad +\int_{0}^{\infty}\int_{0}^{\infty}\int_{0}^{y+z}\frac{\ee^{-\left(y+z\right)}}{\left(y+z\right)^2}x^2\ee^{-px}\left(\frac{x}{y+z}\right)^{\kappa_j}\frac{\ee^{\Phi_j\left(y+z\right)}}{\ee^{\Phi_j\left(x\right)}}K\left(y,z\right)y\mu_j\left(y\right)\mu_j\left(z\right)\dx\dz\dy.
 \end{split}
\end{equation*}
 By taking the difference of the equations for $j=1$ and $j=2$ and rearranging we then obtain
 \begin{equation*}
  \begin{split}
   U_1''\left(p+1\right)-U_2''\left(p+1\right)=-\eps\left[K_1+K_2+K_3+K_4\right]+\left[J_1+J_2+J_3\right]
  \end{split}
 \end{equation*}
 with
 \begin{align*}
  K_1&=\int_{0}^{\infty}\int_{0}^{\xi}x^2\ee^{-px}\left(\left(\frac{x}{\xi}\right)^{\kappa_1}-\left(\frac{x}{\xi}\right)^{\kappa_2}\right)\frac{\ee^{\Phi_1\left(\xi\right)}}{\ee^{\Phi_1\left(x\right)}}\beta_W\left(\xi;\mu_1\right)\left(\frac{1-\ee^{-\xi}}{\xi}\right)\ee^{-\xi}\mu_1\left(\xi\right)\dx\dxi,\\
  K_2&=\int_{0}^{\infty}\int_{0}^{\xi}x^2\ee^{-px}\left(\frac{x}{\xi}\right)^{\kappa_2}\left(\frac{\ee^{\Phi_1\left(\xi\right)}}{\ee^{\Phi_1\left(x\right)}}-\frac{\ee^{\Phi_2\left(\xi\right)}}{\ee^{\Phi_2\left(x\right)}}\right)\beta_W\left(\xi;\mu_1\right)\left(\frac{1-\ee^{-\xi}}{\xi}\right)\ee^{-\xi}\mu_1\left(\xi\right)\dx\dxi,\\
  K_3&=\int_{0}^{\infty}\int_{0}^{\xi}x^2\ee^{-px}\left(\frac{x}{\xi}\right)^{\kappa_2}\frac{\ee^{\Phi_2\left(\xi\right)}}{\ee^{\Phi_2\left(x\right)}}\left(\beta_W\left(\xi;\mu_1\right)-\beta_W\left(\xi;\mu_2\right)\right)\left(\frac{1-\ee^{-\xi}}{\xi}\right)\ee^{-\xi}\mu_1\left(\xi\right)\dx\dxi,\\
  K_4&=\int_{0}^{\infty}\left(1-\ee^{-y}\right)\ee^{-y}\left(1+y\right)H\left(y,p\right)\beta_W\left(y;\mu_2\right)\left(\mu_1\left(y\right)-\mu_2\left(y\right)\right)\dy
  \end{align*}
 and (using the notation $\int_{\Sigma}\left(\cdots\right)\vcc=\int_{0}^{\infty}\int_{0}^{\infty}\int_{0}^{y+z}\left(\cdots\right)\dx\dz\dy$)
\begin{align*}
 J_1&=\int_{\Sigma}\frac{\ee^{-\left(y+z\right)}}{\left(y+z\right)^2}x^2\ee^{-px}\left(\left(\frac{x}{y+z}\right)^{\kappa_1}-\left(\frac{x}{y+z}\right)^{\kappa_2}\right)\frac{\ee^{\Phi_1\left(y+z\right)}}{\ee^{\Phi_1\left(x\right)}}K\left(y,z\right)y\mu_1\left(y\right)\mu_1\left(z\right),\\
 J_2&=\int_{\Sigma}\frac{\ee^{-\left(y+z\right)}}{\left(y+z\right)^2}x^2\ee^{-px}\left(\frac{x}{y+z}\right)^{\kappa_2}\left(\frac{\ee^{\Phi_1\left(y+z\right)}}{\ee^{\Phi_1\left(x\right)}}-\frac{\ee^{\Phi_2\left(y+z\right)}}{\ee^{\Phi_2\left(x\right)}}\right)K\left(y,z\right)y\mu_1\left(y\right)\mu_1\left(z\right),\\
 J_3&=\int_{0}^{\infty}\int_{0}^{\infty}\ee^{-\left(y+z\right)}H\left(y+z,p\right)\left(1+y+z\right)\frac{y}{y+z}K\left(y,z\right)\left(\mu_1\left(y\right)\mu_1\left(z\right)-\mu_2\left(y\right)\mu_2\left(z\right)\right)\dz\dy,
\end{align*}
where we also defined the function
\begin{equation*}
 H\left(\Y,p\right)\vcc=\frac{1}{\Y\left(1+\Y\right)}\int_{0}^{\Y}x^2\ee^{-px}\left(\frac{x}{\Y}\right)^{\kappa_2}\frac{\ee^{\Phi_2\left(\Y\right)}}{\ee^{\Phi_2\left(x\right)}}\dx.
\end{equation*}

To prove Proposition~\ref{Prop:est:zeta:m} we then have to control the terms $K_1-K_4$ and $J_1-J_3$, while it is relatively easy to control $K_1-K_3$ and $J_1$ and $J_2$. The hard part is then to estimate the expressions $K_4$ and $J_3$ as these are Laplace transforms of products. Therefore they might be written as convolutions of Laplace transforms. The main difficulties in estimating theses terms are then that we have to show that the inverse Laplace transform of $H$ exists to obtain suitable estimates for it. 

In the following we will denote by $\delta>0$ some small constant and assume implicitly that we take $\eps$ sufficiently small such that $\abs{\kappa_j}\leq \delta$.

\begin{remark}\label{Rem:est:beta:W}
 From the definition of $\beta_W$ together with~\eqref{kernel3}, \eqref{fdecay}, \eqref{S2E3} and~\eqref{eq:uniform:est:f} we have
\begin{equation*}
 0\leq \beta_W\left(\xi;\mu_j\right)\leq C\left(\xi^{-\alpha}+\xi^{\alpha}\right).
\end{equation*}
\end{remark}

\begin{proof}[Proof of Proposition~\ref{Prop:est:zeta:m}]
In the following we will estimate the terms $K_i$ and $J_i$ separately, while for the terms $K_i$ it suffices to show that they are bounded by $\fnorm{2}{m}$ due to the additional factor $\eps$ in front of them.

\subsection{Estimates for $K_1$ and $J_1$}

We start by estimating $K_1$ and $J_1$ and we will show for any $\delta_{0}>0$
\begin{equation}\label{eq:bl:1}
\eps\snorm{2}{K_1}+\snorm{2}{J_1}\leq \delta_0\fnorm{0}{\mu_1-\mu_2}+C_{\delta_0}\fnorm{0}{\left(1-\zeta\right)\left(\mu_1-\mu_2\right)}.
\end{equation}
which suffices due to Lemma~\ref{Lem:norm:equ}. For this we first note that from the definition of $\Phi_j$ as well as the non-negativity of $\beta_W$ it follows that $\Phi_j$ is non-increasing, i.e.\ for $x\leq \xi$ it holds
\begin{equation*}
 \Phi_j\left(\xi\right)-\Phi_j\left(x\right)\leq 0\quad \text{and} \quad \frac{\ee^{\Phi_j\left(\xi\right)}}{\ee^{\Phi_j\left(x\right)}}\leq 1.
\end{equation*}
 Furthermore, using the elementary inequality $\abs{\ee^{-a}-\ee^{-b}}\leq \abs{a-b}$ for $a,b>0$ we find for $0<x\leq \xi$ and sufficiently small $\eps>0$, using also Remark~\ref{Rem:smallness:kappa}, that
 \begin{equation*}
  \begin{split}
   \abs{\left(\frac{x}{\xi}\right)^{\kappa_1}-\left(\frac{x}{\xi}\right)^{\kappa_2}}&=\ee^{-\log\left(\frac{\xi}{x}\right)\min\left\{\kappa_1,\kappa_2\right\}}\abs{\ee^{-\log\left(\frac{\xi}{x}\right)\left(\kappa_1-\min\left\{\kappa_1,\kappa_2\right\}\right)}-\ee^{-\log\left(\frac{\xi}{x}\right)\left(\kappa_2-\min\left\{\kappa_1,\kappa_2\right\}\right)}}\\
   &\leq \left(\frac{\xi}{x}\right)^{\delta}\log\left(\frac{\xi}{x}\right)\abs{\kappa_1-\kappa_2}.
  \end{split}
 \end{equation*}
In the same way we get for $x\leq y+z$ that
\begin{equation*}
 \begin{split}
  \frac{\ee^{\Phi_j\left(y+z\right)}}{\ee^{\Phi_j\left(x\right)}}\leq 1\quad \text{and}\quad \abs{\left(\frac{x}{y+z}\right)^{\kappa_1}-\left(\frac{x}{y+z}\right)^{\kappa_2}}\leq \left(\frac{y+z}{x}\right)^{\delta}\log\left(\frac{y+z}{x}\right)\abs{\kappa_1-\kappa_2}.
 \end{split}
\end{equation*}
Using this, Remark~\ref{Rem:est:beta:W}, as well as $\frac{1-\ee^{-\xi}}{\xi}\leq C$ and $\log w\leq Cw^{\delta}$ for $w\geq 1$ we obtain
\begin{equation*}
 \begin{split}
  \abs{K_1}&\leq C\int_{0}^{\infty}\int_{0}^{\xi}x^2\ee^{-px}\left(\frac{\xi}{x}\right)^{\delta}\log\left(\frac{\xi}{x}\right)\left[\xi^{-\alpha}+\xi^{\alpha}\right]\ee^{-\xi}\mu_1\left(\xi\right)\dx\dxi\abs{\kappa_1-\kappa_2}\\
  &\leq C\int_{0}^{\infty}\int_{0}^{\xi}x^2\ee^{-px}x^{-2\delta}\xi^{2\delta}\left(\xi^{-\alpha}+\xi^{\alpha}\right)\ee^{-\xi}\mu_1\left(\xi\right)\dxi\abs{\kappa_1-\kappa_2}\\
  &=C\int_{0}^{\infty}\xi^{2\delta}\left(\xi^{-\alpha}+\xi^{\alpha}\right)\ee^{-\xi}\mu_1\left(\xi\right)\int_{0}^{\xi}x^{2-2\delta}\ee^{-px}\dx\dxi\abs{\kappa_1-\kappa_2}.
 \end{split}
\end{equation*}
To control the remaining integral we note that
\begin{equation*}
 \int_{0}^{\xi}x^{2-2\delta}\ee^{-px}\dx\leq\begin{cases}
                                             \xi^{3-2\delta} & p\leq 1\\
                                             \frac{C}{p^{3-2\delta}} & p\geq 1.
                                            \end{cases}
\end{equation*}
Using this we find together with~\eqref{fdecay} and~\eqref{S2E3} that
\begin{equation*}
 \begin{split}
  \abs{K_1}\leq \int_{0}^{\infty}\xi^{3}\left(\xi^{-\alpha}+\xi^{\alpha}\right)\ee^{-\xi}\mu_1\left(\xi\right)\dxi\abs{\kappa_1-\kappa_2}\leq C\abs{\kappa_1-\kappa_2} \quad \text{for } p\leq 1
 \end{split}
\end{equation*}
as well as
\begin{equation*}
 \begin{split}
  \abs{K_1}\leq \frac{C}{p^{3-2\delta}}\int_{0}^{\infty}\xi ^{2\delta}\left(\xi^{-\alpha}+\xi^{\alpha}\right)\ee^{-\xi}\mu_1\left(\xi\right)\dxi\abs{\kappa_1-\kappa_2}\leq Cp^{-2-\theta}\abs{\kappa_1-\kappa_2} \quad \text{for } p\geq 1. 
 \end{split}
\end{equation*}
Together this shows
\begin{equation*}
 \abs{K_1}\leq C\abs{\kappa_1-\kappa_2}\frac{\left(1+p\right)^{1-\theta}}{p^3}.
\end{equation*}
Similarly using that we have $K\left(y,z\right)\leq C\left(\bigl(\frac{y}{z}\bigr)^{\alpha}+\bigl(\frac{z}{y}\bigr)^{\alpha}\right)$ we get
\begin{equation*}
 \begin{split}
  \abs{J_1}&\leq C\int_{0}^{\infty}\int_{0}^{\infty}\int_{0}^{y+z}\frac{\ee^{-\left(y+z\right)}}{\left(y+z\right)^2}x^2\ee^{-px}\left(\frac{y+z}{x}\right)^{2\delta}\left(\Bigl(\frac{y}{z}\Bigr)^{\alpha}+\Bigl(\frac{z}{y}\Bigr)^{\alpha}\right)y\mu_1\left(y\right)\mu_1\left(z\right)\dx\dz\dy\abs{\kappa_1-\kappa_2}\\
  &\leq C\int_{0}^{\infty}\int_{0}^{\infty}\frac{\ee^{-\left(y+z\right)}}{\left(y+z\right)^{1-2\delta}}\left(\Bigl(\frac{y}{z}\Bigr)^{\alpha}+\Bigl(\frac{z}{y}\Bigr)^{\alpha}\right)\mu_1\left(y\right)\mu_1\left(z\right)\int_{0}^{y+z}x^{2-2\delta}\ee^{-px}\dx\dz\dy\abs{\kappa_1-\kappa_2}\\
  &\leq C\abs{\kappa_1-\kappa_2}\frac{\left(1+p\right)^{1-\theta}}{p^3}.
 \end{split}
\end{equation*}
Thus it remains to estimate $\abs{\kappa_1-\kappa_2}$. Since $\abs{\kappa_1-\kappa_2}=2\abs{U_1\left(1\right)-U_2\left(1\right)}$ it suffices to estimate $U_1\left(1\right)-U_2\left(1\right)$ which will be done by some interpolation argument via
\begin{equation*}
 \begin{split}
  \abs{U_1\left(1\right)-U_2\left(1\right)}&=\abs{\int_{0}^{\infty}\left(\mu_1\left(z\right)-\mu_2\left(z\right)\right)\ee^{-z}\dz}\\
  &\leq \abs{\int_{0}^{\infty}\left(\mu_1\left(z\right)-\mu_2\left(z\right)\right)\ee^{-z}\ee^{-nz}\dz}+\abs{\int_{0}^{\infty}\left(\mu_1\left(z\right)-\mu_2\left(z\right)\right)\ee^{-z}\left(1-\ee^{-nz}\right)\dz}\\
  &\leq \fnorm{0}{\mu_1-\mu_2}\frac{\left(2+n\right)^{1-\theta}}{\left(1+n\right)}+\abs{\int_{0}^{\infty}\left(\mu_1\left(z\right)-\mu_2\left(z\right)\right)\left(1-\zeta\left(z\right)\right)\frac{1-\ee^{-nz}}{1-\ee^{-z}}\ee^{-z}\dz}.
 \end{split}
\end{equation*}
Using 
\begin{equation*}
 \frac{1-\ee^{-nz}}{1-\zeta}=\sum_{k=0}^{n-1}\ee^{-kz}
\end{equation*}
we can further rewrite
\begin{equation*}
 \begin{split}
  \abs{U_1\left(1\right)-U_2\left(1\right)}&\leq C\frac{\fnorm{0}{\mu_1-\mu_2}}{\left(1+n\right)^{\theta}}+\sum_{k=0}^{n-1}\abs{\int_{0}^{\infty}\left(\mu_1\left(z\right)-\mu_2\left(z\right)\right)\left(1-\zeta\left(z\right)\right)\ee^{-\left(k+1\right)z}\dz}\\
  &\leq C\frac{\fnorm{0}{\mu_1-\mu_2}}{\left(1+n\right)^{\theta}}+\fnorm{0}{\left(1-\zeta\right)\left(\mu_1-\mu_2\right)}\sum_{k=0}^{n-1}\frac{\left(2+k\right)^{1-\theta}}{\left(1+k\right)}\\
  &\leq \delta_0\fnorm{0}{\mu_1-\mu_2}+C_{\delta_0}\fnorm{0}{\left(1-\zeta\right)\left(\mu_1-\mu_2\right)}
 \end{split}
\end{equation*}
for any $\delta_0>0$. Summarizing we have thus shown
\begin{equation*}
 \eps\abs{K_1}+\abs{J_1}\leq \left(\delta_0\fnorm{0}{\mu_1-\mu_2}+C_{\delta_0}\fnorm{0}{\left(1-\zeta\right)\left(\mu_1-\mu_2\right)}\right)\frac{\left(1+p\right)^{1-\theta}}{p^3},
\end{equation*}
which is just~\eqref{eq:bl:1}.

\subsection{Estimate of $K_2$ and $J_2$}
 
Next we estimate $K_2$ and $J_2$, i.e.\ we show 
\begin{equation}\label{eq:bl:2}
\eps\snorm{2}{K_2}+\snorm{2}{J_2}\leq C\eps\fnorm{2}{\mu_1-\mu_2}.
\end{equation} For this we will use the following estimate which is a consequence of the monotonicity of $\Phi_j$ as well as the elementary inequality $\abs{\ee^{-a}-\ee^{-b}}\leq \abs{a-b}$ for $a,b\geq 0$. For $0<x\leq \xi$ we have $0\leq \Phi_j\left(\xi\right)\leq \Phi_j\left(x\right)$ and thus
\begin{equation*}
 \begin{split}
  \abs{\frac{\ee^{\Phi_1\left(\xi\right)}}{\ee^{\Phi_1\left(x\right)}}-\frac{\ee^{\Phi_2\left(\xi\right)}}{\ee^{\Phi_2\left(x\right)}}}&=\abs{\ee^{-\left(\Phi_1\left(x\right)-\Phi_1\left(\xi\right)\right)}-\ee^{-\left(\Phi_2\left(x\right)-\Phi_2\left(\xi\right)\right)}}\leq \abs{\Phi_1\left(x\right)-\Phi_1\left(\xi\right)-\Phi_2\left(x\right)+\Phi_2\left(\xi\right)}\\
  &\leq \abs{\Phi_1\left(\xi\right)-\Phi_2\left(\xi\right)}+\abs{\Phi_1\left(x\right)-\Phi_2\left(x\right)}.
 \end{split}
\end{equation*}
In the same way we get for $x\leq y+z$
\begin{equation*}
 \abs{\frac{\ee^{\Phi_1\left(y+z\right)}}{\ee^{\Phi_1\left(x\right)}}-\frac{\ee^{\Phi_2\left(y+z\right)}}{\ee^{\Phi_2\left(x\right)}}}\leq \abs{\Phi_1\left(y+z\right)-\Phi_2\left(y+z\right)}+\abs{\Phi_1\left(x\right)-\Phi_2\left(x\right)}.
\end{equation*}
Using this for $K_2$ and $J_2$ we obtain
\begin{equation}\label{eq:K2:1}
 \abs{K_2}\leq \int_{0}^{\infty}\int_{0}^{\xi}x^2\ee^{-px}\left(\frac{\xi}{x}\right)^{\delta}\left(\abs{\Phi_1\left(x\right)-\Phi_2\left(x\right)}+\abs{\Phi_1\left(\xi\right)-\Phi_2\left(\xi\right)}\right)\beta_W\left(\xi;\mu_1\right)\ee^{-\xi}\mu_1\left(\xi\right)\dx\dxi
\end{equation}
and
\begin{equation}\label{eq:J2:1}
 \begin{split}
  \abs{J_2}\leq \int_{\Sigma}\frac{\ee^{-\left(y+z\right)}}{\left(y+z\right)^2}x^2\ee^{-px}\left(\frac{y+z}{x}\right)^{\delta}\left(\abs{\Phi_1\left(y+z\right)-\Phi_2\left(y+z\right)}+\abs{\Phi_1\left(x\right)-\Phi_2\left(x\right)}\right)K\left(y,z\right)y\mu_1\left(y\right)\mu_1\left(z\right).
 \end{split}
\end{equation}
In order to continue we have to estimate $\Phi_1-\Phi_2$ and we thus rewrite this difference in terms of the Laplace-transforms of $\mu_1$ and $\mu_2$. From the definition of $\Phi_j$ it holds together with Proposition~\ref{P.repkernels} that
\begin{equation*}
 \begin{split}
  \Phi_j\left(x\right)&=\eps\int_{x}^{\infty}\frac{\beta_W\left(t,\mu_j\right)}{t}\ee^{-t}\dt=\eps\int_{x}^{\infty}\int_{0}^{\infty}W\left(t,z\right)\mu_j\left(z\right)\ee^{-z}\dz\frac{\ee^{-t}}{t}\dt\\
  &=\eps\int_{x}^{\infty}\int_{0}^{\infty}\int_{0}^{\infty}\int_{0}^{\infty}\Gamma\left(\xi,\eta\right)\ee^{-\xi t-\eta z}\dxi\deta\mu_j\left(z\right)\ee^{-z}\ee^{-t}\left(1+\frac{z}{t}\right)\dz\dt\\
  &=\eps\int_{x}^{\infty}\int_{0}^{\infty}\int_{0}^{\infty}\Gamma\left(\xi,\eta\right)\ee^{-\left(\xi+1\right)t}U_j\left(\eta+1\right)\deta\dxi\dt\\
  &\quad-\eps\int_{x}^{\infty}\int_{0}^{\infty}\int_{0}^{\infty}\del_{\xi}\int_{0}^{\xi}\Gamma\left(s,\eta\right)\ds\ee^{-\left(\xi+1\right)t}\frac{U_j'\left(\eta+1\right)}{t}\deta\dxi\dt.
\end{split}
\end{equation*}
Evaluating the integrals in $t$ gives
 \begin{equation*}
 \begin{split}
 \Phi_{j}\left(x\right)&=\eps\int_{0}^{\infty}\int_{0}^{\infty}\frac{\Gamma\left(\xi,\eta\right)}{\xi+1}U_j\left(\eta+1\right)\ee^{-\left(\xi+1\right)x}\deta\dxi\\
  &\quad-\eps\int_{x}^{\infty}\int_{0}^{\infty}\int_{0}^{\infty}\int_{0}^{\xi}\Gamma\left(s,\eta\right)\ds U_j'\left(\eta+1\right)\ee^{-\left(\xi+1\right)t}\deta\dxi\dt\\
  &=\eps\int_{0}^{\infty}\int_{0}^{\infty}\frac{\Gamma\left(\xi,\eta\right)}{\xi+1}U_j\left(\eta+1\right)\ee^{-\left(\xi+1\right)x}\deta\dxi-\eps\int_{0}^{\infty}\int_{0}^{\infty}\frac{\ee^{-\left(1+\xi\right)x}}{1+\xi}U_j'\left(\eta+1\right)\int_{0}^{\xi}\Gamma\left(s,\eta\right)\ds\dxi\deta.
 \end{split}
\end{equation*}
From this we obtain
\begin{equation*}
 \begin{split}
  &\quad \abs{\Phi_1\left(X\right)-\Phi_2\left(X\right)}\\
  &=\eps\abs{\int_{0}^{\infty}\frac{\ee^{-\left(\xi+1\right)X}}{\xi+1}\int_{0}^{\infty}\Gamma\left(\xi,\eta\right)\left(U_1\left(\eta+1\right)-U_2\left(\eta+1\right)\right)-\left(U_1'\left(\eta+1\right)-U_2'\left(\eta+1\right)\right)\int_{0}^{\xi}\Gamma\left(s,\eta\right)\ds\dxi\deta}\\
  &\leq \eps\int_{0}^{\infty}\frac{\ee^{-\left(\xi+1\right)X}}{\xi+1}\int_{0}^{\infty}\abs{\Gamma\left(\xi,\eta\right)}\abs{U_1\left(\eta+1\right)-U_2\left(\eta+1\right)}\deta\dxi\\
  &\quad +\eps\int_{0}^{\infty}\frac{\ee^{-\left(\xi+1\right)X}}{\xi+1}\int_{0}^{\infty}\int_{0}^{\xi}\abs{\Gamma\left(s,\eta\right)}\ds\abs{U_1'\left(\eta+1\right)-U_2'\left(\eta+1\right)}\deta\dxi\\
  &\leq \eps\fnorm{0}{\dmu}\int_{0}^{\infty}\frac{\ee^{-\left(\xi+1\right)X}}{\xi+1}\int_{0}^{\infty}\abs{\Gamma\left(\xi,\eta\right)}\frac{\left(2+\eta\right)^{1-\theta}}{1+\eta}\deta\dxi\\
  &\quad+\eps\snorm{1}{\dmu}\int_{0}^{\infty}\frac{\ee^{-\left(\xi+1\right)X}}{\xi+1}\int_{0}^{\infty}\int_{0}^{\xi}\abs{\Gamma\left(s,\eta\right)}\ds\frac{\left(2+\eta\right)^{1-\theta}}{\left(1+\eta\right)^2}\deta\dxi.
 \end{split}
\end{equation*}
Thus we need to control the remaining integrals. For the first one we find together with Proposition~\ref{P.repkernels} that
\begin{equation*}
 \begin{split}
  &\quad \int_{0}^{\infty}\int_{0}^{\infty}\frac{\ee^{-\left(\xi+1\right)X}}{\xi+1}\abs{\Gamma\left(\xi,\eta\right)}\frac{\left(2+\eta\right)^{1-\theta}}{1+\eta}\deta\dxi\\
  &\leq C\int_{0}^{\infty}\int_{0}^{\infty}\frac{\ee^{-\left(1+\xi\right)X}}{\xi+1}\frac{1}{\left(\xi+\eta\right)^{1-\alpha}}\left(\frac{1}{\xi^{\alpha}}+\frac{1}{\eta^{\alpha}}\right)\frac{1}{\left(1+\eta\right)^{\theta}}\dxi\deta+C\ee^{-X}\int_{0}^{\infty}\ee^{-\xi X}\frac{\left(2+\xi\right)^{1-\theta}}{\left(1+\xi\right)^{2}}\dxi\\
  &= C\ee^{-X}\int_{0}^{\infty}\frac{\ee^{-\xi X}}{\xi+1}\left(\frac{1}{\xi^{\alpha}}\int_{0}^{\infty}\frac{1}{\left(\xi+\eta\right)^{1-\alpha}}\frac{1}{\left(1+\eta\right)^{\theta}}\deta+\int_{0}^{\infty}\frac{1}{\left(\xi+\eta\right)^{1-\alpha}}\frac{1}{\eta^{\alpha}}\frac{1}{\left(1+\eta\right)^{\theta}}\deta\right)\dxi+C\ee^{-X}.
 \end{split}
\end{equation*}
Estimating $\left(\xi+\eta\right)^{\alpha-1}\leq \eta^{\alpha-1}$ as well as $\left(1+\eta\right)^{-\theta}\leq \eta^{-\theta}$ it follows
\begin{equation*}
 \begin{split}
  &\quad \int_{0}^{\infty}\int_{0}^{\infty}\frac{\ee^{-\left(\xi+1\right)X}}{\xi+1}\abs{\Gamma\left(\xi,\eta\right)}\frac{\left(2+\eta\right)^{1-\theta}}{1+\eta}\deta\dxi\\
  &\leq C\ee^{-X}\int_{0}^{\infty}\frac{\ee^{-\xi X}}{\xi+1}\left(\frac{1}{\xi^{\alpha}}\int_{0}^{\infty}\frac{1}{\eta^{1-\alpha}}\frac{1}{\left(1+\eta\right)^{\theta}}\deta+\int_{0}^{\infty}\frac{1}{\left(\xi+\eta\right)^{1-\alpha}}\frac{1}{\eta^{\alpha+\theta}}\deta\right)\dxi+C\ee^{-X}\\
  &\leq C\ee^{-X}\int_{0}^{\infty}\frac{\ee^{-\xi X}}{\xi+1}\left(\frac{C}{\xi^{\alpha}}+\frac{1}{\xi^{\theta}}\int_{0}^{\infty}\frac{1}{\left(1+\eta\right)^{1-\alpha}}\frac{1}{\eta^{\alpha+\theta}}\deta\right)\dxi+C\ee^{-X}\\
  &\leq C\ee^{-X}\int_{0}^{\infty}\frac{\ee^{-\xi X}}{\xi+1}\left(\frac{1}{\xi^{\alpha}}+\frac{1}{\xi^{\theta}}\right)\dxi+C\ee^{-X}\leq C\ee^{-X}.
 \end{split}
\end{equation*}
Furthermore we have
\begin{equation*}
 \begin{split}
  \int_{0}^{\xi}\abs{\Gamma\left(s,\eta\right)}\ds&\leq C\int_{0}^{\xi}\frac{1}{\left(s+\eta\right)^{1-\alpha}}\left(\frac{1}{s^{\alpha}}+\frac{1}{\eta^{\alpha}}\right)\ds+C\int_{0}^{\infty}\delta_{s=\eta}\ds=C\int_{0}^{\frac{\xi}{\eta}}\frac{1}{\left(s+1\right)^{1-\alpha}}\left(\frac{1}{s^{\alpha}}+1\right)\ds+C\\
  &\leq C\int_{0}^{\frac{\xi}{\eta}}\frac{1}{s^{1-\alpha}}\ds+C=C\left(\left(\frac{\xi}{\eta}\right)^{\alpha}+1\right),
 \end{split}
\end{equation*}
where we used $\left(s+1\right)^{\alpha-1}\left(s^{-\alpha}+1\right)\leq 2s^{\alpha-1}$ in the second step. From this we get
\begin{equation*}
 \begin{split}
  &\quad\int_{0}^{\infty}\frac{\ee^{-\left(\xi+1\right)X}}{\xi+1}\int_{0}^{\infty}\int_{0}^{\xi}\abs{\Gamma\left(s,\eta\right)}\ds\frac{\left(2+\eta\right)^{1-\theta}}{\left(1+\eta\right)^2}\deta\dxi\\
  &\leq C\ee^{-X}\int_{0}^{\infty}\xi^{\alpha-1}\ee^{-\xi X}\int_{0}^{\infty}\frac{1}{\left(1+\eta\right)^{1+\theta}}\left(\frac{1}{\eta^{\alpha}}+1\right)\deta\dxi\leq C\ee^{-X}X^{-\alpha}\int_{0}^{\infty}\ee^{-\xi}\xi^{\alpha-1}\dxi\leq C\frac{\ee^{-X}}{X^{\alpha}}.
 \end{split}
\end{equation*}
Summarizing we find
\begin{equation*}
 \begin{split}
  \abs{\Phi_1\left(X\right)-\Phi_2\left(X\right)}\leq C\eps\ee^{-X}\left(\fnorm{0}{\dmu}+\frac{\snorm{1}{\dmu}}{X^{\alpha}}\right)\leq C\eps \fnorm{1}{\dmu}\frac{\ee^{-X/2}}{X^{\alpha}}.
 \end{split}
\end{equation*}
Coming now back to the estimate of $K_2$ and $J_2$ we obtain from~\eqref{eq:K2:1} and Remark~\ref{Rem:est:beta:W}, applying also Fubini's Theorem, that
\begin{equation*}
 \begin{split}
  \abs{K_2}&\leq C\eps\fnorm{1}{\dmu} \int_{0}^{\infty}\int_{0}^{\xi}x^2\ee^{-px}\left(\frac{\xi}{x}\right)^{\delta}\left(\frac{\ee^{-x/2}}{x^{\alpha}}+\frac{\ee^{-\xi/2}}{\xi^{\alpha}}\right)\beta_W\left(\xi;\mu_1\right)\ee^{-\xi}\mu_1\left(\xi\right)\dx\dxi\\
  &\leq C\eps\fnorm{1}{\dmu}\int_{0}^{\infty}x^2\ee^{-px}\int_{x}^{\infty}\left(\frac{\xi}{x}\right)^{\delta}\left(\frac{\ee^{-x/2}}{x^{\alpha}}+\frac{\ee^{-\xi/2}}{\xi^{\alpha}}\right)\left(\xi^{-\alpha}+\xi^{\alpha}\right)\ee^{-\xi}\mu_1\left(\xi\right)\dxi\dx\\
  &\leq 2C\eps\fnorm{1}{\dmu}\int_{0}^{\infty}x^2\ee^{-px}\frac{\ee^{-x/2}}{x^{\alpha+\delta}}\int_{x}^{\infty}\xi^{\delta}\left(\xi^{-\alpha}+\xi^{\alpha}\right)\ee^{-\xi}\mu_1\left(\xi\right)\dxi\dx,
 \end{split}
\end{equation*}
where we used $\xi^{-\alpha}\ee^{-\xi/2}\leq x^{-\alpha}\ee^{-x/2}$ for $\xi\geq x$ in the last step. Estimating the remaining integrals using also the a-priori estimates~\eqref{fdecay} and~\eqref{S2E3} we get
\begin{equation*}
 \begin{split}
  \abs{K_2}&\leq C\eps\fnorm{1}{\dmu}\int_{0}^{\infty}x^{2-\alpha-\delta}\ee^{-\left(p+1/2\right)x}\dx=C\eps\frac{\fnorm{1}{\dmu}}{\left(p+1/2\right)^{3-\alpha-\delta}}\int_{0}^{\infty}x^{2-\alpha-\delta}\ee^{-x}\dx\\
  &\leq C\eps\fnorm{1}{\dmu}\frac{\left(1+p\right)^{1-\theta}}{p^3},
 \end{split}
\end{equation*}
if we choose $\delta$ sufficiently small. It remains to estimate $J_2$ and we similarly get from~\eqref{eq:J2:1}, using also that $K\left(y,z\right)$ can be estimated by $\left(\left(y/z\right)^{\alpha}+\left(z/y\right)^{\alpha}\right)$, that
\begin{equation*}
 \begin{split}
  \abs{J_2}&\leq C\eps\fnorm{1}{\dmu}\int_{0}^{\infty}\int_{0}^{\infty}\int_{0}^{y+z}\frac{\ee^{-\left(y+z\right)}}{\left(y+z\right)^2}x^2\ee^{-px}\left(\frac{y+z}{x}\right)^{\delta}\cdot\\
  &\qquad\qquad\qquad\qquad\qquad\qquad\qquad\cdot\left(\frac{\ee^{-\frac{y+z}{2}}}{\left(y+z\right)^{\alpha}}+\frac{\ee^{-\frac{x}{2}}}{x^{\alpha}}\right)K\left(y,z\right)y\mu_1\left(y\right)\mu_1\left(z\right)\dx\dz\dy\\
  &\leq C\eps\fnorm{1}{\dmu}\int_{0}^{\infty}x^2\ee^{-px}\frac{\ee^{-x/2}}{x^{\alpha+\delta}}\int_{0}^{\infty}\int_{\max\left\{0,x-y\right\}}^{\infty}\frac{\ee^{-\left(y+z\right)}\mu_1\left(y\right)\mu_2\left(z\right)}{\left(y+z\right)^{1-\delta}}\left(\Bigl(\frac{y}{z}\Bigr)^{\alpha}+\Bigl(\frac{z}{y}\Bigr)^{\alpha}\right)\dz\dy\dx.
 \end{split}
\end{equation*}
Considering first the two inner integrals and using the symmetry,~\eqref{fdecay} and~\eqref{S2E3} it follows
\begin{equation*}
 \begin{split}
  &\quad\int_{0}^{\infty}\int_{\max\left\{0,x-y\right\}}^{\infty}\frac{\ee^{-\left(y+z\right)}\mu_1\left(y\right)\mu_2\left(y\right)}{\left(y+z\right)^{1-\delta}}\left(\Bigl(\frac{y}{z}\Bigr)^{\alpha}+\Bigl(\frac{z}{y}\Bigr)^{\alpha}\right)\dz\dy\\
  &=2\int_{0}^{\infty}\int_{0}^{\infty}\frac{\ee^{-\left(y+z\right)}\mu_1\left(y\right)\mu_2\left(y\right)}{\left(y+z\right)^{1-\delta}}\left(\frac{z}{y}\right)^{\alpha}\dz\dy\leq 2\int_{0}^{\infty}\frac{\ee^{-y}\mu_1\left(y\right)}{y^{\alpha}}\dy\int_{0}^{\infty}\ee^{-z}\mu_1\left(z\right)z^{\alpha+\delta-1}\dz\leq C.
 \end{split}
\end{equation*}
Using this we obtain for $J_2$ in the same way as for $K_2$ that
\begin{equation*}
 \begin{split}
  \abs{J_2}\leq C\eps\fnorm{1}{\dmu}\int_{0}^{\infty}x^{2-\alpha-\delta}\ee^{-\left(p+1/2\right)x}\dx\leq C\eps\fnorm{1}{\dmu}\frac{\left(1+p\right)^{1-\theta}}{p^3},
 \end{split}
\end{equation*}
which proves~\eqref{eq:bl:2}.

\subsection{Estimate for $K_3$}

We now continue by estimating $K_3$ and we will show
\begin{equation}\label{eq:bl:3}
 \eps\snorm{2}{K_3}\leq C\eps\fnorm{1}{\mu_1-\mu_2}.
\end{equation}
 Recalling that $K_3$ is given by
\begin{equation*}
  K_3=\int_{0}^{\infty}\int_{0}^{\xi}x^2\ee^{-px}\left(\frac{x}{\xi}\right)^{\kappa_2}\frac{\ee^{\Phi_2\left(\xi\right)}}{\ee^{\Phi_2\left(x\right)}}\left(\beta_W\left(\xi;\mu_1\right)-\beta_W\left(\xi;\mu_2\right)\right)\left(\frac{1-\ee^{-\xi}}{\xi}\right)\ee^{-\xi}\mu_1\left(\xi\right)\dx\dxi
\end{equation*}
we first derive an estimate for the difference $\beta_W\left(\xi;\mu_1\right)-\beta_W\left(\xi;\mu_2\right)$ using Proposition~\ref{P.repkernels}. We have
\begin{equation}\label{eq:rep:beta:W}
 \begin{split}
  \beta_W\left(y;\mu_j\right)&=\int_{0}^{\infty}\ee^{-\xi y}\int_{0}^{\infty}\Gamma\left(\xi,\eta\right)\int_{0}^{\infty}\left(y+z\right)\mu_j\left(z\right)\ee^{-\left(\eta+1\right)z}\dz\deta\dxi\\
  &=\int_{0}^{\infty}\ee^{-\xi y}\int_{0}^{\infty}\Gamma\left(\xi,\eta\right)\left(yU_j\left(\eta+1\right)-U_j'\left(\eta+1\right)\right)\deta\dxi.
 \end{split}
\end{equation}
Thus we obtain with Proposition~\ref{P.repkernels} that
\begin{equation*}
 \begin{split}
  &\quad \abs{\beta_W\left(y;\mu_1\right)-\beta_W\left(y;\mu_2\right)}\\
  &\leq \int_{0}^{\infty}\ee^{-\xi y}\int_{0}^{\infty}\abs{\Gamma\left(\xi,\eta\right)}\left(y\abs{U_1\left(\eta+1\right)-U_2\left(\eta+1\right)}+\abs{U_1'\left(\eta+1\right)-U_2'\left(\eta+1\right)}\right)\deta\dxi\\
  &\leq C\int_{0}^{\infty}\ee^{-\xi y}\int_{0}^{\infty}\abs{\Gamma\left(\xi,\eta\right)}\left(\fnorm{0}{\dmu}\frac{y}{\left(\eta+1\right)^{\theta}}+\snorm{1}{\dmu}\frac{1}{\left(\eta+1\right)^{1+\theta}}\right)\deta\dxi\\
  &\leq C\fnorm{1}{\dmu}\left(1+y\right)\int_{0}^{\infty}\ee^{-\xi y}\left(\int_{0}^{\infty}\frac{1}{\left(\xi+\eta\right)^{1-\alpha}}\left(\frac{1}{\xi^{\alpha}}+\frac{1}{\eta^{\alpha}}\right)\frac{1}{\eta^{\theta}}\deta+\frac{1}{\xi^{\theta}}\right)\dxi.
 \end{split}
\end{equation*}
Changing variables, we then get
\begin{equation*}
 \begin{split}
   \abs{\beta_W\left(y;\mu_1\right)-\beta_W\left(y;\mu_2\right)}&\leq C\fnorm{1}{\dmu}\left(1+y\right)\int_{0}^{\infty}\ee^{-\xi y}\xi^{-\theta}\left(\int_{0}^{\infty}\frac{1}{\left(1+\eta\right)^{1-\alpha}}\left(1+\frac{1}{\eta^{\alpha}}\right)\frac{1}{\eta^{\theta}}\deta+1\right)\dxi\\
  &\leq C\fnorm{1}{\dmu}\left(1+y\right)y^{\theta-1}\int_{0}^{\infty}\ee^{-\xi}\xi^{-\theta}\dxi\leq C\fnorm{1}{\dmu}y^{\theta-1}\left(1+y\right).
 \end{split}
\end{equation*}
Using this in the expression for $K_3$ together with similar arguments as before, such as monotonicity for $\Phi_j$ and smallness of $\kappa_j$, we can estimate
\begin{equation*}
 \begin{split}
  \abs{K_3}&\leq C\fnorm{1}{\dmu}\int_{0}^{\infty}\int_{0}^{\xi}x^2\ee^{-px}\left(\frac{\xi}{x}\right)^{\delta}\xi^{\theta-1}\left(1+\xi\right)\ee^{-\xi}\mu_1\left(\xi\right)\dx\dxi\\
  &\leq C\fnorm{1}{\dmu}\int_{0}^{\infty}x^{2-\delta}\ee^{-px}\int_{x}^{\infty}\xi^{\theta+\delta-1}\left(1+\xi\right)\ee^{-\xi}\mu_1\left(\xi\right)\dxi\dx\leq C\fnorm{1}{\dmu}\frac{1}{p^{3-\delta}}\\
  &\leq C\fnorm{1}{\dmu}\frac{\left(1+p\right)^{1-\theta}}{p^3}
 \end{split}
\end{equation*}
which shows~\eqref{eq:bl:3}.

\subsection{Estimates for $K_4$ and $J_3$}

It now remains to estimate the terms $K_4$ and $J_3$ which is the most difficult part. We recall
\begin{equation*}
 K_4=\int_{0}^{\infty}\left(1-\ee^{-y}\right)\ee^{-y}\left(1+y\right)H\left(y,p\right)\beta_W\left(y;\mu_2\right)\left(\mu_1\left(y\right)-\mu_2\left(y\right)\right)\dy
\end{equation*}
and using also the symmetry
\begin{equation*}
 \begin{split}
  J_3&=\int_{0}^{\infty}\int_{0}^{\infty}\ee^{-\left(y+z\right)}H\left(y+z,p\right)\left(1+y+z\right)\frac{y}{y+z}K\left(y,z\right)\left(\mu_1\left(y\right)\mu_1\left(z\right)-\mu_2\left(y\right)\mu_2\left(z\right)\right)\dz\dy\\
  &=\frac{1}{2}\int_{0}^{\infty}\int_{0}^{\infty}\ee^{-\left(y+z\right)}H\left(y+z,p\right)\left(1+y+z\right)K\left(y,z\right)\left(\mu_1\left(y\right)\mu_2\left(z\right)-\mu_2\left(y\right)\mu_2\left(z\right)\right)\dz\dy.
 \end{split}
\end{equation*}
Before estimating these terms we first rewrite $H$ integrating by parts, i.e.\
\begin{equation*}
 \begin{split}
  H\left(y,p\right)&=-\frac{1}{p\Y\left(1+\Y\right)}\int_{0}^{\Y}\del_{x}\left(\ee^{-px}\right)\left(x^2\left(\frac{x}{\Y}\right)^{\kappa_2}\frac{\ee^{\Phi_2\left(\Y\right)}}{\ee^{\Phi_2\left(x\right)}}\right)\dx\\
  &=-\frac{\ee^{-p\Y}}{p}\frac{\Y}{1+\Y}+H_{0,1}\left(\Y,p\right)+H_{0,2}\left(\Y,p\right),
 \end{split}
\end{equation*}
where we define
\begin{equation}\label{eq:def:H0}
 \begin{split}
  H_{0,1}\left(\Y,p\right)&\vcc=\frac{2+\kappa_2}{p\Y\left(1+\Y\right)}\int_{0}^{\Y}\ee^{-px}x\left(\frac{x}{\Y}\right)^{\kappa_2}\frac{\ee^{\Phi_2\left(\Y\right)}}{\ee^{\Phi_2\left(x\right)}}\dx,\\
  H_{0,2}\left(\Y,p\right)&\vcc=-\frac{1}{p\Y\left(1+\Y\right)}\int_{0}^{\Y}\ee^{-px}\Phi_2'\left(x\right)x^2\left(\frac{x}{\Y}\right)^{\kappa_2}\frac{\ee^{\Phi_2\left(\Y\right)}}{\ee^{\Phi_2\left(x\right)}}\dx,\\
  H_{0}\left(\Y,p\right)&\vcc=H_{0,1}\left(\Y,p\right)+H_{0,2}\left(\Y,p\right).
 \end{split}
\end{equation}
Using this splitting of $H$ we obtain a corresponding splitting of $K_4$ and $J_3$, i.e.\
\begin{equation*}
 K_4=K_{4,1}+K_{4,0}\qquad \text{and}\qquad J_3=J_{3,1}+J_{3,0}
\end{equation*}
with
\begin{align*}
 K_{4,1}&=-\frac{1}{p}\int_{0}^{\infty}\left(1-\ee^{-y}\right)\ee^{-\left(1+p\right)y}y\beta_W\left(y;\mu_2\right)\left(\mu_1\left(y\right)-\mu_2\left(y\right)\right)\dy,\\
 K_{4,0}&=\int_{0}^{\infty}\left(1-\ee^{-y}\right)\ee^{-y}\left(1+y\right)H_0\left(y,p\right)\beta_W\left(y;\mu_2\right)\left(\mu_1\left(y\right)-\mu_2\left(y\right)\right)\dy,\\
 J_{3,1}&=-\frac{1}{2p}\int_{0}^{\infty}\int_{0}^{\infty}\ee^{-\left(y+z\right)\left(p+1\right)}\left(y+z\right)K\left(y,z\right)\left(\mu_1\left(y\right)\mu_1\left(z\right)-\mu_2\left(y\right)\mu_2\left(z\right)\right)\dz\dy,\\
 J_{3,0}&=\frac{1}{2}\int_{0}^{\infty}\int_{0}^{\infty}\ee^{-\left(y+z\right)}H_{0}\left(y+z,p\right)\left(1+y+z\right)K\left(y,z\right)\left(\mu_1\left(y\right)\mu_1\left(z\right)-\mu_2\left(y\right)\mu_2\left(z\right)\right)\dz\dy.
\end{align*}

\subsection{Estimates of $K_{4,1}$ and $J_{3,1}$}

We first consider the terms $K_{4,1}$ and $J_{3,1}$ which are easier to estimate. Recalling from~\eqref{eq:rep:beta:W} that we have
\begin{equation*}
 \begin{split}
  \beta_W\left(y;\mu_2\right)=\int_{0}^{\infty}\int_{0}^{\infty}\Gamma\left(\xi,\eta\right)\ee^{-\xi y}\left(yU_2\left(\eta+1\right)-U_2'\left(\eta+1\right)\right)\deta\dxi
 \end{split}
\end{equation*}
as well as $m=m_1-m_2=\mu_1-\mu_2=\dmu$, we can rewrite $K_{4,1}$ by
\begin{equation*}
 \begin{split}
  K_{4,1}&=-\frac{1}{p}\int_{0}^{\infty}\int_{0}^{\infty}\Gamma\left(\xi,\eta\right)\int_{0}^{\infty}\left(yU_2\left(\eta+1\right)-U_2'\left(\eta+1\right)\right)\left(1-\ee^{-y}\right)\ee^{-\left(1+p+\xi\right)y}ym\left(y\right)\dy\dxi\deta\\
  &=-\frac{1}{p}\int_{0}^{\infty}\int_{0}^{\infty}\Gamma\left(\xi,\eta\right)U_2\left(\eta+1\right)\left(M''\left(1+p+\xi\right)-M''\left(2+\xi+p\right)\right)\dxi\deta\\
  &\qquad +\frac{1}{p}\int_{0}^{\infty}\int_{0}^{\infty}\Gamma\left(\xi,\eta\right)U_{2}'\left(\eta+1\right)\left(M'\left(1+p+\xi\right)-M'\left(2+p+\xi\right)\right)\dxi\deta.
 \end{split}
\end{equation*}
Recalling additionally that $K=2+\eps W$ and the representation formula for $W$ from Proposition~\ref{P.repkernels} we can rewrite $J_{3,1}$ as
\begin{equation*}
 \begin{split}
  &\quad J_{3,1}\\
  &=-\frac{1}{p}\int_{0}^{\infty}\int_{0}^{\infty}\ee^{-\left(1+p\right)\left(y+z\right)}\left(y+z\right)\left(\mu_1\left(y\right)\mu_1\left(z\right)-\mu_2\left(y\right)\mu_2\left(z\right)\right)\dz\dy\\
  &\quad -\frac{\eps}{2p}\int_{0}^{\infty}\int_{0}^{\infty}\ee^{-\left(1+p\right)\left(y+z\right)}\left(y+z\right)W\left(y,z\right)\left(\mu_1\left(y\right)\mu_1\left(z\right)-\mu_2\left(y\right)\mu_2\left(z\right)\right)\dz\dy\\
  &=\frac{1}{p}\frac{\dd}{\dd p}\left(\left(U_1\left(p+1\right)\right)^2-\left(U_2\left(p+1\right)\right)^2\right)\\
  &\quad-\frac{\eps}{2p}\int_{0}^{\infty}\int_{0}^{\infty}\Gamma\left(\xi,\eta\right)\int_{0}^{\infty}\int_{0}^{\infty}\ee^{-\left(p+1\right)\left(y+z\right)}\left(y+z\right)^2\ee^{-\xi y-\eta z}\left(\mu_1\left(y\right)\mu_1\left(z\right)-\mu_2\left(y\right)\mu_2\left(z\right)\right)\dz\dy\deta\dxi.
 \end{split}
\end{equation*}
Rearranging this yields
\begin{equation*}
 \begin{split}
  &\quad J_{3,1}\\
  &=\frac{1}{p}\frac{\dd}{\dd p}\left(\left(U_1\left(p+1\right)\right)^2-\left(U_2\left(p+1\right)\right)^2\right)\\
  &\quad -\frac{\eps}{2p}\int_{0}^{\infty}\int_{0}^{\infty}\Gamma\left(\xi,\eta\right)\left(\del_{\xi}+\del_{\eta}\right)^2\left(U_1\left(\xi+p+1\right)U_1\left(\eta+p+1\right)-U_2\left(\xi+p+1\right)U_2\left(\eta+p+1\right)\right)\deta\dxi\\
  &=\vcc J_{3,1,1}-J_{3,1,2}.
 \end{split}
\end{equation*}

\subsubsection{Estimate for $J_{3,1}$}

We start by estimating $J_{3,1}$ and we will show
\begin{equation}\label{eq:bl:4:1}
 \snorm{2}{J_{3,1}}\leq \delta_{0}\fnorm{1}{\dmu}+C_{\delta_0}\fnorm{1}{\left(1-\zeta\right)\dmu}.
\end{equation}
We consider $J_{3,1,1}$ and $J_{3,1,2}$ separately and we begin with $J_{3,1,1}$ for which we need some splitting argument similarly to the estimate of $J_1$. For $n\in\N$ we insert 
\begin{equation*}
 1=\ee^{-yn}\ee^{-zn}+\left(1-\ee^{-yn}\right)\ee^{-zn}+\ee^{-yn}\left(1-\ee^{-zn}\right)+\left(1-\ee^{-yn}\right)\left(1-\ee^{-zn}\right)
\end{equation*}
into the integral expression for $J_{3,1,1}$ to get after rearranging
\begin{equation*}
 \begin{split}
  J_{3,1,1}&=\frac{1}{p}\del_{p}\left[\int_{0}^{\infty}\ee^{-\left(p+n+1\right)y}\dmu\left(y\right)\dy\int_{0}^{\infty}\ee^{-\left(p+n+1\right)z}\left(\mu_1\left(z\right)+\mu_2\left(z\right)\right)\dz\right.\\
  &\quad\left.+2\int_{0}^{\infty}\left(1-\ee^{-ny}\right)\ee^{-\left(p+1\right)y}\mu_1\left(y\right)\dy\int_{0}^{\infty}\ee^{-\left(p+1+n\right)z}\dmu\left(z\right)\dz\right.\\
  &\quad\left.+2\int_{0}^{\infty}\left(1-\ee^{-ny}\right)\ee^{-\left(p+1\right)y}\dmu\left(y\right)\dy\int_{0}^{\infty}\ee^{-\left(p+1+n\right)z}\mu_2\left(z\right)\dz\right.\\
  &\quad+\left. \int_{0}^{\infty}\left(1-\ee^{-ny}\right)\ee^{-\left(p+1\right)y}\dmu\left(y\right)\dy\int_{0}^{\infty}\left(1-\ee^{-nz}\right)\ee^{-\left(p+1\right)z}\left(\mu_1\left(z\right)+\mu_{2}\left(z\right)\right)\dz\right].
 \end{split}
\end{equation*}
Thus it follows
\begin{multline*}
  \quad\abs{J_{3,1,1}}\\
  \shoveleft{\leq \frac{1}{p}\left[\frac{\snorm{1}{\dmu}\left(\fnorm{0}{\mu_1}+\fnorm{0}{\mu_2}\right)}{\left(p+n+1\right)^{1+2\theta}}+\frac{\fnorm{0}{\dmu}\left(\snorm{1}{\mu_1}+\snorm{1}{\mu_2}\right)}{\left(p+n+1\right)^{1+2\theta}}\right]}\\
  \shoveleft{+\frac{2}{p}\left[\frac{\snorm{1}{\left(1-\ee^{-n\cdot}\right)\mu_1}\fnorm{0}{\dmu}}{\left(p+1\right)^{1+\theta}\left(p+n+1\right)^{\theta}}+\frac{\fnorm{0}{\left(1-\ee^{-n\cdot}\right)\mu_1}\snorm{1}{\dmu}}{\left(p+1\right)^{\theta}\left(p+n+1\right)^{1+\theta}}+ \right.}\\
  \shoveright{\left.+\frac{\snorm{1}{\left(1-\ee^{-n\cdot}\right)\dmu}\fnorm{0}{\mu_2}}{\left(p+1\right)^{1+\theta}\left(p+n+1\right)^{\theta}}+\frac{\fnorm{0}{\left(1-\ee^{-n\cdot}\right)\dmu}\snorm{1}{\mu_2}}{\left(p+1\right)^{\theta}\left(p+n+1\right)^{1+\theta}}\right]}\\
  \shoveleft{+\frac{1}{p\left(p+1\right)^{1+2\theta}}\left[\snorm{1}{\left(1-\ee^{-n\cdot}\right)\dmu}\left(\fnorm{0}{\left(1-\ee^{-n\cdot}\right)\mu_1}+\fnorm{0}{\left(1-\ee^{-n\cdot}\right)\mu_2}\right)\right.}\\
  \left.+\fnorm{0}{\left(1-\ee^{-n\cdot}\right)\dmu}\left(\snorm{1}{\left(1-\ee^{-n\cdot}\right)\mu_1}+\snorm{1}{\left(1-\ee^{-n\cdot}\right)\mu_2}\right)\right].
\end{multline*}
From Lemmas~\ref{Lem:elem:est:norm} and~\ref{Lem:norm:est:split} together with the uniform boundedness of $\fnorm{2}{\mu_1}$ and $\fnorm{2}{\mu_2}$ shown in Proposition~\ref{Prop:uniform:bound:muk} it follows
\begin{equation*}
 \begin{split}
  &\quad\abs{J_{3,1,1}}\\
  &\leq C\fnorm{1}{\dmu}\left(\fnorm{1}{\mu_1}+\fnorm{1}{\mu_2}+\fnorm{1}{\left(1-\ee^{-n\cdot}\right)\mu_1}\right)\frac{1}{n^{\theta}}\frac{1}{p\left(1+p\right)^{1+\theta}}\\
  &\qquad+C\fnorm{1}{\left(1-\ee^{-n\cdot}\right)\dmu}\left(\fnorm{1}{\mu_2}+\fnorm{1}{\left(1-\ee^{-n\cdot}\right)\mu_1}+\fnorm{1}{\left(1-\ee^{-n\cdot}\right)\mu_2}\right)\frac{1}{p\left(1+p\right)^{1+2\theta}}\\
  &\leq C\fnorm{1}{\dmu}\frac{1}{n^{\theta}}\frac{\left(1+p\right)^{1-\theta}}{p^3}+C\left(n\right)\fnorm{1}{\left(1-\zeta\right)\dmu}\frac{\left(1+p\right)^{1-\theta}}{p^3}.
 \end{split}
\end{equation*}
 By choosing $n$ sufficiently large we can then obtain
 \begin{equation}\label{eq:J:3:1:1}
  \abs{J_{3,1,1}}\leq \left(\frac{\delta_0}{2}\fnorm{1}{\dmu}+\widetilde{C}_{\delta_0}\fnorm{1}{\left(1-\zeta\right)\dmu}\right)\frac{\left(1+p\right)^{1-\theta}}{p^3}.
 \end{equation}
Next we estimate $J_{3,1,2}$ for which no splitting is necessary as this term already contains a factor $\eps$. We have
 \begin{multline*}
\quad\abs{J_{3,1,2}}\\
  \shoveleft{=\frac{\eps}{2p}\abs{\int_{0}^{\infty}\int_{0}^{\infty}\Gamma\left(\xi,\eta\right)\left(\del_{\xi}+\del_{\eta}\right)^2\left(U_1\left(\xi+p+1\right)U_1\left(\eta+p+1\right)-U_2\left(\xi+p+1\right)U_2\left(\eta+p+1\right)\right)\deta\dxi}}\\
  \shoveleft=\frac{\eps}{2p}\abs{\int_{0}^{\infty}\int_{0}^{\infty}\Gamma\left(\xi,\eta\right)\left(U_1''\left(\xi+p+1\right)U_1\left(\eta+p+1\right)-U_2''\left(\xi+p+1\right)U_2\left(\eta+p+1\right)\right.\right.\\
  \left.\left.+2U_1'\left(\xi+p+1\right)U_1'\left(\eta+p+1\right)-2U_2'\left(\xi+p+1\right)U_2'\left(\eta+p+1\right)\right.\right.\\
  \left.\left.+U_1\left(\xi+p+1\right)U_1''\left(\eta+p+1\right)-U_2\left(\xi+p+1\right)U_2''\left(\eta+p+1\right)\right)\vphantom{\int_{0}^{\infty}}\deta\dxi}.  
 \end{multline*}
Inserting additional terms and rearranging gives
\begin{multline*}
  \quad \abs{J_{3,1,2}}=\frac{\eps}{2p}\abs{\int_{0}^{\infty}\int_{0}^{\infty}\Gamma\left(\xi,\eta\right)\left(\left(U_1''\left(\xi+p+1\right)-U_2''\left(\xi+p+1\right)\right)U_1\left(\eta+p+1\right)\right.\right.\\
  \left.\left.+U_2''\left(\xi+p+1\right)\left(U_1\left(\eta+p+1\right)-U_2\left(\eta+p+1\right)\right)\right.\right.\\
  \left.\left.+2\left(U_1'\left(\xi+p+1\right)-U_2'\left(\xi+p+1\right)\right)U_1'\left(\eta+p+1\right)\right.\right.\\
  \left.\left.+2U_2'\left(\xi+p+1\right)\left(U_1'\left(\eta+p+1\right)-U_2'\left(\eta+p+1\right)\right)\right.\right.\\
  \left.\left.+\left(U_1\left(\xi+p+1\right)-U_2\left(\xi+p+1\right)\right)U_1''\left(\eta+p+1\right)\right.\right.\\
  \left.\left.+U_2\left(\xi+p+1\right)\left(U_1''\left(\eta+p+1\right)-U_2''\left(\eta+p+1\right)\right)\right)\vphantom{\int_{0}^{\infty}}}
\end{multline*}
leading to the following estimate
\begin{multline*}
 \abs{J_{3,1,2}}\leq\frac{\eps}{2p}\int_{0}^{\infty}\int_{0}^{\infty}\abs{\Gamma\left(\xi,\eta\right)}\left(\left(\snorm{2}{\dmu}\fnorm{0}{\mu_1}+\snorm{2}{\mu_2}\fnorm{0}{\dmu}\right)\frac{\left(2+\xi+p\right)^{1-\theta}}{\left(1+\xi+p\right)^{3}}\frac{\left(2+\eta+p\right)^{1-\theta}}{\eta+p+1}\right.\\
  \left.+2\left(\snorm{2}{\dmu}\snorm{1}{\mu_1}+\snorm{1}{\mu_2}\snorm{1}{\dmu}\right)\frac{\left(2+\xi+p\right)^{1-\theta}}{\left(1+\xi+p\right)^2}\frac{\left(2+\eta+p\right)^{1-\theta}}{\left(1+\eta+p\right)^2}\right.\\
  \left.+\left(\fnorm{0}{\dmu}\snorm{2}{\mu_1}+\fnorm{0}{\mu_2}\snorm{2}{\dmu}\right)\frac{\left(2+\xi+p\right)^{1-\theta}}{\xi+p+1}\frac{\left(2+\eta+p\right)^{1-\theta}}{\left(\eta+p+1\right)^3}\right)\deta\dxi.
\end{multline*}
Estimating by the highest norms and the most dominant terms we get
\begin{equation*}
 \begin{split}
  &\quad\abs{J_{3,1,2}}\\
  &\leq C\eps\frac{\fnorm{2}{\dmu}\left(\fnorm{2}{\mu_1}+\fnorm{2}{\mu_2}\right)}{p}\int_{0}^{\infty}\int_{0}^{\infty}\abs{\Gamma\left(\xi,\eta\right)}\cdot\\
    &\cdot\left(\frac{1}{\left(1+\xi+p\right)^{2+\theta}\left(1+\eta+p\right)^{\theta}}+\frac{1}{\left(1+\xi+p\right)^{1+\theta}\left(1+\eta+p\right)^{1+\theta}}+\frac{1}{\left(1+\xi+p\right)^{\theta}\left(1+\eta+p\right)^{2+\theta}}\right)\deta\dxi.
 \end{split}
\end{equation*}
Using the homogeneity of $\Gamma$ we further obtain by changing variables and taking into account the uniform boundedness of $\fnorm{2}{\mu_1}$ and $\fnorm{2}{\mu_2}$ that
\begin{equation*}
 \begin{split}
  &\quad \abs{J_{3,1,2}}\\
   & \leq \frac{C\eps\fnorm{2}{\dmu}}{p\left(1+p\right)^{1+2\theta}}\int_{0}^{\infty}\int_{0}^{\infty}\left(\frac{\abs{\Gamma\left(\xi,\eta\right)}}{\left(1+\xi\right)^{2+\theta}\left(1+\eta\right)^{\theta}}+\frac{\abs{\Gamma\left(\xi,\eta\right)}}{\left(1+\xi\right)^{1+\theta}\left(1+\eta\right)^{1+\theta}}+\frac{\abs{\Gamma\left(\xi,\eta\right)}}{\left(1+\xi\right)^{\theta}\left(1+\eta\right)^{2+\theta}}\right)\deta\dxi\\
  &\leq C\eps\fnorm{2}{\dmu}\frac{\left(1+p\right)^{1-\theta}}{p^3},
 \end{split}
\end{equation*}
where last integral is bounded by a constant due to Lemma~\ref{Lem:est:Gamma:int}. Together with~\eqref{eq:J:3:1:1} this shows~\eqref{eq:bl:4:1} for $\eps$ sufficiently small.

\subsubsection{Estimate for $K_{4,1}$}

We next consider $K_{4,1}$ and we will show
\begin{equation}\label{eq:bl:4:2}
 \eps\snorm{2}{K_{4,1}}\leq C\eps\fnorm{2}{\dmu}.
\end{equation}
 Recall
\begin{equation*}
 \begin{split}
  K_{4,1}&=-\frac{1}{p}\int_{0}^{\infty}\int_{0}^{\infty}\Gamma\left(\xi,\eta\right)U_2\left(\eta+1\right)\left(M''\left(1+p+\xi\right)-M''\left(2+\xi+p\right)\right)\dxi\deta\\
  &\qquad +\frac{1}{p}\int_{0}^{\infty}\int_{0}^{\infty}\Gamma\left(\xi,\eta\right)U_{2}'\left(\eta+1\right)\left(M'\left(1+p+\xi\right)-M'\left(2+p+\xi\right)\right)\dxi\deta
 \end{split}
\end{equation*}
as well as
\begin{equation*}
 \abs{M'\left(p+\xi+2\right)-M'\left(\xi+p+1\right)}\leq C\snorm{2}{\dmu}\left(1+p+\xi\right)^{-2-\theta}.
\end{equation*}
Using this we obtain together with~\eqref{eq:Lambda} similarly as before that
\begin{equation*}
 \begin{split}
  \abs{K_{4,1}}&\leq \frac{C}{p}\fnorm{0}{\mu_2}\snorm{2}{\dmu}\int_{0}^{\infty}\int_{0}^{\infty}\frac{\abs{\Gamma\left(\xi,\eta\right)}}{\left(\eta+1\right)^{\theta}\left(1+p+\xi\right)^{2+\theta}}\deta\dxi\\
  &\quad +\frac{C}{p}\snorm{1}{\mu_2}\snorm{2}{\dmu}\int_{0}^{\infty}\int_{0}^{\infty}\frac{\abs{\Gamma\left(\xi,\eta\right)}}{\left(\eta+1\right)^{1+\theta}\left(1+\xi+p\right)^{2+\theta}}\deta\dxi\\
  &\leq \frac{C}{p}\fnorm{1}{\mu_2}\snorm{2}{\dmu}\int_{0}^{\infty}\int_{0}^{\infty}\frac{\abs{\Gamma\left(\xi,\eta\right)}}{\eta^{\theta}\left(1+\xi+p\right)^{2+\theta}}\deta\dxi.
 \end{split}
\end{equation*}
Changing variables we find together with Lemma~\ref{Lem:est:Gamma:int:2} and Proposition~\ref{Prop:uniform:bound:muk} that
\begin{equation*}
 \begin{split}
  \abs{K_{4,1}}&\leq \frac{C\fnorm{1}{\mu_2}\snorm{2}{\dmu}}{p\left(1+p\right)^{1+2\theta}}\int_{0}^{\infty}\int_{0}^{\infty}\frac{\abs{\Gamma\left(\xi,\eta\right)}}{\eta^{\theta}\left(1+\xi\right)^{2+\theta}}\deta\dxi\leq C\snorm{2}{\dmu}\frac{\left(1+p\right)^{1-\theta}}{p^3}.
 \end{split}
\end{equation*}
This shows~\eqref{eq:bl:4:2}

\subsection{Estimates for $K_{4,0}$ and $J_{3,0}$}

Thus it remains to estimate the terms $K_{4,0}$ and $J_{3,0}$ which is the most difficult part. We recall
\begin{align*}
 K_{4,0}&=\int_{0}^{\infty}\left(1-\ee^{-y}\right)\ee^{-y}\left(1+y\right)H_0\left(y,p\right)\beta_W\left(y;\mu_2\right)\left(\mu_1\left(y\right)-\mu_2\left(y\right)\right)\dy\\
 J_{3,0}&=\frac{1}{2}\int_{0}^{\infty}\int_{0}^{\infty}\ee^{-\left(y+z\right)}H_{0}\left(y+z,p\right)\left(1+y+z\right)K\left(y,z\right)\left(\mu_1\left(y\right)\mu_1\left(z\right)-\mu_2\left(y\right)\mu_2\left(z\right)\right)\dz\dy.
\end{align*}
In order to get estimates for $K_{4,0}$ and $J_{3,0}$ we need to rewrite these expressions in terms of the Laplace-transforms of $\mu_1$ and $\mu_2$. For this we use the representation of $H_0$ given by
\begin{equation}\label{eq:def:Q0}
 H_{0}\left(\Y,p\right)=\int_{0}^{\infty}Q_0\left(\xi,p\right)\ee^{-\xi \Y}\dxi \quad \text{for all } \Y,p>0
\end{equation}
that is proven in Proposition~\ref{Prop:rep:H0}. Here $Q_0\left(\cdot,p\right)\in L^2\left(\R_{+}\right)$ for all $p>0$. We will also use the following estimate on $Q_0\left(\xi,p\right)$ that is shown in Proposition~\ref{Prop:Q0:estimate}. For some $\nu>0$ sufficiently small it holds
\begin{equation*}
   \int_{0}^{\infty}\frac{\abs{Q_0\left(\xi,p\right)}}{\left(1+\xi\right)^{2\theta-\nu}}\dxi\leq C\frac{\left(1+p\right)^{1-\theta}}{p^3} \quad \text{for all } p>0.
\end{equation*}

\subsection{Estimate for $J_{3,0}$}

We will show
\begin{equation}\label{eq:bl:5:1}
 \snorm{2}{J_{3,0}}\leq \delta_{0}\fnorm{2}{\dmu}+C_{\delta_{0}}\fnorm{1}{\left(1-\zeta\right)\dmu}.
\end{equation}
From~\eqref{eq:def:Q0} and $K=2+\eps W$ we can rewrite
\begin{equation*}
 \begin{split}
  J_{3,0}&=\frac{1}{2}\int_{0}^{\infty}\int_{0}^{\infty}\ee^{-\left(y+z\right)}H_{0}\left(y+z,p\right)\left(1+y+z\right)\left(2+\eps W\left(y,z\right)\right)\left(\mu_1\left(y\right)\mu_1\left(z\right)-\mu_2\left(y\right)\mu_2\left(z\right)\right)\dz\dy\\
  &=\vcc J_{3,0,1}+J_{3,0,2}
 \end{split}
\end{equation*}
with
\begin{align*}
 J_{3,0,1}&=\int_{0}^{\infty}Q_0\left(\xi,p\right)\int_{0}^{\infty}\int_{0}^{\infty}\ee^{-\left(\xi+1\right)\left(y+z\right)}\left(1+y+z\right)\left(\mu_1\left(y\right)\mu_1\left(z\right)-\mu_2\left(y\right)\mu_{2}\left(z\right)\right)\dz\dy\dxi,\\
 J_{3,0,2}&=\frac{\eps}{2}\int_{0}^{\infty}Q_0\left(\xi,p\right)\int_{0}^{\infty}\int_{0}^{\infty}\ee^{-\left(\xi+1\right)\left(y+z\right)}\left(1+y+z\right)W\left(y,z\right)\left(\mu_1\left(y\right)\mu_1\left(z\right)-\mu_2\left(y\right)\mu_2\left(z\right)\right)\dz\dy\dxi.
\end{align*}
We first consider $J_{3,0,1}$ and insert again the splitting
\begin{equation*}
 1=\ee^{-yn}\ee^{-zn}+\left(1-\ee^{-yn}\right)\ee^{-zn}+\ee^{-yn}\left(1-\ee^{-zn}\right)+\left(1-\ee^{-yn}\right)\left(1-\ee^{-zn}\right)
\end{equation*}
to obtain, by rearranging and inserting additional terms, that
\begin{multline*}
  \quad J_{3,0,1}\\
  \shoveleft{=\int_{0}^{\infty}Q_0\left(\xi,p\right)\left(1-\del_{\xi}\right)\int_{0}^{\infty}\int_{0}^{\infty}\ee^{-\left(\xi+1\right)\left(y+z\right)}\left(\mu_1\left(y\right)\mu_1\left(z\right)-\mu_2\left(y\right)\mu_2\left(z\right)\right)\dz\dy\dxi}\\
 \shoveleft{=\int_{0}^{\infty}Q_0\left(\xi,p\right)\left(1-\del_{\xi}\right)\Delta U\left(\xi+1+n\right)\left(U_1\left(\xi+1+n\right)+U_2\left(\xi+1+n\right)\right)\dxi}\\
 \shoveleft{\quad+2\int_{0}^{\infty}Q_0\left(\xi,p\right)\left(1-\del_{\xi}\right)\int_{0}^{\infty}\ee^{-\left(\xi+1\right)y}\left(1-\ee^{-yn}\right)\left[\mu_1\left(y\right)\left(U_1\left(\xi+1+n\right)-U_2\left(\xi+1+n\right)\right)\phantom{\dy\dxi}\right.}\\
         \shoveright{\left.+\left(\mu_1\left(y\right)-\mu_{2}\left(y\right)\right)U_2\left(\xi+1+n\right)\right]\dy\dxi}\\
 \shoveleft{\quad+\int_{0}^{\infty}Q_0\left(\xi,p\right)\left(1-\del_{\xi}\right)\cdot}\\
 \cdot\left(\int_{0}^{\infty}\left(1-\ee^{-yn}\right)\ee^{-\left(\xi+1\right)y}\dmu\left(y\right)\dy\int_{0}^{\infty}\left(1-\ee^{-zn}\right)\ee^{-\left(1+\xi\right)z}\left(\mu_1\left(z\right)+\mu_2\left(z\right)\right)\dz\right)\dxi.
\end{multline*}
From this we get the following estimate
\begin{equation*}
 \begin{split}
  &\quad \abs{J_{3,0,1}}\\
  &\begin{multlined}
    \leq C\int_{0}^{\infty}\abs{Q_0\left(\xi,p\right)}\left(\fnorm{0}{\dmu}\left(\fnorm{0}{\mu_1}+\fnorm{0}{\mu_2}\right)\frac{1}{\left(\xi+1+n\right)^{2\theta}}\right.\\
  \left.\phantom{\qquad\qquad}+\left(\snorm{1}{\dmu}\left(\fnorm{0}{\mu_1}+\fnorm{0}{\mu_2}\right)+\fnorm{0}{\dmu}\left(\snorm{1}{\mu_1}+\snorm{1}{\mu_2}\right)\right)\frac{1}{\left(1+\xi+n\right)^{1+2\theta}}\right)\dxi
   \end{multlined}\\
  &\quad\begin{multlined}
         +2C\int_{0}^{\infty}\frac{\abs{Q_0\left(\xi,p\right)}}{\left(\xi+n+1\right)^{\theta}\left(1+\xi\right)^{\theta}}\left(\left(\fnorm{0}{\left(1-\ee^{-n\cdot}\right)\mu_1}\fnorm{0}{\dmu}+\fnorm{0}{\left(1-\ee^{-n\cdot}\right)\dmu}\fnorm{0}{\mu_2}\right)\right.\\
  \left.+\left(\snorm{1}{\left(1-\ee^{-n\cdot}\right)\mu_1}\fnorm{0}{\dmu}+\snorm{1}{\left(1-\ee^{-n\cdot}\right)\dmu}\fnorm{0}{\mu_2}\right)\frac{1}{\xi+1}\right.\\
  \left.+\left(\fnorm{0}{\left(1-\ee^{-n\cdot}\right)\mu_1}\snorm{1}{\dmu}+\fnorm{0}{\left(1-\ee^{-n\cdot}\right)\dmu}\snorm{1}{\mu_2}\right)\frac{1}{\xi+n+1}\right)\dxi
        \end{multlined}\\
   &\quad\begin{multlined}
          +\int_{0}^{\infty}\abs{Q_0\left(\xi,p\right)}\left(\fnorm{0}{\left(1-\ee^{-n\cdot}\right)\dmu}\left(\fnorm{0}{\left(1-\ee^{-n\cdot}\right)\mu_1}+\fnorm{0}{\left(1-\ee^{-n\cdot}\right)\mu_2}\right)\frac{1}{\left(1+\xi\right)^{2\theta}}\phantom{\Big)\dxi}\right.\\
  \left.+\left(\snorm{1}{\left(1-\ee^{-n\cdot}\right)\dmu}\left(\fnorm{0}{\left(1-\ee^{-n\cdot}\right)\mu_1}+\fnorm{0}{\left(1-\ee^{-n\cdot}\right)\mu_2}\right)\right.\right.\\
  \left.\left.+\fnorm{0}{\left(1-\ee^{-n\cdot}\right)\dmu}\left(\snorm{1}{\left(1-\ee^{-n\cdot}\right)\mu_1}+\snorm{1}{\left(1-\ee^{-n\cdot}\right)\mu_2}\right)\right)\frac{1}{\left(1+\xi\right)^{1+2\theta}}\right)\dxi.
         \end{multlined}
 \end{split}
\end{equation*}
Estimating by the most dominant terms we further obtain
\begin{multline*}
   \abs{J_{3,0,1}}\leq C\fnorm{1}{\dmu}\left(\fnorm{1}{\mu_1}+\fnorm{1}{\mu_2}\right)\int_{0}^{\infty}\frac{\abs{Q_{0}\left(\xi,p\right)}}{\left(1+\xi\right)^{2\theta}}\dxi\\
  +C\fnorm{1}{\dmu}\fnorm{1}{\left(1-\ee^{-n\cdot}\right)\mu_1}\int_{0}^{\infty}\frac{\abs{Q_0\left(\xi,p\right)}}{\left(1+\xi+n\right)^{\theta}\left(1+\xi\right)^{\theta}}\dxi\\
  +C\fnorm{1}{\left(1-\ee^{-n\cdot}\right)\dmu}\fnorm{1}{\mu_2}\int_{0}^{\infty}\frac{\abs{Q_0\left(\xi,p\right)}}{\left(1+\xi\right)^{2\theta}}\dxi\\
  +C\fnorm{1}{\left(1-\ee^{-n\cdot}\right)\dmu}\left(\fnorm{1}{\left(1-\ee^{-n\cdot}\right)\mu_1}+\fnorm{1}{\left(1-\ee^{-n\cdot}\right)\mu_2}\right)\int_{0}^{\infty}\frac{\abs{Q_0\left(\xi,p\right)}}{\left(\xi+1\right)^{2\theta}}\dxi.
\end{multline*}
Choosing now $\nu>0$ sufficiently small and using Lemma~\ref{Lem:elem:est:norm}, Lemma~\ref{Lem:norm:est:split} and Proposition~\ref{Prop:uniform:bound:muk} we can further estimate
\begin{equation*}
 \begin{split}
  \abs{J_{3,0,1}}\leq \frac{C}{n^{\nu}}\fnorm{1}{\dmu}\int_{0}^{\infty}\frac{\abs{Q_0\left(\xi,p\right)}}{\left(1+\xi\right)^{2\theta-\nu}}\dxi+C\left(n\right)\fnorm{1}{\left(1-\zeta\right)\dmu}\int_{0}^{\infty}\frac{\abs{Q_0\left(\xi,p\right)}}{\left(1+\xi\right)^{2\theta}}\dxi.
 \end{split}
\end{equation*}
Thus for sufficiently large $n$ we obtain from Proposition~\ref{Prop:Q0:estimate}
\begin{equation}\label{eq:J301}
 \abs{J_{3,0,1}}\leq \left(\delta_{0}\fnorm{1}{\dmu}+C_{\delta_{0}}\fnorm{1}{\left(1-\zeta\right)\dmu}\right)\frac{\left(1+p\right)^{1-\theta}}{p^3}.
\end{equation}
We next consider the term $J_{3,0,2}$ and obtain together with Proposition~\ref{P.repkernels} that
\begin{multline*}
  \quad J_{3,0,2}\\
  \shoveleft{=\frac{\eps}{2}\int_{0}^{\infty}\int_{0}^{\infty}\Gamma\left(\sigma,\eta\right)\int_{0}^{\infty}Q_0\left(\xi,p\right)\int_{0}^{\infty}\int_{0}^{\infty}\ee^{-\left(\xi+1\right)\left(y+z\right)}\ee^{-\sigma y-\eta z}\left(1+y+z\right)\left(y+z\right)\cdot}\\
  \shoveright{\cdot\left(\mu_1\left(y\right)\mu_1\left(z\right)-\mu_2\left(y\right)\mu_2\left(z\right)\right)\dz\dy\dxi\deta\dsig}\\
  \shoveleft{=-\frac{\eps}{2}\int_{0}^{\infty}\int_{0}^{\infty}\Gamma\left(\sigma,\eta\right)\int_{0}^{\infty}Q_0\left(\xi,p\right)\left(1-\del_{\xi}\right)\del_{\xi}\int_{0}^{\infty}\int_{0}^{\infty}\ee^{-\left(\xi+1+\sigma\right)y}\ee^{-\left(\xi+\eta+1\right)z}\cdot}\\
  \shoveright{\cdot\left(\mu_1\left(y\right)\mu_1\left(z\right)-\mu_2\left(y\right)\mu_2\left(z\right)\right)\dz\dy\dxi\deta\dsig}\\
  \shoveleft{=-\frac{\eps}{2}\int_{0}^{\infty}\int_{0}^{\infty}\Gamma\left(\sigma,\eta\right)\int_{0}^{\infty}Q_0\left(\xi,p\right)\left(1-\del_{\xi}\right)\del_{\xi}\left(\left(U_{1}\left(\xi+\sigma+1\right)-U_2\left(\xi+\sigma+1\right)\right)U_1\left(\xi+\eta+1\right)\right.}\\
  \left.+U_2\left(\xi+\sigma+1\right)\left(U_1\left(\xi+\eta+1\right)-U_2\left(\xi+\eta+1\right)\right)\right)\dxi\deta\dsig.
\end{multline*}
 From this, taking Leibniz' rule into account, we obtain the following estimate
\begin{multline*}
  \quad \abs{J_{3,0,2}}\\
  \shoveleft{\leq C\eps\int_{0}^{\infty}\int_{0}^{\infty}\int_{0}^{\infty}\abs{\Gamma\left(\sigma,\eta\right)}\abs{Q_0\left(\xi,p\right)}\left\{\frac{\snorm{1}{\dmu}\fnorm{0}{\mu_1}+\snorm{1}{\mu_2}\fnorm{0}{\dmu}}{\left(\xi+\sigma+1\right)^{1+\theta}\left(\xi+\eta+1\right)^{\theta}}\right.}\\
  \left.+\frac{\fnorm{0}{\dmu}\snorm{1}{\mu_1}+\fnorm{0}{\mu_2}\snorm{1}{\dmu}}{\left(\xi+\sigma+1\right)^{\theta}\left(\xi+\eta+1\right)^{1+\theta}}+\frac{\snorm{2}{\dmu}\fnorm{0}{\mu_1}+\snorm{2}{\mu_2}\fnorm{0}{\dmu}}{\left(\xi+\sigma+1\right)^{2+\theta}\left(\xi+\eta+1\right)^{\theta}}\right.\\
  \left.+2\frac{\snorm{1}{\dmu}\snorm{1}{\mu_1}+\snorm{1}{\mu_2}\snorm{1}{\dmu}}{\left(\xi+\sigma+1\right)^{1+\theta}\left(\xi+\eta+1\right)^{1+\theta}}+\frac{\fnorm{0}{\dmu}\snorm{2}{\mu_1}+\fnorm{0}{\mu_2}\snorm{2}{\dmu}}{\left(\xi+\sigma+1\right)^{\theta}\left(\xi+\eta+1\right)^{2+\theta}}\right\}\dxi\deta\dsig.
\end{multline*}
Estimating by the highest norms, changing variables in the $\sigma$- and $\eta$-integral and using the uniform boundedness of $\mu_1$ and $\mu_2$ (see Proposition~\ref{Prop:uniform:bound:muk}) gives together with the homogeneity of $\Gamma$ that
\begin{equation*}
 \begin{split}
  &\quad\abs{J_{3,0,2}}\\
  &\leq C\eps\fnorm{1}{\dmu}\int_{0}^{\infty}\frac{\abs{Q_0\left(\xi,p\right)}}{\left(1+\xi\right)^{2\theta}}\int_{0}^{\infty}\int_{0}^{\infty}\left(\frac{\abs{\Gamma\left(\sigma,\eta\right)}}{\left(1+\sigma\right)^{1+\theta}\left(1+\eta\right)^{\theta}}+\frac{\abs{\Gamma\left(\sigma,\eta\right)}}{\left(1+\sigma\right)^{\theta}\left(1+\eta\right)^{1+\theta}}\right)\deta\dsig\dxi\\
  &\begin{multlined}
    \quad+ C\eps\fnorm{2}{\dmu}\int_{0}^{\infty}\frac{\abs{Q_0\left(\xi,p\right)}}{\left(1+\xi\right)^{1+2\theta}}\cdot\\
    \cdot\int_{0}^{\infty}\int_{0}^{\infty}\left(\frac{\abs{\Gamma\left(\sigma,\eta\right)}}{\left(1+\sigma\right)^{2+\theta}\left(1+\eta\right)^{\theta}}+\frac{\abs{\Gamma\left(\sigma,\eta\right)}}{\left(1+\sigma\right)^{1+\theta}\left(1+\eta\right)^{1+\theta}}+\frac{\abs{\Gamma\left(\sigma,\eta\right)}}{\left(1+\sigma\right)^{\theta}\left(1+\eta\right)^{2+\theta}}\right)\deta\dsig\dxi.
   \end{multlined}
 \end{split}
\end{equation*}
 Estimating by the most dominant term in $\xi$ we obtain together with Lemma~\ref{Lem:est:Gamma:int} and Proposition~\ref{Prop:Q0:estimate}
\begin{equation}\label{eq:J302}
 \begin{split}
  \abs{J_{3,0,2}}\leq C\eps\fnorm{2}{\dmu}\int_{0}^{\infty}\frac{\abs{Q_0\left(\xi,p\right)}}{\left(1+\xi\right)^{2\theta}}\dxi\leq C\eps\fnorm{2}{\dmu}\frac{\left(1+p\right)^{1-\theta}}{p^3}.
 \end{split}
\end{equation}
Thus taking~\eqref{eq:J301} and~\eqref{eq:J302} together we obtain~\eqref{eq:bl:5:1} if $\eps$ is sufficiently small.

\subsubsection{Estimate for $K_{4,0}$}

It only remains to estimate $K_{4,0}$, i.e.\ we will show
\begin{equation}\label{eq:bl:5:2}
 \eps\snorm{2}{K_{4,0}}\leq C\eps\fnorm{2}{\dmu}.
\end{equation}
Therefore, with the help of $Q_0$ as in~\eqref{eq:def:Q0}, we first rewrite  
\begin{equation*}
 \begin{split}
  K_{4,0}&=\int_{0}^{\infty}\left(1-\ee^{-y}\right)\ee^{-y}\left(1+y\right)H_0\left(y,p\right)\beta_W\left(y;\mu_2\right)\left(\mu_1\left(y\right)-\mu_2\left(y\right)\right)\dy\\
  &=\int_{0}^{\infty}Q_0\left(\xi,p\right)\left(1-\del_{\xi}\right)\int_{0}^{\infty}\left(1-\ee^{-y}\right)\ee^{-\left(1+\xi\right)y}\beta_W\left(y,\mu_2\right)\left(\mu_1\left(y\right)-\mu_2\left(y\right)\right)\dy\dxi.
 \end{split}
\end{equation*}
Recalling also the representation of $\beta_W$ from~\eqref{eq:rep:beta:W} we further get
\begin{equation*}
 \begin{split}
  &\quad K_{4,0}\\
  &=\int_{0}^{\infty}\int_{0}^{\infty}\Gamma\left(\sigma,\eta\right)\int_{0}^{\infty}Q_0\left(\xi,p\right)\left(1-\del_{\xi}\right)\int_{0}^{\infty}\left(1-\ee^{-y}\right)\ee^{-\left(1+\xi+\sigma\right)y}\left(yU_2\left(\eta+1\right)-U_2'\left(\eta+1\right)\right)\cdot\\
  &\qquad\qquad\cdot\left(\mu_1\left(y\right)-\mu_2\left(y\right)\right)\dy\dxi\dsig\deta\\
  &=-\int_{0}^{\infty}\int_{0}^{\infty}\Gamma\left(\sigma,\eta\right)U_2\left(\eta+1\right)\int_{0}^{\infty}Q_0\left(\xi,p\right)\left(1-\del_{\xi}\right)\del_{\xi}\left(M\left(\xi+\sigma+1\right)-M\left(\xi+\sigma+2\right)\right)\dxi\dsig\deta\\
  &\quad-\int_{0}^{\infty}\int_{0}^{\infty}\Gamma\left(\sigma,\eta\right)U_2'\left(\eta+1\right)\int_{0}^{\infty}Q_0\left(\xi,p\right)\left(1-\del_{\xi}\right)\left(M\left(\xi+\sigma+1\right)-M\left(\xi+\sigma+2\right)\right)\dxi\dsig\deta.
 \end{split}
\end{equation*}
Taking into account that we have
\begin{equation*}
 \abs{M\left(r+2\right)-M\left(r+1\right)}\leq C\frac{\snorm{1}{\dmu}}{\left(r+1\right)^{1+\theta}}\quad \text{and}\quad \abs{M'\left(r+2\right)-M'\left(r+1\right)}\leq C\frac{\snorm{2}{\dmu}}{\left(r+1\right)^{2+\theta}}
\end{equation*}
as well as
\begin{equation*}
 \abs{M''\left(r+2\right)}\leq C\frac{\snorm{2}{\dmu}}{\left(r+1\right)^{2+\theta}}
\end{equation*}
we obtain 
\begin{equation*}
 \begin{split}
  &\quad\abs{K_{4,0}}\\
  &\leq C\int_{0}^{\infty}\int_{0}^{\infty}\int_{0}^{\infty}\abs{\Gamma\left(\sigma,\eta\right)}\abs{Q_0\left(\xi,p\right)}\frac{\fnorm{0}{\mu_2}}{\left(\eta+1\right)^{\theta}}\frac{\snorm{2}{\dmu}}{\left(1+\xi+\sigma\right)^{2+\theta}}\dxi\deta\dsig\\
  &\quad+C\int_{0}^{\infty}\int_{0}^{\infty}\int_{0}^{\infty}\abs{\Gamma\left(\sigma,\eta\right)}\abs{Q_0\left(\xi,p\right)}\frac{\snorm{1}{\mu_2}}{\left(1+\eta\right)^{1+\theta}}\left(\frac{\snorm{1}{\dmu}}{\left(1+\xi+\sigma\right)^{1+\theta}}+\frac{\snorm{2}{\dmu}}{\left(1+\xi+\sigma\right)^{2+\theta}}\right)\dxi\deta\dsig.
 \end{split}
\end{equation*}
Estimating, similarly as before, by the highest norms and the most dominant terms in the $\eta$- and $\sigma$-integral and using also Proposition~\ref{Prop:uniform:bound:muk}, we obtain that
\begin{equation*}
 \begin{split}
  \abs{K_{4,0}}&\leq C\fnorm{2}{\dmu}\snorm{1}{\mu_2}\int_{0}^{\infty}\int_{0}^{\infty}\int_{0}^{\infty}\frac{\abs{\Gamma\left(\sigma,\eta\right)}\abs{Q_0\left(\xi,p\right)}}{\eta^{\theta}\left(1+\xi+\sigma\right)^{1+\theta}}\dxi\deta\dsig\\
  &\leq C\fnorm{2}{\dmu}\int_{0}^{\infty}\frac{\abs{Q_0\left(\xi,p\right)}}{\left(1+\xi\right)^{2\theta}}\int_{0}^{\infty}\int_{0}^{\infty}\frac{\abs{\Gamma\left(\sigma,\eta\right)}}{\eta^{\theta}\left(1+\sigma\right)^{1+\theta}}\deta\dsig\dxi\leq C\fnorm{2}{\dmu}\int_{0}^{\infty}\frac{\abs{Q_0\left(\xi,p\right)}}{\left(1+\xi\right)^{2\theta}}\dxi,
 \end{split}
\end{equation*}
where we used Lemma~\ref{Lem:est:Gamma:int:2} in the last step. From Proposition~\ref{Prop:Q0:estimate} it then follows~\eqref{eq:bl:5:2}.
\end{proof}

\section{Existence of $Q_0$}\label{Sec:rep:H0}

In this section we show that the function $H_0$ as given by~\eqref{eq:def:H0} can be represented as Laplace transform of some function $Q_0$ as indicated by~\eqref{eq:def:Q0}. For this we will rely on the Paley-Wiener Theorem that may be found in~\cite[Ch.VI, 4.]{Yos78}. Following~\cite{Yos78} we denote by $H^2\left(0\right)$ the Hardy-Lebesgue class consisting of functions $\varphi$ such that
\begin{enumerate}
 \item $\varphi\left(y\right)$ is holomorphic in the right half-plane $\Re(y)>0$
 \item for each fixed $x>0$, $\varphi\left(x+\im z\right)$ as a function of $z$ belongs to $L^2\left(\R\right)$ in such a way that
 \begin{equation}\label{eq:int:prop}
  \sup_{x>0}\left(\int_{-\infty}^{\infty}\abs{\varphi\left(x+\im z\right)}^2\dz\right)<\infty.
 \end{equation}
\end{enumerate}

With this we cite the following Theorem  (see~\cite[Ch.VI, 4, Theorem 2]{Yos78}).

\begin{theorem}[Paley-Wiener]\label{Thm:Paley-Wiener}
 Let $\varphi\left(y\right)\in H^2\left(0\right)$. Then the boundary function $\varphi\left(\im z\right)\in L^2\left(-\infty,\infty\right)$ of $\varphi\left(x+\im z\right)$ exists in the sense that 
 \begin{equation*}
  \lim_{x\downarrow 0}\int_{-\infty}^{\infty}\abs{\varphi\left(\im z\right)-\varphi\left(x+\im z\right)}^2\dz=0
 \end{equation*}
 in such a way that the inverse Fourier transform
 \begin{equation*}
  \psi\left(t\right)=\frac{1}{2\pi}\lim_{N\to \infty}\int_{-N}^{N}\varphi\left(\im z\right)\ee^{\im tz}\dz
 \end{equation*}
 vanishes for $t<0$ and $\varphi\left(y\right)$ may be obtained as the Laplace transform of $\psi\left(t\right)$. 
\end{theorem}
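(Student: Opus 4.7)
The plan is to prove the representation in three steps: a Cauchy-rectangle argument relating the inverse Fourier transforms of $\varphi$ on different vertical lines, a support argument that kills the negative-$t$ part of that inverse transform, and finally the extraction of $L^2$ boundary values on the imaginary axis.

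For the first step, set $\varphi_x(z)\vcc=\varphi(x+\im z)$, so that the hypothesis gives $\sup_{x>0}\|\varphi_x\|_{L^2(\R)}<\infty$. Let $\psi_x\vcc=\mathcal{F}^{-1}\varphi_x$, which lies in $L^2(\R)$ with controlled norm by Plancherel. For $0<x_1<x_2$ and $t\in\R$, apply Cauchy's theorem to $y\mapsto\varphi(y)\ee^{ty}$ on the rectangle with vertices $x_j\pm\im N$: the vertical sides produce (a smoothed version of) $2\pi\im\bigl(\ee^{x_2 t}\psi_{x_2}(t)-\ee^{x_1 t}\psi_{x_1}(t)\bigr)$, and the horizontal segments must be shown to vanish along a subsequence $N_k\to\infty$. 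This forces $\ee^{xt}\psi_x(t)$ to be independent of $x$, so $\psi(t)\vcc=\ee^{xt}\psi_x(t)$ is well defined. For the second step, since $\psi_x(t)=\ee^{-xt}\psi(t)$ must remain uniformly $L^2$-bounded as $x\to\infty$, any positive-measure set on which $\psi$ is nonzero in $(-\infty,0)$ would make $\int_{-\infty}^{0}\ee^{2x|t|}|\psi(t)|^2\,\dt$ blow up; hence $\psi$ vanishes a.e.\ on $(-\infty,0)$, and the Laplace representation $\varphi(x+\im z)=\int_0^\infty\psi(t)\ee^{-(x+\im z)t}\,\dt$ follows by Fourier inversion. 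For the third step, as $x\downarrow 0$, dominated convergence applied to $\psi_x(t)=\ee^{-xt}\psi(t)\mathbf{1}_{t\geq 0}$ with dominating function $|\psi|$ gives $\psi_x\to\psi$ in $L^2$, whence $\varphi_x\to\mathcal{F}\psi$ in $L^2$; this limit is by definition the boundary function $\varphi(\im z)$, and the inverse Fourier transform $\psi$ is supported in $[0,\infty)$ as required.

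The main obstacle is controlling the horizontal contributions in Cauchy's theorem: the hypothesis provides only $L^2$ control of $\varphi$ on vertical lines, not pointwise decay as $|\Im(y)|\to\infty$, so a direct contour-closing argument is not available. I would handle this by averaging; Fubini applied to $\int_{N_0}^{2N_0}\|\varphi(\,\cdot\pm\im s)\|_{L^2([x_1,x_2])}^2\,\ds$ shows the integrand is finite, hence the horizontal contributions vanish along some subsequence $N_k\to\infty$ for a.e.\ $t$. Alternatively, one regularises by multiplying $\varphi(y)\ee^{ty}$ by $\ee^{-\sigma y^2}$, applies Cauchy's theorem rigorously for each $\sigma>0$, and removes the regularisation at the end by $L^2$ approximation against the $\psi_x$ obtained in the limit.
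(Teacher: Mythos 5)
The paper does not prove this statement: it cites it directly from Yosida's textbook (\cite[Ch.~VI, 4, Theorem~2]{Yos78}), so there is no in-paper proof to compare against. Your proposal is the standard textbook proof of the $H^2(0)$ Paley--Wiener theorem, and its overall structure --- Cauchy on rectangles to show $\ee^{xt}\psi_x(t)$ is independent of $x$, exponential blowup as $x\to\infty$ to kill the support on $(-\infty,0)$, and monotone/dominated convergence as $x\downarrow 0$ to get $L^2$ boundary values and the Laplace representation --- is sound. You have also correctly identified that the only delicate point is the horizontal edges of the rectangle, where only an $L^2$ bound on vertical lines is available; your first fix (Fubini gives $z\mapsto\int_{x_1}^{x_2}\abs{\varphi(\sigma+\im z)}^2\dsig$ in $L^1(\R)$, hence it vanishes along a subsequence, and Cauchy--Schwarz then kills the horizontal contributions) is the standard and correct one.

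Two remarks. First, your alternative regularisation by $\ee^{-\sigma y^2}$ does not work: for $y=x+\im z$ in the right half-plane one has $\abs{\ee^{-\sigma y^2}}=\ee^{\sigma(z^2-x^2)}$, which \emph{grows} as $\abs{z}\to\infty$; this makes the integrand larger on the horizontal edges and also ruins the $L^2$ bound on the vertical lines, so the regularisation would have to be a factor that decays in the imaginary direction while staying bounded in the half-plane --- $\ee^{-\sigma y^2}$ is the wrong choice. Second, before invoking dominated convergence with dominant $\abs{\psi}^2$ you need $\psi\in L^2$; this does follow, but it requires a preliminary monotone-convergence step: since $\norm{\psi_x}_{L^2}^2=\int_0^\infty\ee^{-2xt}\abs{\psi(t)}^2\dt\leq C$ for all $x>0$ and $\ee^{-2xt}\uparrow 1$ as $x\downarrow 0$, one gets $\norm{\psi}_{L^2}\leq C$. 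With that supplied, the DCT step for $\norm{\psi-\psi_x}_{L^2}^2=\int_0^\infty(1-\ee^{-xt})^2\abs{\psi(t)}^2\dt\to0$ is fine, and Plancherel then yields the $L^2$ boundary function $\varphi(\im\cdot)=\T\psi$ and the Laplace representation for $\Re(y)>0$ (pointwise, since $\psi\ee^{-x\cdot}\in L^1\cap L^2$ for $x>0$ and both sides are continuous in $z$).
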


With this we will show the representation of $H_0$ as Laplace transform of $Q_0$.

\begin{proposition}\label{Prop:rep:H0}
 The function $H_0\left(\Y,p\right)$ from~\eqref{eq:def:H0} satisfies
 \begin{equation*}
  H_0\left(\Y,p\right)=\int_{0}^{\infty}Q_0\left(\xi,p\right)\ee^{-\xi \Y}\dxi,
 \end{equation*}
 where the function $Q_0$ is given by
 \begin{equation}\label{eq:Q0}
  Q_0\left(\xi,p\right)=\frac{1}{2\pi \im}\lim_{R\to \infty}\int_{-R \im}^{R\im}H_0\left(\Y,p\right)\ee^{\Y\xi}\dd{\Y}
 \end{equation}
 and $Q_0\left(\cdot,p\right)\in L^2\left(\R\right)$ for each $p>0$. Note that this limit is taken with respect to the $L^2$-topology.
\end{proposition}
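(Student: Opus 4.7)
The strategy is to show that $H_0(\cdot,p)$ belongs to the Hardy-Lebesgue class $H^2(0)$ for each fixed $p>0$ and to invoke Theorem~\ref{Thm:Paley-Wiener}. Two ingredients are needed: (i) a holomorphic extension of $H_0(\cdot,p)$ to the right half-plane $\{\Re(y)>0\}$, and (ii) the uniform $L^2$-bound~\eqref{eq:int:prop} on vertical lines. Granted these, the Paley-Wiener theorem furnishes an $L^2$-function $Q_0(\cdot,p)$ supported in $[0,\infty)$ whose Laplace transform is $H_0(\cdot,p)$; rewriting its inverse Fourier formula in the variable $y=iz$ on the imaginary axis gives precisely the Bromwich representation~\eqref{eq:Q0} with the limit understood in the $L^2$-topology.

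For the holomorphic extension I would start from the Laplace representation~\eqref{eq:rep:beta:W} of $\beta_W(y;\mu_2)$ and the analyticity assumption~\eqref{kernel0} on $W$, which together extend $y\mapsto\beta_W(y;\mu_2)$ holomorphically to $\{\Re(y)>0\}$; hence so does
$$\Phi_2(y)=\varepsilon\int_y^\infty\frac{\beta_W(t;\mu_2)}{t}e^{-t}\,dt,$$
the integral being taken along any contour in the right half-plane. Parametrising the $(0,y)$-integration in~\eqref{eq:def:H0} by $x=ty$, $t\in(0,1)$ gives a contour depending holomorphically on $y$; as $y^{\kappa_2}$, $(1+y)^{-1}$ and all the exponential factors are then also holomorphic in $\{\Re(y)>0\}$, so are $H_{0,1}$ and $H_{0,2}$.

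For the $L^2$-bound I would rewrite
$$H_{0,1}(y,p)=\frac{(2+\kappa_2)\,y}{p\,(1+y)}\int_0^1 t^{1+\kappa_2}e^{-pty}e^{\Phi_2(y)-\Phi_2(ty)}\,dt,$$
in which $|e^{\Phi_2(y)-\Phi_2(ty)}|$ is bounded throughout $\{\Re(y)>0\}\times(0,1)$ thanks to the positivity clause $\Re\,W(\xi,1)\geq 0$ in~\eqref{kernel0}, which yields monotonicity of $\Re(\Phi_2)$ along rays from the origin. Combined with $|e^{-pty}|=e^{-pt\Re(y)}\leq 1$, this already gives $|H_{0,1}(y,p)|\leq C|y|/|1+y|$; one integration by parts in $t$ (using $e^{-pty}$ as the factor to integrate) then extracts an extra factor $1/(py)$, the boundary contribution and the remainder being controlled via Remark~\ref{Rem:est:beta:W} applied to the arising $y\Phi_2'(ty)$ term, leading to
$$|H_0(y,p)|\leq \frac{C(p)}{1+|y|},\qquad \Re(y)>0,$$
($H_{0,2}$ being treated in the same way, where the additional $\Phi_2'(x)$ in the integrand only improves the behaviour near the origin). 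Since $(1+|x+iz|)^{-2}\leq \min(1,z^{-2})$ uniformly in $x>0$, this delivers $\sup_{x>0}\int_{-\infty}^\infty|H_0(x+iz,p)|^2\,dz<\infty$, which is~\eqref{eq:int:prop}.

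The main obstacle is the boundedness of $|e^{\Phi_2(y)-\Phi_2(ty)}|$ and the integration-by-parts control of the oscillatory integral in the complex half-plane: both rely crucially on the positivity and analyticity properties of $W$ from~\eqref{kernel0}, which were imposed precisely so that this boundary-layer analysis goes through. Once (i) and (ii) are verified, Theorem~\ref{Thm:Paley-Wiener} directly produces the stated $L^2$-representation and formula~\eqref{eq:Q0}.
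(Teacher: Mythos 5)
Your overall strategy---verify $H_0(\cdot,p)\in H^2(0)$ and invoke the Paley--Wiener Theorem---is exactly the one the paper uses, and your remarks on the holomorphic extension and on why the positivity clause in~\eqref{kernel0} makes the exponential factor $e^{\Phi_2(\Y)-\Phi_2(t\Y)}$ controllable are in the right spirit (though the paper needs the decay estimate~\eqref{eq:bound:Phi} to turn the positivity into a genuinely uniform bound for large $|\Y|$, via the splitting~\eqref{eq:splitting:Phi}; this does not come for free from the positivity on rays alone).

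The genuine gap is in your claimed decay estimate $|H_0(\Y,p)|\leq C(p)/(1+|\Y|)$. A single integration by parts in $t$ produces a remainder containing $t^{1+\kappa_2}\,\Y\,\Phi_2'(t\Y) = -\varepsilon\,t^{\kappa_2}\,\beta_W(t\Y)\,e^{-t\Y}$, and since $|\beta_W(\xi)|\sim|\xi|^{\alpha}$ for large $|\xi|$ this contributes a factor $|\Y|^{\alpha}$ that is not removed by further integrations by parts (it reappears in each boundary term). The bound one actually obtains, and the one the paper establishes (see~\eqref{eq:H012},~\eqref{eq:H021:I},~\eqref{eq:H0:L2:bound:5}), is only
\begin{equation*}
|H_0(\Y,p)|\leq C(p)\left(|\Y|^{\delta-2}+|\Y|^{\alpha-1}+|\Y|^{3\alpha-2}\right),\qquad |\Y|>1,
\end{equation*}
with dominant term $|\Y|^{\alpha-1}$. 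This is still square-integrable on vertical lines, but \emph{only} because $\alpha<\tfrac12$; your claimed $|\Y|^{-1}$ bound would make the proposition hold for any $\alpha<1$ and therefore cannot be correct---the very hypothesis $\alpha<\tfrac12$ is consumed at this step and must appear explicitly. In addition, the paper must integrate by parts \emph{twice} for the $H_{0,2}$ contribution (or, equivalently, for the $\Phi_2'$ term that arises from your single integration by parts) before the $\Y$-integral has sufficient decay at infinity; a single integration by parts leaves an integrand that is merely $O(|\Y|^{\alpha})$ over the contour of length $|\Y|$ and hence does not yet yield the $|\Y|^{\alpha-1}$ estimate.
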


\subsection{Analytic extension of $\beta_W$ and $\Phi$}

Before we can prove Proposition~\ref{Prop:rep:H0}, we have to extend the functions $\beta_W\left(\cdot, \mu_j\right)$ and $\Phi\left(\cdot,\mu_j\right)$ analytically to the right half plane in $\C$ and collect some properties. Furthermore we note that in the following, to simplify notation, we will always write $\mu$, $\beta_W$ as well as $\Phi$ instead
of $\mu_k$, $\beta_W\left(\cdot,\mu_k\right)$ and $\Phi_k$. Thus assuming $\mu$ to be fixed in the following we have the following result.
\begin{lemma}\label{Lem:prop:betaW}
 The function $\beta_W$ can be extended analytically to $\C\setminus\left(-\infty,0\right]$ and we have for all $x\in \C\setminus\left(-\infty,0\right]$ the following estimates
 \begin{align}
    \abs{\beta_W\left(x\right)}&\leq C\left(\abs{x}^{-\alpha}+\abs{x}^{\alpha}\right)\label{eq:est:beta:W:0},\\
    \abs{\frac{\dd^{\ell}}{\dx^\ell}\beta_W\left(x\right)}&\leq C_{\ell}\left(\abs{x}^{-\ell-\alpha}+\abs{x}^{\alpha-\ell}\right) &&\text{if } \Re\left(x\right)\geq 0 \text{ for all }  \ell\in\N \label{eq:est:beta:W},\\
     \Re\left(\beta_W\left(x\right)\right)&\geq 0 &&\text{if } \Re\left(x\right)\geq 0. \label{eq:real:part:betaW}
 \end{align}
\end{lemma}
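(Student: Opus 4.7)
The plan is to exploit the integral representation
\[
\beta_W(y) = \int_0^\infty W\!\left(\tfrac{y}{z},1\right)\mu(z)\,e^{-z}\,dz,
\]
which follows from homogeneity of degree zero of $W$, and then transfer the hypotheses~\eqref{kernel0} and~\eqref{eq:W:decay} on $W(\cdot,1)$ into properties of $\beta_W$ via integration in $z$.

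First I would establish the analytic extension. For $y\in\C\setminus(-\infty,0]$ and $z>0$, the point $y/z$ lies in $\C\setminus(-\infty,0]$ (same argument as $y$), so $W(y/z,1)$ is well-defined and analytic in $y$ by~\eqref{kernel0}. The decay estimate on $|W(\cdot,1)|$ combined with $\mu(z)e^{-z}\leq Ce^{-z/2}$ (from~\eqref{fdecay}) and the integrable singularity at the origin coming from~\eqref{S2E3} furnish a uniform dominating function on compact subsets of $\C\setminus(-\infty,0]$. Morera's theorem then gives analyticity of $\beta_W$ on $\C\setminus(-\infty,0]$, and the bound~\eqref{eq:est:beta:W:0} follows directly from
\[
\left|W\!\left(\tfrac{y}{z},1\right)\right|\leq C\left(z^\alpha|y|^{-\alpha}+z^{-\alpha}|y|^\alpha\right)
\]
together with the moment bounds $\int_0^\infty(z^\alpha+z^{-\alpha})\mu(z)e^{-z}\,dz<\infty$ guaranteed by~\eqref{fdecay}\textendash\eqref{S2E3}.

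For the derivative bounds~\eqref{eq:est:beta:W}, the key observation is geometric: for any $\xi\in\C$ with $\Re(\xi)\geq0$ and $\xi\neq0$, the closed disk of radius $|\xi|/2$ centred at $\xi$ lies entirely in $\C\setminus(-\infty,0]$ (the closest point of $(-\infty,0]$ to $\xi$ is the origin, at distance $|\xi|$). Cauchy's integral formula applied on this disk yields
\[
\left|\frac{d^\ell}{d\xi^\ell}W(\xi,1)\right|\leq\frac{\ell!}{(|\xi|/2)^\ell}\sup_{|z-\xi|=|\xi|/2}|W(z,1)|\leq C_\ell\bigl(|\xi|^{-\alpha-\ell}+|\xi|^{\alpha-\ell}\bigr),
\]
where I use~\eqref{kernel0} together with $|z|\asymp|\xi|$ on the circle. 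Differentiating under the integral (justified, again by the dominating function) gives $\partial_y^\ell\beta_W(y)=\int_0^\infty z^{-\ell}W^{(\ell)}(y/z,1)\mu(z)e^{-z}\,dz$, and substituting the Cauchy estimate followed by the same $z$-moment argument delivers~\eqref{eq:est:beta:W}.

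Finally,~\eqref{eq:real:part:betaW} is immediate: for $\Re(y)\geq0$ and $z>0$ one has $\Re(y/z)=\Re(y)/z\geq0$, so $\Re W(y/z,1)\geq0$ by the second line of~\eqref{kernel0}, and non-negativity is preserved by integration against the positive measure $\mu(z)e^{-z}\,dz$. The only delicate point in the whole argument is verifying that the Cauchy-type estimate produces the precise exponents $-\alpha-\ell$ and $\alpha-\ell$ uniformly on $\{\Re(x)\geq0\}\setminus\{0\}$; this is why the disk radius must scale like $|\xi|$ rather than being fixed, and why restricting to $\Re(x)\geq0$ (where the distance to $(-\infty,0]$ equals $|\xi|$) is the natural hypothesis.
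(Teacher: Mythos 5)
Your proposal is correct, and the first three points (integral representation via homogeneity, analyticity, the zeroth-order bound, and the sign of the real part) match the paper essentially verbatim. For the derivative bound~\eqref{eq:est:beta:W} you take a slightly different route: you push the Cauchy estimate down to the kernel $W(\cdot,1)$ on the disk of radius $|\xi|/2$ about $\xi$ (valid for $\Re(\xi)\geq 0$), obtain $|W^{(\ell)}(\xi,1)|\leq C_\ell(|\xi|^{-\alpha-\ell}+|\xi|^{\alpha-\ell})$, and then differentiate under the integral, whereas the paper first proves~\eqref{eq:est:beta:W:0} on all of $\C\setminus(-\infty,0]$ and applies the Cauchy estimate directly to $\beta_W$ on the same scale-$|x|$ disk. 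The two routes are entirely equivalent in strength; the paper's is a touch shorter because once one has~\eqref{eq:est:beta:W:0} globally, no additional differentiation-under-the-integral needs to be justified, while yours makes the bookkeeping of the $z$-weight (namely that $z^{-\ell}|y/z|^{\mp\alpha-\ell}=z^{\pm\alpha}|y|^{\mp\alpha-\ell}$, so the required moments are $\int z^{\pm\alpha}\mu(z)e^{-z}\,dz$) more transparent. Your geometric observation about why the disk radius must scale like $|\xi|$ and why $\Re(x)\geq 0$ is the right domain for the derivative bound is exactly the point the paper leaves implicit.
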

\begin{proof}
 Using the homogeneity of $W$ we can rewrite
\begin{equation}\label{eq:hom:betaW}
 \begin{split}
  \beta_W\left(y\right)=\int_{0}^{\infty}W\left(y,z\right)\mu\left(z\right)\ee^{-z}\dz=\int_{0}^{\infty}W\left(\frac{y}{z},1\right)\mu\left(z\right)\ee^{-z}\dz.
 \end{split}
\end{equation}
By assumption~\eqref{kernel0} it then follows that $\beta_W$ can be extended to an analytic function in $\C\setminus\left(-\infty,0\right]$. Furthermore using
\begin{equation*}
 \Re\left(\beta_W\left(x\right)\right)=\int_{0}^{\infty}\Re\left(W\left(\frac{x}{z},1\right)\mu\left(z\right)\ee^{-z}\right)\dz
\end{equation*}
we also get immediately~\eqref{eq:real:part:betaW}. On the other hand it follows from~\eqref{eq:hom:betaW} and~\eqref{kernel0} together with~\eqref{eq:mean:est} that
\begin{equation*}
 \abs{\beta_W\left(x\right)}\leq C\int_{0}^{\infty}\left(\abs{\frac{z}{x}}^{\alpha}+\abs{\frac{x}{z}}^{\alpha}\right)\mu\left(z\right)\ee^{-z}\dz\leq C\left(\abs{x}^{-\alpha}+\abs{x}^{\alpha}\right),
\end{equation*}
which shows~\eqref{eq:est:beta:W:0}. Then, using the analyticity of $\beta_W$ in $\C\setminus\left(-\infty,0\right]$, we get~\eqref{eq:est:beta:W} for $\ell\in\N$  by means of Cauchy estimates.
\end{proof}

\begin{remark}\label{Rem:path:int}
From now on we will use repeatedly the notation $\int_{x}^{\sgn\left(\Im x\right)\im\infty}\left(\cdots\right)\dt$ to denote the path integral $\int_{\gamma_x}\left(\cdots\right)\dt$ where we denote 
\begin{equation*}
 \begin{split}
  \gamma_x\vcc=\left\{\Re\left(x\right)+\im\sgn\left(\Im x\right)\xi\;|\; \xi\in\left(\Im x,\infty\right)\right\} \quad \text{if } \Im\left(x\right)\neq 0
 \end{split}
\end{equation*}
while for $\Im\left(x\right)=0$ we can take both signs, i.e.\
\begin{equation*}
 \gamma_x\vcc=\left\{\Re\left(x\right)\pm\im\xi\;|\; \xi\in\left(0,\infty\right)\right\} \quad \text{if } \Im\left(x\right)= 0.
\end{equation*}
Similarly, for $x\in \C\setminus\left(-\infty,0\right]$, integrals of the form $\int_{x}^{\infty}\left(\cdots\right)\dt$ have to be understood as integrals $\int_{\gamma_x}\left(\cdots\right)\dt$ with 
\begin{equation*}
 \gamma_x=\left\{s+\im\Im\left(x\right)\;|\; s\in\left(\Re x,\infty\right)\right\}
\end{equation*}
as illustrated in Figure~\ref{fig:gamma}. Finally for $x,\Y\in \C$ we denote by $\int_{x}^{\Y}\left(\cdots\right)\dt$ the path integral over the segment from $x$ to $\Y$. In all the following computations the segment connecting $x$ and $\Y$ does not intersect the negative real line $\left(-\infty,0\right)$.
\end{remark}
  
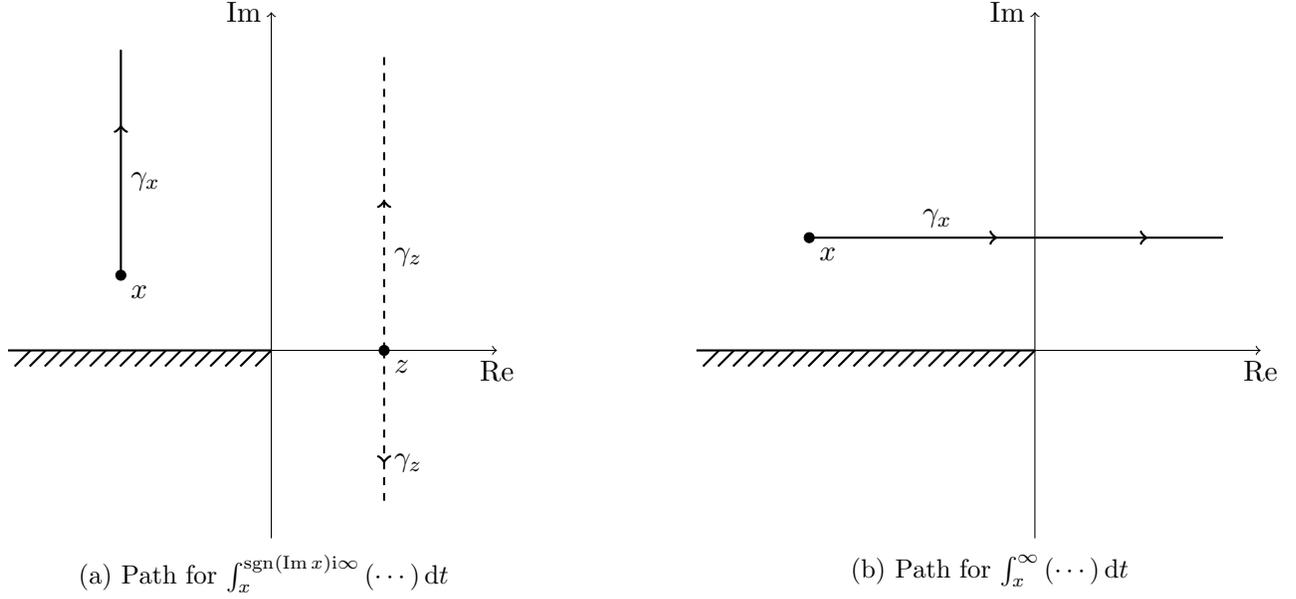
\begin{figure}
  \centering
  \begin{minipage}[t]{.45\linewidth}
    \centering
     \begin{tikzpicture}[contour/.style={postaction={decorate, decoration={markings,
mark=at position 2cm with {\arrow[line width=1pt]{>}}
}}},
contourtwo/.style={postaction={decorate, decoration={markings,
mark=at position 1.5cm with {\arrow[line width=1pt]{>}}
}}},
    interface/.style={postaction={draw,decorate,decoration={border,angle=45,
                    amplitude=0.3cm,segment length=2mm}}},
]
\draw[->] (0,0) -- (3,0) node[below] {$\Re$};
\draw[->] (0,-2.5) -- (0,4.5) node[left] {$\Im$};

\draw[line width=.8pt,interface](0,0)--(-3.5,0);

\path[draw,line width=0.8pt,contour] (-2,1)  -- (-2,4);
\path[draw,dashed,line width=0.8pt,contour] (1.5,0)  -- (1.5,4);
\path[draw,dashed,line width=0.8pt,contourtwo] (1.5,0)  -- (1.5,-2);

\draw[black,fill=black] (-2,1) circle (.4ex);
\draw[black, fill=black] (1.5,0) circle (.4ex);
\node[below right] at (-2,1) {$x$};
\node[below right] at (1.5,0) {$z$};
\node[right] at (-2,2.25) {$\gamma_{x}$};
\node[right] at (1.5,1.25) {$\gamma_{z}$};
\node[right] at (1.5,-1.5) {$\gamma_{z}$};
\end{tikzpicture}
\subcaption{Path for $\int_{x}^{\sgn\left(\Im x\right)\im\infty}\left(\cdots\right)\dt$}
  \end{minipage}
  \hfill
  \begin{minipage}[t]{.45\linewidth}
    \centering
    \begin{tikzpicture}[contour/.style={postaction={decorate, decoration={markings,
mark=at position 4.5cm with {\arrow[line width=1pt]{>}},
mark=at position 2.5cm with {\arrow[line width=1pt]{>}}
}}},
    interface/.style={postaction={draw,decorate,decoration={border,angle=45,
                    amplitude=0.3cm,segment length=2mm}}},
]
\draw[->] (0,0) -- (3,0) node[below] {$\Re$};
\draw[->] (0,-2.5) -- (0,4.5) node[left] {$\Im$};

\draw[line width=.8pt,interface](0,0)--(-4.5,0);

\path[draw,line width=0.8pt,contour] (-3,1.5)  -- (2.5,1.5);

\draw[black,fill=black] (-3,1.5) circle (.4ex);
\node[below right] at (-3,1.5) {$x$};
\node[above] at (-1.3,1.5) {$\gamma_{x}$};
\end{tikzpicture}
      \subcaption{Path for $\int_{x}^{\infty}\left(\cdots\right)\dt$}
  \end{minipage}
  \caption{Integration path $\gamma_{x}$}
  \label{fig:gamma}
\end{figure}

 In the following Lemma we collect several properties and estimates for $\Phi$ that will be useful later in the Proofs of Propositions~\ref{Prop:rep:H0} and~\ref{Prop:Q0:estimate}.

\begin{lemma}\label{Lem:properties:Phi}
 The function $\Phi$ as given in~\eqref{Phidef} can be extended analytically to $\C\setminus\left(-\infty,0\right]$. Furthermore, for $\Phi$ and $x\in \C\setminus\left(-\infty,0\right]$ we have the representation formulas 
\begin{align}
 \Phi\left(x\right)&=\eps\int_{x}^{ \sgn\left(\Im x\right)\im\infty}\frac{\beta_W\left(t\right)}{t}\ee^{-t}\dt, \label{eq:Phi:complex:1}\\
 \Phi\left(x\right)&=\eps\frac{\beta_W\left(x\right)\ee^{-x}}{x}+\eps\int_{x}^{\sgn\left(\Im x\right)\im\infty}\frac{\dd}{\dt}\left(\frac{\beta_W\left(t\right)}{t}\right)\ee^{-t}\dt.\label{eq:Phi:complex:2}
\end{align}
 Additionally for $x\in\C\setminus\left(-\infty,0\right]$ with $\Re\left(x\right)\geq 0$ we have
 \begin{align}
 \abs{\frac{\dd^{\ell}}{\dx^\ell}\Phi\left(x\right)}&\leq C\eps\left(\abs{x}^{-\ell-\alpha}+\abs{x}^{\alpha-1}\right) \quad \text{for all } \ell\in\N,  \label{eq:der:Phi:bound}\\
  \abs{\Phi\left(x\right)}&\leq C\eps \min\left\{\abs{x}^{-\alpha},\abs{x}^{\alpha-1}\right\}\label{eq:bound:Phi},
   \end{align}
 while for $\Re\left(\Y\right)\geq 0$ and $x\in \left\{\lambda \Y\; |\; \lambda\in\left[0,1\right]\right\}$ there exists $C>0$ such that 
 \begin{equation}\label{eq:difference:Phi}
  \Re\left(\Phi\left(\Y\right)-\Phi\left(x\right)\right)\leq C\eps \left(\abs{\Y}^{1+\alpha}+\abs{\Y}^{1-\alpha}\right).
 \end{equation}
\end{lemma}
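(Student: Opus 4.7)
The plan is to obtain everything from Cauchy's theorem and the properties of $\beta_W$ collected in Lemma~\ref{Lem:prop:betaW}. First I would extend $\Phi$ analytically to $\C\setminus(-\infty,0]$ by interpreting \eqref{Phidef} as a horizontal path integral $\int_x^{x+\infty}$ in the sense of Remark~\ref{Rem:path:int}; the factor $\abs{e^{-t}}=e^{-\Re(t)}$ provides absolute convergence at infinity and analyticity follows by dominated convergence. Formula \eqref{eq:Phi:complex:1} then comes from Cauchy's theorem on the rectangle with vertices $x$, $\Re(x)+\sgn(\Im x)\im R$, $\Re(x)+R_0+\sgn(\Im x)\im R$, $\Re(x)+R_0$: letting $R_0\to\infty$ kills the right vertical side by exponential decay, and letting $R\to\infty$ kills the horizontal side at height $\sgn(\Im x)R$ because its integrand has modulus at most $CR^{\alpha-1}e^{-\Re(x)-s}$, integrable in $s$ (using $\abs{\beta_W(t)/t}\leq C\abs{t}^{\alpha-1}$ for large $\abs{t}$ and $\alpha<1$). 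For \eqref{eq:Phi:complex:2} I would integrate by parts on the truncated segment $[x,\Re(x)+\sgn(\Im x)\im R]$ and send $R\to\infty$: the boundary term at $\sgn(\Im x)\im R$ is $O(R^{\alpha-1})$ and the resulting tail is absolutely convergent because $\abs{(\beta_W(t)/t)'}\leq C(\abs{t}^{-\alpha-2}+\abs{t}^{\alpha-2})$ is integrable along the vertical ray.

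The pointwise bounds \eqref{eq:der:Phi:bound} and \eqref{eq:bound:Phi} are then routine. Differentiating \eqref{Phidef} gives $\Phi'(x)=-\eps\beta_W(x)e^{-x}/x$, and higher derivatives come from iterated Leibniz applied to $\beta_W(x)e^{-x}/x$ together with the derivative estimates \eqref{eq:est:beta:W}, the trivial bound $\abs{(1/x)^{(j)}}\leq C_j\abs{x}^{-1-j}$, and $\abs{e^{-x}}\leq 1$ on $\{\Re(x)\geq 0\}$; collecting exponents yields \eqref{eq:der:Phi:bound}. For \eqref{eq:bound:Phi} I would split into regimes: when $\abs{x}\leq 1$ use the horizontal-path form of \eqref{Phidef}, whose integrable singular contribution $\abs{x+s}^{-\alpha-1}$ produces the bound $C\eps\abs{x}^{-\alpha}$; when $\abs{x}\geq 1$ use \eqref{eq:Phi:complex:2}, whose boundary term $\beta_W(x)e^{-x}/x$ is of size $\abs{x}^{\alpha-1}$ and whose tail integral is controlled by the same order.

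The main obstacle is the difference bound \eqref{eq:difference:Phi}. Parametrising the segment as $t=s\Y$, $s\in[\lambda,1]$ with $\lambda=\abs{x}/\abs{\Y}$, a brute modulus estimate
\begin{equation*}
 \abs{\Re(\Phi(\Y)-\Phi(x))} \leq C\eps\int_\lambda^1\frac{\abs{\beta_W(s\Y)}}{s}\,\ds \leq C\eps\int_\lambda^1 \bigl(s^{-\alpha-1}\abs{\Y}^{-\alpha}+s^{\alpha-1}\abs{\Y}^\alpha\bigr)\,\ds
\end{equation*}
diverges as $\lambda\to 0$, so some cancellation must be exploited. The remedy is to extract the leading singular behaviour of $\Phi$ at $0$ provided by \eqref{eq:asymp:W}: writing $\Phi(t)=A\,t^{-\alpha}+\Psi(t)$ with $A=\eps C_W\alpha^{-1}\int_0^\infty z^\alpha\mu(z)e^{-z}\,\dz>0$, one observes that on any ray $t=re^{\im\theta}$ with $\abs{\theta}\leq \pi/2$,
\begin{equation*}
 A\,\Re(\Y^{-\alpha}-x^{-\alpha}) = A\cos(\alpha\theta)\bigl(\abs{\Y}^{-\alpha}-\abs{x}^{-\alpha}\bigr)\leq 0,
\end{equation*}
because $\alpha\abs{\theta}\leq\alpha\pi/2<\pi/2$ makes $\cos(\alpha\theta)>0$ and $\abs{\Y}\geq\abs{x}$ makes the parenthesis non-positive. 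Thus the would-be divergent singular contribution to $\Re(\Phi(\Y)-\Phi(x))$ carries the favourable sign and can be dropped from an upper bound, leaving $\Re(\Phi(\Y)-\Phi(x))\leq\abs{\Psi(\Y)}+\abs{\Psi(x)}$. The remaining task is to show $\abs{\Psi(t)}\leq C\eps(\abs{t}^{1+\alpha}+\abs{t}^{1-\alpha})$ uniformly on the segment, which I would obtain by rerunning the regime-split argument of \eqref{eq:bound:Phi} on the explicit representation of $\Psi$ as $\eps$ times an integral whose integrand has the $t^{-\alpha-1}$ singularity subtracted, the rate of the subtraction being controlled by the Hölder regularity \eqref{eq:hoelder} of $W'$.
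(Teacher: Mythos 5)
Your treatment of the analytic extension, the representation formulas \eqref{eq:Phi:complex:1} and \eqref{eq:Phi:complex:2}, and the pointwise bounds \eqref{eq:der:Phi:bound}, \eqref{eq:bound:Phi} is correct and agrees with the paper's argument (contour deformation, integration by parts, Cauchy-type estimates from Lemma~\ref{Lem:prop:betaW}).

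There is, however, a genuine gap in the argument for \eqref{eq:difference:Phi}. You propose to split $\Phi(t)=A t^{-\alpha}+\Psi(t)$ using the asymptotics~\eqref{eq:asymp:W}, observe that the singular leading piece has non-positive real part along the ray, and then control the remainder by $\abs{\Psi(t)}\leq C\eps(\abs{t}^{1+\alpha}+\abs{t}^{1-\alpha})$. That last estimate is not available. The assumption~\eqref{eq:asymp:W} (and hence Lemmas~\ref{Lem:asymptotics:beta} and~\ref{Lem:Asymp:Phi}) gives only a $o(\abs{t}^{-\alpha})$ estimate for $\Phi(t)-A t^{-\alpha}$ with no rate: the function $\eta$ in~\eqref{eq:def:asymp} is merely required to tend to zero, so the remainder may still be unbounded as $t\to0$ (e.g.\ of order $\abs{t}^{-\alpha}/\log(1/\abs{t})$). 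Since $x=\lambda\Y$ ranges over arbitrarily small points on the segment, the term $\abs{\Psi(x)}$ in your bound cannot be controlled by $C\eps(\abs{\Y}^{1+\alpha}+\abs{\Y}^{1-\alpha})$. The appeal to the Hölder condition~\eqref{eq:hoelder} does not rescue this: that condition only gives local Hölder regularity of $W'$ with constants that blow up near the origin, not a quantitative rate for the asymptotic $W(\xi,1)-C_W\xi^{-\alpha}$.

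The paper avoids this entirely by never introducing the asymptotic expansion here (indeed, in Remark~\ref{Rem:alpha:pos} it is stated that \eqref{eq:asymp:W} is only used in Proposition~\ref{Prop:Q0:estimate}). Instead, the paper works with the exact integrand: writing $\Phi(\Y)-\Phi(x)=-\eps\int_x^{\Y}\frac{\beta_W(t)}{t}\,e^{-t}\,\mathrm{d}t$, splitting $e^{-t}=1+(e^{-t}-1)$, and parametrizing $t=s\,e^{\im\theta}$, the piece with $\frac{W(t/z,1)}{t}\,\mathrm{d}t=\frac{W(se^{\im\theta}/z,1)}{s}\,\mathrm{d}s$ has non-positive real part for every fixed $z$, by the structural hypothesis $\Re\bigl(W(\xi,1)\bigr)\geq0$ on $\Re(\xi)\geq0$ from~\eqref{kernel0} together with $\mu\geq0$. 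The regular piece $\frac{e^{-t}-1}{t}$ is bounded, so the corresponding contribution is estimated directly. This structural sign argument requires no rate information and handles the singularity at the origin uniformly. You should replace your expansion-based argument for~\eqref{eq:difference:Phi} by this one.
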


\begin{proof}
From Lemma~\ref{Lem:prop:betaW} it follows immediately that the expression~\eqref{Phidef} can be extended analytically to $\C\setminus\left(-\infty,0\right]$ where the integral has to be interpreted as explained in Remark~\ref{Rem:path:int}. Formula~\eqref{eq:Phi:complex:1} then follows by contour deformation taking into account also Lemma~\ref{Lem:prop:betaW}. Note that the integral in~\eqref{eq:Phi:complex:1} has to be understood in the sense of improper Riemann integrals.

On the other hand formula~\eqref{eq:Phi:complex:2} is obtained by means of integration by parts, while the integral on the right hand side exists in the usual Lebesgue sense.

 The estimates~\eqref{eq:der:Phi:bound} and \eqref{eq:bound:Phi} easily follow from the definitions, i.e.\ as $\Phi'\left(x\right)=-\eps\frac{\beta_W\left(x\right)}{x}\ee^{-x}$ the estimate~\eqref{eq:der:Phi:bound} for $\ell=1$ immediately follows from~\eqref{eq:est:beta:W:0}, while for $\ell>1$ we just differentiate repeatedly using~\eqref{eq:est:beta:W:0} and~\eqref{eq:est:beta:W} and taking into account also the oscillatory term $\ee^{-x}$. To obtain~\eqref{eq:bound:Phi} for $\abs{x}\leq 1$ we just integrate~\eqref{eq:der:Phi:bound} for $\ell=1$. For $\abs{x}\geq 1$ the estimate directly follows from~\eqref{eq:Phi:complex:2} together with~\eqref{eq:est:beta:W:0} and~\eqref{eq:est:beta:W}.
 
 It remains to prove~\eqref{eq:difference:Phi} which is more complicated. From the definition of $\Phi$ we obtain by contour deformation
\begin{equation*}
 \begin{split}
  \Phi\left(\Y\right)-\Phi\left(x\right)&=\eps \int_{\Y}^{\infty}\frac{\beta_W\left(t\right)}{t}\ee^{-t}\dt-\eps \int_{x}^{\infty}\frac{\beta_W\left(t\right)}{t}\ee^{-t}\dt=-\eps\int_{x}^{\Y}\frac{\beta_W\left(t\right)}{t}\ee^{-t}\dt\\
  &=-\eps\int_{0}^{\infty}\mu\left(z\right)\ee^{-z}\int_{x}^{\Y}W\left(\frac{t}{z},1\right)\frac{\ee^{-t}}{t}\dt\dx.
 \end{split}
\end{equation*}
Using the splitting $\ee^{-t}=1+\left(\ee^{-t}-1\right)$ we can write
\begin{equation*}
 \begin{split}
  \Phi\left(\Y\right)-\Phi\left(x\right)&=-\eps\int_{0}^{\infty}\mu\left(z\right)\ee^{-z}\int_{x}^{\Y}\frac{W\left(\frac{t}{z},1\right)}{t}\dt\dz-\eps\int_{0}^{\infty}\mu\left(z\right)\ee^{-z}\int_{x}^{\Y}\frac{W\left(\frac{t}{z},1\right)}{t}\left(\ee^{-t}-1\right)\dt\dz\\
  &=\vcc(I)+(II).
 \end{split}
\end{equation*}
Using that $\abs{\frac{\ee^{-t}-1}{t}}\leq C$, the a-priori estimates~\eqref{fdecay} and~\eqref{S2E3} and writing $x=\lambda \Y$ with $\lambda\in\left[0,1\right]$ we can estimate the second term by
\begin{equation*}
 \begin{split}
  \abs{(II)}&\leq C\eps\int_{0}^{\infty}\mu\left(z\right)\ee^{-z}\abs{\Y}\left(1-\lambda\right)\int_{0}^{1}\abs{W\left(\frac{\Y\left(\lambda+s\left(1-\lambda\right)\right)}{z},1\right)}\ds\dz\\
  &\leq C\eps\abs{\Y}\int_{0}^{\infty}\mu\left(z\right)\ee^{-z}(1-\lambda)\int_{0}^{1}\frac{\abs{\Y}^{\alpha}\left(s\left(1-\lambda\right)+\lambda\right)^{\alpha}}{z^{\alpha}}+\frac{z^{\alpha}}{\abs{\Y}^{\alpha}\left(s\left(1-\lambda\right)+\lambda\right)^{\alpha}}\ds\dz\\
  &\leq C\eps\left(\abs{\Y}^{1+\alpha}+\abs{\Y}^{1-\alpha}\right).
 \end{split}
\end{equation*}
Furthermore, writing $\Y=r\ee^{\im \theta}$ with $\theta\in\left[-\frac{\pi}{2},\frac{\pi}{2}\right]$ we can rewrite
\begin{equation*}
 \begin{split}
  (I)&=-\eps\int_{0}^{\infty}\mu\left(z\right)\ee^{-z}\int_{\lambda \Y}^{\Y}\frac{W\left(\frac{t}{z},1\right)}{t}\dt\dz=-\eps\int_{0}^{\infty}\mu\left(z\right)\ee^{-z}\int_{r\ee^{\im \theta}\lambda}^{r\ee^{\im \theta}}\frac{W\left(\frac{t}{z},1\right)}{t}\dt\dz\\
  &=-\eps\int_{0}^{\infty}\mu\left(z\right)\ee^{-z}\int_{r\lambda}^{r}\frac{W\left(\frac{s\ee^{\im \theta}}{z},1\right)}{s}\ds\dz,
 \end{split}
\end{equation*}
where the last integral is an integral in the real line. Using then~\eqref{kernel0} and $\mu\geq 0$ we find
\begin{equation*}
 \Re\left(-\eps\int_{0}^{\infty}\mu\left(z\right)\ee^{-z}\int_{\Y\lambda}^{\Y}\frac{W\left(\frac{t}{z},1\right)}{t}\dt\dz\right)\leq 0.
\end{equation*}
Together this then shows~\eqref{eq:difference:Phi}.
\end{proof}

In the following we will frequently use that 
\begin{equation*}
 \begin{split}
  \abs{\ee^{\Phi\left(\Y\right)-\Phi\left(x\right)}}\leq C\quad \text{for } x\in\left\{\lambda \Y\;|\; \lambda\in\left[0,1\right]\right\}\quad \text{and}\quad \abs{\ee^{\Phi\left(\Y\right)-\Phi\left(x\right)}}\leq C_{R}\quad \text{for any } \abs{x},\abs{\Y}\geq R>0.
 \end{split}
\end{equation*}

\begin{remark}\label{Rem:H0:analytic}
 From the estimates obtained in Lemma~\ref{Lem:properties:Phi} it follows immediately that we can extend the expression for $H_0$ to the right half-plane by integrating along the segment connecting $0$ to $\Y$. From the analyticity of the integrand the value of the integral is then independent of the choice of the path.
\end{remark}

Before we come to the proof of Proposition~\ref{Prop:rep:H0} we also note that due to Remark~\ref{Rem:smallness:kappa} we have $\abs{\kappa}\to 0$ as $\eps\to 0$. Thus, in the following we may always assume that for each $\delta>0$ it holds $\abs{\kappa}\leq \delta$ if $\eps$ is sufficiently small, while this will not be mentioned each time. 

\subsection{Proof of Proposition~\ref{Prop:rep:H0}}

We come now to the proof of the representation formula for $H_0$.

\begin{proof}[Proof of Proposition~\ref{Prop:rep:H0}]
 Due to Theorem~\ref{Thm:Paley-Wiener} it suffices to show that $H_0\in H^2\left(0\right)$. Recalling that $H_0\left(\cdot,p\right)$ is analytic in the right half plane due to Remark~\ref{Rem:H0:analytic}, it remains to establish the $L^2$-condition~\eqref{eq:int:prop} in the definition of $H^2\left(0\right)$. We first show that for $\abs{\Y}\leq 1$ and $\Re \left(\Y\right)\geq 0$ we have that $H_0\left(\Y,p\right)$ is uniformly bounded with respect to $\Y$. In fact, assuming $\abs{\Y}\leq 1$, choosing as path of integration in~\eqref{eq:def:H0} the segment from zero to $\Y$ and using also~\eqref{eq:der:Phi:bound} and~\eqref{eq:difference:Phi}, we have
 \begin{equation}\label{eq:H0:L2:bound:0}
  \begin{split}
   \abs{H_{0}\left(\Y,p\right)}&\leq\abs{\frac{\left(2+\kappa\right)\Y}{p\Y\left(1+\Y\right)}\int_{0}^{1}\ee^{-ps\Y}s\Y s^{\kappa}\frac{\ee^{\Phi\left(\Y\right)}}{\ee^{\Phi\left(s\Y\right)}}\ds}+\abs{\frac{\Y}{p\Y\left(1+\Y\right)}\int_{0}^{1}\Phi'\left(s\Y\right)\ee^{-ps\Y}s^2\Y^2s^{\kappa}\frac{\ee^{\Phi\left(\Y\right)}}{\ee^{\Phi\left(s\Y\right)}}\ds}\\
   &\leq \frac{C\abs{\Y}}{p\abs{1+\Y}}+\frac{C}{p\abs{1+\Y}}\int_{0}^{1}\left(\abs{s\Y}^{-1-\alpha}+\abs{s\Y}^{\alpha-1}\right)s^2\abs{\Y}^2s^{\kappa}\ds\\
   &\leq \frac{C}{p\abs{1+\Y}}\left(\abs{\Y}+\abs{\Y}^{1-\alpha}+\abs{\Y}^{1+\alpha}\right)\leq \frac{C}{p}.
  \end{split}
 \end{equation}
 We next consider $\abs{\Y}>1$. To do this we treat the terms $H_{0,1}$ and $H_{0,2}$ (see~\eqref{eq:def:H0}) separately and begin with $H_{0,1}$. By splitting the integral, we find
 \begin{equation*}
  \begin{split}
   H_{0,1}\left(\Y,p\right)&=\frac{2+\kappa}{p\Y\left(1+\Y\right)}\int_{0}^{\frac{\Y}{\abs{\Y}}}\ee^{-px}x\left(\frac{x}{\Y}\right)^{\kappa}\frac{\ee^{\Phi\left(\Y\right)}}{\ee^{\Phi\left(x\right)}}\dx+\frac{2+\kappa}{p\Y\left(1+\Y\right)}\int_{\frac{\Y}{\abs{\Y}}}^{\Y}\ee^{-px}x\left(\frac{x}{\Y}\right)^{\kappa}\frac{\ee^{\Phi\left(\Y\right)}}{\ee^{\Phi\left(x\right)}}\dx\\
   &=\vcc H_{0,1,1}+H_{0,1,2},
  \end{split}
 \end{equation*}
 where in both cases we can just take a segment as path of integration. Using the estimate
 \begin{equation}\label{eq:splitting:Phi}
  \Re\left(\Phi\left(\Y\right)-\Phi\left(x\right)\right)=\Re\left(\Phi\left(\frac{\Y}{\abs{\Y}}\right)-\Phi\left(x\right)\right)+\Re\left(\Phi\left(\Y\right)-\Phi\left(\frac{\Y}{\abs{\Y}}\right)\right)\leq C,
 \end{equation}
 which is due to Lemma~\ref{Lem:properties:Phi}, together with~\eqref{eq:bound:Phi} and~\eqref{eq:difference:Phi}, we obtain for $H_{0,1,1}$ that
 \begin{equation}\label{eq:H0:L2:bound:1}
  \begin{split}
   \abs{H_{0,1,1}}\leq \frac{C}{p\abs{\Y}^{1-\delta}\abs{1+\Y}}.
  \end{split}
 \end{equation}
 For $H_{0,1,2}$, integrating by parts, we obtain 
 \begin{equation*}
  \begin{split}
   H_{0,1,2}&=-\frac{2+\kappa}{p^2\Y\left(1+\Y\right)}\int_{\frac{\Y}{\abs{\Y}}}^{\Y}\del_{x}\left(\ee^{-px}\right)\left(x\left(\frac{x}{\Y}\right)^{\kappa}\frac{\ee^{\Phi\left(\Y\right)}}{\ee^{\Phi\left(x\right)}}\right)\dx\\
   &=-\frac{2+\kappa}{p^2\Y\left(1+\Y\right)}\left(\ee^{-p\Y}\Y-\frac{\Y}{\abs{\Y}}\frac{\ee^{-p\frac{\Y}{\abs{\Y}}}}{\abs{\Y}^{\kappa}}\frac{\ee^{\Phi\left(\Y\right)}}{\ee^{\Phi\left(\frac{\Y}{\abs{\Y}}\right)}}\right)\\
   &\qquad+\frac{2+\kappa}{p^2\Y\left(1+\Y\right)}\int_{\frac{\Y}{\abs{\Y}}}^{\Y}\ee^{-px}\left(\frac{1+\kappa}{x}-\Phi'\left(x\right)\right)x\left(\frac{x}{\Y}\right)^{\kappa}\frac{\ee^{\Phi\left(\Y\right)}}{\ee^{\Phi\left(x\right)}}\dx.
  \end{split}
 \end{equation*}
 Thus similarly as before we get
 \begin{equation}\label{eq:H012}
  \begin{split}
   \abs{H_{0,1,2}}&\leq \frac{C}{p^2\abs{1+\Y}}+\frac{C}{p^2\abs{\Y}^{1-\delta}\abs{1+\Y}}\\
   &\qquad+\frac{C}{p^2\abs{\Y}\abs{1+\Y}}\int_{1}^{\abs{\Y}}\abs{\ee^{-ps\frac{\Y}{\abs{\Y}}}\left(\frac{1+\kappa}{s\frac{\Y}{\abs{\Y}}}+\Phi'\left(s\frac{\Y}{\abs{\Y}}\right)\right)s\frac{\Y}{\abs{\Y}}\left(\frac{s}{\abs{\Y}}\right)^{\kappa}\frac{\ee^{\Phi\left(\Y\right)}}{\ee^{\Phi\left(\frac{s\Y}{\abs{\Y}}\right)}}}\ds\\
   &\leq \frac{C}{p^2\abs{1+\Y}}+\frac{C}{p^2\abs{\Y}^{1+\kappa}\abs{1+\Y}}\int_{0}^{\abs{\Y}}\left(s^{\kappa}+s^{\kappa-\alpha}+s^{\alpha+\kappa}\right)\ds\\
   &\leq \frac{C}{p^2\abs{1+\Y}}+\frac{C\abs{\Y}^{\alpha}}{p^2\abs{1+\Y}}\leq \frac{C\abs{\Y}^{\alpha}}{p^2\abs{1+\Y}}.
  \end{split}
 \end{equation}
 We next consider $H_{0,2}$ and proceed similarly, while we also use $\Phi'\left(x\right)=-\eps\beta_W\left(x\right)x^{-1}\ee^{-x}$, i.e.
 \begin{equation*}
  \begin{split}
   H_{0,2}\left(\Y,p\right)&=-\frac{1}{p\Y\left(1+\Y\right)}\int_{0}^{\frac{\Y}{\abs{\Y}}}\Phi'\left(x\right)\ee^{-px}x^2\left(\frac{x}{\Y}\right)^{\kappa}\frac{\ee^{\Phi\left(\Y\right)}}{\ee^{\Phi\left(x\right)}}\dx\\
   &\qquad+\frac{\eps}{p\Y\left(1+\Y\right)}\int_{\frac{\Y}{\abs{\Y}}}^{\Y}\beta_W\left(x\right)\ee^{-\left(1+p\right)x}x\left(\frac{x}{\Y}\right)^{\kappa}\frac{\ee^{\Phi\left(\Y\right)}}{\ee^{\Phi\left(x\right)}}\dx=\vcc H_{0,2,1}+H_{0,2,2}.
  \end{split}
 \end{equation*}
 Using again~\eqref{eq:splitting:Phi} and Lemma~\ref{Lem:properties:Phi} we obtain
 \begin{equation}\label{eq:H021}
  \begin{split}
   \abs{H_{0,2,1}}&\leq \frac{C\abs{\frac{\Y}{\abs{\Y}}}}{p\abs{\Y}\abs{1+\Y}}\int_{0}^{1}\left(\abs{s\frac{\Y}{\abs{\Y}}}^{-1-\alpha}+\abs{s\frac{\Y}{\abs{\Y}}}^{\alpha-1}\right)\abs{s\frac{\Y}{\abs{\Y}}}^2\left(\frac{s}{\abs{\Y}}\right)^{\kappa}\ds\leq \frac{C}{p\abs{\Y}^{1-\delta}\abs{1+\Y}}.
  \end{split}
 \end{equation}
To estimate $H_{0,2,2}$ we have to integrate by parts twice (at least for some terms). Precisely we have
\begin{multline}\label{eq:H022:split} 
  H_{0,2,2}=-\frac{\eps}{p\left(1+p\right)\Y\left(1+\Y\right)}\int_{\frac{\Y}{\abs{\Y}}}^{\Y}\del_{x}\left(\ee^{-\left(1+p\right)x}\right)\beta_{W}\left(x\right)x\left(\frac{x}{\Y}\right)^{\kappa}\frac{\ee^{\Phi\left(\Y\right)}}{\ee^{\Phi\left(x\right)}}\dx\\
  =-\frac{\eps}{p\left(1+p\right)\Y\left(1+\Y\right)}\left(\beta_W\left(\Y\right)\ee^{-\left(1+p\right)\Y}\Y-\beta_W\left(\frac{\Y}{\abs{\Y}}\right)\ee^{-\left(1+p\right)\frac{\Y}{\abs{\Y}}}\frac{\Y}{\abs{\Y}^{1+\kappa}}\frac{\ee^{\Phi\left(\Y\right)}}{\ee^{\Phi\left(\frac{\Y}{\abs{\Y}}\right)}}\right)\\
  \qquad+\frac{\eps}{p\left(1+p\right)\Y\left(1+\Y\right)}\int_{\frac{\Y}{\abs{\Y}}}^{\Y}\ee^{-\left(1+p\right)x}\left(\beta_W'\left(x\right)+\left(1+\kappa\right)\frac{\beta_W\left(x\right)}{x}\right)x\left(\frac{x}{\Y}\right)^{\kappa}\frac{\ee^{\Phi\left(\Y\right)}}{\ee^{\Phi\left(x\right)}}\dx\\
  -\frac{\eps}{p\left(1+p\right)\Y\left(1+\Y\right)}\int_{\frac{\Y}{\abs{\Y}}}^{\Y}\ee^{-\left(1+p\right)x}\beta_W\left(x\right)\Phi'\left(x\right)x\left(\frac{x}{\Y}\right)^{\kappa}\frac{\ee^{\Phi\left(\Y\right)}}{\ee^{\Phi\left(x\right)}}\dx=\vcc(I)+(II)
\end{multline}
with 
\begin{equation*}
 \begin{split}
  (I)&=-\frac{\eps}{p\left(1+p\right)\Y\left(1+\Y\right)}\left(\beta_W\left(\Y\right)\ee^{-\left(1+p\right)\Y}\Y-\beta_W\left(\frac{\Y}{\abs{\Y}}\right)\ee^{-\left(1+p\right)\frac{\Y}{\abs{\Y}}}\frac{\Y}{\abs{\Y}^{1+\kappa}}\frac{\ee^{\Phi\left(\Y\right)}}{\ee^{\Phi\left(\frac{\Y}{\abs{\Y}}\right)}}\right)\\
  &\quad +\frac{\eps}{p\left(1+p\right)\Y\left(1+\Y\right)}\int_{\frac{\Y}{\abs{\Y}}}^{\Y}\ee^{-\left(1+p\right)x}\left(\beta_W'\left(x\right)+\left(1+\kappa\right)\frac{\beta_W\left(x\right)}{x}\right)x\left(\frac{x}{\Y}\right)^{\kappa}\frac{\ee^{\Phi\left(\Y\right)}}{\ee^{\Phi\left(x\right)}}\dx
 \end{split}
\end{equation*}
and 
\begin{equation*}
 (II)=\frac{\eps}{p\left(1+p\right)\Y\left(1+\Y\right)}\int_{\frac{\Y}{\abs{\Y}}}^{\Y}\ee^{-\left(1+p\right)x}\beta_W\left(x\right)\Phi'\left(x\right)x\left(\frac{x}{\Y}\right)^{\kappa}\frac{\ee^{\Phi\left(\Y\right)}}{\ee^{\Phi\left(x\right)}}\dx.
\end{equation*}
We first estimate $(I)$ and as before we get
\begin{equation}\label{eq:H021:I}
 \begin{split}
  \abs{(I)}&\leq \frac{C}{p\left(1+p\right)\abs{\Y}\abs{1+\Y}}\left(\left(\abs{\Y}^{\alpha}+\abs{\Y}^{-\alpha}\right)\abs{\Y}+\abs{\Y}^{\delta}\right)\\
  &\quad+\frac{C}{p\left(1+p\right)\abs{\Y}\abs{1+\Y}}\int_{1}^{\abs{\Y}}\left(s^{-\alpha-1}+s^{\alpha-1}\right)s\left(\frac{s}{\abs{\Y}}\right)^{\kappa}\ds\leq \frac{C\abs{\Y}^{\alpha}}{p\left(1+p\right)\abs{1+\Y}},
 \end{split}
\end{equation}
where we estimated by the most dominant terms in the last step. It remains to consider $(II)$ while here we have to integrate by parts again to get enough decay in $\Y$ at infinity. Thus expressing $\Phi'$ again by $\beta_W$ yields
\begin{equation*}
 \begin{split}
  &\quad (II)\\
  &=\frac{\eps}{p\left(1+p\right)\Y\left(1+\Y\right)}\int_{\frac{\Y}{\abs{\Y}}}^{\Y}\ee^{-\left(2+p\right)x}\beta_W\left(x\right)\Phi'\left(x\right)\left(\frac{x}{\Y}\right)^{\kappa}\frac{\ee^{\Phi\left(\Y\right)}}{\ee^{\Phi\left(x\right)}}\dx\\
  &=-\frac{\eps^2}{p\left(1+p\right)\left(2+p\right)\Y\left(1+\Y\right)}\int_{\frac{\Y}{\abs{\Y}}}^{\Y}\del_{x}\left(\ee^{-\left(2+p\right)x}\right)\beta_W^2\left(x\right)\left(\frac{x}{\Y}\right)^{\kappa}\frac{\ee^{\Phi\left(\Y\right)}}{\ee^{\Phi\left(x\right)}}\dx\\
  &=-\frac{\eps^2}{p\left(1+p\right)\left(2+p\right)\Y\left(1+\Y\right)}\left(\ee^{-\left(2+p\right)\Y}\beta_W^2\left(\Y\right)-\ee^{-\left(2+p\right)\frac{\Y}{\abs{\Y}}}\beta_W^2\left(\frac{\Y}{\abs{\Y}}\right)\frac{1}{\abs{\Y}^{\kappa}}\frac{\ee^{\Phi\left(\Y\right)}}{\ee^{\Phi\left(\frac{\Y}{\abs{\Y}}\right)}}\right)\\
  &\quad +\frac{\eps^2}{p\left(1+p\right)\left(2+p\right)\Y\left(1+\Y\right)}\int_{\frac{\Y}{\abs{\Y}}}^{\Y}\ee^{-\left(2+p\right)x}\left(\del_{x}\left(\beta_W\left(x\right)\right)^2+\kappa\frac{\beta_W^2\left(x\right)}{x}-\beta_W^2\left(x\right)\Phi'\left(x\right)\right)\left(\frac{x}{\Y}\right)^{\kappa}\frac{\ee^{\Phi\left(\Y\right)}}{\ee^{\Phi\left(x\right)}}\dx.
 \end{split}
\end{equation*}
Now proceeding as before we find
\begin{equation}\label{eq:H0:L2:bound:5}
 \begin{split}
  \abs{(II)}&\leq \frac{C}{p\left(1+p\right)\left(2+p\right)\abs{\Y}\abs{1+\Y}}\left(\abs{\Y}^{2\alpha}+\abs{\Y}^{\delta}\right)\\
  &\quad+\frac{C}{p\left(1+p\right)\left(2+p\right)\abs{\Y}\abs{1+\Y}}\int_{1}^{\abs{\Y}}\left(s^{2\alpha-1}+s^{2\alpha}s^{\alpha-1}\right)\left(\frac{s}{\abs{\Y}}\right)^{\kappa}\ds\\
  &\leq \frac{C}{p\left(1+p\right)\left(2+p\right)\abs{\Y}^{1-2\alpha}\abs{1+\Y}}+\frac{C}{p\left(1+p\right)\left(2+p\right)\abs{\Y}^{1-3\alpha}\abs{1+\Y}}.
 \end{split}
\end{equation}
Summarizing~\eqref{eq:H0:L2:bound:1}\textendash\eqref{eq:H0:L2:bound:5} and taking into account $\abs{\Y}>1$ we find
\begin{equation*}
 \begin{split}
  \abs{H_0\left(\Y,p\right)}\leq C\left(p\right)\left(\abs{\Y}^{\delta-2}+\abs{\Y}^{\alpha-1}+\abs{\Y}^{3\alpha-2}\right),
 \end{split}
\end{equation*}
while on the other hand from~\eqref{eq:H0:L2:bound:0} we have $\abs{H_0\left(\Y,p\right)}\leq C\left(p\right)$ for $\abs{\Y}\leq 1$ and thus also $\abs{H_0\left(\Y,p\right)}\leq C\left(p\right)$ for $\abs{\Y}\leq 2$. Writing then $\Y=x+\im z$ we get for $x\leq 1$ that
\begin{equation*}
 \begin{split}
  &\quad\int_{-\infty}^{\infty}\abs{H_0\left(x+\im z\right)}^2\dz\leq C\int_{-1}^{1}\abs{H_0\left(x+\im z\right)}^2\dz+\int_{\R\setminus \left(-1,1\right)}\abs{H_0\left(x+\im z\right)}^2\dz\\
  &\leq C\left(p\right)+2C\left(p\right)\int_{1}^{\infty}\left(\frac{1}{\left(x^2+z^2\right)^{\frac{2-\delta}{2}}}+\frac{1}{\left(x^2+z^2\right)^{\frac{1-\alpha}{2}}}+\frac{1}{\left(x^2+z^2\right)^{\frac{2-3\alpha}{2}}}\right)^2\dz\\
  &\leq C\left(p\right)+2C\left(p\right)\int_{1}^{\infty}\frac{1}{z^{2\left(2-\delta\right)}}+\frac{1}{z^{2\left(1-\alpha\right)}}+\frac{1}{z^{2\left(2-3\alpha\right)}}\dz\leq C\left(p\right),
 \end{split}
\end{equation*}
where the constant is independent of $x$ and the last integral is finite as $\alpha<1/2$. For $x\geq 1$ we find similarly
\begin{equation*}
 \begin{split}
  &\int_{-\infty}^{\infty}\abs{H_0\left(x+\im z\right)}^2\dz\leq 2C\left(p\right)\int_{0}^{\infty}\left(\frac{1}{\left(x^2+z^2\right)^{\frac{2-\delta}{2}}}+\frac{1}{\left(x^2+z^2\right)^{\frac{1-\alpha}{2}}}+\frac{1}{\left(x^2+z^2\right)^{\frac{2-3\alpha}{2}}}\right)^2\dz\\
  &=C\left(p\right)\int_{0}^{1}\left(\cdots\right)\dz+C\left(p\right)\int_{1}^{\infty}\left(\cdots\right)\dz\\
  &\leq C\left(p\right)\left(\frac{1}{x^{2\left(2-\delta\right)}}+\frac{1}{x^{2\left(1-\alpha\right)}}+\frac{1}{x^{2\left(2-3\alpha\right)}}\right)+C\left(p\right)\int_{1}^{\infty}\frac{1}{z^{2\left(2-\delta\right)}}+\frac{1}{z^{2\left(1-\alpha\right)}}+\frac{1}{z^{2\left(2-3\alpha\right)}}\dz\leq C\left(p\right)
 \end{split}
\end{equation*}
independent of $x$, where in the last step we used $x\geq 1$ as well as $\alpha<1/2$ to get a finite integral. This then shows $H_0\in H^2\left(0\right)$ and thus finishes the proof. 
\end{proof}

\section{Integral estimate on $Q_{0}$}\label{Sec:int:est:Q0}

In this section we prove an integral estimate on $Q_0\left(\cdot,p\right)$ that is used in the proof of Proposition~\ref{Prop:est:zeta:m}.

\begin{proposition}\label{Prop:Q0:estimate}
 There exists a constant $C>0$ and $\nu=\nu\left(\alpha,\theta\right)>0$ sufficiently small such that the function $Q_0$ given by~\eqref{eq:Q0} in Proposition~\ref{Prop:rep:H0} satisfies
 \begin{equation}\label{eq:int:Q0}
  \int_{0}^{\infty}\frac{\abs{Q_0\left(\xi,p\right)}}{\left(1+\xi\right)^{2\theta-\nu}}\dxi\leq C\frac{\left(1+p\right)^{1-\theta}}{p^3} \quad \text{for all } p>0.
 \end{equation}
\end{proposition}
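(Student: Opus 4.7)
The plan is to obtain pointwise (or nearly pointwise) estimates for $|Q_0(\xi,p)|$ and then integrate them against the weight $(1+\xi)^{-(2\theta-\nu)}$. Since $H_0(\cdot,p)$ extends analytically to $\C\setminus(-\infty,0]$ by Remark~\ref{Rem:H0:analytic} and has the decay at infinity established in the proof of Proposition~\ref{Prop:rep:H0}, I would first deform the contour in~\eqref{eq:Q0} from the imaginary axis to a Hankel-type contour wrapping around the branch cut $(-\infty,0]$. Up to suitable justification of the deformation, this gives
\begin{equation*}
 Q_0(\xi,p)=\frac{1}{2\pi\im}\int_0^{\infty}\bigl(H_0^{-}(-r,p)-H_0^{+}(-r,p)\bigr)\ee^{-r\xi}\dr,
\end{equation*}
where $H_0^{\pm}(-r,p)$ denote the boundary values from the upper and lower half-planes. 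The factor $\ee^{-r\xi}$ then gives the necessary decay in $\xi$, so the problem is reduced to estimating the jump $J(r,p):=H_0^{-}(-r,p)-H_0^{+}(-r,p)$ and then applying Fubini.

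The jump $J(r,p)$ comes from three sources in the defining formulas~\eqref{eq:def:H0} for $H_{0,1}$ and $H_{0,2}$: the branch of the prefactor $\Y^{-1}(1+\Y)^{-1}(x/\Y)^{\kappa}$; the discontinuity of $\ee^{\Phi(\Y)}$, which via~\eqref{eq:Phi:complex:1} is controlled by the jump of $\beta_W$; and the jump of $\beta_W$ itself across $(-\infty,0]$, governed by the analyticity~\eqref{kernel0} and the $C^{1,\gamma}$-regularity~\eqref{kernel0b}. In the regime $r\sim 1$ these inputs, together with the growth bounds on $W_{\pm}$ in~\eqref{eq:W:decay}, give a bound on $J(r,p)$ that is polynomial in $r$; for $r\ll 1$ one must invoke the precise asymptotic~\eqref{eq:asymp:W} to isolate the leading singular contribution coming from $W\sim C_W(\cdot)^{-\alpha}$, producing behaviour of the form $r^{\alpha-1}$ (modulo logarithms and a $p$-dependent prefactor); and for $r\gg 1$, the oscillatory factor $\ee^{-(1+p)x}$ inside $H_{0,2}$ is repeatedly integrated by parts, exactly as in the proof of Proposition~\ref{Prop:rep:H0}, to convert it into decay in $r$.

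Having produced pointwise bounds of the form $|Q_0(\xi,p)|\leq \Psi(\xi)(1+p)^{1-\theta}/p^3$ for a function $\Psi$ with $\int_0^{\infty}\Psi(\xi)(1+\xi)^{-(2\theta-\nu)}\dxi<\infty$, the claim~\eqref{eq:int:Q0} follows at once. I expect $\Psi(\xi)$ to behave essentially like $\xi^{-\alpha}$ near $\xi=0$ (coming from the $r^{\alpha-1}$ singular behaviour of the jump translated through the $\ee^{-r\xi}$ factor) and to decay polynomially at infinity; the choice of $\nu>0$ sufficiently small then secures integrability thanks to $\alpha<\tfrac12$ and $\theta\in(\alpha,\tfrac12)$.

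The main obstacle will be the small-$r$ analysis, which is the analytic manifestation of the boundary layer at $x=0$ mentioned in Section~\ref{Ss.boundarylayer}. There one must go beyond mere size bounds on $W$ and use the full strength of~\eqref{eq:asymp:W}: splitting $W(\xi,1)=C_W\xi^{-\alpha}+R(\xi)$ with $R$ of lower order, computing the contribution of the leading singular piece $C_W\xi^{-\alpha}$ to $\beta_W$ and to $\Phi$ in closed form (or via a careful asymptotic matching), and controlling the remainder with the Hölder estimate~\eqref{eq:hoelder}. This is exactly the point where the restriction $\alpha<\tfrac12$ is used, since otherwise the leading singular contribution to $Q_0$ near the origin would fail to be integrable against the given weight. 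Tracking the $p$-dependence uniformly across the three natural regimes $r\leq 1/p$, $1/p\leq r\leq 1$, and $r\geq 1$ is the remaining bookkeeping task.
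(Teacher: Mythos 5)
Your proposal takes a genuinely different route from the paper, but it runs into a fundamental obstacle that you do not address.

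The paper does \emph{not} deform the full contour to a Hankel contour around $(-\infty,0]$. Instead, it keeps the integration on the imaginary axis $[-\im R,\im R]$, integrates by parts in the inner variable $x$ (inside $H_{0,1}$, $H_{0,2}$), and then treats the regimes $\xi<p$ and $\xi>p$ separately, integrating by parts again in $x$ in the first case and in $\Y$ in the second. Cut-off functions $\coup$ and $\cut$ separate small and large $|x|$, $|\Y|$, and only specific analytic sub-terms (such as $W_{1,0}$, $W_{1,1,1}$, $W_{2,0}$) are handled by small half-circle or residue deformations, each time with explicit justification that the boundary arcs vanish.

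The step in your plan that would fail is the global Hankel deformation itself. You write that ``$H_0(\cdot,p)$ extends analytically to $\C\setminus(-\infty,0]$ by Remark~\ref{Rem:H0:analytic}'', but that remark only gives an extension to the right half-plane $\Re(\Y)>0$. More importantly, to replace the imaginary-axis integral by a Hankel contour you must close the contour at infinity in the left half-plane, so you need $H_0(\Y,p)\ee^{\Y\xi}\to 0$ along large arcs with $\Re(\Y)\to-\infty$. But $H_0$ contains the factor $\int_0^\Y \ee^{-px}(\cdots)\dx$, and for $\Re(\Y)<0$ the exponential $\ee^{-px}$ \emph{grows} like $\ee^{p|\Re(\Y)|}$ along any admissible path from $0$ to $\Y$; likewise, for $\Y$ near the cut with $|\Y|$ small, $\Re(\Phi(\Y))\sim\mathrm{const}\cdot\eps\,|\Y|^{-\alpha}\cos(\pi\alpha)>0$ by Lemma~\ref{Lem:Asymp:Phi}, so $\ee^{\Phi(\Y)}$ also blows up near the origin on the cut. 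Consequently, for $\xi<p$ the product $H_0(\Y,p)\ee^{\Y\xi}$ is not expected to vanish on large left-half-plane arcs and the deformation is not legitimate; no amount of refinement of the jump $J(r,p)$ will recover this. This is precisely why the paper keeps the contour on the imaginary axis for $\xi<p$ and extracts the required $p^{-3}(1+p)^{1-\theta}$ decay through repeated integration by parts in $x$, rather than through exponential suppression by $\ee^{-r\xi}$.

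Your instinct about where the hard analysis lives --- the small-$|\Y|$ (equivalently small-$|s|$ after passing to real variables) regime, controlled by the sharp asymptotics~\eqref{eq:asymp:W} via $\Phi$, and the use of $\alpha<\tfrac12$ --- does match the paper's (cf.\ Lemmas~\ref{Lem:asymptotics:xi:leq:p}, \ref{Lem:asymptotics:p:leq:xi}, \ref{Lem:asymptotics:exp:Phi}). But the Hankel jump formula is not the right framework here, and without the split between $\xi<p$ and $\xi>p$ (with separate integrations by parts) the argument does not close.
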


\begin{proof}
We first note that $Q_0\left(\cdot,p\right)$ is obtained as a limit as $R\to\infty$ in~\eqref{eq:Q0}. Thus we need to obtain estimates uniform in $R$ in order to show the claim. We recall the definition of $H_{0,1}$ and $H_{0,2}$ from~\eqref{eq:def:H0} and define in view of~\eqref{eq:Q0} for $R>0$ the corresponding truncated functions
\begin{align*}
 Q_{k,R}\left(\xi,p\right)&\vcc=\frac{1}{2\pi\im}\int_{-\im R}^{\im R}\ee^{\Y\xi}H_{0,k}\left(\Y,p\right)\dd{\Y}, \quad k=1,2.
\end{align*}
The general strategy of the proof is then the following. We will split the expressions $Q_{k,R}$ as
\begin{equation}\label{eq:QkR:splitting}
 Q_{k,R}\left(\xi,p\right)=L_{k}\left(\xi,p\right)+B_{k,R}\left(\xi,p\right)
\end{equation}
and show that for $L_k\left(\xi,p\right)$ we obtain suitable decay estimates in $\xi$ and $p$. On the other hand we will see that for all $p>0$ the expressions $B_{R,k}\left(\xi,p\right)$ converge to zero for a.e. $\xi>0$ as $R\to \infty$.

From~\eqref{eq:Q0} we know that for all $p>0$ there exists some subsequence $R_n=R_n\left(p\right)\to\infty$ as $n\to\infty$ such that 
\begin{equation*}
 Q_{0}\left(\xi,p\right)=\lim_{n\to\infty}\left(Q_{1,R_n}\left(\xi,p\right)+Q_{2,R_n}\left(\xi,p\right)\right) \quad \text{for a.e. } \xi\in\R_{+}.
\end{equation*}
Then using the splitting~\eqref{eq:QkR:splitting} it follows for all $p>0$ and a.e. $\xi\in\R_{+}$
\begin{equation*}
 \begin{split}
   \abs{Q_0\left(\xi,p\right)}&\leq \lim_{n\to\infty}\left(\abs{B_{1,R_n}\left(\xi,p\right)}+\abs{B_{2,R_n}\left(\xi,p\right)}\right)+\abs{L_1\left(\xi,p\right)}+\abs{L_2\left(\xi,p\right)}=\abs{L_1\left(\xi,p\right)}+\abs{L_2\left(\xi,p\right)}.
 \end{split}
\end{equation*}
The general strategy in order to obtain estimates for $L_{k}$ and $B_{k,R}$ will now be as follows. First we integrate by parts in the expressions $Q_{k,R}\left(\xi,p\right)$ in the variable $x$ (in $H_{0,k}$). Then we have to consider the regions $\xi<p$ and $\xi>p$ separately and integrate by parts either in $x$ again in the first case (i.e.\ for $\xi<p$) or in $\Y$ in the second one (i.e.\ for $\xi>p$) in order to obtain suitable decay estimates in $\xi$ and $p$.

Before we start estimating the different terms we will collect here some notation and general results that will be used frequently in the following. In order to separate the regions close to the origin and far from the origin in some integrals, we will have to introduce some cut-off functions both in the variable $x$ and $\Y$ that will be denoted by $\coup$ and $\cut$. Precisely, we take smooth functions $\coup,\cut\in C^{\infty}$ such that
\begin{equation}\label{eq:cutoff}
 \begin{split}
 0\leq \coup\leq 1,\quad \coup\left(-s\right)=\coup\left(s\right), \quad \coup\left(s\right)=0 \text{ for } \abs{s}\in\left[0,1/2\right] \quad \text{and} \quad \coup\left(s\right)=1 \text{ for } \abs{s}\in\left[1,\infty\right)\\
  0\leq \cut\leq 1,\quad \cut\left(-\Y\right)=\cut\left(\Y\right), \quad \cut\left(\Y\right)=1 \text{ for } \abs{\Y}\in\left[0,1\right] \quad \text{and} \quad \cut\left(\Y\right)=0 \text{ for } \abs{\Y}\in\left[2,\infty\right).  
 \end{split}
\end{equation}
 Furthermore we will use that for any $a\in \left[0,1\right]$ we have
 \begin{equation*}
  \abs{\ee^{-x}-1}\leq C_{a}\abs{x}^{a}\quad \text{for all } x \text{ with }\Re\left(x\right)\geq 0.
 \end{equation*}
 We will also often use that due to Fubini's Theorem we have
 \begin{equation*}
  \int_{-R}^{R}\int_{0}^{\Y}\left(\cdots\right)\ds\dd{\Y}=\int_{-R}^{R}\int_{s}^{\sgn\left(s\right)R}\left(\cdots\right)\dd{\Y}\ds.
 \end{equation*}
 Additionally, we will use several results concerning the precise asymptotic behaviour of the function $\Phi$ at zero. These estimates are collected and proved in Section~\ref{Sec:asymptotics}. We emphasize here that this is the only part where we will use the assumption~\eqref{eq:asymp:W} on the asymptotic behaviour of $W$ at the origin.
  
\subsection{Contribution of $H_{0,1}$}

We begin with the consideration of $Q_{1,R}$ and first integrate by parts in the expression for $H_{0,1}$ to get
\begin{equation*}
 \begin{split}
  H_{0,1}\left(\Y,p\right)&=\frac{2+\kappa}{p\Y\left(1+\Y\right)}\int_{0}^{\Y}\ee^{-px}x\left(\frac{x}{\Y}\right)^{\kappa}\frac{\ee^{\Phi\left(\Y\right)}}{\ee^{\Phi\left(x\right)}}\dx=-\frac{2+\kappa}{p^2\Y\left(1+\Y\right)}\int_{0}^{\Y}\del_{x}\left(\ee^{-px}\right)x\left(\frac{x}{\Y}\right)^{\kappa}\frac{\ee^{\Phi\left(\Y\right)}}{\ee^{\Phi\left(x\right)}}\dx\\
  &=-\frac{2+\kappa}{p^2\left(1+\Y\right)}\ee^{-p\Y}+\frac{\left(2+\kappa\right)\left(1+\kappa\right)}{p^2\left(1+\Y\right)\Y}\int_{0}^{\Y}\ee^{-px}\left(\frac{x}{\Y}\right)^{\kappa}\frac{\ee^{\Phi\left(\Y\right)}}{\ee^{\Phi\left(x\right)}}\dx\\
  &\quad-\frac{2+\kappa}{p^2\Y\left(1+\Y\right)}\int_{0}^{\Y}\ee^{-px}\Phi'\left(x\right)x\left(\frac{x}{\Y}\right)^{\kappa}\frac{\ee^{\Phi\left(\Y\right)}}{\ee^{\Phi\left(x\right)}}\dx=\vcc W_{1,0}\left(\Y,p\right)+W_{1,1}\left(\Y,p\right)+W_{1,2}\left(\Y,p\right).
 \end{split}
\end{equation*}

\subsubsection{Contribution of $W_{1,0}$}

We consider first the boundary part $W_{1,0}$ whose contribution to $Q_{1,R}$ is given by
\begin{equation}\label{eq:rand:1}
 \begin{split}
  &\quad-\frac{2+\kappa}{2p^2\pi\im}\int_{-\im R}^{\im R}\frac{\ee^{\Y\left(\xi-p\right)}}{1+\Y}\dd{\Y}=-\frac{2+\kappa}{2p^2\pi\im}\frac{1}{\xi-p}\int_{-\im R}^{\im R}\frac{\del_\Y\left(\ee^{\Y\left(\xi-p\right)}-1\right)}{1+\Y}\dd{\Y}\\
  &=-\frac{2+\kappa}{2p^2\pi\im}\frac{1}{\xi-p}\left(\frac{\ee^{\im R\left(\xi-p\right)}-1}{1+\im R}-\frac{\ee^{-\im R\left(\xi-p\right)}-1}{1-\im R}\right)-\frac{2+\kappa}{2p^2\pi\im}\frac{1}{\xi-p}\int_{-\im R}^{\im R}\frac{\ee^{\Y\left(\xi-p\right)}-1}{\left(1+\Y\right)^2}\dd{\Y}.
 \end{split}
\end{equation}
We first note that for $\xi\neq p$ we can estimate
\begin{equation*}
 \begin{split}
  \abs{\frac{2+\kappa}{2p^2\pi\im}\frac{1}{\xi-p}\left(\frac{\ee^{\im R\left(\xi-p\right)}-1}{1+\im R}-\frac{\ee^{-\im R\left(\xi-p\right)}-1}{1-\im R}\right)}\leq \frac{C}{p^2}\frac{1}{R\abs{\xi-p}}\longrightarrow 0\quad \text{as } R\longrightarrow\infty.
 \end{split}
\end{equation*}

To estimate the remaining integral in~\eqref{eq:rand:1} we have to consider the regions $\xi<p$ and $\xi>p$ separately using contour deformation and the analyticity of the integrand in $\C\setminus\left\{-1\right\}$.

  \begin{figure}
\begin{center}
  \begin{tikzpicture}[contour/.style={postaction={decorate, decoration={markings,
mark=at position 1.75cm with {\arrow[line width=1pt]{>}},
mark=at position 5.25cm with {\arrow[line width=1pt]{>}},
mark=at position 9.625cm with {\arrow[line width=1pt]{>}},
mark=at position 14.875cm with {\arrow[line width=1pt]{>}}}}},
contourtwo/.style={postaction={decorate, decoration={markings, mark=at position 2.625cm with {\arrow[line width=1pt]{>}},mark=at position 7.875cm with {\arrow[line width=1pt]{>}}}}}]

\draw[->] (-4,0) -- (5,0) node[below] {$\Re$};
\draw[->] (0,-4) -- (0,4.5) node[left]  {$\Im$};

\path[draw,line width=0.8pt, contour] (0,-3.5) node[above left]{$-R$}  -- (0,3.5) node[below left]{$R$}  arc (90:-90:3.5);

\path[draw, dashed, line width=0.8pt,contourtwo] (0,3.5) arc (90:270:3.5);

\node[below right] at (3.5,0) {$R$};
\draw[black,fill=black] (-1,0) circle (.4ex);
\node[below] at (-1,0){$-1$};
\end{tikzpicture}
\caption{Contour for $W_{1,0}$}
\label{fig:W10}
 \end{center}
\end{figure}
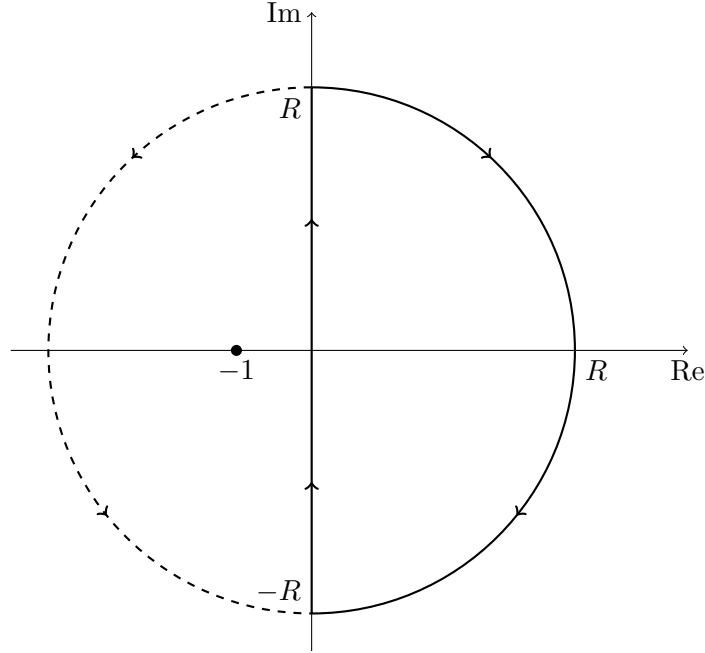

Starting with $\xi<p$ we will show that the integral over the half-circle of radius $R$ in the right half-plane (see Figure~\ref{fig:W10}) converges to $0$ if $R\to\infty$ which finally shows then that there is no contribution of $W_{1,0}\left(\cdot,p\right)$ to $Q_{1,R}$ in the limit $R\to \infty$ for $\xi<p$. Precisely, proceeding similarly as before we see
\begin{equation}\label{eq:rand:3}
 \begin{split}
  \abs{\frac{2+\kappa}{2p^2\pi\im}\frac{1}{\xi-p}\int_{-\im R}^{\im R}\frac{\ee^{\Y\left(\xi-p\right)}-1}{\left(1+\Y\right)^2}\dd{\Y}}=\abs{\frac{2+\kappa}{2p^2\pi\im} \frac{\im R}{\xi-p}\int_{-\pi/2}^{\pi/2}\frac{\ee^{R\ee^{\im s}\left(\xi-p\right)}-1}{\left(1+R\ee^{\im s}\right)^2}\ee^{\im s}\ds}\leq \frac{C}{p^2}\frac{1}{R\abs{\xi-p}},
 \end{split}
\end{equation}
where the right-hand side converges to zero for $R\to\infty$.

It remains to consider the region $\xi>p$ and to estimate the expression
\begin{equation*}
 \frac{2+\kappa}{2p^2\pi\im}\frac{1}{\xi-p}\int_{-\im R}^{\im R}\frac{\ee^{\Y\left(\xi-p\right)}-1}{\left(1+\Y\right)^2}\dd{\Y}
\end{equation*}
there, which will be done similarly as before. As $\xi>p$ we have to choose an integration path in the left half-plane and we can show in exactly the same way as in~\eqref{eq:rand:3} that the integral over the half-circle of radius $R$ in the left half-plane converges to zero for $R\to\infty$. Thus, by the residue theorem we find
\begin{equation*}
 \begin{split}
  -\frac{2+\kappa}{2p^2\pi\im}\frac{1}{\xi-p}\int_{-\im R}^{\im R}\frac{\ee^{\Y\left(\xi-p\right)}-1}{\left(1+\Y\right)^2}\dd{\Y}\longrightarrow -\frac{2+\kappa}{p^2}\ee^{-\left(\xi-p\right)},\quad \text{for }\xi>p\text{ and } R\longrightarrow \infty.
 \end{split}
\end{equation*}
 From this we see that the contribution of $W_{1,0}\left(\cdot,p\right)$ to the integral in~\eqref{eq:int:Q0} can be estimated by
 \begin{equation*}
  \begin{split}
   \frac{\abs{2+\kappa}}{p^2}\int_{p}^{\infty}\frac{\ee^{-\left(\xi-p\right)}}{\left(1+\xi\right)^{2\theta-\nu}}\dxi\leq \frac{C}{p^{2+2\theta-\nu}}\int_{0}^{\infty}\ee^{-\xi}\dxi\leq C\frac{\left(1+p\right)^{1-\theta}}{p^3}
  \end{split}
 \end{equation*}
 for sufficiently small $\nu>0$.

 \subsubsection{Contribution of $W_{1,1}$ for $\xi<p$}
 
Next we consider the contribution coming from $W_{1,1}$ and we again first assume $\xi<p$. Thus we integrate by parts in $x$ to obtain
\begin{equation*}
 \begin{split}
  W_{1,1}\left(\Y,p\right)&=-\frac{\left(2+\kappa\right)\left(1+\kappa\right)}{p^3\Y\left(1+\Y\right)}\int_{0}^{\Y}\del_{x}\left(\ee^{-px}-1\right)\left(\frac{x}{\Y}\right)^{\kappa}\frac{\ee^{\Phi\left(\Y\right)}}{\ee^{\Phi\left(x\right)}}\dx\\
  &=-\frac{\left(2+\kappa\right)\left(1+\kappa\right)}{p^3\Y\left(1+\Y\right)}\left(\ee^{-p\Y}-1\right)+\frac{\left(2+\kappa\right)\left(1+\kappa\right)}{p^3\Y\left(1+\Y\right)}\int_{0}^{\Y}\left(\ee^{-px}-1\right)\del_{x}\left(\left(\frac{x}{\Y}\right)^{\kappa}\frac{\ee^{\Phi\left(\Y\right)}}{\ee^{\Phi\left(x\right)}}\right)\dx\\
  &=\vcc W_{1,1,1}\left(\Y,p\right)+W_{1,1,2}\left(\Y,p\right).
 \end{split}
\end{equation*}
We consider the two terms separately and first note that $W_{1,1,1}$ is analytic in $\C\setminus\left\{-1\right\}$. Thus, instead of integrating along the imaginary axis (from $-\im R$ to $\im R$) we can deform the integration path in such a way that, around the origin, we integrate over some half-circle (of radius one for example) in the right half-plane (see Figure~\ref{fig:W11}).

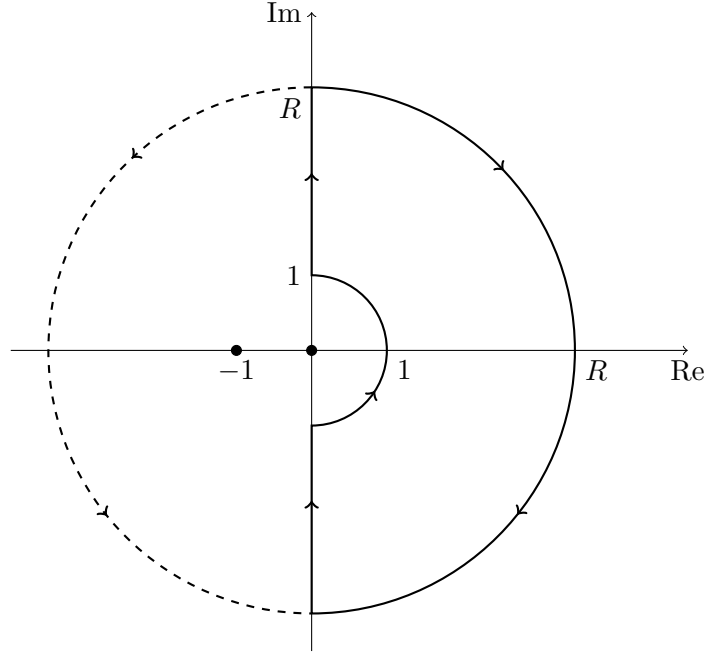
\begin{figure}
\begin{center}
  \begin{tikzpicture}[contour/.style={postaction={decorate,decoration={markings,
mark=at position 1.5cm with {\arrow[line width=1pt]{>}},
mark=at position 3.5cm with {\arrow[line width=1pt]{>}},
mark=at position 7cm with {\arrow[line width=1pt]{>}},
mark=at position 11cm with {\arrow[line width=1pt]{>}},
mark=at position 16cm with {\arrow[line width=1pt]{>}}}}},
contourtwo/.style={postaction={decorate,decoration={markings, mark=at position 2.625cm with {\arrow[line width=1pt]{>}},mark=at position 7.875cm with {\arrow[line width=1pt]{>}}}}}]

\draw[->] (-4,0) -- (5,0) node[below] {$\Re$};
\draw[->] (0,-4) -- (0,4.5) node[left] {$\Im$};

\path[draw,line width=0.8pt,contour] (0,-3.5)  -- (0,-1)  arc (-90:90:1) node[left] {$1$} -- (0, 3.5)node[below left] {$R$} arc (90:-90:3.5);

\path[draw, dashed, line width=0.8pt,contourtwo] (0,3.5) arc (90:270:3.5);

\node[below right] at (3.5,0) {$R$};
\node[below right] at (1,0) {$1$};
\draw[black,fill=black] (0,0) circle (.4ex);
\draw[black,fill=black] (-1,0) circle (.4ex);
\node[below] at (-1,0) {$-1$};
\end{tikzpicture}
\caption{Contour for $W_{1,1,1}$}
\label{fig:W11}
 \end{center}
\end{figure}

Splitting furthermore the two terms, we have to consider (while the integral has to be understood along the path just explained)
\begin{equation*}
 \begin{split}
  \frac{1}{2\pi\im}\int_{-\im R}^{\im R}\frac{\ee^{\Y\left(\xi-p\right)}}{\Y\left(1+\Y\right)}\dd{\Y}-\frac{1}{2\pi\im}\int_{-\im R}^{\im R}\frac{\ee^{\Y\xi}}{\Y\left(1+\Y\right)}\dd{\Y}.
 \end{split}
\end{equation*}
We evaluate these integrals now using residues and in the first integral we close the contour by some half-circle of radius $R$ in the right half-plane while in the second one we take the half-circle in the left half-plane. Due to the decay of the integrand at $\infty$ one immediately sees that in the limit $R\to\infty$ the integrals over these large half-circles vanish. Thus we obtain by the Residue Theorem that
\begin{equation*}
 \begin{split}
  \lim_{R\to\infty}\left(\frac{1}{2\pi\im}\int_{-\im R}^{\im R}\frac{\ee^{\Y\left(\xi-p\right)}}{\Y\left(1+\Y\right)}\dd{\Y}-\frac{1}{2\pi\im}\int_{-\im R}^{\im R}\frac{\ee^{\Y\xi}}{\Y\left(1+\Y\right)}\dd{\Y}\right)=\ee^{-\xi}-1.
 \end{split}
\end{equation*}
 Thus, we find as contribution from $W_{1,1,1}$ to the integral in~\eqref{eq:int:Q0}, if $\nu>0$ is sufficiently small, that
 \begin{equation*}
  \begin{split}
   \frac{C}{p^3}\int_{0}^{p}\frac{1-\ee^{-\xi}}{\left(1+\xi\right)^{2\theta-\nu}}\dxi\leq \frac{C}{p^3}p^{1-2\theta+\nu}\leq C\frac{\left(1+p\right)^{1-\theta}}{p^3}.
  \end{split}
 \end{equation*}
We next consider the contribution of $W_{1,1,2}$ and first estimate for $a\in\left(\alpha,\theta\right)$ (e.g. $a=(\alpha+\theta)/2$) that
\begin{equation*}
 \begin{split}
  \abs{W_{1,1,2}\left(\Y,p\right)}&=\abs{\frac{\left(2+\kappa\right)\left(1+\kappa\right)}{p^3\Y\left(1+\Y\right)}\int_{0}^{\Y}\left(\ee^{-px}-1\right)\left(\frac{\kappa}{x}-\Phi'\left(x\right)\right)\left(\frac{x}{\Y}\right)^{\kappa}\frac{\ee^{\Phi\left(\Y\right)}}{\ee^{\Phi\left(x\right)}}\dx}\\
  &\leq C\frac{1}{p^3\abs{1+\Y}}\int_{0}^{1}\abs{\ee^{-ps\Y}-1}\max\left\{\abs{s\Y}^{-\alpha-1},\abs{s\Y}^{\alpha-1}\right\}s^{\kappa}\ds\\
  &\leq C\frac{\abs{\Y}^{a}}{p^3\abs{1+\Y}}p^a\int_{0}^{1}s^{a+\kappa}\max\left\{\abs{s\Y}^{-\alpha-1},\abs{s\Y}^{\alpha-1}\right\}\ds\\
  &\leq C\left(a\right)\frac{1}{p^{3-a}}\frac{\max\left\{\abs{\Y}^{a-\alpha},\abs{\Y}^{a+\alpha}\right\}}{\abs{\Y}\abs{1+\Y}}.
 \end{split}
\end{equation*}
Note that the last integral is bounded as $a>\alpha$ and this is enough to have $a+\kappa>\alpha$ for sufficiently small $\eps$ due to Remark~\ref{Rem:smallness:kappa}. Using this we see that we can take the limit $R\to \infty$ in the integral $\int_{-\im R}^{\im R} \ee^{\Y\xi}W_{1,1,2}\left(\Y,p\right)\dd{\Y}$ and then further estimate
\begin{equation*}
 \begin{split}
  \abs{\frac{1}{2\pi\im}\int_{-\im\infty}^{\im\infty}\ee^{\Y\xi}W_{1,1,2}\left(\Y,p\right)\dd{\Y}}\leq \frac{C}{p^{3-a}}\int_{-\infty}^{\infty}\frac{\abs{\ee^{\im \Y\xi}}}{\abs{\Y}\abs{1+\Y}}\max\left\{\abs{\Y}^{a-\alpha},\abs{\Y}^{a+\alpha}\right\}\dd{\Y}\leq \frac{C\left(a,\delta\right)}{p^{3-a}}
 \end{split}
\end{equation*}
by the choice of $a$. For $a$ fixed we can then choose $\nu>0$ sufficiently small such that the contribution of $W_{1,1,2}$ to the integral in~\eqref{eq:int:Q0} can be estimated by (note we assume $\xi<p$)
\begin{equation*}
 \begin{split}
  \frac{C\left(a,\delta\right)}{p^{3-a}}\int_{0}^{p}\frac{1}{\xi^{2\theta-\nu}}\dxi\leq \frac{C\left(a,\delta\right)}{p^{2+2\theta-a-\nu}}\leq C\frac{\left(1+p\right)^{1-\theta}}{p^3}.
 \end{split}
\end{equation*}

\subsubsection{Contribution of $W_{1,1}$ for $\xi>p$}

Now we consider the contribution from $W_{1,1}$ in the region $\xi>p$ and therefore we integrate by parts in $\Y$ to obtain
\begin{equation*}
 \begin{split}
  &\quad\frac{1}{2\pi\im}\int_{-\im R}^{\im R} \ee^{\Y\xi}W_{1,1}\left(\Y,p\right)\dd{\Y}=-\frac{\left(2+\kappa\right)\left(1+\kappa\right)}{2\pi\im p^2 \xi}\int_{-\im R}^{\im R}\frac{\del_{\Y}\left(\ee^{\Y\left(\xi-1\right)}-1\right)}{\Y^{1+\kappa}\left(1+\Y\right)}\int_{0}^{\Y}\ee^{-px}x^{\kappa}\frac{\ee^{\Phi\left(\Y\right)}}{\ee^{\Phi\left(x\right)}}\dx\dd{\Y}\\
  &=-\frac{\left(2+\kappa\right)\left(1+\kappa\right)}{2\pi\im p^2 \xi}\left.\left(\frac{\del_{\Y}\left(\ee^{\Y\left(\xi-1\right)}-1\right)}{\Y^{1+\kappa}\left(1+\Y\right)}\int_{0}^{\Y}\ee^{-px}x^{\kappa}\frac{\ee^{\Phi\left(\Y\right)}}{\ee^{\Phi\left(x\right)}}\dx\right)\right|_{\Y=-\im R}^{\Y=\im R}\\
  &\quad -\frac{\left(2+\kappa\right)\left(1+\kappa\right)}{2\pi\im p^2 \xi}\left(\int_{-\im R}^{\im R}\left(\ee^{\Y\xi}-1\right)\left(\frac{1+\kappa}{\Y^{2+\kappa}\left(1+\Y\right)}+\frac{1}{\Y^{1+\kappa}\left(1+\Y\right)^2}\right)\int_{0}^{\Y}\ee^{-px}x^{\kappa}\frac{\ee^{\Phi\left(\Y\right)}}{\ee^{\Phi\left(x\right)}}\dx\dd{\Y}\right.\\
  &\quad \left. -\int_{-\im R}^{\im R}\frac{\ee^{\Y\xi}-1}{\Y\left(1+\Y\right)}\ee^{-p\Y}\dd{\Y}-\int_{\im R}^{\im R}\frac{\ee^{\Y\xi}-1}{\Y^{1+\kappa}\left(1+\Y\right)}\Phi'\left(\Y\right)\int_{0}^{\Y}\ee^{-px}x^{\kappa}\frac{\ee^{\Phi\left(\Y\right)}}{\ee^{\Phi\left(x\right)}}\dx\dd{\Y}  \right)\\
  &=(I)+\big((II)+(III)+(IV)\big).
 \end{split}
\end{equation*}
We estimate these terms now separately. First we have
\begin{equation}\label{eq:W11:I}
 \begin{split}
  \abs{(I)}\leq \frac{C}{p^2\xi}\left(\frac{\abs{\ee^{\im R\xi}-1}}{R^{1+\kappa}\abs{1+\im R}}+\frac{\abs{\ee^{-\im R\xi}-1}}{R^{1+\kappa}\abs{1-\im R}}\right)\int_{0}^{R}s^{\kappa}\ds\leq \frac{C}{R}\frac{1}{p^2\xi}\longrightarrow 0\quad \text{as } R\longrightarrow\infty.
 \end{split}
\end{equation}
In order to estimate the remaining terms we fix
\begin{equation}\label{eq:b:W11}
 b\in\left(\alpha,\theta\right).
\end{equation}
Then to estimate $(II)$ we use that for $\Y\in \im \R$ we have
\begin{equation*}
 \abs{\int_{0}^{\Y}\ee^{-px}x^{\kappa}\frac{\ee^{\Phi\left(\Y\right)}}{\ee^{\Phi\left(x\right)}}\dx}\leq C\abs{\Y}^{1+\kappa}.
\end{equation*}
Then we obtain
\begin{equation}\label{eq:W11:II}
 \begin{split}
  \abs{(II)}\leq \frac{C}{p^2\xi}\int_{-R}^{R}\frac{\abs{\ee^{\im \Y\xi}-1}}{\abs{\Y}^{2+\kappa}\left(1+\abs{\Y}\right)}\abs{\Y}^{1+\kappa}\dd{\Y}\leq \frac{C}{p^2\xi^{1-b}}\int_{0}^{R}\frac{\Y^b}{\Y\left(1+\Y\right)}\dd{\Y}\leq \frac{C}{p^2\xi^{1-b}}.
 \end{split}
\end{equation}
The term $(III)$ can be treated exactly as the term $W_{1,1,1}$ before and we obtain
\begin{equation*}
 \abs{(III)}\leq \frac{C}{p^2\xi^{1-b}}.
\end{equation*}
 It thus remains to consider the terms $(IV)$ and we find similarly as before
\begin{equation}\label{eq:W11:IV}
 \begin{split}
  \abs{(IV)}&=\frac{C}{p^2\xi}\abs{\int_{\im R}^{\im R}\frac{\ee^{\Y\xi}-1}{\Y^{1+\kappa}\left(1+\Y\right)}\Phi'\left(\Y\right)\int_{0}^{\Y}\ee^{-px}x^{\kappa}\frac{\ee^{\Phi\left(\Y\right)}}{\ee^{\Phi\left(x\right)}}\dx\dd{\Y}}\\
  &\leq \frac{C\eps}{p^2\xi^{1-b}}\int_{-R}^{R}\frac{\abs{\Y}^{b}}{\abs{1+\Y}}\max\left\{\abs{\Y}^{\alpha-1},\abs{\Y}^{-\alpha-1}\right\}\dd{\Y}\leq \frac{C}{p^2\xi^{1-b}},
 \end{split}
\end{equation}
where the last integral is bounded uniformly in $R$ due to the choice of $b$ in~\eqref{eq:b:W11}. Thus taking~\eqref{eq:W11:I} and \eqref{eq:W11:II}\textendash\eqref{eq:W11:IV} together we see that in the limit $R\to\infty$ we have the estimate
\begin{equation*}
 \limsup_{R\to\infty}\abs{\frac{1}{2\pi\im}\int_{-\im R}^{\im R} \ee^{\Y\xi}W_{1,1}\left(\Y,p\right)\dd{\Y}}\leq \frac{C}{p^2\xi^{1-b}}.
\end{equation*}
 Thus, the contribution of $W_{1,1}$ to the integral in~\eqref{eq:int:Q0} can then be estimated by
\begin{equation*}
 \begin{split}
  \frac{C}{p^2}\int_{p}^{\infty}\frac{1}{\xi^{2\theta-\nu+1-b}}\dxi\leq \frac{C}{p^{2+2\theta-b-\nu}}\leq C\frac{\left(1+p\right)^{1-\theta}}{p^3}
 \end{split}
\end{equation*}
if $\nu>0$ is sufficiently small (for $b$ fixed as in~\eqref{eq:b:W11}).

\subsection{Contribution of $W_{1,2}$}

Next we consider the contribution coming from $W_{1,2}\left(\Y,p\right)$ and first recall
\begin{equation*}
 W_{1,2}\left(\Y,p\right)=-\frac{2+\kappa}{p^2\Y\left(1+\Y\right)}\int_{0}^{\Y}\ee^{-px}\Phi'\left(x\right)x\left(\frac{x}{\Y}\right)^{\kappa}\frac{\ee^{\Phi\left(\Y\right)}}{\ee^{\Phi\left(x\right)}}\dx.
\end{equation*}
The strategy here is in principle the same as before, i.e.\ we consider the regions $\xi<p$ and $\xi>p$ separately and integrate by parts to get enough decay in $p$ and, depending on the region, also in $\xi$. But due to the occurrence of $\Phi'$ things get more involved and we have to take into account also the precise asymptotic behaviour of $\Phi$ at the origin. 

\subsubsection{Contribution of $W_{1,2}$ for $\xi<p$}

We start again by looking at the region $\xi<p$ and we first rewrite $W_{1,2}$ using the cut-off $\coup$ introduced in~\eqref{eq:cutoff} to obtain
\begin{equation*}
 \begin{split}
  W_{1,2}\left(\Y,p\right)&=-\frac{2+\kappa}{p^2\Y\left(1+\Y\right)}\int_{0}^{\Y}\ee^{-px}\Phi'\left(x\right)x\left(\frac{x}{\Y}\right)^{\kappa}\frac{\ee^{\Phi\left(\Y\right)}}{\ee^{\Phi\left(x\right)}}\coup\left(\im x\right)\dx\\
  &\quad-\frac{2+\kappa}{p^2\Y\left(1+\Y\right)}\int_{0}^{\Y}\ee^{-px}\Phi'\left(x\right)x\left(\frac{x}{\Y}\right)^{\kappa}\frac{\ee^{\Phi\left(\Y\right)}}{\ee^{\Phi\left(x\right)}}\left(1-\coup\left(\im x\right)\right)\dx=\vcc W_{1,2,1}\left(\Y,p\right)+W_{1,2,2}\left(\Y,p\right).
 \end{split}
\end{equation*}
Note that $\coup$ is a real function on the real line. We first consider $W_{1,2,1}$ which is the easier part because due to the cut-off $\coup$, we can integrate by parts again. Using furthermore that $\Phi'\left(x\right)=-\eps\beta_{W}\left(x\right)x^{-1}\ee^{-x}$ we obtain
\begin{equation*}
 \begin{split}
  W_{1,2,1}\left(\Y,p\right)&=\frac{\left(2+\kappa\right)\eps}{p^2\Y\left(1+\Y\right)}\int_{0}^{\Y}\ee^{-\left(p+1\right)x}\beta_W\left(x\right)\left(\frac{x}{\Y}\right)^{\kappa}\frac{\ee^{\Phi\left(\Y\right)}}{\ee^{\Phi\left(x\right)}}\coup\left(\im x\right)\dx\\
  &=-\frac{\left(2+\kappa\right)\eps}{p^2\left(1+p\right)}\frac{1}{\Y\left(1+\Y\right)}\int_{0}^{\Y}\del_{x}\left(\ee^{-\left(1+p\right)x}\right)\beta_W\left(x\right)\left(\frac{x}{\Y}\right)^{\kappa}\frac{\ee^{\Phi\left(\Y\right)}}{\ee^{\Phi\left(x\right)}}\coup\left(\im x\right)\dx.
 \end{split}
\end{equation*}
Integrating by parts this gives
\begin{equation*}
 \begin{split}
  W_{1,2,1}\left(\Y,p\right)&=-\frac{\left(2+\kappa\right)\eps}{p^2\left(1+p\right)}\frac{\ee^{-\left(1+p\right)\Y}\coup\left(\Y\right)}{\Y\left(1+\Y\right)}\beta_W\left(\Y\right)\\
  &\quad+\frac{\left(2+\kappa\right)\eps}{p^2\left(1+p\right)}\frac{1}{\Y\left(1+\Y\right)}\int_{0}^{\Y}\ee^{-\left(1+p\right)x}\del_{x}\left(\beta_W\left(x\right)\left(\frac{x}{\Y}\right)^{\kappa}\frac{\ee^{\Phi\left(\Y\right)}}{\ee^{\Phi\left(x\right)}}\coup\left(\im x\right)\right)\dx\\
  &=\vcc W_{1,2,1,1}\left(\Y,p\right)+W_{1,2,1,2}\left(\Y,p\right).
 \end{split}
\end{equation*}
First we have due to the estimates on $\beta_W$ from~\eqref{eq:est:beta:W:0},~\eqref{eq:est:beta:W} together with the properties of $\coup$ that
\begin{align*}
  \abs{W_{1,2,1,1}\left(\Y,p\right)}&\leq C\frac{\abs{\Y}^{\alpha}}{\abs{\Y}\abs{1+\Y}}\chi_{\left\{\abs{\Y}\geq 1/2\right\}}.
\end{align*}
Furthermore expanding the derivative we find
\begin{equation*}
 \begin{split}
  &\quad \del_{x}\left(\beta_W\left(x\right)\left(\frac{x}{\Y}\right)^{\kappa}\frac{\ee^{\Phi\left(\Y\right)}}{\ee^{\Phi\left(x\right)}}\coup\left(\im x\right)\right)\\
  &=\left(\beta_W'\left(x\right)\coup\left(\im x\right)+\frac{\kappa}{x}\beta_W\left(x\right)\coup\left(\im x\right)-\beta_W\left(x\right)\Phi'\left(x\right)\coup\left(\im x\right)+\im\beta_W\left(x\right)\coup'\left(\im x\right)\right)\left(\frac{x}{\Y}\right)^{\kappa}\frac{\ee^{\Phi\left(\Y\right)}}{\ee^{\Phi\left(x\right)}}.
 \end{split}
\end{equation*}
Estimating by the most singular term at infinity and using that $\coup$ is supported away from the origin we find
\begin{equation*}
 \begin{split}
  \abs{\del_{x}\left(\beta_W\left(x\right)\left(\frac{x}{\Y}\right)^{\kappa}\frac{\ee^{\Phi\left(\Y\right)}}{\ee^{\Phi\left(x\right)}}\coup\left(\im x\right)\right)}\leq \frac{C}{\abs{\Y}^{\kappa}}\abs{x}^{\kappa-1+2\alpha}\chi_{\left\{\abs{x}\geq \frac{1}{2}\right\}}.
 \end{split}
\end{equation*}
This then yields
\begin{equation*}
 \begin{split}
  \abs{\int_{0}^{\Y}\ee^{-\left(1+p\right)x}\del_{x}\left(\beta_W\left(x\right)\left(\frac{x}{\Y}\right)^{\kappa}\frac{\ee^{\Phi\left(\Y\right)}}{\ee^{\Phi\left(x\right)}}\coup\left(\im x\right)\right)\dx}\leq C\abs{\Y}^{2\alpha}\chi_{\left\{\abs{\Y}\geq \frac{1}{2}\right\}}.
 \end{split}
\end{equation*}
Thus, estimating again by the most singular terms we conclude
\begin{equation*}
 \begin{split}
  \abs{\frac{1}{2\pi\im}\int_{-R\im}^{R\im}\ee^{\Y\xi}W_{1,2,1}\left(\Y,p\right)\dd{\Y}}\leq \frac{C}{p^2\left(1+p\right)}\int_{\left(-R,R\right)\setminus\left(-\frac{1}{2},\frac{1}{2}\right)}\frac{1}{\abs{\Y}\abs{1+\Y}}\left(\abs{\Y}^{\alpha}+\abs{\Y}^{2\alpha}\right)\dd{\Y}\leq \frac{C}{p^2\left(1+p\right)},
 \end{split}
\end{equation*}
where the last estimate holds uniformly in $R$ and $\xi$. The contribution of $W_{1,2,1}$ to the integral in~\eqref{eq:int:Q0} can then be estimated by 
\begin{equation}\label{eq:contr:W121}
 \begin{split}
  \frac{C}{p^2\left(1+p\right)}\int_{0}^{p}\frac{1}{\xi^{2\theta-\nu}}\dxi\leq \frac{C}{p^{1+2\theta-\nu}\left(1+p\right)}\leq C\frac{\left(1+p\right)^{1-\theta}}{p^3}
 \end{split}
\end{equation}
if $\nu$ is sufficiently small.

We next estimate the expression coming from $W_{1,2,2}$ which is more involved. Note first that we can directly take the limit $R\to\infty$ due to the cut-off $1-\coup(\im x)$. Thus we have to consider
\begin{equation*}
 \begin{split}
  Q_{1,2,2}\vcc=-\frac{2+\kappa}{2\pi\im}\frac{1}{p^2}\int_{-\im\infty}^{\im\infty}\frac{\ee^{\Y\xi}}{\Y\left(1+\Y\right)}\int_{0}^{\Y}\ee^{-px}\Phi'\left(x\right)x\left(\frac{x}{\Y}\right)^{\kappa}\frac{\ee^{\Phi\left(\Y\right)}}{\ee^{\Phi\left(x\right)}}\left(1-\coup\left(\im x\right)\right)\dx\dd{\Y}.
 \end{split}
\end{equation*}
Transforming the integrals into integrals over real intervals and using Fubini's Theorem we can rewrite this as
\begin{equation*}
 \begin{split}
  Q_{1,2,2}&=-\frac{2+\kappa}{2\pi}\frac{1}{p^2}\int_{-\infty}^{\infty}\frac{\ee^{\im \Y\xi}}{\im \Y\left(1+\im \Y\right)}\int_{0}^{\im \Y}\ee^{-px}\Phi'\left(x\right)x\left(\frac{x}{\im \Y}\right)^{\kappa}\frac{\ee^{\Phi\left(\im \Y\right)}}{\ee^{\Phi\left(x\right)}}\left(1-\coup\left(\im x\right)\right)\dx\dd{\Y}\\
  &=-\frac{2+\kappa}{2\pi}\frac{1}{p^2}\int_{-\infty}^{\infty}\frac{\ee^{\im \Y\xi}}{\Y\left(1+\im \Y\right)}\int_{0}^{\Y}\ee^{-\im ps}\Phi'\left(\im x\right)\left(\im s\right)\left(\frac{s}{\Y}\right)^{\kappa}\frac{\ee^{\Phi\left(\im \Y\right)}}{\ee^{\Phi\left(\im s\right)}}\left(1-\coup\left(s\right)\right)\ds\dd{\Y}\\
  &=-\frac{2+\kappa}{2\pi}\frac{\im}{p^2}\int_{-\infty}^{\infty}\ee^{-\im ps}\Phi'\left(\im s\right)s^{1+\kappa}\left(1-\coup\left(s\right)\right)\ee^{-\Phi\left(\im s\right)}\int_{s}^{\sgn\left(s\right)\infty}\frac{\ee^{\im \Y \xi}}{ \Y^{1+\kappa}\left(1+\im \Y\right)}\ee^{\Phi\left(\im \Y\right)}\dd{\Y}\ds.
 \end{split}
\end{equation*}
Before continuing we have to insert the cut-off $\cut$ (see~\eqref{eq:cutoff}) in the variable $\Y$ to separate the regions $\abs{\Y}\leq 1$ and $\abs{\Y}\geq 1$, i.e.\ we split $Q_{1,2,2}=Q_{1,2,2,1}+Q_{1,2,2,2}$ with 
\begin{align*}
 Q_{1,2,2,1}&\vcc=-\frac{2+\kappa}{2\pi}\frac{\im}{p^2}\int_{-\infty}^{\infty}\ee^{-\im ps}\Phi'\left(\im s\right)s^{1+\kappa}\left(1-\coup\left(s\right)\right)\ee^{-\Phi\left(\im s\right)}\int_{s}^{\sgn\left(s\right)\infty}\frac{\ee^{\im \Y \xi}\ee^{\Phi\left(\im \Y\right)}\cut\left(\Y\right)}{\Y^{1+\kappa}\left(1+\im \Y\right)}\dd{\Y}\ds\\
 Q_{1,2,2,2}&\vcc=-\frac{2+\kappa}{2\pi}\frac{\im}{p^2}\int_{-\infty}^{\infty}\ee^{-\im ps}\Phi'\left(\im s\right)s^{1+\kappa}\left(1-\coup\left(s\right)\right)\ee^{-\Phi\left(\im s\right)}\int_{s}^{\sgn\left(s\right)\infty}\frac{\ee^{\im \Y \xi}\ee^{\Phi\left(\im \Y\right)}\left(1-\cut\left(\Y\right)\right)}{\Y^{1+\kappa}\left(1+\im \Y\right)}\dd{\Y}\ds.
\end{align*}
The second term, $Q_{1,2,2,2}$, is again easier to treat because we can integrate by parts in $s$. Doing this, using also the definition of $\Phi$, we get
\begin{equation}\label{eq:Q1222}
 \begin{split}
  Q_{1,2,2,2}&=\frac{\left(2+\kappa\right)\eps}{2\pi}\frac{1}{p^2}\int_{-\infty}^{\infty}\ee^{-\im s\left(p+1\right)}\beta_W\left(\im s\right)s^{\kappa}\left(1-\coup\left(s\right)\right)\ee^{-\Phi\left(\im s\right)}\int_{s}^{\sgn\left(s\right)\infty}\frac{\ee^{\im \Y \xi}\left(1-\cut\left(\Y\right)\right)}{\Y^{1+\kappa}\left(1+\im \Y\right)}\ee^{\Phi\left(\im \Y\right)}\dd{\Y}\ds\\
  &=-\frac{\left(2+\kappa\right)\eps}{2\pi}\frac{1}{p^2\left(1+p\right)}\int_{-\infty}^{\infty}\del_{s}\left(\ee^{-\im s\left(p+1\right)}-1\right)\beta_W\left(\im s\right)s^{\kappa}\left(1-\coup\left(s\right)\right)\ee^{-\Phi\left(\im s\right)}\int_{s}^{\sgn\left(s\right)\infty}\left(\cdots\right)\dd{\Y}\ds\\
  &=\frac{\left(2+\kappa\right)\eps}{2\pi}\frac{1}{p^2\left(1+p\right)}\int_{-\infty}^{\infty}\left(\ee^{-\im s\left(p+1\right)}-1\right)\del_{s}\left(\beta_W\left(\im s\right)s^{\kappa}\left(1-\coup\left(s\right)\right)\ee^{-\Phi\left(\im s\right)}\right)\int_{s}^{\sgn\left(s\right)\infty}\left(\cdots\right)\dd{\Y}\ds\\
  &\quad -\frac{\left(2+\kappa\right)\eps}{2\pi}\frac{1}{p^2\left(1+p\right)}\int_{-\infty}^{\infty}\left(\ee^{-\im s\left(p+1\right)}-1\right)\beta_W\left(\im s\right)\left(1-\coup\left(s\right)\right)\frac{\ee^{\im s \xi}}{s\left(1+\im s\right)}\left(1-\cut\left(s\right)\right)\ds. 
 \end{split}
\end{equation}
On the other hand it holds
\begin{equation*}
 \abs{\int_{s}^{\sgn\left(s\right)\infty}\frac{\ee^{\im \Y \xi}}{\Y^{1+\kappa}\left(1+\im \Y\right)}\ee^{\Phi\left(\im \Y\right)}\left(1-\cut\left(\Y\right)\right)\dd{\Y}}\leq C \quad \text{for all } s
\end{equation*}
 due to the cut-off $1-\cut(\Y)$ which is zero in $\left[-1,1\right]$ and the boundedness of $\ee^{\Phi\left(\im \Y\right)}$ for $\abs{\Y}\geq 1$. Furthermore, the cut-off $1-\coup\left(s\right)$ and Lemma~\ref{Lem:asymptotics:exp:Phi} yield
 \begin{equation*}
  \eps\abs{\del_{s}\left(\beta_W\left(\im s\right)s^{\kappa}\left(1-\coup\left(s\right)\right)\ee^{-\Phi\left(\im s\right)}\right)}\leq C\abs{s}^{\kappa-1}\chi_{\left\{\abs{s}\leq 1\right\}}.
 \end{equation*}
 Finally, using again that $1-\cut\left(s\right)=0$ for $-1\leq s\leq 1$ as well as $1-\coup\left(s\right)=0$ for $\abs{s}\geq 1$ we also get
\begin{equation}\label{eq:W12:v3}
 \abs{\ee^{-\im s\left(p+1\right)}\beta_W\left(\im s\right)\left(1-\coup\left(s\right)\right)\frac{\ee^{\im s \xi}}{s\left(1+\im s\right)}\left(1-\cut\left(s\right)\right)}=0  \quad\text{for all } s.
\end{equation}
Thus, taking these estimates together we conclude from~\eqref{eq:Q1222} together with $\abs{\ee^{-\im s\left(p+1\right)}-1}\leq C(p+1)^{a}\abs{s}^{a}$ with $a\in(\delta,\theta)$ that
\begin{equation*}
 \abs{Q_{1,2,2,2}}\leq \frac{C}{p^2\left(1+p\right)^{1-a}}
\end{equation*}
and similarly as in~\eqref{eq:contr:W121} this gives the desired contribution to the integral in~\eqref{eq:int:Q0} due to the choice of $a$.

\begin{remark}\label{Rem:est:bd:pert:v3}
 For bounded perturbations, i.e.\@ for $\alpha=0$ we estimate $W$ by a kernel $\widetilde{W}$ with positive $\tilde{\alpha}>0$. However, as explained in Remark~\ref{Rem:alpha:pos} we cannot recover the assumption~\ref{eq:asymp:W} and therefore we cannot rely on Lemma~\ref{Lem:asymptotics:exp:Phi} here. Nevertheless, we still obtain the desired estimate in this case due to the fact, that we can choose $\tilde{\alpha}>0$ as small as we want. In fact, the left-hand side of~\eqref{eq:W12:v3} can also be estimated by $C\abs{s}^{\kappa-1-\tilde{\alpha}}\chi_{\{\abs{s}\leq 1\}}$. If we choose $\tilde{\alpha}<\delta$ with $\delta<\theta/2$ we can proceed as above if we take $a\in(2\delta,\theta)$.
\end{remark}

Thus it remains to consider $Q_{1,2,2,1}$. Changing variables, $\Y=s+t$, and rearranging we obtain
\begin{equation*}
 \begin{split}
  &\quad Q_{1,2,2,1}\\
  &=-\frac{2+\kappa}{2\pi}\frac{\im}{p^2}\int_{-\infty}^{\infty}\ee^{-\im ps}\Phi'\left(\im s\right)s^{1+\kappa}\left(1-\coup\left(s\right)\right)\ee^{-\Phi\left(\im s\right)}\int_{s}^{\sgn\left(s\right)\infty}\cut\left(\Y\right)\frac{\ee^{\im \Y\xi}}{\Y^{1+\kappa}\left(1+\im \Y\right)}\ee^{\Phi\left(\im \Y\right)}\dd{\Y}\ds\\
  &=-\frac{2+\kappa}{2\pi}\frac{\im}{p^2}\int_{-\infty}^{\infty}\ee^{\im s\left(\xi-p\right)}\Phi'\left(\im s\right)s^{1+\kappa}\left(1-\coup\left(s\right)\right)\int_{0}^{\sgn\left(s\right)\infty}\frac{\cut\left(s+t\right)\ee^{\im t\xi}}{\left(s+t\right)^{1+\kappa}\left(1+\im\left(s+t\right)\right)}\ee^{\Phi\left(\im\left(s+t\right)\right)-\Phi\left(\im s\right)}\dt\ds.
 \end{split}
\end{equation*}
Integrating by parts in $s$ we get
\begin{multline}\label{eq:Q1221}
  \quad Q_{1,2,2,1}\\
  \shoveleft{=-\frac{2+\kappa}{2\pi}\frac{1}{p^2\left(\xi-p\right)}\int_{-\infty}^{\infty}\del_{s}\left(\ee^{\im s\left(\xi-p\right)}-1\right)\Phi'\left(\im s\right)s^{1+\kappa}\left(1-\coup\left(s\right)\right)\cdot}\\
  \shoveright{\cdot\int_{0}^{\sgn\left(s\right)\infty}\frac{\cut\left(s+t\right)\ee^{\im t\xi}}{\left(s+t\right)^{1+\kappa}\left(1+\im\left(s+t\right)\right)}\ee^{\Phi\left(\im\left(s+t\right)\right)-\Phi\left(\im s\right)}\dt\ds}\\
  \shoveleft{=\frac{2+\kappa}{2\pi}\frac{1}{p^2\left(\xi-p\right)}\int_{-\infty}^{\infty}\left(\ee^{\im s\left(\xi-p\right)}-1\right)\cdot}\\
  \cdot\del_{s}\left(\Phi'\left(\im s\right)s^{1+\kappa}\left(1-\coup\left(s\right)\right)\int_{0}^{\sgn\left(s\right)\infty}\frac{\cut\left(s+t\right)\ee^{\im t\xi}}{\left(s+t\right)^{1+\kappa}\left(1+\im\left(s+t\right)\right)}\ee^{\Phi\left(\im\left(s+t\right)\right)-\Phi\left(\im s\right)}\dt\right)\ds.
\end{multline}
Expanding the derivative it follows
\begin{equation*}
 \begin{split}
  &\quad \del_{s}\left(\Phi'\left(\im s\right)s^{1+\kappa}\left(1-\coup\left(s\right)\right)\int_{0}^{\sgn\left(s\right)\infty}\frac{\cut\left(s+t\right)\ee^{\im t\xi}}{\left(s+t\right)^{1+\kappa}\left(1+\im\left(s+t\right)\right)}\ee^{\Phi\left(\im\left(s+t\right)\right)-\Phi\left(\im s\right)}\dt\right)\\
  &=\del_{s}\left(\Phi'\left(\im s\right)s^{1+\kappa}\left(1-\coup\left(s\right)\right)\right)\int_{0}^{\sgn\left(s\right)\infty}\frac{\cut\left(s+t\right)\ee^{\im t\xi}}{\left(s+t\right)^{1+\kappa}\left(1+\im\left(s+t\right)\right)}\ee^{\Phi\left(\im\left(s+t\right)\right)-\Phi\left(\im s\right)}\dt\\
  &\quad +\Phi'\left(\im s\right)s^{1+\kappa}\left(1-\coup\left(s\right)\right)\int_{0}^{\sgn\left(s\right)\infty}\del_{s}\left(\frac{\cut\left(s+t\right)\ee^{\im t\xi}}{\left(s+t\right)^{1+\kappa}\left(1+\im\left(s+t\right)\right)}\right)\ee^{\Phi\left(\im\left(s+t\right)\right)-\Phi\left(\im s\right)}\dt\\
 &\quad +\im\Phi'\left(\im s\right)s^{1+\kappa}\left(1-\coup\left(s\right)\right)\int_{0}^{\sgn\left(s\right)\infty}\frac{\cut\left(s+t\right)\ee^{\im t\xi}\left(\Phi'\left(\im\left(s+t\right)\right)-\Phi'\left(\im s\right)\right)}{\left(s+t\right)^{1+\kappa}\left(1+\im\left(s+t\right)\right)}\ee^{\Phi\left(\im\left(s+t\right)\right)-\Phi\left(\im s\right)}\dt.
 \end{split}
\end{equation*}
From Lemmas~\ref{Lem:properties:Phi} and~\ref{Lem:asymptotics:xi:leq:p} together with $\eps\abs{s}^{-\alpha}\ee^{-\frac{D\eps}{\abs{s}^{\alpha}}}\leq C$ we then get by estimating by the most singular terms that
\begin{equation}\label{eq:est:W:bd:v3}
  \abs{\del_{s}\left(\Phi'\left(\im s\right)s^{1+\kappa}\left(1-\coup\left(s\right)\right)\int_{0}^{\sgn\left(s\right)\infty}\frac{\cut\left(s+t\right)\ee^{\im t\xi}}{\left(s+t\right)^{1+\kappa}\left(1+\im\left(s+t\right)\right)}\ee^{\Phi\left(\im\left(s+t\right)\right)-\Phi\left(\im s\right)}\dt\right)}\leq C\frac{\chi_{\left\{\abs{s}\leq 2\right\}}}{\abs{s}^{1+2\delta}}.
\end{equation}
Using this in~\eqref{eq:Q1221} together with $\abs{\ee^{\im s\left(\xi-p\right)}-1}\leq C\abs{s}^{a}\abs{\xi-p}^a$ for some $a\in\left(2\delta,\theta\right)$ we find
\begin{equation*}
 \begin{split}
  \abs{Q_{1,2,2,1}}\leq C\frac{1}{p^2\abs{\xi-p}^{1-a}}\int_{-2}^{2}\abs{s}^{a-1-2\delta}\ds\leq \frac{C\left(a\right)}{p^2\abs{\xi-p}^{1-a}}.
 \end{split}
\end{equation*}
The contribution to the integral in~\eqref{eq:int:Q0} can then be estimated by
\begin{equation}\label{eq:contr:Q1221}
 \begin{split}
  &\quad\frac{C}{p^2}\int_{0}^{p}\frac{1}{\left(\xi+1\right)^{2\theta-\nu}}\frac{1}{\abs{\xi-p}^{1-a}}\dxi\leq \frac{C}{p^2}\left(\frac{1}{p^{1-a}}\int_{0}^{p/2}\xi^{-2\theta+\nu}\dxi+\frac{1}{p^{2\theta}}\int_{p/2}^{p}\abs{\xi-p}^{a-1}\dxi\right)\\
  &\leq \frac{C}{p^{2+2\theta-a-\nu}}\leq C\frac{\left(1+p\right)^{1-\theta}}{p^3}
 \end{split}
\end{equation}
if we choose $\nu>0$ sufficiently small for fixed $a$.

\begin{remark}\label{Rem:adapt:11.4}
 For $\alpha=0$ as small change in the argument is necessary, similarly to Remark~\ref{Rem:est:bd:pert:v3}. Precisely, in this case the left-hand side of~\eqref{eq:est:W:bd:v3} can be estimated by $C\abs{s}^{\kappa-\tilde{\alpha}-1}\chi_{\{\abs{s}\leq 2\}}$. Choosing $\tilde{\alpha}<\delta$ and $\delta<\theta/2$, we can proceed as above.
\end{remark}

\subsubsection{Contribution of $W_{1,2}$ for $\xi>p$}

Next we have to consider the region where $\xi>p$ and we proceed similarly, i.e.\ we first introduce a cut-off in the variable $\Y$ to separate the regions of small and large values of $\Y$. Then in one part we can again integrate by parts directly, while in the other one we have to take more care using the precise asymptotic behaviour of $\Phi$. Precisely we have to consider
\begin{equation*}
 \begin{split}
  &\quad \frac{1}{2\pi\im}\int_{-\im R}^{\im R}\frac{2+\kappa}{p^2\Y\left(1+\Y\right)}\ee^{\Y\xi}\int_{0}^{\Y}\ee^{-px}\Phi'\left(x\right)x\left(\frac{x}{\Y}\right)^{\kappa}\frac{\ee^{\Phi\left(\Y\right)}}{\ee^{\Phi\left(x\right)}}\dx\dd{\Y}\\
  &=\frac{\left(2+\kappa\right)\im}{2\pi p^2}\int_{-R}^{R}\frac{\ee^{\im \Y\xi}}{\Y^{1+\kappa}\left(1+\im \Y\right)}\int_{0}^{\Y}\ee^{-\im ps}\Phi'\left(\im s\right)s^{1+\kappa}\frac{\ee^{\Phi\left(\im \Y\right)}}{\ee^{\Phi\left(\im s\right)}}\ds\left(\coup\left(\Y\right)+1-\coup\left(\Y\right)\right)\dd{\Y}=\widehat{Q}_{1,2,1}+\widehat{Q}_{1,2,2}
 \end{split}
\end{equation*}
with 
\begin{align*}
 \widehat{Q}_{1,2,1}&\vcc= \frac{\left(2+\kappa\right)\im}{2\pi p^2}\int_{-R}^{R}\coup\left(\Y\right)\frac{\ee^{\im \Y\xi}}{\Y^{1+\kappa}\left(1+\im \Y\right)}\int_{0}^{\Y}\ee^{-\im ps}\Phi'\left(\im s\right)s^{1+\kappa}\frac{\ee^{\Phi\left(\im \Y\right)}}{\ee^{\Phi\left(\im s\right)}}\ds\dd{\Y},\\
 \widehat{Q}_{1,2,2}&\vcc=\frac{\left(2+\kappa\right)\im}{2\pi p^2}\int_{-R}^{R}\left(1-\coup\left(\Y\right)\right)\frac{\ee^{\im \Y\xi}}{\Y^{1+\kappa}\left(1+\im \Y\right)}\int_{0}^{\Y}\ee^{-\im ps}\Phi'\left(\im s\right)s^{1+\kappa}\frac{\ee^{\Phi\left(\im \Y\right)}}{\ee^{\Phi\left(\im s\right)}}\ds\dd{\Y}.
\end{align*}
We first consider $\widehat{Q}_{1,2,1}$ where it is possible to integrate by parts and we thus obtain
\begin{equation*}
 \begin{split}
  \widehat{Q}_{1,2,1}&=\frac{2+\kappa}{2\pi p^2\xi}\int_{-R}^{R}\frac{\del_{\Y}\left(\ee^{\im \Y\xi}-1\right)}{\Y^{1+\kappa}\left(1+\im \Y\right)}\int_{0}^{\Y}\ee^{-\im ps}\Phi'\left(\im s\right)s^{1+\kappa}\ee^{-\Phi\left(\im s\right)}\ds\ee^{\Phi\left(\im \Y\right)}\coup\left(\Y\right)\dd{\Y}\\
  &=\frac{2+\kappa}{2\pi p^2\xi}\left.\left(\frac{\ee^{\im \Y\xi}-1}{\Y^{1+\kappa}\left(1+\im \Y\right)}\coup\left(\Y\right)\int_{0}^{\Y}\ee^{-\im ps}\Phi'\left(\im s\right)s^{1+\kappa}\frac{\ee^{\Phi\left(\im \Y\right)}}{\ee^{\Phi\left(\im s\right)}}\ds\right)\right|_{\Y=-R}^{\Y=R}\\
  &\quad -\frac{2+\kappa}{2\pi p^2\xi}\int_{-R}^{R}\left(\ee^{\im \Y\xi}-1\right)\del_{\Y}\left(\frac{\coup\left(\Y\right)}{\Y^{1+\kappa}\left(1+\im \Y\right)}\right)\int_{0}^{\Y}\ee^{-\im ps}\Phi'\left(\im s\right)s^{1+\kappa}\frac{\ee^{\Phi\left(\im \Y\right)}}{\ee^{\Phi\left(\im s\right)}}\ds \dd{\Y}\\
  &\quad -\frac{2+\kappa}{2\pi p^2\xi}\int_{-R}^{R}\frac{\ee^{\im \Y\xi}-1}{1+\im \Y}\ee^{-\im p\Y}\Phi'\left(\im \Y\right)\coup\left(\Y\right)\dd{\Y}\\
  &\quad -\frac{\left(2+\kappa\right)\im}{2\pi p^2\xi}\int_{-R}^{R}\frac{\ee^{\im \Y\xi}-1}{\Y^{1+\kappa}\left(1+\im \Y\right)}\coup\left(\Y\right)\Phi'\left(\im \Y\right)\int_{0}^{\Y}\ee^{-\im ps}\Phi'\left(\im s\right)\im s^{1+\kappa}\frac{\ee^{\Phi\left(\im \Y\right)}}{\ee^{\Phi\left(\im s\right)}}\ds\dd{\Y}\\
  &=\vcc(I)+(II)+(III)+(IV).
 \end{split}
\end{equation*}
Using similarly as before the boundedness of $\frac{\ee^{\Phi\left(\im \Y\right)}}{\ee^{\Phi\left(\im s\right)}}$ as well $\abs{\Phi'\left(Z\right)}\leq C\eps\left(\abs{Z}^{-\alpha-1}+\abs{Z}^{\alpha-1}\right)$ we find
\begin{align*}
 \abs{\int_{0}^{\Y}\ee^{-\im ps}\Phi'\left(\im s\right)s^{1+\kappa}\frac{\ee^{\Phi\left(\im \Y\right)}}{\ee^{\Phi\left(\im s\right)}}\ds}&\leq C\eps &\text{for } \abs{\Y}&\leq 1\\
 \abs{\int_{0}^{\Y}\ee^{-\im ps}\Phi'\left(\im s\right)s^{1+\kappa}\frac{\ee^{\Phi\left(\im \Y\right)}}{\ee^{\Phi\left(\im s\right)}}\ds}&\leq C\eps\abs{\Y}^{1+\alpha+\kappa} &\text{for } \abs{\Y}&\geq 1.
\end{align*}
From this estimate we then obtain
\begin{equation*}
 \begin{split}
  \abs{(I)}\leq \frac{C}{p^2\xi}\frac{R^{\alpha}}{1+R}\longrightarrow 0 \quad \text{for } R\longrightarrow \infty,
 \end{split}
\end{equation*}
where we also used the boundedness of $\coup$. Next we get, by taking also the limit $R\to\infty$, which is possible due to the sufficient large decay at infinity of the integrand, that
\begin{equation*}
 \begin{split}
  \abs{(II)}\leq \frac{C}{p^2\xi}\int_{-\infty}^{\infty}\frac{\chi_{\left\{\abs{\Y}\geq\frac{1}{2}\right\}}}{\abs{\Y}^{2+\kappa}\left(1+\abs{\Y}\right)}\max\left\{1,\abs{\Y}^{1+\alpha+\kappa}\right\}\dd{\Y}\leq \frac{C}{p^2\xi},
 \end{split}
\end{equation*}
using also that $\alpha<1/2$. Similarly we obtain
\begin{equation*}
 \begin{split}
  \abs{(III)}\leq \frac{C}{p^2\xi}\int_{-\infty}^{\infty}\frac{\chi_{\left\{\abs{\Y}\geq\frac{1}{2}\right\}}}{1+\abs{\Y}}\left(\abs{\Y}^{-\alpha-1}+\abs{\Y}^{\alpha-1}\right)\dd{\Y}\leq \frac{C}{p^2 \xi}.
 \end{split}
\end{equation*}
Finally for $(IV)$ we get
\begin{equation*}
 \begin{split}
  \abs{(IV)}\leq \frac{C}{p^2\xi}\int_{-\infty}^{\infty}\frac{\chi_{\left\{\abs{\Y}\geq\frac{1}{2}\right\}}}{\abs{\Y}^{1+\kappa}\left(1+\abs{\Y}\right)}\left(\abs{\Y}^{-\alpha-1}+\abs{\Y}^{\alpha-1}\right)\max\left\{1,\abs{\Y}^{1+\alpha+\kappa}\right\}\dd{\Y}\leq \frac{C}{p^2\xi},
 \end{split}
\end{equation*}
where it is crucial in the last step that $\alpha<1/2$. Thus for $\xi>p$ we obtain the estimate
\begin{equation*}
 \abs{\widehat{Q}_{1,2,2}}\leq \frac{C}{p^2\xi}
\end{equation*}
and therefore the contribution to~\eqref{eq:int:Q0} can be estimated by
\begin{equation}\label{eq:contr:Q122}
 \begin{split}
  \frac{C}{p^2}\int_{p}^{\infty}\frac{1}{\xi^{2\theta-\nu+1}}\dxi\leq \frac{C}{p^{2+2\theta-\nu}}\leq C\frac{\left(1+p\right)^{1-\theta}}{p^3}
 \end{split}
\end{equation}
as $\nu$ can be made as small as needed.

It now remains to estimate the contribution coming from $\widehat{Q}_{1,2,1}$. Noting that, due to the cut-off $1-\coup\left(\Y\right)$, we can take the limit $R\to\infty$ in $\widehat{Q}_{1,2,1}$, we have to consider
\begin{equation*}
 \begin{split}
  \lim_{R\to\infty}\widehat{Q}_{1,2,1}=\frac{\left(2+\kappa\right)\im}{2\pi p^2}\int_{-\infty}^{\infty}\ee^{-\im ps} \Phi'\left(\im s\right)s^{1+\kappa}\ee^{-\Phi\left(\im s\right)}\int_{s}^{\sgn\left(s\right)\infty}\frac{\ee^{\im \Y\xi}}{\Y^{1+\kappa}\left(1+\im \Y\right)}\ee^{\Phi\left(\im \Y\right)}\left(1-\coup\left(\Y\right)\right)\dd{\Y}\ds.
 \end{split}
\end{equation*}
  Changing variables, $\Y=s+t$, and rearranging we can integrate by parts in $s$ to get
 \begin{equation*}
  \begin{split}
  &\quad \lim_{R\to\infty}\widehat{Q}_{1,2,1}\\
  &=\frac{\left(2+\kappa\right)\im}{2\pi p^2}\int_{-\infty}^{\infty}\Phi'\left(\im s\right)s^{1+\kappa}\ee^{\im s\left(\xi-p\right)}\int_{0}^{\sgn\left(s\right)\infty}\frac{\left(1-\coup\left(s+t\right)\right)}{\left(s+t\right)^{1+\kappa}\left(1+\im\left(s+t\right)\right)}\ee^{\im t\xi}\ee^{\Phi\left(\im\left(s+t\right)\right)-\Phi\left(\im s\right)}\dt\ds\\
   &=\frac{2+\kappa}{2\pi p^2\left(\xi-p\right)}\int_{-\infty}^{\infty}\del_{s}\left(\ee^{\im\left(\xi-p\right)s}-1\right)\Phi'\left(\im s\right)\im s^{1+\kappa}\int_{0}^{\sgn\left(s\right)\infty}\left(\cdots\right)\dt\ds\\
   &=-\frac{2+\kappa}{2\pi p^2\left(\xi-p\right)}\int_{-\infty}^{\infty}\left(\ee^{\im s\left(\xi-p\right)}-1\right)\del_{s}\left(\Phi'\left(\im s\right)\im s^{1+\kappa}\int_{0}^{\sgn\left(s\right)\infty}\left(\cdots\right)\dt\right)\ds.
  \end{split}
 \end{equation*}
Expanding the derivative we get
\begin{equation*}
 \begin{split}
  &\quad \del_{s}\left(\Phi'\left(\im s\right) s^{1+\kappa}\int_{0}^{\sgn\left(s\right)\infty}\frac{\left(1-\coup\left(s+t\right)\right)}{\left(s+t\right)^{1+\kappa}\left(1+\im\left(s+t\right)\right)}\ee^{\im t\xi}\ee^{\Phi\left(\im\left(s+t\right)\right)-\Phi\left(\im s\right)}\dt\right)\\
  &=\del_{s}\left(\Phi'\left(\im s\right) s^{1+\kappa}\right)\int_{0}^{\sgn\left(s\right)\infty}\frac{\left(1-\coup\left(s+t\right)\right)}{\left(s+t\right)^{1+\kappa}\left(1+\im\left(s+t\right)\right)}\ee^{\im t\xi}\ee^{\Phi\left(\im\left(s+t\right)\right)-\Phi\left(\im s\right)}\dt\\
  &\quad +\left(\Phi'\left(\im s\right) s^{1+\kappa}\right)\int_{0}^{\sgn\left(s\right)\infty}\del_{s}\left(\frac{\left(1-\coup\left(s+t\right)\right)}{\left(s+t\right)^{1+\kappa}\left(1+\im\left(s+t\right)\right)}\ee^{\im t\xi}\right)\ee^{\Phi\left(\im\left(s+t\right)\right)-\Phi\left(\im s\right)}\dt\\
  &\quad+\im\left(\Phi'\left(\im s\right) s^{1+\kappa}\right)\int_{0}^{\sgn\left(s\right)\infty}\frac{\left(1-\coup\left(s+t\right)\right)}{\left(s+t\right)^{1+\kappa}\left(1+\im\left(s+t\right)\right)}\ee^{\im t\xi}\left(\Phi'\left(\im\left(s+t\right)\right)-\Phi'\left(\im s\right)\right)\ee^{\Phi\left(\im\left(s+t\right)\right)-\Phi\left(\im s\right)}\dt
 \end{split}
\end{equation*}
and together with Lemmas~\ref{Lem:properties:Phi} and~\ref{Lem:asymptotics:p:leq:xi} as well as $\eps\abs{s}^{-\alpha}\ee^{-\frac{D\eps}{\abs{s}^{\alpha}}}\leq C$ we find
\begin{equation*}
 \abs{\del_{s}\left(\Phi'\left(\im s\right) s^{1+\kappa}\int_{0}^{\sgn\left(s\right)\infty}\frac{\left(1-\coup\left(s+t\right)\right)}{\left(s+t\right)^{1+\kappa}\left(1+\im\left(s+t\right)\right)}\ee^{\im t\xi}\ee^{\Phi\left(\im\left(s+t\right)\right)-\Phi\left(\im s\right)}\dt\right)}\leq C\frac{\chi_{\left\{\abs{s}\leq 1\right\}}}{\abs{s}^{1+2\delta}}.
\end{equation*}
Using then that for $a\in \left(2\delta,\theta\right)$ we have $\abs{\ee^{\im s\left(\xi-p\right)}-1}\leq C\abs{s}^{a}\abs{\xi-p}^a$ it follows 
\begin{equation*}
 \begin{split}
    &\quad \lim_{R\to\infty}\widehat{Q}_{1,2,1}\leq \frac{C}{p^2\abs{\xi-p}^{1-a}}\int_{-2}^{2}\abs{s}^{a-1-2\delta}\ds\leq \frac{C\left(a\right)}{p^2\abs{\xi-p}^{1-a}}.
 \end{split}
\end{equation*}
The contribution to the integral in~\eqref{eq:int:Q0} can then be controlled by
\begin{equation}\label{eq:contr:Q1211}
 \begin{split}
  \frac{C}{p^2}\int_{p}^{\infty}\frac{1}{\xi^{2\theta-\nu}\abs{\xi-p}^{1-a}}\dxi\leq \frac{C}{p^{2+2\theta-\nu-a}}\int_{1}^{\infty}\frac{1}{\xi^{2\theta-\nu}\abs{1-\xi}^{1-a}}\dxi\leq \frac{C}{p^{2+2\theta-\nu-a}}\leq C\frac{\left(1+p\right)^{1-\theta}}{p^3}
 \end{split}
\end{equation}
 if we choose $\nu>0$ sufficiently small (for fixed $a$). This then finishes the considerations of the term $W_{1,2}$.

 \begin{remark}
  For $\alpha=0$ an argument similar to that one in Remark~\ref{Rem:adapt:11.4} applies.
 \end{remark}

\subsection{Contribution of $H_{0,2}$}

In the following we consider the contribution to the integral in~\eqref{eq:int:Q0} coming from $H_{0,2}\left(\Y,p\right)$. To do this we proceed in the same way as for the term $H_{0,1}\left(\Y,p\right)$, i.e.\ we first integrate by parts and use also $\Phi'\left(x\right)=-\eps\beta_W\left(x\right)x^{-1}\ee^{-x}$ to obtain
\begin{equation*}
 \begin{split}
  H_{0,2}\left(\Y,p\right)
  &=\frac{\eps}{p\Y\left(1+\Y\right)}\int_{0}^{\Y}\ee^{-\left(1+p\right)x}\beta_W\left(x\right)x\left(\frac{x}{\Y}\right)^{\kappa}\frac{\ee^{\Phi\left(\Y\right)}}{\ee^{\Phi\left(x\right)}}\dx\\
  &=-\frac{\eps}{p\left(1+p\right)\Y\left(1+\Y\right)}\int_{0}^{\Y}\del_{x}\left(\ee^{-\left(1+p\right)x}\right)\beta_W\left(x\right)x\left(\frac{x}{\Y}\right)^{\kappa}\frac{\ee^{\Phi\left(\Y\right)}}{\ee^{\Phi\left(x\right)}}\dx\\
  &=-\eps\frac{\ee^{-\left(p+1\right)\Y}\beta_W\left(\Y\right)}{p\left(1+p\right)\left(1+\Y\right)}+\frac{\eps}{p\left(1+p\right)}\frac{1}{\Y\left(1+\Y\right)}\int_{0}^{\Y}\ee^{-\left(p+1\right)x}\del_{x}\left(x\beta_W\left(x\right)\left(\frac{x}{\Y}\right)^{\kappa}\right)\frac{\ee^{\Phi\left(\Y\right)}}{\ee^{\Phi\left(x\right)}}\dx\\
  &\quad +\frac{1}{p\left(1+p\right)}\frac{1}{\Y\left(1+\Y\right)}\int_{0}^{\Y}\ee^{-p x}x^2\left(\Phi'\left(x\right)\right)^2\left(\frac{x}{\Y}\right)^{\kappa}\frac{\ee^{\Phi\left(\Y\right)}}{\ee^{\Phi\left(x\right)}}\dx\\
  &=\vcc W_{2,0}\left(\Y,p\right)+W_{2,1}\left(\Y,p\right)+W_{2,2}\left(\Y,p\right).
 \end{split}
\end{equation*}
As before we consider the contribution of the three terms separately. 

\subsection{Contribution of $W_{2,0}\left(\Y,p\right)$}

We have to consider the following integral
\begin{equation*}
 \begin{split}
  -\frac{1}{2\pi\im}\frac{\eps}{p\left(1+p\right)}\int_{-\im R}^{\im R}\frac{\ee^{\Y\left(\xi-\left(p+1\right)\right)}}{1+\Y}\beta_W\left(\Y\right)\dd{\Y},
 \end{split}
\end{equation*}
while we are interested in estimates with respect to $\xi$ and $p$ in the limit $R\to\infty$. First we note that we can deform the path of integration to some small semi-circle of radius $r$ in the right half-plane around the origin to avoid the singularity there (see Figure~\ref{fig:W20}). Precisely, the integral over this semi-circle can be estimated by 
\begin{equation*}
 \frac{C}{p\left(1+p\right)}r^{1-\alpha}\longrightarrow 0 \quad \text{as } r\longrightarrow 0.
\end{equation*}

  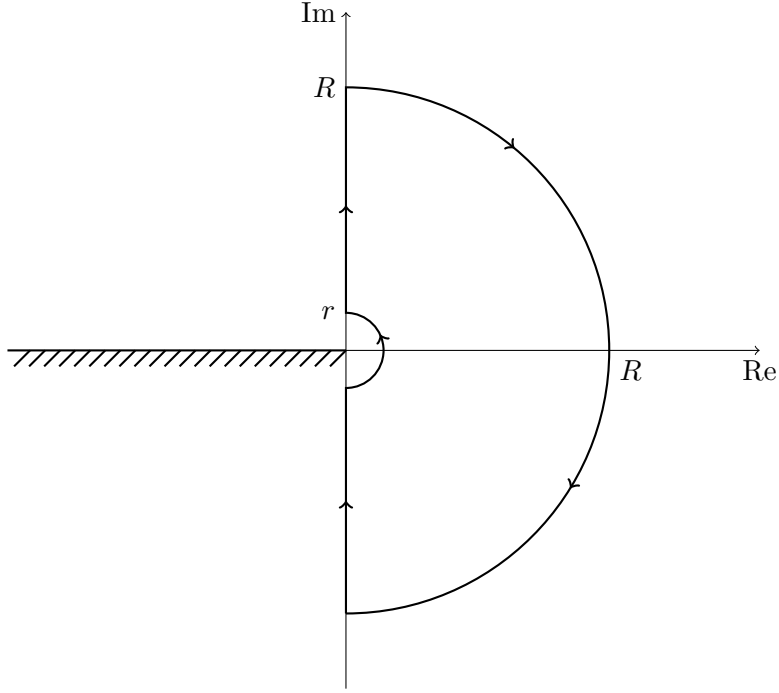
\begin{figure}
\begin{center}  
 \begin{tikzpicture}[contour/.style={postaction={decorate, decoration={markings,
mark=at position 1.5cm with {\arrow[line width=1pt]{>}},
mark=at position 4cm with {\arrow[line width=1pt]{>}},
mark=at position 6cm with {\arrow[line width=1pt]{>}},
mark=at position 10cm with {\arrow[line width=1pt]{>}},
mark=at position 15cm with {\arrow[line width=1pt]{>}}}}},
    interface/.style={postaction={draw,decorate,decoration={border,angle=45,
                    amplitude=0.3cm,segment length=2mm}}}]
                    
\draw[->] (0,0) -- (5.5,0) node[below] {$\Re$};
\draw[->] (0,-4.5) -- (0,4.5) node[left]{$\Im$};

\draw[line width=.8pt,interface](0,0)--(-4.5,0);

\path[draw,line width=0.8pt,contour] (0,-3.5)  -- (0,-0.5)  arc (-90:90:0.5) node[left] {$r$} -- (0, 3.5)node[left] {$R$} arc (90:-90:3.5);

\node[below right] at (3.5,0) {$R$};
\end{tikzpicture}
\caption{Contour for $W_{2,0}$}
\label{fig:W20}
 \end{center}
\end{figure}

We now assume first that $\xi<p+1$ and show that in this region there is no contribution from $W_{2,0}$. To do this we first integrate by parts to get
\begin{equation}\label{eq:W20}
 \begin{split}
  &\quad-\frac{\eps}{2\pi\im p\left(1+p\right)}\int_{-\im R}^{\im R}\frac{\ee^{\Y\left(\xi-\left(p+1\right)\right)}}{1+\Y}\beta_W\left(\Y\right)\dd{\Y}\\
  &=-\frac{\eps}{2\pi\im p\left(1+p\right)\left(\xi-\left(p+1\right)\right)}\int_{-\im R}^{\im R}\frac{\del_{\Y}\left(\ee^{\Y\left(\xi-\left(1+p\right)\right)}-1\right)}{1+\Y}\beta_W\left(\Y\right)\dd{\Y}\\
  &=-\frac{\eps}{2\pi\im p\left(1+p\right)\left(\xi-\left(p+1\right)\right)}\left(\frac{\ee^{\im R\left(\xi-\left(1+p\right)\right)}-1}{1+\im R}\beta_W\left(\im R\right)-\frac{\ee^{-\im R\left(\xi-\left(1+p\right)\right)}-1}{1-\im R}\beta_W\left(-\im R\right)\right)\\
  &\quad +\frac{\eps}{2\pi\im p\left(1+p\right)\left(\xi-\left(p+1\right)\right)}\int_{-\im R}^{\im R}\left(\ee^{\Y\left(\xi-\left(1+p\right)\right)}-1\right)\left(-\frac{\beta_W\left(\Y\right)}{\left(1+\Y\right)^2}+\frac{\beta_W'\left(\Y\right)}{1+\Y}\right)\dd{\Y}=\vcc(I)+(II).
 \end{split}
\end{equation}
For $(I)$ we obtain by similar estimates as above
\begin{equation*}
 \begin{split}
  \abs{(I)}\leq \frac{C}{p\left(1+p\right)}\frac{1}{\abs{\xi-\left(p+1\right)}}\frac{R^{\alpha}+R^{-\alpha}}{R}.
 \end{split}
\end{equation*}
The right-hand side converges to zero for $R\to\infty$. To treat the second term we use Cauchy's Theorem, using the analyticity of the integrand in the right half plane. Note that we already showed that we can avoid the singularity at zero by some arbitrary small semi-circle. Precisely we show that the integral over the semi-circle of radius $R$ converges to zero for $R\to\infty$ and thus by Cauchy's Theorem this also shows that $(II)\to 0$. Parametrising the semi-circle by $\varphi\left(s\right)=R\ee^{\im s}$ with $s\in\left[-\pi/2,\pi/2\right]$ we get
\begin{equation*}
 \begin{split}
  &\quad \abs{\frac{\eps}{2\pi p\left(1+p\right)}\frac{R}{\xi-\left(1+p\right)}\int_{-\pi/2}^{\pi/2}\left(\ee^{R\ee^{\im s}\left(\xi-\left(p+1\right)\right)}-1\right)\left(-\frac{\beta_W\left(R\ee^{\im s}\right)}{\left(1+R\ee^{\im s}\right)^2}+\frac{\beta_W'\left(R\ee^{\im s}\right)}{1+R\ee^{\im s}}\right)\ds}\\
  &\leq \frac{C}{p\left(1+p\right)}\frac{R^{1+a}}{\abs{\xi-\left(p+1\right)}^{1-a}}\int_{-\pi/2}^{\pi/2}\frac{R^{-\alpha}+R^{\alpha}}{R^2}\ds\leq \frac{C}{p\left(1+p\right)}\frac{R^{\alpha+a-1}}{\abs{\xi-\left(p+1\right)}^{1-a}}.
 \end{split}
\end{equation*}
The right hand side converges to zero for $R\to\infty$ and thus there is no contribution from the region $\xi<p+1$ from $W_{2,0}$. 

For $\xi>p+1$ we proceed similarly and use the expression obtained in~\eqref{eq:W20}, which holds independently of the relation of $\xi$ and $p$. For $(I)$ we have already shown that $(I)\to 0$ as $R\to\infty$. It thus remains only to consider $(II)$. We then get in the same way as above for $a\in\left(\alpha,\theta\right)$
\begin{equation*}
 \begin{split}
  \abs{(II)}&\leq \frac{C}{p\left(1+p\right)}\frac{1}{\abs{\xi-\left(1+p\right)}^{1-a}}\int_{-R}^{R}\frac{\abs{\Y}^a}{\left(1+\abs{\Y}\right)^2}\left(\abs{\Y}^{-\alpha}+\abs{\Y}^{\alpha}\right)+\frac{\abs{\Y}^a}{1+\abs{\Y}}\left(\abs{\Y}^{-\alpha-1}+\abs{\Y}^{\alpha-1}\right)\dd{\Y}\\
  &\leq \frac{C}{p\left(1+p\right)}\frac{1}{\abs{\xi-\left(1+p\right)}^{1-a}}
 \end{split}
\end{equation*}
 uniformly in $R$ due to the choice of $a$. For the contribution to the integral in~\eqref{eq:int:Q0} we then obtain
\begin{equation*}
 \frac{C}{p\left(1+p\right)}\int_{1+p}^{\infty}\frac{1}{\xi^{2\theta-\nu}\abs{\xi-\left(1+p\right)}^{1-a}}\leq \frac{C}{p\left(1+p\right)^{1+2\theta-\nu-a}}\int_{1}^{\infty}\frac{1}{\xi^{2\theta-\nu}\abs{\xi-1}^{1-a}}\dxi\leq C\frac{\left(1+p\right)^{1-\theta}}{p^3}
\end{equation*}
if we choose $\nu>0$ sufficiently small for fixed $a$.

\subsection{Contribution of $W_{2,1}\left(\Y,p\right)$}

We first recall
\begin{equation*}
 W_{2,1}\left(\Y,p\right)=-\frac{\eps}{p\left(1+p\right)\Y\left(1+\Y\right)}\int_{0}^{\Y}\ee^{-\left(1+p\right)x}\del_{x}\left(x\beta_W\left(x\right)\left(\frac{x}{\Y}\right)^{\kappa}\right)\frac{\ee^{\Phi\left(\Y\right)}}{\ee^{\Phi\left(x\right)}}\dx.
\end{equation*}
 The strategy is the same as for the term $W_{1,2}$, i.e.\ we consider the regions $\xi<p+1$ and $\xi>p+1$ separately, introduce a regularisation and a cut-off and integrate by parts.
  
\subsubsection{Contribution of $W_{2,1}\left(\Y,p\right)$ for $\xi<p+1$}
 
 We begin with $\xi<p+1$ and rewrite
 \begin{equation*}
  \begin{split}
   W_{2,1}\left(\Y,p\right)&=-\frac{\eps}{p\left(1+p\right)\Y\left(1+\Y\right)}\int_{0}^{\Y}\ee^{-\left(1+p\right)x}\del_{x}\left(x\beta_W\left(x\right)\left(\frac{x}{\Y}\right)^{\kappa}\right)\frac{\ee^{\Phi\left(\Y\right)}}{\ee^{\Phi\left(x\right)}}\coup\left(\im x\right)\dx\\
   &\quad -\frac{\eps}{p\left(1+p\right)\Y\left(1+\Y\right)}\int_{0}^{\Y}\ee^{-\left(1+p\right)x}\del_{x}\left(x\beta_W\left(x\right)\left(\frac{x}{\Y}\right)^{\kappa}\right)\frac{\ee^{\Phi\left(\Y\right)}}{\ee^{\Phi\left(x\right)}}\left(1-\coup\left(\im x\right)\right)\dx\\
   &=\vcc W_{2,1,1}\left(\Y,p\right)+W_{2,1,2}\left(\Y,p\right).
  \end{split}
 \end{equation*}
The term $W_{2,1,1}$ is again easier as we can integrate by parts and using 
\begin{equation*}
\del_{x}\left(x\beta_W\left(x\right)\left(\frac{x}{\Y}\right)^{\kappa}\right)=\left(\left(1+\kappa\right)\beta_W\left(x\right)+x\beta_W'\left(x\right)\right)\left(\frac{x}{\Y}\right)^{\kappa}
\end{equation*}
we obtain
\begin{equation*}
 \begin{split}
  W_{2,1,1}\left(\Y,p\right)&=\frac{\eps}{p\left(1+p\right)^2\Y\left(1+\Y\right)}\int_{0}^{\Y}\del_{x}\left(\ee^{-\left(1+p\right)x}\right)\del_{x}\left(x\beta_W\left(x\right)\left(\frac{x}{\Y}\right)^{\kappa}\right)\frac{\ee^{\Phi\left(\Y\right)}}{\ee^{\Phi\left(x\right)}}\coup\left(\im x\right)\dx\\
  &=\eps\frac{\ee^{-\left(1+p\right)\Y}\left(\left(1+\kappa\right)\beta_W\left(\Y\right)+\Y\beta_W'\left(\Y\right)\right)\coup\left(\im \Y\right)}{p\left(1+p\right)^2\Y\left(1+\Y\right)}\\
  &\quad -\frac{\eps}{p\left(1+p\right)^2\Y\left(1+\Y\right)}\int_{0}^{\Y}\ee^{-\left(1+p\right)x}\del_{x}\left(\del_{x}\left(x\beta_W\left(x\right)\left(\frac{x}{\Y}\right)^{\kappa}\right)\coup\left(\im x\right)\right)\frac{\ee^{\Phi\left(\Y\right)}}{\ee^{\Phi\left(x\right)}}\dx\\
  &\quad +\frac{\eps}{p\left(1+p\right)\Y\left(1+\Y\right)}\int_{0}^{\Y}\ee^{-\left(1+p\right)x}\del_{x}\left(x\beta_W\left(x\right)\left(\frac{x}{\Y}\right)^{\kappa}\right)\coup\left(\im x\right)\Phi'\left(x\right)\frac{\ee^{\Phi\left(\Y\right)}}{\ee^{\Phi\left(x\right)}}\dx.
 \end{split}
\end{equation*}
By similar arguments already used before we obtain 
\begin{align*}
 \abs{\eps\frac{\ee^{-\left(1+p\right)\Y}\left(\left(1+\kappa\right)\beta_W\left(\Y\right)+\Y\beta_W'\left(\Y\right)\right)\coup\left(\im \Y\right)}{p\left(1+p\right)^2\Y\left(1+\Y\right)}}
\leq C\eps\frac{\abs{\Y}^{\alpha}}{\abs{\Y}\left(1+\abs{\Y}\right)}\chi_{\left\{\abs{\Y}\geq \frac{1}{2}\right\}}.
 \end{align*}
Furthermore we get
\begin{align*}
 \abs{\del_{x}\left(\del_{x}\left(x\beta_W\left(x\right)\left(\frac{x}{\Y}\right)^{\kappa}\right)\coup\left(\im x\right)\right)}\leq \frac{C}{\abs{\Y}^{\kappa}}\abs{x}^{\alpha+\kappa-1}\chi_{\left\{\abs{x}\geq \frac{1}{2}\right\}}
\end{align*}
which yields
\begin{align*}
 \abs{\int_{0}^{\Y}\ee^{-\left(1+p\right)x}\del_{x}\left(\del_{x}\left(x\beta_W\left(x\right)\left(\frac{x}{\Y}\right)^{\kappa}\right)\coup\left(\im x\right)\right)\frac{\ee^{\Phi\left(\Y\right)}}{\ee^{\Phi\left(x\right)}}\dx}\leq \abs{\Y}^{\alpha}\chi_{\left\{\abs{\Y}\geq \frac{1}{2}\right\}}.                                                                                                                                                                                                                                                                                                                                                                                                                                                                    
\end{align*}
Finally we have
\begin{align*}
 \abs{\int_{0}^{\Y}\ee^{-\left(1+p\right)x}\del_{x}\left(x\beta_W\left(x\right)\left(\frac{x}{\Y}\right)^{\kappa}\right)\coup\left(\im x\right)\Phi'\left(x\right)\frac{\ee^{\Phi\left(\Y\right)}}{\ee^{\Phi\left(x\right)}}\dx}
 \leq C \abs{\Y}^{2\alpha}\chi_{\left\{\abs{\Y}\geq \frac{1}{2}\right\}}.
\end{align*}
Summarizing and estimating by the most singular contributions we obtain
\begin{equation*}
 \begin{split}
  \abs{W_{2,1,1}\left(\Y,p\right)}\leq \frac{C}{p\left(1+p\right)^2}\frac{\abs{\Y}^{2\alpha}\chi_{\left\{\abs{\Y}\geq \frac{1}{2}\right\}}}{\abs{\Y}\left(1+\abs{\Y}\right)}.
 \end{split}
\end{equation*}
The contribution to $Q_0$ can then be estimated by
\begin{equation*}
 \begin{split}
  \abs{\frac{1}{2\pi\im}\int_{-\im R}^{\im R}\ee^{\Y\xi}W_{2,1,1}\left(\Y,p\right)\dd{\Y}}\leq \frac{C}{p\left(1+p\right)^2}\int_{-\infty}^{\infty}\frac{\abs{\Y}^{2\alpha}\chi_{\left\{\abs{\Y}\geq \frac{1}{2}\right\}}}{\abs{\Y}\left(1+\abs{\Y}\right)}\dd{\Y}\leq \frac{C}{p\left(1+p\right)^2}.
 \end{split}
\end{equation*}
The contribution to the integral in~\eqref{eq:int:Q0} can be controlled similarly as in~\eqref{eq:contr:W121}.

We next come the the term $W_{2,1,2}$ and after taking the limit $R\to\infty$, rearranging, changing to real variables of integration and introducing the cut-off $\cut$ we obtain
\begin{equation*}
 \begin{split}
  \lim_{R\to\infty}\frac{1}{2\pi\im}&\int_{-\im R}^{\im R} \ee^{\Y\xi}W_{2,1,2}\left(\Y,p\right)\dd{\Y}\\
  &\begin{aligned}
    =-\frac{\eps}{2\pi p\left(1+p\right)}\int_{-\infty}^{\infty}&\ee^{-\left(p+1\right)\im s}\left(\left(1+\kappa\right)\beta_{W}\left(\im s\right)+\im s\beta_W'\left(\im s\right)\right)s^{\kappa}\ee^{-\Phi\left(\im s\right)}\left(1-\coup\left(s\right)\right)\cdot\\
  &\cdot\int_{s}^{\sgn\left(s\right)\infty}\frac{\ee^{\im \Y\xi}\left(\cut\left(\Y\right)+1-\cut\left(\Y\right)\right)}{\Y^{1+\kappa}\left(1+\im \Y\right)}\ee^{\Phi\left(\im \Y\right)}\dd{\Y}\ds
   \end{aligned}\\
  &=Q_{2,1,2,1}+Q_{2,1,2,2}
 \end{split}
\end{equation*}
with 
\begin{align*}
  Q_{2,1,2,1}\vcc=-\frac{\eps}{2\pi p\left(1+p\right)}\int_{-\infty}^{\infty}&\ee^{-\left(p+1\right)\im s}\left(\left(1+\kappa\right)\beta_{W}\left(\im s\right)+\im s\beta_W'\left(\im s\right)\right)s^{\kappa}\left(1-\coup\left(s\right)\right)\ee^{-\Phi\left(\im s\right)}\cdot\\
  &\cdot\int_{s}^{\sgn\left(s\right)\infty}\frac{\ee^{\im \Y\xi}\cut\left(\Y\right)}{\Y^{1+\kappa}\left(1+\im \Y\right)}\ee^{\Phi\left(\im \Y\right)}\dd{\Y}\ds,\\
  Q_{2,1,2,2}\vcc=-\frac{\eps}{2\pi p\left(1+p\right)}\int_{-\infty}^{\infty}&\ee^{-\left(p+1\right)\im s}\left(\left(1+\kappa\right)\beta_{W}\left(\im s\right)+\im s\beta_W'\left(\im s\right)\right)s^{\kappa}\left(1-\coup\left(s\right)\right)\ee^{-\Phi\left(\im s\right)}\cdot\\
  &\cdot\int_{s}^{\sgn\left(s\right)\infty}\frac{\ee^{\im \Y\xi}\left(1-\cut\left(\Y\right)\right)}{\Y^{1+\kappa}\left(1+\im \Y\right)}\ee^{\Phi\left(\im \Y\right)}\dd{\Y}\ds.
\end{align*}
As before the term $Q_{2,1,2,2}$ is the easier one as we can integrate by parts once more, i.e.\ we get
\begin{equation}\label{eq:Q2122}
 \begin{split}
  &\quad Q_{2,1,2,2}\\
  &=\frac{\eps}{2\pi\im p\left(1+p\right)^2}\int_{-\infty}^{\infty}\del_{s}\left(\ee^{-\left(p+1\right)\im s}-1\right)\left(\left(1+\kappa\right)\beta_{W}\left(\im s\right)+\im s\beta_W'\left(\im s\right)\right)s^{\kappa}\left(1-\coup\left(s\right)\right)\ee^{-\Phi\left(\im s\right)}\cdot\\
  &\qquad \qquad \qquad \qquad \cdot\int_{s}^{\sgn\left(s\right)\infty}\frac{\ee^{\im \Y\xi}\left(1-\cut\left(\Y\right)\right)}{\Y^{1+\kappa}\left(1+\im \Y\right)}\ee^{\Phi\left(\im \Y\right)}\dd{\Y}\ds\\
  &=-\frac{\eps}{2\pi\im p\left(1+p\right)^2}\int_{-\infty}^{\infty}\left(\ee^{-\left(p+1\right)\im s}-1\right)\del_{s}\left(\left(\left(1+\kappa\right)\beta_{W}\left(\im s\right)+\im s\beta_W'\left(\im s\right)\right)s^{\kappa}\left(1-\coup\left(s\right)\right)\right)\ee^{-\Phi\left(\im s\right)}\cdot\\
  &\qquad\qquad\qquad\qquad\cdot\int_{s}^{\sgn\left(s\right)\infty}\left(\cdots\right)\dd{\Y}\ds\\
  &\quad +\frac{\im\eps}{2\pi\im p\left(1+p\right)^2}\int_{-\infty}^{\infty}\left(\ee^{-\left(p+1\right)\im s}-1\right)\left(\left(1+\kappa\right)\beta_{W}\left(\im s\right)+\im s\beta_W'\left(\im s\right)\right)s^{\kappa}\left(1-\coup\left(s\right)\right)\Phi'\left(\im s\right)\ee^{-\Phi\left(\im s\right)}\cdot\\
  &\qquad\qquad\qquad\qquad\cdot\int_{s}^{\sgn\left(s\right)\infty}\left(\cdots\right)\dd{\Y}\ds\\
  &\quad +\frac{\eps}{2\pi\im p\left(1+p\right)^2}\int_{-\infty}^{\infty}\left(\ee^{-\left(p+1\right)\im s}-1\right)\left(\left(1+\kappa\right)\beta_{W}\left(\im s\right)+\im s\beta_W'\left(\im s\right)\right)\left(1-\coup\left(s\right)\right)\frac{\ee^{\im \xi s}\left(1-\cut\left(s\right)\right)}{ s\left(1+\im s\right)}\ds.
 \end{split}
\end{equation}
We first note that due to the cut-off and the decay in $\Y$ we obtain
\begin{equation*}
 \abs{\int_{s}^{\sgn\left(s\right)\infty}\frac{\ee^{\im \Y\xi}\left(1-\cut\left(\Y\right)\right)}{\Y^{1+\kappa}\left(1+\im \Y\right)}\ee^{\Phi\left(\im \Y\right)}\dd{\Y}}\leq C \quad \text{for all } s.
\end{equation*}
Furthermore in view of Lemma~\ref{Lem:asymptotics:exp:Phi} and using that $1-\coup$ is supported close to the origin, we have 
\begin{align*}
 \eps\abs{\del_{s}\left(\left(\left(1+\kappa\right)\beta_{W}\left(\im s\right)+\im s\beta_W'\left(\im s\right)\right)s^{\kappa}\left(1-\coup\left(s\right)\right)\right)\ee^{-\Phi\left(\im s\right)}}&\leq C\abs{s}^{\kappa-1}\chi_{\left\{\abs{s}\leq 1\right\}},\\
 \eps\abs{\left(\left(1+\kappa\right)\beta_{W}\left(\im s\right)+\im s\beta_W'\left(\im s\right)\right)s^{\kappa}\left(1-\coup\left(s\right)\right)\Phi'\left(\im s\right)\ee^{-\Phi\left(\im s\right)}}&\leq C\abs{s}^{\kappa-1}\chi_{\left\{\abs{s}\leq 1\right\}}.
\end{align*}
Finally, due to the additional cut-off $\cut$ we have
\begin{align*}
 \abs{\left(\left(1+\kappa\right)\beta_{W}\left(\im s\right)+\im s\beta_W'\left(\im s\right)\right)\left(1-\coup\left(s\right)\right)\frac{\ee^{\im \xi s}\left(1-\cut\left(s\right)\right)}{\im s\left(1+\im s\right)}}=0.
\end{align*}
Thus we conclude from~\eqref{eq:Q2122} and $\abs{\ee^{-\left(p+1\right)\im s}-1}\leq C(p+1)^{a}\abs{s}^{a}$ with $a\in(\delta,\theta)$ that
\begin{equation*}
 \abs{Q_{2,1,2,2}}\leq \frac{C}{p\left(1+p\right)^{2-a}}
\end{equation*}
and the contribution to~\eqref{eq:Q0} can be controlled similarly as in~\eqref{eq:contr:W121}.

\begin{remark}
 In the case $\alpha=0$ we can argue similarly as in Remark~\ref{Rem:est:bd:pert:v3}.
\end{remark}

We turn now to $Q_{2,1,2,1}$ and after changing variables, $\Y=s+t$, and integrating by parts in $s$ we obtain
\begin{equation*}
 \begin{split}
  &\quad Q_{2,1,2,1}\\
  &=-\frac{\eps}{2\pi\im p\left(1+p\right)\left(\xi-\left(1+p\right)\right)}\int_{-\infty}^{\infty}\del_{s}\left(\ee^{\im s\left(\xi-\left(p+1\right)\right)}-1\right)\left(\left(1+\kappa\right)\beta_W\left(\im s\right)+\im s\beta_W'\left(\im s\right)\right)s^{\kappa}\left(1-\coup\left(s\right)\right)\cdot\\
  &\qquad \cdot\int_{0}^{\sgn\left(s\right)\infty}\frac{\cut\left(s+t\right)\ee^{\im t\xi}}{\left(s+t\right)^{1+\kappa}\left(1+\im\left(s+t\right)\right)}\ee^{\Phi\left(\im\left(s+t\right)\right)-\Phi\left(\im s\right)}\dt\ds\\
  &=\frac{\eps}{2\pi\im p\left(1+p\right)\left(\xi-\left(1+p\right)\right)}\int_{-\infty}^{\infty}\left(\ee^{\im s\left(\xi-\left(p+1\right)\right)}-1\right)\del_{s}\left(\left(\left(1+\kappa\right)\beta_W\left(\im s\right)+\im s\beta_W'\left(\im s\right)\right)s^{\kappa}\left(1-\coup\left(s\right)\right)\right)\cdot\\
  &\qquad \cdot \int_{0}^{\sgn\left(s\right)\infty}\frac{\cut\left(s+t\right)\ee^{\im t\xi}}{\left(s+t\right)^{1+\kappa}\left(1+\im\left(s+t\right)\right)}\ee^{\Phi\left(\im\left(s+t\right)\right)-\Phi\left(\im s\right)}\dt\ds\\
  &+\frac{\eps}{2\pi\im p\left(1+p\right)\left(\xi-\left(1+p\right)\right)}\int_{-\infty}^{\infty}\left(\ee^{\im s\left(\xi-\left(p+1\right)\right)}-1\right)\left(\left(1+\kappa\right)\beta_W\left(\im s\right)+\im s\beta_W'\left(\im s\right)\right)s^{\kappa}\left(1-\coup\left(s\right)\right)\cdot\\
  &\qquad \cdot \int_{0}^{\sgn\left(s\right)\infty}\del_{s}\left(\frac{\cut\left(s+t\right)\ee^{\im t\xi}}{\left(s+t\right)^{1+\kappa}\left(1+\im\left(s+t\right)\right)}\right)\ee^{\Phi\left(\im\left(s+t\right)\right)-\Phi\left(\im s\right)}\dt\ds\\
  &\quad +\frac{\eps}{2\pi p\left(1+p\right)\left(\xi-\left(1+p\right)\right)}\int_{-\infty}^{\infty}\left(\ee^{\im s\left(\xi-\left(p+1\right)\right)}-1\right)\left(\left(1+\kappa\right)\beta_W\left(\im s\right)+\im s\beta_W'\left(\im s\right)\right)s^{\kappa}\left(1-\coup\left(s\right)\right)\cdot\\
  &\qquad \cdot \int_{0}^{\sgn\left(s\right)\infty}\frac{\cut\left(s+t\right)\ee^{\im t\xi}}{\left(s+t\right)^{1+\kappa}\left(1+\im\left(s+t\right)\right)}\left(\Phi'\left(\im\left(s+t\right)\right)-\Phi'\left(\im s\right)\right)\ee^{\Phi\left(\im\left(s+t\right)\right)-\Phi\left(\im s\right)}\dt\ds.
 \end{split}
\end{equation*}
Estimating by the most singular contribution we obtain for $\abs{s}\leq 2$ that
\begin{align*}
 \abs{\del_{s}\left(\left(\left(1+\kappa\right)\beta_W\left(\im s\right)+\im s\beta_W'\left(\im s\right)\right)s^{\kappa}\left(1-\coup\left(s\right)\right)\right)}&\leq C\abs{s}^{-\alpha-1+\kappa}\\
 \abs{\left(\left(1+\kappa\right)\beta_W\left(\im s\right)+\im s\beta_W'\left(\im s\right)\right)s^{\kappa}\left(1-\coup\left(s\right)\right)}&\leq C\abs{s}^{-\alpha+\kappa}. 
\end{align*}
Thus together with Lemmas~\ref{Lem:asymptotics:xi:leq:p} and~\ref{Lem:properties:Phi} as well as $\eps\abs{s}^{-\alpha}\ee^{-\frac{D\eps}{\abs{s}^{\alpha}}}\leq C$ we obtain for some $a\in\left(2\delta,\theta\right)$ that
\begin{equation*}
 \begin{split}
  \abs{Q_{2,1,2,1}}&\leq \frac{C}{p\left(1+p\right)\abs{\xi-\left(p+1\right)}^{1-a}}\int_{-2}^{2}\biggl(\abs{s}^{-\alpha-1+\kappa}\Bigl(\abs{s}^{\alpha-\kappa}+\eps\abs{s}^{-\delta}\ee^{-\frac{B\eps}{\abs{s}^{\alpha}}}\Bigr)\\
  &\quad+\abs{s}^{-\alpha+\kappa}\Bigl(\abs{s}^{\alpha-1-\kappa}+\eps\abs{s}^{-1-\kappa}\ee^{-\frac{B\eps}{\abs{s}^{\alpha}}}\Bigr)+\abs{s}^{-\alpha+\kappa}\Bigl(\abs{s}^{\alpha-1-\kappa}+\eps^{2}\abs{s}^{-1-\alpha}\ee^{-\frac{B\eps}{\abs{s}^{\alpha}}}\Bigr)\biggr)\ds\\
  &\leq \frac{C}{p\left(1+p\right)\abs{\xi-\left(p+1\right)}^{1-a}}\int_{-2}^{2}\abs{s}^{a-1-2\delta}\ds\leq \frac{C}{p\left(1+p\right)\abs{\xi-\left(p+1\right)}^{1-a}},
 \end{split}
\end{equation*}
while the contribution to the integral in~\eqref{eq:int:Q0} can be controlled analogously to~\eqref{eq:contr:Q1221}.

\begin{remark}
 If $\alpha=0$ we can proceed similarly as in Remark~\ref{Rem:adapt:11.4}.
\end{remark}

\subsubsection{Contribution of $W_{2,1}\left(\Y,p\right)$ for $\xi>p+1$}

We next consider the region $\xi>p+1$ and introducing the cut-off $\coup$ we have to consider the integral
\begin{equation*}
 \begin{split}
  &\quad-\frac{1}{2\pi\im}\frac{\eps}{p\left(1+p\right)}\int_{-R\im}^{R\im}\frac{\ee^{\Y\xi}}{\Y\left(1+\Y\right)}\int_{0}^{\Y}\ee^{-\left(p+1\right)x}\del_{x}\left(x\beta_W\left(x\right)\left(\frac{x}{\Y}\right)^{\kappa}\right)\frac{\ee^{\Phi\left(\Y\right)}}{\ee^{\Phi\left(x\right)}}\dx\left(\coup\left(\im \Y\right)+1-\coup\left(\im \Y\right)\right)\dd{\Y}\\
  &=\widehat{Q}_{2,1,1}+\widehat{Q}_{2,1,2}
 \end{split}
\end{equation*}
with 
\begin{align*}
 \widehat{Q}_{2,1,1}&\vcc=-\frac{1}{2\pi\im}\frac{\eps}{p\left(1+p\right)}\int_{-R\im}^{R\im}\frac{\ee^{\Y\xi}}{\Y\left(1+\Y\right)}\int_{0}^{\Y}\ee^{-\left(p+1\right)x}\del_{x}\left(x\beta_W\left(x\right)\left(\frac{x}{\Y}\right)^{\kappa}\right)\frac{\ee^{\Phi\left(\Y\right)}}{\ee^{\Phi\left(x\right)}}\dx\coup\left(\im \Y\right)\dd{\Y},\\
 \widehat{Q}_{2,1,1}&\vcc=-\frac{1}{2\pi\im}\frac{\eps}{p\left(1+p\right)}\int_{-R\im}^{R\im}\frac{\ee^{\Y\xi}}{\Y\left(1+\Y\right)}\int_{0}^{\Y}\ee^{-\left(p+1\right)x}\del_{x}\left(x\beta_W\left(x\right)\left(\frac{x}{\Y}\right)^{\kappa}\right)\frac{\ee^{\Phi\left(\Y\right)}}{\ee^{\Phi\left(x\right)}}\dx\left(1-\coup\left(\im \Y\right)\right)\dd{\Y}.
\end{align*}
 
Proceeding as before we first consider $\widehat{Q}_{2,1,1}$, where we can integrate by parts to obtain
\begin{equation*}
 \begin{split}
  &\quad \widehat{Q}_{2,1,1}\\
  &=-\frac{\eps}{2\pi \im p\left(1+p\right)\xi}\int_{-R\im}^{R\im}\frac{\del_{\Y}\left(\ee^{\Y\xi}-1\right)}{\Y^{1+\kappa}\left(1+\Y\right)}\coup\left(\im \Y\right)\int_{0}^{\Y}\ee^{-\left(1+p\right)x}\del_{x}\left(x^{1+\kappa}\beta_W\left(x\right)\right)\ee^{-\Phi\left(x\right)}\dx\ee^{\Phi\left(\Y\right)}\dd{\Y}\\
  &=-\frac{\eps}{2\pi \im p\left(1+p\right)\xi}\left.\left(\frac{\ee^{\Y\xi}-1}{\Y^{1+\kappa}\left(1+\Y\right)}\coup\left(\im \Y\right)\int_{0}^{\Y}\ee^{-\left(1+p\right)x}\del_{x}\left(x^{1+\kappa}\beta_W\left(x\right)\right)\ee^{-\Phi\left(x\right)}\dx\ee^{\Phi\left(\Y\right)}\right)\right|_{\Y=-R\im}^{\Y=R\im}\\
  &\quad +\frac{\eps}{2\pi \im p\left(1+p\right)\xi}\int_{-R\im}^{R\im}\left(\ee^{\Y\xi}-1\right)\del_{\Y}\left(\frac{\coup\left(\im \Y\right)}{\Y^{1+\kappa}\left(1+\Y\right)}\right)\int_{0}^{\Y}\ee^{-\left(1+p\right)x}\del_{x}\left(x^{1+\kappa}\beta_W\left(x\right)\right)\frac{\ee^{\Phi\left(\Y\right)}}{\ee^{\Phi\left(x\right)}}\dx\dd{\Y}\\
  &\quad+\frac{\eps}{2\pi \im p\left(1+p\right)\xi}\int_{-R\im}^{R\im}\left(\ee^{\Y\xi}-1\right)\frac{\coup\left(\im \Y\right)}{\Y\left(1+\Y\right)}\left(\left(1+\kappa\right)\beta_W\left(\Y\right)+\Y\beta_W'\left(\Y\right)\right)\ee^{-\left(1+p\right)\Y}\dd{\Y}\\
  &\quad +\frac{\eps}{2\pi \im p\left(1+p\right)\xi}\int_{-R\im}^{R\im}\left(\ee^{\Y\xi}-1\right)\frac{\coup\left(\im \Y\right)}{\Y^{1+\kappa}\left(1+\Y\right)}\Phi'\left(\Y\right)\int_{0}^{\Y}\ee^{-\left(1+p\right)x}\del_{x}\left(x^{1+\kappa}\beta_W\left(x\right)\right)\frac{\ee^{\Phi\left(\Y\right)}}{\ee^{\Phi\left(x\right)}}\dx\dd{\Y}\\
  &=\vcc(I)+(II)+(III)+(IV).
 \end{split}
\end{equation*}
As before we first note
\begin{equation*}
 \begin{split}
  \abs{\int_{0}^{\Y}\ee^{-\left(1+p\right)x}\del_{x}\left(x^{1+\kappa}\beta_W\left(x\right)\right)\ee^{-\Phi\left(x\right)}\dx}\leq C\max\left\{\abs{\Y}^{1+\alpha+\kappa},\abs{\Y}^{1-\alpha+\kappa}\right\}.
 \end{split}
\end{equation*}
Using this we obtain
\begin{equation*}
 \begin{split}
  \abs{(I)}\leq \frac{C}{p\left(1+p\right)\xi}\frac{R^{1+\alpha+\kappa}}{R^{2+\kappa}}\leq\frac{C}{p\left(1+p\right)\xi}R^{\alpha-1}\longrightarrow 0 \quad \text{for }R\longrightarrow \infty.
 \end{split}
\end{equation*}
Furthermore using the regularising effect of $\coup$ we find
\begin{equation*}
 \begin{split}
  \abs{(II)}&\leq \frac{C}{p\left(1+p\right)\xi}\int_{-R}^{R}\chi_{\left\{\abs{\Y}\geq \frac{1}{2}\right\}}\abs{\Y}^{-3-\kappa}\max\left\{\abs{\Y}^{1+\alpha+\kappa},\abs{\Y}^{1-\alpha+\kappa}\right\}\dd{\Y}\leq \frac{C}{p\left(1+p\right)\xi}
 \end{split}
\end{equation*}
uniformly in $R$ as $\alpha<1/2$. Analogously
\begin{equation*}
 \begin{split}
  \abs{(III)}\leq \frac{C}{p\left(1+p\right)\xi}\leq \int_{-\infty}^{\infty}\chi_{\left\{\abs{\Y}\geq \frac{1}{2}\right\}}\abs{\Y}^{-2}\max\left\{\abs{\Y}^{\alpha},\abs{\Y}^{-\alpha}\right\}\dd{\Y}\leq \frac{C}{p\left(1+p\right)\xi}.
 \end{split}
\end{equation*}
Finally we also get
\begin{equation*}
 \begin{split}
  \abs{(IV)}&\leq \frac{C}{p\left(1+p\right)\xi}\int_{-\infty}^{\infty}\chi_{\left\{\abs{\Y}\geq \frac{1}{2}\right\}}\abs{\Y}^{-2-\kappa}\left(\abs{\Y}^{-\alpha-1}+\abs{\Y}^{\alpha-1}\right)\max\left\{\abs{\Y}^{1+\alpha+\kappa},\abs{\Y}^{1-\alpha+\kappa}\right\}\dd{\Y}\\
  &\leq \frac{C}{p\left(1+p\right)\xi}
 \end{split}
\end{equation*}
as $\alpha<1/2$. Thus together we have
\begin{equation*}
 \abs{\widehat{Q}_{2,1,1}}\leq \frac{C}{p\left(1+p\right)\xi} \quad \text{for } p+1\leq \xi
\end{equation*}
and thus similarly as in~\eqref{eq:contr:Q122} the contribution to~\eqref{eq:Q0} can be controlled as desired.

It remains to consider $\widehat{Q}_{2,1,2}$. Taking the limit $R\to\infty$, which is possible as $1-\coup$ is supported close to the origin, transforming to integrals over real variables and rearranging we obtain
\begin{multline*}
  \lim_{R\to\infty}\widehat{Q}_{2,1,2}=\frac{\im\eps}{2\pi p\left(1+p\right)}\int_{-\infty}^{\infty}\ee^{-\im\left(p+1\right)s}\left(\left(1+\kappa\right)\beta_W\left(\im s\right)+\im s\beta_W'\left(\im s\right)\right)s^{\kappa}\ee^{-\Phi\left(\im s\right)}\cdot\\
  \cdot\int_{s}^{\sgn\left(s\right)\infty}\frac{\ee^{\im \Y\xi}\ee^{\Phi\left(\im \Y\right)}\left(1-\coup\left(\Y\right)\right)}{\im \Y^{1+\kappa}\left(1+\im \Y\right)}\dd{\Y}\ds.
\end{multline*}
Changing variables, $\Y=s+t$, and integrating by parts in $s$ by means of $\ee^{-\im\left(p+1\right)s}=\del_{s}\left(\ee^{\im s\left(\xi-\left(p+1\right)\right)}-1\right)$ we arrive at
\begin{equation*}
 \begin{split}
  \lim_{R\to\infty}\widehat{Q}_{2,1,2}
  &=-\frac{\eps}{2\pi\im p\left(1+p\right)}\frac{1}{\xi-\left(1+p\right)}\int_{-\infty}^{\infty}\left(\ee^{\im s\left(\xi-\left(1+p\right)\right)}-1\right)\del_{s}\left(\left(\left(1+\kappa\right)\beta_W\left(\im s\right)+\im s\beta_W'\left(\im s\right)\right)s^{\kappa}\right)\cdot\\
  &\qquad \cdot\int_{0}^{\sgn\left(s\right)\infty}\frac{\ee^{\im \xi t}\left(1-\coup\left(s+t\right)\right)}{\left(s+t\right)^{1+\kappa}\left(1+\im\left(s+t\right)\right)}\ee^{\Phi\left(\im\left(s+t\right)\right)-\Phi\left(\im s\right)}\dt\ds\\
  &\quad -\frac{\eps}{2\pi\im p\left(1+p\right)}\frac{1}{\xi-\left(1+p\right)}\int_{-\infty}^{\infty}\left(\ee^{\im s\left(\xi-\left(1+p\right)\right)}-1\right)\left(\left(1+\kappa\right)\beta_W\left(\im s\right)+\im s\beta_W'\left(\im s\right)\right)s^{\kappa}\cdot\\
  &\qquad \cdot\int_{0}^{\sgn\left(s\right)\infty}\del_{s}\left(\frac{\ee^{\im \xi t}\left(1-\coup\left(s+t\right)\right)}{\left(s+t\right)^{1+\kappa}\left(1+\im\left(s+t\right)\right)}\right)\ee^{\Phi\left(\im\left(s+t\right)\right)-\Phi\left(\im s\right)}\dt\ds\\
  &\quad-\frac{\eps}{2\pi p\left(1+p\right)}\frac{1}{\xi-\left(1+p\right)}\int_{-\infty}^{\infty}\left(\ee^{\im s\left(\xi-\left(1+p\right)\right)}-1\right)\left(\left(1+\kappa\right)\beta_W\left(\im s\right)+\im s\beta_W'\left(\im s\right)\right)s^{\kappa}\cdot\\
  &\qquad \cdot\int_{0}^{\sgn\left(s\right)\infty}\frac{\ee^{\im \xi t}\left(1-\coup\left(s+t\right)\right)}{\left(s+t\right)^{1+\kappa}\left(1+\im\left(s+t\right)\right)}\left(\Phi'\left(\im\left(s+t\right)\right)-\Phi'\left(\im s\right)\right)\ee^{\Phi\left(\im\left(s+t\right)\right)-\Phi\left(\im s\right)}\dt\ds.
 \end{split}
\end{equation*}
 For $\abs{s}\leq 1$ we have
 \begin{align*}
  \abs{\del_{s}\left(\left(\left(1+\kappa\right)\beta_W\left(\im s\right)+\im s\beta_W'\left(\im s\right)\right)s^{\kappa}\right)}&\leq C\abs{s}^{-\alpha-1+\kappa}\\
  \abs{\left(\left(1+\kappa\right)\beta_W\left(\im s\right)+\im s\beta_W'\left(\im s\right)\right)s^{\kappa}}&\leq C\abs{s}^{-\alpha+\kappa}.
 \end{align*}
Thus we obtain for some $a\in\left(2\delta,\theta\right)$ with $\delta>0$ sufficiently small together with Lemmas~\ref{Lem:asymptotics:p:leq:xi} and~\ref{Lem:properties:Phi} similarly as before
\begin{equation*}
 \begin{split}
  \abs{\widehat{Q}_{2,1,2}}\leq \frac{C}{p\left(1+p\right)\abs{\xi-\left(1+p\right)}^{1-a}}\int_{-1}^{1}\abs{s}^{a-1-2\delta}\ds\leq \frac{C}{p\left(1+p\right)\abs{\xi-\left(1+p\right)}^{1-a}},
 \end{split}
\end{equation*}
while the resulting contribution to~\eqref{eq:Q0} can be shown to be fine similarly as in~\eqref{eq:contr:Q1211}.

\begin{remark}
 In the case $\alpha=0$ we have to adapt the argument similarly as in Remark~\ref{Rem:adapt:11.4}.
\end{remark}

\subsection{Contribution of $W_{2,2}\left(\Y,p\right)$}

Finally we consider the contribution of 
\begin{equation*}
 \begin{split}
  W_{2,2}\left(\Y,p\right)=\frac{\eps}{p\left(1+p\right)\Y\left(1+\Y\right)}\int_{0}^{\Y}\ee^{-\left(1+p\right)x}x\beta_W\left(x\right)\left(\frac{x}{\Y}\right)^{\kappa}\Phi'\left(x\right)\frac{\ee^{\Phi\left(\Y\right)}}{\ee^{\Phi\left(x\right)}}\dx.
 \end{split}
\end{equation*}

\subsubsection{Contribution of $W_{2,2}\left(\Y,p\right)$ for $\xi<p$}

We first consider $\xi<p$. Rewriting (using $\Phi'\left(x\right)=-\eps\beta_W\left(x\right)x^{-1}\ee^{-x}$) and introducing the cut-off $\coup$ we get
\begin{equation*}
 \begin{split}
  W_{2,2}\left(\Y,p\right)&=-\frac{\eps^2}{p\left(1+p\right)}\frac{1}{\Y\left(1+\Y\right)}\int_{0}^{\Y}\ee^{-\left(p+2\right)x}\left(\beta_W\left(x\right)\right)^2\left(\frac{x}{\Y}\right)^{\kappa}\frac{\ee^{\Phi\left(\Y\right)}}{\ee^{\Phi\left(x\right)}}\coup\left(\im x\right)\dx\\
  &\quad -\frac{1}{p\left(1+p\right)}\frac{1}{\Y\left(1+\Y\right)}\int_{0}^{\Y}\ee^{-px}x^2\left(\Phi'\left(x\right)\right)^2\left(\frac{x}{\Y}\right)^{\kappa}\frac{\ee^{\Phi\left(\Y\right)}}{\ee^{\Phi\left(x\right)}}\left(1-\coup\left(\im x\right)\right)\dx\\
  &=\vcc W_{2,2,1}\left(\Y,p\right)+W_{2,2,2}\left(\Y,p\right).
 \end{split}
\end{equation*}
We first consider $W_{2,2,1}\left(\Y,p\right)$ and we integrate by parts to obtain
\begin{equation*}
 \begin{split}
   W_{2,2,1}\left(\Y,p\right)&=\frac{\eps^2}{p\left(1+p\right)\left(p+2\right)}\frac{1}{\Y\left(1+\Y\right)}\int_{0}^{\Y}\del_{x}\left(\ee^{-\left(p+2\right)x}\right)\left(\beta_W\left(x\right)\right)^2\left(\frac{x}{\Y}\right)^{\kappa}\frac{\ee^{\Phi\left(\Y\right)}}{\ee^{\Phi\left(x\right)}}\coup\left(\im x\right)\dx\\
  &=\frac{\eps^2\ee^{-\left(p+2\right)\Y}}{p\left(1+p\right)\left(2+p\right)}\frac{\left(\beta_W\left(\Y\right)\right)^2\coup\left(\im \Y\right)}{\Y\left(1+\Y\right)}\\
  &\quad -\frac{\eps^2}{p\left(1+p\right)\left(p+2\right)}\frac{1}{\Y\left(1+\Y\right)}\int_{0}^{\Y}\ee^{-\left(p+2\right)x}\del_{x}\left(\left(\beta_W\left(x\right)\right)^2\left(\frac{x}{\Y}\right)^{\kappa}\coup\left(\im x\right)\right)\frac{\ee^{\Phi\left(\Y\right)}}{\ee^{\Phi\left(x\right)}}\dx\\
  &\quad -\frac{\eps^3}{p\left(1+p\right)\left(p+2\right)}\frac{1}{\Y\left(1+\Y\right)}\int_{0}^{\Y}\ee^{-\left(p+3\right)x}\frac{\left(\beta_W\left(x\right)\right)^3}{x}\left(\frac{x}{\Y}\right)^{\kappa}\coup\left(\im x\right)\frac{\ee^{\Phi\left(\Y\right)}}{\ee^{\Phi\left(x\right)}}\dx\\
  &=\vcc W_g\left(\Y,p\right)+W_{b}\left(\Y,p\right)
 \end{split}
\end{equation*}
with 
\begin{align*}
 W_g\left(\Y,p\right)&\vcc=\eps^2\frac{\ee^{-\left(p+2\right)\Y}}{p\left(1+p\right)\left(2+p\right)}\frac{\left(\beta_W\left(\Y\right)\right)^2\coup\left(\im \Y\right)}{\Y\left(1+\Y\right)}\\
  &\quad -\frac{\eps^2}{p\left(1+p\right)\left(p+2\right)}\frac{1}{\Y\left(1+\Y\right)}\int_{0}^{\Y}\ee^{-\left(p+2\right)x}\del_{x}\left(\left(\beta_W\left(x\right)\right)^2\left(\frac{x}{\Y}\right)^{\kappa}\coup\left(\im x\right)\right)\frac{\ee^{\Phi\left(\Y\right)}}{\ee^{\Phi\left(x\right)}}\dx,\\
 W_{b}\left(\Y,p\right)&\vcc=-\frac{\eps^3}{p\left(1+p\right)\left(p+2\right)}\frac{1}{\Y\left(1+\Y\right)}\int_{0}^{\Y}\ee^{-\left(p+3\right)x}\frac{\left(\beta_W\left(x\right)\right)^3}{x}\left(\frac{x}{\Y}\right)^{\kappa}\coup\left(\im x\right)\frac{\ee^{\Phi\left(\Y\right)}}{\ee^{\Phi\left(x\right)}}\dx.
\end{align*}
The term $W_{g}\left(\Y,p\right)$ can be estimated directly and we obtain by similar arguments as before that
\begin{equation*}
 \abs{W_{g}\left(\Y,p\right)}\leq \frac{C}{p\left(1+p\right)^2}\chi_{\left\{\abs{\Y}\geq \frac{1}{2}\right\}}\abs{\Y}^{2\alpha-2}.
\end{equation*}
Thus the contribution to $Q_0$ is controlled by
\begin{equation*}
 \begin{split}
  \abs{\frac{1}{2\pi\im}\int_{-\im R}^{\im R}\ee^{\Y\xi}W_{g}\left(\Y,p\right)\dd{\Y}}\leq \frac{C}{p\left(1+p\right)^2}\int_{-\infty}^{\infty}\chi_{\left\{\abs{\Y}\geq \frac{1}{2}\right\}}\abs{\Y}^{2\alpha-2}\dd{\Y}\leq \frac{C}{p\left(1+p\right)^2},
 \end{split}
\end{equation*}
which can be treated similarly as in~\eqref{eq:contr:W121}. To control the term $W_{b}\left(\Y,p\right)$ we have to integrate by parts once more to get enough decay in $\Y$ which is no problem due to the cut-off $\coup$, i.e.\ we have
\begin{equation*}
 \begin{split}
  W_{b}\left(\Y,p\right)&=\frac{\eps^3}{p\left(1+p\right)\left(2+p\right)\left(3+p\right)\Y\left(1+\Y\right)}\int_{0}^{\Y}\del_{x}\left(\ee^{-\left(p+3\right)x}\right)\frac{\left(\beta_W\left(x\right)\right)^3}{x}\left(\frac{x}{\Y}\right)^{\kappa}\coup\left(\im x\right)\frac{\ee^{\Phi\left(\Y\right)}}{\ee^{\Phi\left(x\right)}}\dx\\
  &=\frac{\eps^3\ee^{-\left(p+3\right)\Y}}{p\left(1+p\right)\left(2+p\right)\left(3+p\right)\Y^2\left(1+\Y\right)}\coup\left(\im \Y\right)\left(\beta_W\left(\Y\right)\right)^3\\
  &\quad -\frac{\eps^3}{p\left(1+p\right)\left(2+p\right)\left(3+p\right)\Y\left(1+\Y\right)}\int_{0}^{\Y}\ee^{-\left(p+3\right)x}\del_{x}\left(\frac{\left(\beta_W\left(x\right)\right)^3}{x}\left(\frac{x}{\Y}\right)^{\kappa}\coup\left(\im x\right)\right)\frac{\ee^{\Phi\left(\Y\right)}}{\ee^{\Phi\left(x\right)}}\dx\\
  &\quad +\frac{\eps^4}{p\left(1+p\right)\left(2+p\right)\left(3+p\right)\Y\left(1+\Y\right)}\int_{0}^{\Y}\ee^{-\left(p+4\right)x}\frac{\left(\beta_W\left(x\right)\right)^4}{x^2}\left(\frac{x}{\Y}\right)^{\kappa}\coup\left(\im x\right)\frac{\ee^{\Phi\left(\Y\right)}}{\ee^{\Phi\left(x\right)}}\dx.
 \end{split}
\end{equation*}
 Similarly as before we can estimate now
 \begin{equation*}
  \begin{split}
   \abs{W_{b}\left(\Y,p\right)}\leq \frac{C}{p\left(1+p\right)^2}\chi_{\left\{\abs{\Y}\geq \frac{1}{2}\right\}}\max\left\{\abs{\Y}^{4\alpha-3},\abs{\Y}^{-2}\right\}.
  \end{split}
 \end{equation*}
Thus the contribution to $Q_0$ can be controlled by
\begin{equation*}
 \begin{split}
  \abs{\frac{1}{2\pi\im}\int_{-\im R}^{\im R}\ee^{\Y\xi}W_{b}\left(\Y,p\right)\dd{\Y}}\leq \frac{C}{p\left(1+p\right)^2}\int_{-\infty}^{\infty}\chi_{\left\{\abs{\Y}\geq \frac{1}{2}\right\}}\max\left\{\abs{\Y}^{4\alpha-3},\abs{\Y}^{-2}\right\}\dd{\Y}\leq \frac{C}{p\left(1+p\right)^2},
 \end{split}
\end{equation*}
which is the same as for $W_g$ and thus also gives the right contribution similarly to~\eqref{eq:contr:W121}

Next we consider the contribution of $W_{2,2,2}\left(\Y,p\right)$. Taking the limit $R\to\infty$, changing to real variables and introducing the cut-off we then find
\begin{equation*}
 \begin{split}
  &\quad -\lim_{R\to\infty}\frac{1}{2\pi\im p\left(1+p\right)}\int_{-\im R}^{\im R}\ee^{\Y\xi}W_{2,2,2}\left(\Y,p\right)\dd{\Y}\\
  &=-\frac{1}{2\pi p\left(1+p\right)}\int_{-\infty}^{\infty}\ee^{-\im p s}\left(\im s\Phi'\left(\im s\right)\right)^2s^{\kappa}\ee^{-\Phi\left(\im s\right)}\int_{s}^{\sgn\left(s\right)\infty}\ee^{\im \Y\xi}\frac{\cut\left(\Y\right)\left(1-\coup\left(s\right)\right)}{\Y^{1+\kappa}\left(1+\im \Y\right)}\ee^{\Phi\left(\im \Y\right)}\dd{\Y}\ds\\
  &\quad -\frac{1}{2\pi p\left(1+p\right)}\int_{-\infty}^{\infty}\ee^{-\im p s}\left(\im s\Phi'\left(\im s\right)\right)^2s^{\kappa}\ee^{-\Phi\left(\im s\right)}\int_{s}^{\sgn\left(s\right)\infty}\ee^{\im \Y\xi}\frac{\left(1-\cut\left(\Y\right)\right)\left(1-\coup\left(s\right)\right)}{\Y^{1+\kappa}\left(1+\im \Y\right)}\ee^{\Phi\left(\im \Y\right)}\dd{\Y}\ds\\
  &=\vcc Q_{2,2,1}+Q_{2,2,2}.
 \end{split}
\end{equation*}
As previously we first consider $Q_{2,2,2}$ where we can integrate by parts again to obtain
\begin{equation*}
 \begin{split}
  Q_{2,2,2}&=\frac{\im}{2\pi p^2\left(1+p\right)}\int_{-\infty}^{\infty}\del_{s}\left(\ee^{-\im p s}-1\right)\left(\Phi'\left(\im s\right)\right)^2s^{2+\kappa}\left(1-\coup\left(s\right)\right)\ee^{-\Phi\left(\im s\right)}\cdot\\
  &\qquad\qquad\qquad\qquad\qquad\qquad\qquad\cdot\int_{s}^{\sgn\left(s\right)\infty}\ee^{\im \Y\xi}\frac{1-\cut\left(\Y\right)}{ \Y^{1+\kappa}\left(1+\im \Y\right)}\ee^{\Phi\left(\im \Y\right)}\dd{\Y}\ds\\
  &=-\frac{\im}{2\pi p^2\left(1+p\right)}\int_{-\infty}^{\infty}\left(\ee^{-\im p s}-1\right)\del_{s}\left(\left(\Phi'\left(\im s\right)\right)^2s^{2+\kappa}\left(1-\coup\left(s\right)\right)\ee^{-\Phi\left(\im s\right)}\right)\int_{s}^{\sgn\left(s\right)\infty}\left(\cdots\right)\dd{\Y}\ds\\
  &\quad +\frac{\im}{2\pi p^2\left(1+p\right)}\int_{-\infty}^{\infty}\left(\ee^{-\im p s}-1\right)\left(\Phi'\left(\im s\right)\right)^2 s^2\left(1-\coup\left(s\right)\right)\ee^{\im s\xi}\frac{1-\cut\left(s\right)}{ s^{1+\kappa}\left(1+\im s\right)}\ds.
 \end{split}
\end{equation*}
 Similarly as before we obtain from Lemma~\ref{Lem:asymptotics:exp:Phi} together with the cut-off functions $\cut$ and $1-\coup$ that
 \begin{align*}
  \abs{\del_{s}\left(\left(\Phi'\left(\im s\right)\right)^2s^{2+\kappa}\left(1-\coup\left(s\right)\right)\ee^{-\Phi\left(\im s\right)}\right)\int_{s}^{\sgn\left(s\right)\infty}\ee^{\im \Y\xi}\frac{1-\cut\left(\Y\right)}{ \Y^{1+\kappa}\left(1+\im \Y\right)}\ee^{\Phi\left(\im \Y\right)}\dd{\Y}}&\leq C\abs{s}^{\kappa-1}\chi_{\left\{\abs{s}\leq 1\right\}}\\
  \abs{\left(\Phi'\left(\im s\right)\right)^2 s^2\left(1-\coup\left(s\right)\right)\ee^{\im s\xi}\frac{1-\cut\left(s\right)}{s^{1+\kappa}\left(1+\im s\right)}}&=0.
 \end{align*}
With these estimates and $\abs{\ee^{-\im p s}-1}\leq Cp^{a}\abs{s}^{a}$ with $a\in(\delta,\theta)$ we immediately get
\begin{equation*}
 \begin{split}
  \abs{Q_{2,2,2}}\leq \frac{C}{p^{2-a}\left(1+p\right)},
 \end{split}
\end{equation*}
which gives a contribution to the integral in~\eqref{eq:Q0} that can be controlled similarly to~\eqref{eq:contr:W121}.

\begin{remark}
 The case $\alpha=0$ is treated analogously as in Remark~\ref{Rem:est:bd:pert:v3}. 
\end{remark}

To estimate the contribution of $Q_{2,2,1}$ we proceed in the same was as before, i.e.\ we change variables, $\Y=s+t$, rearrange and integrate by parts in $s$ to obtain
\begin{equation*}
 \begin{split}
  Q_{2,2,1}&=\frac{\im}{2\pi p\left(1+p\right)\left(\xi-p\right)}\int_{-\infty}^{\infty}\left(\ee^{\im s\left(\xi-p\right)}-1\right)\del_{s}\left(\left(\Phi'\left(\im s\right)\right)^2s^{2+\kappa}\left(1-\coup\left(s\right)\right)\right)\cdot\\
  &\qquad\qquad \cdot\int_{0}^{\sgn\left(s\right)\infty}\frac{\cut\left(s+t\right)\ee^{\im t\xi}}{\left(s+t\right)^{1+\kappa}\left(1+\im\left(s+t\right)\right)}\ee^{\Phi\left(\im\left(s+t\right)\right)-\Phi\left(\im s\right)}\dt\ds\\
  &\quad +\frac{\im}{2\pi p\left(1+p\right)\left(\xi-p\right)}\int_{-\infty}^{\infty}\left(\ee^{\im s\left(\xi-p\right)}-1\right)\left(\Phi'\left(\im s\right)\right)^2s^{2+\kappa}\left(1-\coup\left(s\right)\right)\cdot\\
  &\qquad \qquad \cdot\int_{0}^{\sgn\left(s\right)\infty}\del_{s}\left(\frac{\cut\left(s+t\right)\ee^{\im t\xi}}{\left(s+t\right)^{1+\kappa}\left(1+\im\left(s+t\right)\right)}\right)\ee^{\Phi\left(\im\left(s+t\right)\right)-\Phi\left(\im s\right)}\dt\ds\\
  &\quad -\frac{1}{2\pi p\left(1+p\right)\left(\xi-p\right)}\int_{-\infty}^{\infty}\left(\ee^{\im s\left(\xi-p\right)}-1\right)\left(\Phi'\left(\im s\right)\right)^2s^{2+\kappa}\left(1-\coup\left(s\right)\right)\cdot\\
  &\qquad \qquad \cdot\int_{0}^{\sgn\left(s\right)\infty}\frac{\cut\left(s+t\right)\ee^{\im t\xi}}{\left(s+t\right)^{1+\kappa}\left(1+\im\left(s+t\right)\right)}\left(\Phi'\left(\im\left(s+t\right)\right)-\Phi'\left(\im s\right)\right)\ee^{\Phi\left(\im\left(s+t\right)\right)-\Phi\left(\im s\right)}\dt\ds.
 \end{split}
\end{equation*}
Using the estimates
\begin{align*}
 \abs{\del_{s}\left(\left(\Phi'\left(\im s\right)\right)^2s^{2+\kappa}\left(1-\coup\left(s\right)\right)\right)}&\leq C\eps^{2}\abs{s}^{-2\alpha-1+\kappa},\\
 \abs{\left(\Phi'\left(\im s\right)\right)^2s^{2+\kappa}\left(1-\coup\left(s\right)\right)}&\leq C\eps^{2}\abs{s}^{-2\alpha+\kappa}
\end{align*}
for $\abs{s}\leq 2$ together with Lemmas~\ref{Lem:asymptotics:xi:leq:p} and~\ref{Lem:properties:Phi} as well as $\eps\abs{s}^{-\alpha}\ee^{-\frac{D\eps}{\abs{s}^{\alpha}}}\leq C$ we obtain for some $a\in\left(\alpha+2\delta,\theta\right)$ with $\delta>0$ sufficiently small that
\begin{equation*}
 \begin{split}
  \abs{Q_{2,2,1}}\leq \frac{C}{p\left(1+p\right)\abs{\xi-p}^{1-a}}\int_{-2}^{2}\abs{s}^{a-\alpha-1-2\delta}\ds\leq \frac{C}{p\left(1+p\right)\abs{\xi-p}^{1-a}}.
 \end{split}
\end{equation*}
Thus it follows for the contribution to the integral in~\eqref{eq:int:Q0} that
\begin{equation*}
 \begin{split}
  &\quad \frac{C}{p^2}\int_{0}^{p}\frac{1}{\left(1+\xi\right)^{2\theta-\nu}\abs{\xi-p}^{1-a}}\dxi\leq \frac{C}{p^2}\left(\frac{1}{p^{1-a}}\int_{0}^{p/2}\frac{1}{\xi^{2\theta-\nu}}\dxi+\frac{1}{p^{2\theta-\nu}}\int_{p/2}^{p}\frac{1}{\abs{\xi-p}^{1-a}}\dxi\right)\\
  &\leq \frac{C}{p^{2+2\theta-a-\nu}}\leq C\frac{\left(1+p\right)^{1-\theta}}{p^3}
 \end{split}
\end{equation*}
if we choose $\nu>0$ sufficiently small for fixed $a$.

\begin{remark}
 The case $\alpha=0$ can be treated similarly (see also Remark~\ref{Rem:adapt:11.4}).
\end{remark}

\subsubsection{Contribution of $W_{2,2}\left(\Y,p\right)$ for $\xi>p$}

It remains now to consider the region $\xi>p$ and we have to consider
\begin{equation*}
 \begin{split}
  &\quad \frac{1}{2\pi\im}\int_{-\im R}^{\im R}\ee^{\Y\xi}W_{2,2}\left(\Y,p\right)\dd{\Y}\\
  &=\frac{1}{2\pi\im}\int_{-\im R}^{\im R}\ee^{\Y\xi}W_{2,2}\left(\Y,p\right)\coup\left(\im \Y\right)\dd{\Y}+\frac{1}{2\pi\im}\int_{-\im R}^{\im R}\ee^{\Y\xi}W_{2,2}\left(\Y,p\right)\left(1-\coup\left(\im \Y\right)\right)\dd{\Y}=\vcc\widehat{Q}_{2,2,1}+\widehat{Q}_{2,2,2}.
 \end{split}
\end{equation*}

We first consider $\widehat{Q}_{2,2,1}$, change to real variables and integrate by parts, using $\ee^{\Y\xi}=\del_{\Y}\left(\xi^{-1}\ee^{\Y\xi}\right)$, to finally get
\begin{equation*}
 \begin{split}
  \widehat{Q}_{2,2,1}&=\frac{1}{2\pi p\left(1+p\right)\xi}\left.\left(\frac{\ee^{\im \Y\xi}\coup\left(\Y\right)}{ \Y^{1+\kappa}\left(1+\im \Y\right)}\int_{0}^{\Y}\ee^{-p\im s}\left(\im s\Phi'\left(\im s\right)\right)^2s^{\kappa}\frac{\ee^{\Phi\left(\im \Y\right)}}{\ee^{\Phi\left(\im s\right)}}\ds\right)\right|_{\Y=-R}^{\Y=R}\\
  &\quad -\frac{1}{2\pi p\left(1+p\right)\xi}\int_{-R}^{R}\ee^{\im \Y \xi}\del_{\Y}\left(\frac{\coup\left(\Y\right)}{ \Y^{1+\kappa}\left(1+\im \Y\right)}\right)\int_{0}^{\Y}\ee^{-p\im s}\left(\im s\Phi'\left(\im s\right)\right)^2s^{\kappa}\frac{\ee^{\Phi\left(\im \Y\right)}}{\ee^{\Phi\left(\im s\right)}}\ds\dd{\Y}\\
  &\quad -\frac{1}{2\pi p\left(1+p\right)\xi}\int_{-R}^{R}\ee^{\im \Y \xi}\frac{\coup\left(\Y\right)}{ \Y\left(1+\im \Y\right)}\ee^{-p\im \Y}\left(\im \Y\Phi'\left(\im \Y\right)\right)^2\dd{\Y}\\
  &\quad -\frac{1}{2\pi p\left(1+p\right)\xi}\int_{-R}^{R}\ee^{\im \Y \xi}\frac{\coup\left(\Y\right)\Phi'\left(\im \Y\right)}{ \Y^{1+\kappa}\left(1+\im \Y\right)}\int_{0}^{\Y}\ee^{-p\im s}\left(\im s\Phi'\left(\im s\right)\right)^2s^{\kappa}\frac{\ee^{\Phi\left(\im \Y\right)}}{\ee^{\Phi\left(\im s\right)}}\ds\dd{\Y}\\
  &=\vcc(I)+(II)+(III)+(IV).
 \end{split}
\end{equation*}
Using the estimate
\begin{equation}\label{eq:large:xi:Y:int}
 \begin{split}
  \abs{\int_{0}^{\Y}\ee^{-p\im s}\left(\im s\Phi'\left(\im s\right)\right)^2s^{\kappa}\frac{\ee^{\Phi\left(\im \Y\right)}}{\ee^{\Phi\left(\im s\right)}}\ds}\leq C\eps^{2}\max\left\{\abs{\Y}^{2\alpha+1+\kappa},\abs{\Y}^{-2\alpha+1+\kappa}\right\}
 \end{split}
\end{equation}
we immediately get
\begin{equation*}
 \begin{split}
  \abs{(I)}\leq\frac{C}{p\left(1+p\right)\xi}R^{2\alpha-1}\longrightarrow 0\quad \text{for } R\longrightarrow\infty.
 \end{split}
\end{equation*}
Furthermore using that $\coup$ is supported away from the origin we get
\begin{equation*}
 \begin{split}
  \abs{(II)}\leq \frac{C}{p\left(1+p\right)\xi}\int_{-\infty}^{\infty}\chi_{\left\{\abs{\Y}\geq \frac{1}{2}\right\}}\abs{\Y}^{-3-\kappa}\max\left\{\abs{\Y}^{2\alpha+1+\kappa},\abs{\Y}^{-2\alpha+1+\kappa}\right\}\dd{\Y}\leq\frac{C}{p\left(1+p\right)\xi} 
 \end{split}
\end{equation*}
as $\alpha<1/2$. Similarly we find
\begin{equation*}
 \begin{split}
  \abs{(III)}\leq \frac{C}{p\left(1+p\right)\xi}\int_{-\infty}^{\infty}\chi_{\left\{\abs{\Y}\geq \frac{1}{2}\right\}}\abs{\Y}^{-2-\kappa}\max\left\{\abs{\Y}^{2\alpha+\kappa},\abs{\Y}^{-2\alpha+\kappa}\right\}\dd{\Y}\leq \frac{C}{p\left(1+p\right)\xi}.
 \end{split}
\end{equation*}
Analogously to the case $\xi<p$, the decay in $\Y$ is insufficient in the term $(IV)$ to get an estimate uniform in $R$. Thus we have to integrate by parts again there, which is possible due to the cut-off $\coup$ and we get, by rewriting also $\Phi'\left(\im \Y\right)$, as before
\begin{equation*}
 \begin{split}
  (IV)&=\frac{\eps}{2\pi p\left(1+p\right)\xi\left(1+\xi\right)}\int_{-R}^{R}\del_{\Y}\left(\ee^{\im \Y\left(\xi+1\right)}\right)\frac{\coup\left(\Y\right)\beta_W\left(\im \Y\right)}{\Y^{2+\kappa}\left(1+\im \Y\right)}\int_{0}^{\Y}\ee^{-p\im s}\left(\Phi'\left(\im s\right)\right)^2s^{2+\kappa}\frac{\ee^{\Phi\left(\im \Y\right)}}{\ee^{\Phi\left(x\right)}}\ds\dd{\Y}\\
  &=\frac{\eps}{2\pi p\left(1+p\right)\xi\left(1+\xi\right)}\left.\left(\ee^{\im \Y\left(\xi+1\right)}\frac{\coup\left(\Y\right)\beta_W\left(\im \Y\right)}{\Y^{2+\kappa}\left(1+\im \Y\right)}\int_{0}^{\Y}\ee^{-p\im s}\left(\Phi'\left(\im s\right)\right)^2s^{2+\kappa}\frac{\ee^{\Phi\left(\im \Y\right)}}{\ee^{\Phi\left(x\right)}}\ds\right)\right|_{\Y=-R}^{\Y=R}\\
  &\quad -\frac{\eps}{2\pi p\left(1+p\right)\xi\left(1+\xi\right)}\int_{-R}^{R}\ee^{\im \Y\left(\xi+1\right)}\del_{\Y}\left(\frac{\coup\left(\Y\right)\beta_W\left(\im \Y\right)}{\Y^{2+\kappa}\left(1+\im \Y\right)}\right)\int_{0}^{\Y}\ee^{-p\im s}\left(\Phi'\left(\im s\right)\right)^2s^{2+\kappa}\frac{\ee^{\Phi\left(\im \Y\right)}}{\ee^{\Phi\left(x\right)}}\ds\dd{\Y}\\
  &\quad -\frac{\eps}{2\pi p\left(1+p\right)\xi\left(1+\xi\right)}\int_{-R}^{R}\ee^{\im \Y\left(\xi+1\right)}\frac{\coup\left(\Y\right)\beta_W\left(\im \Y\right)}{\left(1+\im \Y\right)}\ee^{-p\im \Y}\left(\Phi'\left(\im \Y\right)\right)^2\dd{\Y}\\
  &\quad +\frac{\eps^2}{2\pi\im p\left(1+p\right)\xi\left(1+\xi\right)}\int_{-R}^{R}\ee^{\im \Y\left(\xi+1\right)}\frac{\coup\left(\Y\right)\left(\beta_W\left(\im \Y\right)\right)^2}{ \Y^{3+\kappa}\left(1+\im \Y\right)}\int_{0}^{\Y}\ee^{-p\im s}\left(\Phi'\left(\im s\right)\right)^2s^{2+\kappa}\frac{\ee^{\Phi\left(\im \Y\right)}}{\ee^{\Phi\left(x\right)}}\ds\dd{\Y}\\
  &=\vcc(IV)_1+(IV)_2+(IV)_3+(IV)_4.
 \end{split}
\end{equation*}
 In addition to the estimate obtained in~\eqref{eq:large:xi:Y:int} we have
  \begin{align*}
  \abs{\frac{\coup\left(\Y\right)\beta_W\left(\im \Y\right)}{\Y^{2+\kappa}\left(1+\im \Y\right)}}\leq C\abs{\Y}^{\alpha-3-\kappa}\chi_{\left\{\abs{\Y}\geq \frac{1}{2}\right\}},\quad
  \abs{\del_{\Y}\left(\frac{\coup\left(\Y\right)\beta_W\left(\im \Y\right)}{\Y^{2+\kappa}\left(1+\im \Y\right)}\right)}\leq C\abs{\Y}^{\alpha-4-\kappa}\chi_{\left\{\abs{\Y}\geq \frac{1}{2}\right\}},\\
  \abs{\frac{\coup\left(\Y\right)\beta_W\left(\im \Y\right)}{\left(1+\im \Y\right)}\ee^{-p\im \Y}\left(\Phi'\left(\im \Y\right)\right)^2}\leq C\abs{\Y}^{3\alpha-3}\chi_{\left\{\abs{\Y}\geq \frac{1}{2}\right\}},\quad
  \abs{\frac{\coup\left(\Y\right)\left(\beta_W\left(\im \Y\right)\right)^2}{ \Y^{3+\kappa}\left(1+\im \Y\right)}}\leq C\abs{\Y}^{2\alpha-4-\kappa}\chi_{\left\{\abs{\Y}\geq \frac{1}{2}\right\}}.
 \end{align*}
 Thus on the one hand we get
 \begin{equation*}
  \begin{split}
   \abs{(IV)_1}\leq \frac{C}{p\left(1+p\right)\xi\left(1+\xi\right)}R^{3\alpha-2}\longrightarrow 0 \quad \text{for } R\longrightarrow\infty.
  \end{split}
 \end{equation*}
On the other hand we obtain
\begin{align*}
 \abs{(IV)_2}&\leq \frac{C}{p\left(1+p\right)\xi\left(1+\xi\right)}\int_{-\infty}^{\infty}\chi_{\left\{\abs{\Y}\geq \frac{1}{2}\right\}}\abs{\Y}^{\alpha-4-\kappa}\max\left\{\abs{\Y}^{2\alpha+1+\kappa},\abs{\Y}^{-2\alpha+1+\kappa}\right\}\dd{\Y}\\
 &\leq \frac{C}{p\left(1+p\right)\xi\left(1+\xi\right)},\\
 \abs{(IV)_3}&\leq \frac{C}{p\left(1+p\right)\xi\left(1+\xi\right)}\int_{-\infty}^{\infty}\chi_{\left\{\abs{\Y}\geq \frac{1}{2}\right\}}\abs{\Y}^{3\alpha-3}\dd{\Y}\\
 &\leq \frac{C}{p\left(1+p\right)\xi\left(1+\xi\right)},\\
 \abs{(IV)_4}&\leq \frac{C}{p\left(1+p\right)\xi\left(1+\xi\right)}\int_{-\infty}^{\infty}\chi_{\left\{\abs{\Y}\geq \frac{1}{2}\right\}}\abs{\Y}^{2\alpha-4-\kappa}\max\left\{\abs{\Y}^{2\alpha+1+\kappa},\abs{\Y}^{-2\alpha+1+\kappa}\right\}\dd{\Y}\\
 &\leq \frac{C}{p\left(1+p\right)\xi\left(1+\xi\right)}.
\end{align*}
Thus in summary we have in the limit $R\to\infty$ that
\begin{equation*}
 \begin{split}
  \limsup_{R\to\infty}\abs{(IV)}\leq \frac{C}{p\left(1+p\right)\xi\left(1+\xi\right)}
 \end{split}
\end{equation*}
and it easily follows by similar estimates as before that this gives the desired contribution to the integral in~\eqref{eq:Q0}.

Thus it finally remains only to consider $\widehat{Q}_{2,2,2}$, where we proceed as before, i.e.\ taking the limit $R\to\infty$, changing to real variables and rearranging, to get
\begin{equation*}
 \begin{split}
  \lim_{R\to\infty}\widehat{Q}_{2,2,2}&=-\frac{1}{2\pi p\left(1+p\right)}\int_{-\infty}^{\infty}\ee^{-\im ps}\left(\im s\Phi'\left(\im s\right)\right)^2s^{\kappa}\ee^{-\Phi\left(\im s\right)}\int_{s}^{\sgn\left(s\right)\infty}\ee^{\im \Y \xi}\frac{\left(1-\coup\left(\Y\right)\right)}{ \Y^{1+\kappa}\left(1+\im \Y\right)}\ee^{\Phi\left(\im \Y\right)}\dd{\Y}\ds.
 \end{split}
\end{equation*}
Changing variables, $\Y=s+t$, as before and integrating by parts we get
\begin{multline*}
  \quad \lim_{R\to\infty}\widehat{Q}_{2,2,2}\\
  \shoveleft{=-\frac{1}{2\pi p\left(1+p\right)}\int_{-\infty}^{\infty}\ee^{\im s\left(\xi-p\right)}\left(\im s\Phi'\left(\im s\right)\right)^2s^{\kappa}\int_{0}^{\sgn\left(s\right)\infty}\frac{\left(1-\coup\left(s+t\right)\right)}{ \left(s+t\right)^{1+\kappa}\left(1+\im\left(s+t\right)\right)}\ee^{\im t\xi}\ee^{\Phi\left(\im\left(s+t\right)\right)-\Phi\left(\im s\right)}\dt\ds}\\
  \shoveleft{=\frac{\im}{2\pi p\left(1+p\right)\left(\xi-p\right)}\int_{-\infty}^{\infty}\left(\ee^{\im s\left(\xi-p\right)}-1\right)\del_{s}\left(\left(\Phi'\left(\im s\right)\right)^2s^{2+\kappa}\right)\cdot}\\
  \shoveright{\cdot\int_{0}^{\sgn\left(s\right)\infty}\frac{\left(1-\coup\left(s+t\right)\right)}{\left(s+t\right)^{1+\kappa}\left(1+\im\left(s+t\right)\right)}\ee^{\im t\xi}\ee^{\Phi\left(\im\left(s+t\right)\right)-\Phi\left(\im s\right)}\dt\ds}\\
  \shoveleft{\quad +\frac{\im}{2\pi p\left(1+p\right)\left(\xi-p\right)}\int_{-\infty}^{\infty}\left(\ee^{\im s\left(\xi-p\right)}-1\right)\left(\Phi'\left(\im s\right)\right)^2s^{2+\kappa}\cdot}\\
  \shoveright{\cdot\int_{0}^{\sgn\left(s\right)\infty}\del_{s}\left(\frac{\left(1-\coup\left(s+t\right)\right)}{ \left(s+t\right)^{1+\kappa}\left(1+\im\left(s+t\right)\right)}\right)\ee^{\im t\xi}\ee^{\Phi\left(\im\left(s+t\right)\right)-\Phi\left(\im s\right)}\dt\ds}\\
  \shoveleft{\quad -\frac{1}{2\pi p\left(1+p\right)\left(\xi-p\right)}\int_{-\infty}^{\infty}\left(\ee^{\im s\left(\xi-p\right)}-1\right)\left(\Phi'\left(\im s\right)\right)^2s^{2+\kappa}\cdot}\\
  \cdot\int_{0}^{\sgn\left(s\right)\infty}\frac{\left(1-\coup\left(s+t\right)\right)}{ \left(s+t\right)^{1+\kappa}\left(1+\im\left(s+t\right)\right)}\left(\Phi'\left(\im\left(s+t\right)\right)-\Phi'\left(\im s\right)\right)\ee^{\im t\xi}\ee^{\Phi\left(\im\left(s+t\right)\right)-\Phi\left(\im s\right)}\dt\ds.
\end{multline*}
We have similarly as before for $\abs{s}\leq 1$ that
\begin{equation*}
 \abs{\del_{s}\left(\left(\Phi'\left(\im s\right)\right)^2s^{2+\kappa}\right)}\leq C\eps^{2}\abs{s}^{-2\alpha-1+\kappa}\quad \text{and}\quad  \abs{\left(\Phi'\left(\im s\right)\right)^2s^{2+\kappa}}\leq C\eps^{2}\abs{s}^{-2\alpha+\kappa}.
\end{equation*}
Thus together with Lemmas~\ref{Lem:asymptotics:p:leq:xi} and~\ref{Lem:properties:Phi} as well as $\eps\abs{s}^{-\alpha}\ee^{-\frac{D\eps}{\abs{s}^{\alpha}}}\leq C$ we obtain for $a\in\left(\alpha+2\delta,\theta\right)$ with $\delta>0$ sufficiently small that
\begin{equation*}
 \begin{split}
  \abs{\widehat{Q}_{2,2,2}}\leq \frac{C}{p\left(1+p\right)\abs{\xi-p}^{1-a}}\int_{-1}^{1}\abs{s}^{a-\alpha-2\delta-1}\ds\leq \frac{C}{p\left(p+1\right)\abs{\xi-p}^{1-a}}.
 \end{split}
\end{equation*}
As already shown in the cases before (see for example~\eqref{eq:contr:Q1211}), the contribution is fine if we choose $\nu>0$ sufficiently small for $a$ fixed. 

\begin{remark}
 The case $\alpha=0$ can again be treated by an argument similar to that one in Remark~\ref{Rem:adapt:11.4}.
\end{remark}

This then finishes the proof.
\end{proof}

\section{Asymptotics for $\beta_W$, $\Phi$ and several integral estimates}\label{Sec:asymptotics}

In this section we show several estimates that are necessary to control the integrals occurring in the Proof of Proposition~\ref{Prop:Q0:estimate}.

\begin{lemma}\label{Lem:asymptotics:beta}
 Denoting by $m_{\alpha}\vcc=\int_{0}^{\infty}z^{\alpha}\mu\left(z\right)\ee^{-z}\dz$ it holds for $x\in\C\setminus\left(-\infty,0\right]$ that
 \begin{equation*}
  \beta_W\left(x,\mu\right)\sim C_W m_{\alpha}x^{-\alpha}\quad \text{as } x\to 0
 \end{equation*}
 in the sense of~\eqref{eq:def:asymp}.
\end{lemma}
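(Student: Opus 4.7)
The plan is to exploit the homogeneity of $W$ to write
\[
 \beta_W(x,\mu) = \int_0^\infty W\!\left(\tfrac{x}{z},1\right)\mu(z)e^{-z}\dz,
\]
and then insert the asymptotic hypothesis~\eqref{eq:asymp:W}. Writing $\xi=x/z$, the identity $x^\alpha = \xi^\alpha z^\alpha$ gives $x^\alpha W(x/z,1) = (\xi^\alpha W(\xi,1))\,z^\alpha$, so that
\[
 x^\alpha\beta_W(x,\mu) - C_W m_\alpha
  = \int_0^\infty \bigl(\xi^\alpha W(\xi,1)-C_W\bigr)\,z^\alpha \mu(z) e^{-z}\dz.
\]
By~\eqref{eq:def:asymp}, there exists $\eta\ge 0$ on $\C\setminus(-\infty,0]$ with $|\xi^\alpha W(\xi,1)-C_W|\le \eta(\xi)$ and $\tilde\eta(\rho):=\sup_{|\xi|=\rho}\eta(\xi)\to 0$ as $\rho\to 0^+$; moreover the bound~\eqref{kernel0} implies $\eta(\xi)\le C(1+|\xi|^{2\alpha})$ globally.

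The first main step is then to split the integral at $z=|x|^{1/2}$. On the outer region $z\ge|x|^{1/2}$ one has $|\xi|=|x|/z\le|x|^{1/2}$, so introducing the monotone majorant $\omega(r):=\sup_{0<\rho\le r}\tilde\eta(\rho)$ (which still satisfies $\omega(r)\to 0$ as $r\to 0^+$) yields
\[
 \int_{|x|^{1/2}}^\infty z^\alpha \eta(x/z)\mu(z)e^{-z}\dz \le \omega\!\left(|x|^{1/2}\right) m_\alpha,
\]
which tends to zero as $|x|\to 0$ uniformly in the argument of $x$.

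The second step handles the inner region $0<z\le|x|^{1/2}$ using the crude bound $\eta(\xi)\le C(1+|\xi|^{2\alpha})$:
\[
 \int_0^{|x|^{1/2}} z^\alpha \eta(x/z)\mu(z)e^{-z}\dz
  \le C\!\int_0^{|x|^{1/2}} z^\alpha\mu(z)e^{-z}\dz + C|x|^{2\alpha}\!\int_0^{|x|^{1/2}} z^{-\alpha}\mu(z)e^{-z}\dz.
\]
The first piece vanishes as $|x|\to 0$ since $z^\alpha\mu(z)e^{-z}\in L^1$ (which follows from~\eqref{fdecay}, \eqref{S2E3}, or directly from boundedness of $m_\alpha$). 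For the second piece, \eqref{S2E3} gives $\int_0^1 z^{-\alpha}f(z)\dz \le C_\alpha$ with $f=\mu e^{-z}$, so the integral is bounded uniformly in $x$; multiplying by $|x|^{2\alpha}$ with $\alpha>0$ produces a vanishing contribution.

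Combining the two regions yields a bound of the form $|x^\alpha\beta_W(x,\mu)-C_W m_\alpha|\le \eta_\beta(x)$ where $\eta_\beta(x)$ depends only on $|x|$ (and the a-priori bounds on $\mu$) and tends to zero as $|x|\to 0^+$, which is precisely~\eqref{eq:def:asymp}. No step looks technically delicate: the only point requiring some care is ensuring that the control on $\eta$ is \emph{uniform} in $\arg(x)$, but this is built into the supremum formulation of~\eqref{eq:def:asymp} and is preserved throughout the argument since our majorants depend only on $|x|/z$.
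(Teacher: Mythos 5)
Your proof is correct and follows essentially the same strategy as the paper's: rewrite $\beta_W$ via the homogeneity of $W$, express $x^\alpha\beta_W(x)-C_W m_\alpha$ as an integral of $(\xi^\alpha W(\xi,1)-C_W)z^\alpha\mu(z)e^{-z}$ with $\xi=x/z$, and split into an inner region (near $z=0$) controlled by the crude growth bound~\eqref{kernel0} plus the a-priori weighted integrability of $f=\mu e^{-\cdot}$ from~\eqref{S2E3}, and an outer region controlled by the smallness hypothesis~\eqref{eq:asymp:W}. The only cosmetic difference is the splitting point: you cut at $z=|x|^{1/2}$ and package the smallness of the asymptotic error in a monotone majorant $\omega$, whereas the paper fixes $r>0$, takes the corresponding $\delta_r$ from the asymptotics assumption, and cuts at $z=|x|/\delta_r$; both yield the same conclusion with comparable effort.
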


\begin{proof}
 By assumption~\eqref{eq:asymp:W}, for any $r>0$ there exists $\delta_r>0$ such that 
 \begin{equation*}
  \abs{W\left(\xi,1\right)-C_W\xi^{-\alpha}}\leq r \abs{\xi}^{-\alpha} \quad \text{for } \abs{\xi}\leq \delta_r.
 \end{equation*}
 Furthermore by definition we have
 \begin{equation*}
  \begin{split}
   \beta_W\left(x\right)&=\int_{0}^{\infty}W\left(\frac{x}{z},1\right)\mu\left(z\right)\ee^{-z}\dz\\
   &=\int_{0}^{\frac{\abs{x}}{\delta_r}}W\left(\frac{x}{z},1\right)\mu\left(z\right)\ee^{-z}\dz+\int_{\frac{\abs{x}}{\delta_r}}^{\infty}\left(W\left(\frac{x}{z},1\right)-C_W\left(\frac{x}{z}\right)^{-\alpha}+C_W\left(\frac{x}{z}\right)^{-\alpha}\right)\mu\left(z\right)\ee^{-z}\dz.
  \end{split}
 \end{equation*}
We can then estimate
\begin{equation*}
 \begin{split}
  &\quad \abs{x^{\alpha}\int_{0}^{\frac{\abs{x}}{\delta_r}}W\left(\frac{x}{z},1\right)\mu\left(z\right)\ee^{-z}\dz}\leq \abs{x}^{\alpha}\int_{0}^{\frac{\abs{x}}{\delta_r}}\left(\abs{\frac{x}{z}}^{-\alpha}+\abs{\frac{x}{z}}^{\alpha}\right)\mu\left(z\right)\ee^{-z}\dz\\
  &\leq \int_{0}^{\frac{\abs{x}}{\delta_r}}z^{-\alpha}\mu\left(z\right)\ee^{-z}\dz\left(1+\abs{x}^{2\alpha}\right)\longrightarrow 0\quad \text{as } \abs{x}\longrightarrow 0.
 \end{split}
 \end{equation*}
 Furthermore using~\eqref{eq:asymp:W} we get
 \begin{equation*}
  \begin{split}
   \abs{x^{\alpha}\int_{\frac{\abs{x}}{\delta_r}}^{\infty}\left(W\left(\frac{x}{z},1\right)-C_W\left(\frac{x}{z}\right)^{-\alpha}\right)\mu\left(z\right)\ee^{-z}\dz}\leq r\int_{\frac{\abs{x}}{\delta_r}}^{\infty}z^{\alpha}\mu\left(z\right)\ee^{-z}\dz\leq Cr
  \end{split}
 \end{equation*}
uniformly in $x$. Finally we have
\begin{equation*}
 \begin{split}
  x^{\alpha}\int_{\frac{\abs{x}}{\delta_r}}^{\infty}C_W\left(\frac{x}{z}\right)^{-\alpha}\mu\left(z\right)\ee^{-z}\dz&=C_W\int_{\frac{\abs{x}}{\delta_r}}^{\infty}z^{\alpha}\mu\left(z\right)\ee^{-z}\dz\longrightarrow C_W m_{\alpha} \quad \text{for } \abs{x}\longrightarrow 0.
 \end{split}
\end{equation*}
This shows the claim.
\end{proof}

\begin{lemma}\label{Lem:Asymp:Phi}
 With the same notation as in Lemma~\ref{Lem:asymptotics:beta} it holds for $\Y\in\C\setminus\left(-\infty,0\right]$ that
 \begin{equation*}
  \Phi\left(\Y\right)\sim \frac{C_W m_{\alpha}\eps}{\alpha}\Y^{-\alpha} \quad \text{as } \Y\to 0.
 \end{equation*}
 in the sense of~\eqref{eq:def:asymp}.
\end{lemma}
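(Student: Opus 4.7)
The plan is to insert the asymptotic expansion of $\beta_W$ from Lemma~\ref{Lem:asymptotics:beta} into the contour representation~\eqref{eq:Phi:complex:1}, evaluate the leading singular integral $\int t^{-\alpha-1}\ee^{-t}\dt$ explicitly, and show that all remaining contributions are $o(\Y^{-\alpha})$ as $\Y\to 0$.

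First, starting from~\eqref{eq:Phi:complex:1}, write
\begin{equation*}
 \Phi(\Y)=\eps\int_{\Y}^{\sgn(\Im\Y)\im\infty}\frac{\beta_W(t)}{t}\ee^{-t}\dt,
\end{equation*}
and decompose $\beta_W(t)=C_W m_\alpha t^{-\alpha}+R(t)$ where, by Lemma~\ref{Lem:asymptotics:beta}, there is a nonnegative function $\eta_\beta$ with $\eta_\beta(t)\to 0$ as $|t|\to 0$ such that $|R(t)|\leq\eta_\beta(t)|t|^{-\alpha}$ for $t\in\C\setminus(-\infty,0]$, while the bound~\eqref{eq:est:beta:W:0} gives $|R(t)|\leq C(|t|^{-\alpha}+|t|^{\alpha})$ globally. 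This splits $\Phi$ into a leading part $\Phi_{\rm lead}(\Y)=\eps C_W m_\alpha\int_\Y^{\sgn(\Im\Y)\im\infty}t^{-\alpha-1}\ee^{-t}\dt$ and a remainder $\Phi_{\rm rem}(\Y)=\eps\int_\Y^{\sgn(\Im\Y)\im\infty}\frac{R(t)}{t}\ee^{-t}\dt$.

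Next I would compute $\Phi_{\rm lead}$. Write $\ee^{-t}=1+(\ee^{-t}-1)$. For the constant $1$, deforming the contour (staying in $\C\setminus(-\infty,0]$) and using that $t^{-\alpha}/(-\alpha)$ is a primitive of $t^{-\alpha-1}$ with vanishing limit at $\sgn(\Im\Y)\im\infty$, one gets $\int_\Y^{\sgn(\Im\Y)\im\infty}t^{-\alpha-1}\dt=\Y^{-\alpha}/\alpha$. For the remainder piece, $|t^{-\alpha-1}(\ee^{-t}-1)|\leq C|t|^{-\alpha}$ near zero and $\leq C|t|^{-\alpha-1}\ee^{-\Re t}$ in the right half-plane far from zero, so the integral over the full path from $0$ to $\sgn(\Im\Y)\im\infty$ converges absolutely. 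Therefore
\begin{equation*}
 \Y^\alpha\int_\Y^{\sgn(\Im\Y)\im\infty}t^{-\alpha-1}(\ee^{-t}-1)\dt\longrightarrow 0\quad\text{as }\Y\to 0,
\end{equation*}
which proves $\Y^\alpha\Phi_{\rm lead}(\Y)\to\eps C_W m_\alpha/\alpha$.

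For $\Phi_{\rm rem}$, given $r>0$ choose $\delta>0$ so that $\eta_\beta(t)\leq r$ for $|t|\leq\delta$, and split the path at the sphere $|t|=\delta$. On the inner piece (which is only present for $|\Y|\leq\delta$) one has $|R(t)/t|\leq r|t|^{-\alpha-1}$, so this part is bounded in absolute value by $r\int_{|\Y|}^{\delta}\rho^{-\alpha-1}\dd\rho\leq (r/\alpha)|\Y|^{-\alpha}$; after multiplication by $|\Y|^\alpha$ its contribution is bounded by $r/\alpha$, which is arbitrarily small. On the outer piece $|t|\geq\delta$ the global bound on $R$ together with the exponential factor $\ee^{-\Re t}$ gives an integral bounded by some constant $C(\delta)$, whose contribution after multiplication by $|\Y|^\alpha$ tends to $0$. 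Choosing $r$ first and then sending $\Y\to 0$ yields $\Y^\alpha\Phi_{\rm rem}(\Y)\to 0$.

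The main obstacle is the careful handling of the integration contour $\gamma_\Y$ from Remark~\ref{Rem:path:int}: one must justify that both the contour deformation used to evaluate $\int t^{-\alpha-1}\dt$ and the sphere-splitting argument are compatible with this path (which is a vertical ray, not a radial one), and that the function $\eta_\beta$ controlling $R$ on a small disk around the origin does so uniformly in the complex argument of $t$, both of which follow from the complex version of~\eqref{eq:def:asymp}. Once this is in place, combining the three steps gives $|\Y^\alpha\Phi(\Y)-\eps C_W m_\alpha/\alpha|\to 0$ as $\Y\to 0$, which is exactly the asymptotics in the sense of~\eqref{eq:def:asymp}.
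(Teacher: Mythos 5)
Your decomposition $\beta_W = C_W m_\alpha t^{-\alpha} + R$ and the ``$r$ small first, then $\Y$ small'' scheme mirror the paper's argument; the difference is the choice of representation and contour, and that is where your outer estimate breaks.

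You work from~\eqref{eq:Phi:complex:1}, i.e.\ along the vertical ray $\gamma_\Y$. On this ray $\Re t = \Re\Y$ is \emph{constant}, so $\abs{\ee^{-t}} = \ee^{-\Re\Y}$ does not decay as $\abs{t}\to\infty$. On the outer piece $\abs{t}\ge\delta$ the only available bound for the remainder is $\abs{R(t)/t}\le C(\abs{t}^{-\alpha-1}+\abs{t}^{\alpha-1})$, and since $\alpha\in(0,1/2)$ the dominant term $\abs{t}^{\alpha-1}$ is \emph{not} absolutely integrable along the vertical ray. So the step ``the global bound on $R$ together with the exponential factor $\ee^{-\Re t}$ gives an integral bounded by some constant $C(\delta)$'' fails: the integral you wrote is only conditionally convergent, and your estimate treats it as absolutely convergent. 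This is precisely why the paper does not split~\eqref{eq:Phi:complex:1} this way. In its proof it instead writes $\Phi(\Y)=\eps\int_{\Y}^{\infty}\frac{\beta_W(t)}{t}\ee^{-t}\dt$ and deforms the path near the origin to an arc of the circle $\abs{\xi}=\abs{\Y}$ followed by the segment $[\abs{\Y},\delta]$ and then $[\delta,\infty)$ on the positive real axis (Figure~\ref{fig:asymp}); there $\ee^{-t}$ decays genuinely exponentially, so $\int_{\delta}^{\infty}\frac{\abs{\beta_W(t)}}{t}\ee^{-t}\dt \le C\eps\,\delta^{-\alpha}$ is immediate, and the compact piece $\int_{\Y}^{\delta}$ is handled by the splitting $\ee^{-t}=1+(\ee^{-t}-1)$ exactly as you do.

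To repair your argument you either need to deform the contour to one that reaches $+\infty$ along (or near) the positive real axis, as the paper does, or you should start instead from~\eqref{eq:Phi:complex:2}, whose integrand involves $\frac{\dd}{\dt}\bigl(\beta_W(t)/t\bigr)\sim \abs{t}^{\alpha-2}$ and therefore \emph{is} absolutely integrable on the vertical ray, so that the sphere-splitting you propose goes through. A lesser point: your bound for the inner piece, ``$\le r\int_{\abs{\Y}}^{\delta}\rho^{-\alpha-1}\dd\rho$'', tacitly reparametrizes the vertical segment by $\rho=\abs{t}$ without accounting for the Jacobian; the correct bound is $C(\alpha)\,r\,\abs{\Y}^{-\alpha}$ with a constant depending on $\alpha$, which is harmless for the conclusion but should be stated.
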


\begin{proof}
 From the definition we get for some $0<\delta<1$ and $\abs{\Y}\leq \delta$ that
 \begin{equation*}
  \begin{split}
   \Phi\left(\Y\right)=\int_{\Y}^{\infty}\frac{\eps\beta_W\left(t\right)}{t}\ee^{-t}\dt=\int_{\Y}^{\delta}\frac{\eps\beta_W\left(t\right)}{t}\dt+\int_{\Y}^{\delta}\frac{\eps\beta_W\left(t\right)}{t}\left(\ee^{-t}-1\right)\dt+\int_{\delta}^{\infty}\frac{\eps\beta_W\left(t\right)}{t}\ee^{-t}\dt,
  \end{split}
 \end{equation*}
 where the path of integration connecting $\Y$ and $\delta$ is chosen to be contained in in the set  $\C\setminus\left(-\infty,0\right]\cap\left(\left\{\xi\in\C\;|\; \abs{\xi}=\abs{\Y}\right\}\cup\R_{+}\right)$ as shown in Figure~\ref{fig:asymp}.
 
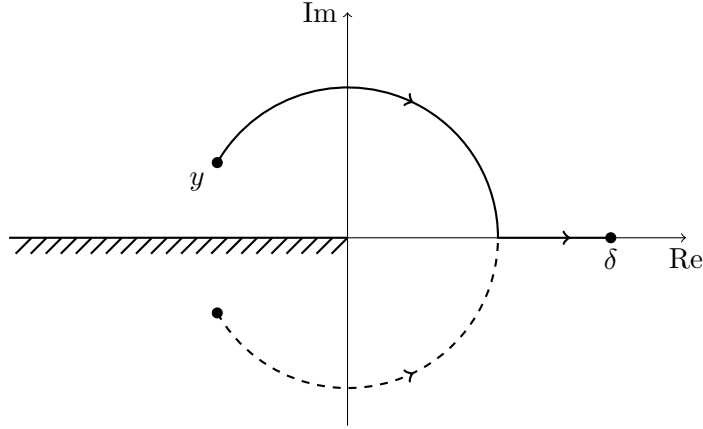
\begin{figure}
\begin{center}
   \begin{tikzpicture}[contour/.style={postaction={decorate, decoration={markings,
mark=at position 3cm with {\arrow[line width=1pt]{>}},mark=at position 6.2cm with {\arrow[line width=1pt]{>}}
}}},
    interface/.style={postaction={draw,decorate,decoration={border,angle=45,
                    amplitude=0.3cm,segment length=2mm}}}]

\draw[->] (0,0) -- (4.5,0) node[below] {$\Re$};
\draw[->] (0,-2.5) -- (0,3) node[left] {$\Im$};
\draw[line width=.8pt,interface](0,0)--(-4.5,0);

\path[draw,line width=0.8pt,contour] (-1.732,1) arc(150:0:2) -- (3.5,0)node[below]{$\delta$};
\path[draw, dashed, line width=0.8pt, contour] (-1.732,-1) arc(-150:0:2);

\draw[black,fill=black] (-1.732,1) circle (.4ex);
\draw[black,fill=black] (-1.732,-1) circle (.4ex);
\draw[black,fill=black] (3.5,0) circle (.4ex);
\node[below] at (-2,1) {$\Y$};
\end{tikzpicture}

\caption{Contour connecting $\Y$ and $\delta$}
\label{fig:asymp}
 \end{center}
\end{figure}
 
 Using $\abs{\ee^{-t}-t}\leq 2t$ and the estimates obtained in Lemma~\ref{Lem:prop:betaW} we immediately get
 \begin{equation*}
  \abs{\int_{\delta}^{\infty}\frac{\eps\beta_W\left(t\right)}{t}\ee^{-t}\dt}\leq C\eps\delta^{-\alpha}\quad \text{and} \quad \abs{\int_{\Y}^{\delta}\frac{\eps\beta_W\left(t\right)}{t}\left(\ee^{-t}-1\right)\dt}\leq C\eps.
 \end{equation*}
 Furthermore we have 
 \begin{equation*}
  \begin{split}
   \int_{\Y}^{\delta}\frac{\eps\beta_W\left(t\right)}{t}\dt=\int_{\Y}^{\delta}\frac{C_W m_{\alpha}\eps t^{-\alpha}}{t}\dt+\int_{\Y}^{\delta}\frac{\eps\beta_W\left(t\right)-C_W m_{\alpha}\eps t^{-\alpha}}{t}\dt.
  \end{split}
 \end{equation*}
 Using that from Lemma~\ref{Lem:asymptotics:beta} we have $\beta_W\left(x,\mu\right)\sim C_W m_{\alpha}x^{-\alpha}$ as $x\to 0$ it follows for some $r>0$ that
 \begin{equation*}
  \begin{split}
   \abs{\int_{\Y}^{\delta}\frac{\eps\beta_W\left(t\right)-C_W m_{\alpha}\eps t^{-\alpha}}{t}\dt}\leq \frac{\eps r}{\alpha}\abs{\Y}^{-\alpha}
  \end{split}
 \end{equation*}
 if we choose $\delta<\delta_r$. On the other hand we get 
 \begin{equation*}
  \int_{\Y}^{\delta}\frac{C_W m_{\alpha}\eps t^{-\alpha}}{t}\dt=\frac{C_W m_{\alpha}\eps }{\alpha}\Y^{-\alpha}-\frac{C_W m_{\alpha}\eps\delta^{-\alpha}}{\alpha}.
 \end{equation*}
 Summarizing we have
 \begin{equation*}
  \abs{\Y^{\alpha}\Phi\left(\Y\right)-\frac{C_W m_{\alpha}\eps}{\alpha}}\leq C_{\delta_{r}}\eps\abs{\Y}^{\alpha}+\frac{\eps}{\alpha}r
 \end{equation*}
 and the claim follows by choosing first $r$ and then $\abs{\Y}$ small. 
\end{proof}

\begin{lemma}\label{Lem:asymptotics:exp:Phi}
 There exist constants $C,d>0$ such that 
 \begin{equation*}
  \abs{\ee^{-\Phi\left(\im s\right)}}\leq C\ee^{-\frac{d\eps}{\abs{s}^{\alpha}}} \quad \text{for } \abs{s}\leq 1.
 \end{equation*}
\end{lemma}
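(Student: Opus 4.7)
The starting point is the identity $\abs{\ee^{-\Phi(\im s)}} = \ee^{-\Re\Phi(\im s)}$, so the claim is equivalent to establishing
\begin{equation*}
 \Re\Phi(\im s) \geq \frac{d\eps}{\abs{s}^\alpha} - \log C \qquad\text{for } \abs{s}\leq 1.
\end{equation*}
The essential observation is that for real $s\neq 0$ one has $(\im s)^{-\alpha} = \abs{s}^{-\alpha}\ee^{-\im\alpha\pi\sgn(s)/2}$, so
\begin{equation*}
 \Re\bigl((\im s)^{-\alpha}\bigr) = \abs{s}^{-\alpha}\cos(\tfrac{\alpha\pi}{2}),
\end{equation*}
and $\cos(\alpha\pi/2) \geq \cos(\pi/4) > 0$ since $\alpha\in(0,1/2)$. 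Hence the leading term in the asymptotic expansion given by Lemma~\ref{Lem:Asymp:Phi} contributes a genuinely positive quantity of order $\eps\abs{s}^{-\alpha}$ to $\Re\Phi(\im s)$, which is exactly what is needed.

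My plan is to split $\abs{s}\leq 1$ into two regions. For $\abs{s}\leq s_0$ with $s_0$ small to be chosen, I would invoke Lemma~\ref{Lem:Asymp:Phi}. Inspecting its proof (rather than merely its statement) shows that the error term in the sense of~\eqref{eq:def:asymp} is itself of order $\eps$: writing $\Phi(\Y) = (C_W m_\alpha \eps/\alpha)\Y^{-\alpha} + R(\Y)$, the proof yields the bound $\abs{R(\Y)} \leq C_{\delta_r}\eps\abs{\Y}^\alpha\cdot\abs{\Y}^{-\alpha} + (r\eps/\alpha)\abs{\Y}^{-\alpha}$, where $r>0$ may be fixed arbitrarily small at the price of shrinking $\delta_r$. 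Choosing $r$ small enough that $r < \tfrac12 C_W m_\alpha \cos(\alpha\pi/2)$ and then $s_0\leq \delta_r$ small enough that $C_{\delta_r}s_0^{2\alpha} < \tfrac{1}{4\alpha}C_W m_\alpha \cos(\alpha\pi/2)$, one obtains
\begin{equation*}
 \Re\Phi(\im s) \geq \frac{C_W m_\alpha \cos(\alpha\pi/2)}{4\alpha}\cdot\frac{\eps}{\abs{s}^\alpha} \qquad\text{for }\abs{s}\leq s_0,
\end{equation*}
which proves the lemma in this region with any $d\leq \tfrac{C_W m_\alpha \cos(\alpha\pi/2)}{4\alpha}$ and $C=1$.

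For the complementary region $s_0 \leq \abs{s}\leq 1$, estimate~\eqref{eq:bound:Phi} gives $\abs{\Phi(\im s)}\leq C\eps$, hence $\Re\Phi(\im s) \geq -C\eps$. Since $d\eps/\abs{s}^\alpha \leq d\eps s_0^{-\alpha}$ on this set, the target inequality reduces to $\log C \geq (ds_0^{-\alpha} + C)\eps$, which is satisfied for all $\eps \leq \eps_0$ by choosing the multiplicative constant $C$ sufficiently large, with $s_0$ already fixed above. The only mildly delicate point in the whole argument is the book-keeping in the first region to verify that the remainder in Lemma~\ref{Lem:Asymp:Phi} scales linearly in $\eps$; everything else follows from elementary manipulations and the strict positivity of $\cos(\alpha\pi/2)$, which is the structural reason the bound has the claimed exponential form.
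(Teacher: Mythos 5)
Your argument is correct and follows the same route as the paper's proof: reduce to a lower bound on $\Re\Phi(\im s)$ via $\Re\bigl((\im s)^{-\alpha}\bigr)=\abs{s}^{-\alpha}\cos(\alpha\pi/2)>0$, split $\abs{s}\leq 1$ into a neighbourhood of the origin where Lemma~\ref{Lem:Asymp:Phi} applies and a complementary annulus where the crude bound~\eqref{eq:bound:Phi} suffices, and absorb the latter region into the multiplicative constant. You are right, and more explicit than the paper, that one must use the quantitative form $\abs{\Y^\alpha\Phi(\Y)-C_W m_\alpha\eps/\alpha}\leq C_{\delta_r}\eps\abs{\Y}^\alpha+(\eps/\alpha)r$ from the \emph{proof} of Lemma~\ref{Lem:Asymp:Phi} rather than merely the $\sim$-statement, since the threshold $s_0$ must be chosen independently of $\eps$ for the final constants $C,d$ to be $\eps$-uniform as required; the paper takes this for granted. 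The only slip is typographical: the condition should read $C_{\delta_r}s_0^{\alpha}<\tfrac{1}{4\alpha}C_W m_\alpha\cos(\alpha\pi/2)$, not $C_{\delta_r}s_0^{2\alpha}$, since the remainder bound produces $C_{\delta_r}\eps$ and one needs $C_{\delta_r}\abs{s}^\alpha$ to be dominated by the leading term after dividing through by $\eps\abs{s}^{-\alpha}$.
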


\begin{proof}
 We have 
 \begin{equation*}
  \Re\left(\left(\im s\right)^{-\alpha}\right)=\abs{s}^{-\alpha}\cos\left(\frac{\alpha \pi}{2}\right).
 \end{equation*}
Then from Lemma~\ref{Lem:Asymp:Phi} there exists $\delta_{*}>0$ such that
\begin{equation*}
 \abs{\Phi\left(\im s\right)-\frac{C_W m_{\alpha}\eps }{\alpha}\left(\im s\right)^{-\alpha}}\leq \frac{C_W m_{\alpha}\eps }{\alpha}\frac{\abs{s}^{-\alpha}}{2}\cos\left(\frac{\alpha\pi}{2}\right)\quad \text{for }\abs{s}\leq \delta_{*}.
\end{equation*}
On the other hand we have from~\eqref{eq:bound:Phi}
\begin{equation*}
 \abs{\Phi\left(\im s\right)-\frac{C_W m_{\alpha}\eps}{\alpha}\left(\im s\right)^{-\alpha}}\leq C_{\delta_{*}}\eps \quad \text{for }\abs{s}\geq \delta_{*}.
\end{equation*}
Thus we get
\begin{equation*}
 \begin{split}
  \abs{\ee^{-\Phi\left(\im s\right)}}\leq \ee^{\abs{\Phi\left(\im s\right)-\frac{-C_W m_{\alpha}\eps}{\alpha}\left(\im s\right)^{-\alpha}}}\ee^{-\Re\left(\frac{C_W m_{\alpha}\eps}{\alpha}\left(\im s\right)^{-\alpha}\right)}\leq C_{\delta_{*}}\ee^{\frac{C_W m_{\alpha}\eps}{2\alpha}\cos\left(\frac{\alpha \pi}{2}\right)\abs{s}^{-\alpha}},
 \end{split}
\end{equation*}
showing the claim.
\end{proof}

\begin{lemma}\label{Lem:asymptotics:xi:leq:p}
 Let $\cut$ be as in the Proof of Proposition~\ref{Prop:Q0:estimate} (see~\eqref{eq:cutoff}). Then for each $\delta>0$ there exists some constant $C>0$ such that it holds
 \begin{align*}
  \abs{\int_{0}^{\sgn\left(s\right)\infty}\frac{\cut\left(s+t\right)\ee^{\im t\xi}}{\left(s+t\right)^{1+\kappa}\left(1+\im\left(s+t\right)\right)}\ee^{\Phi\left(\im\left(s+t\right)\right)-\Phi\left(\im s\right)}\dt}&\leq C\left(\frac{\abs{s}^{\alpha-\kappa}}{\eps}+\abs{s}^{-\delta}\ee^{-\frac{B\eps}{\abs{s}^{\alpha}}}\right)\chi_{\left\{\abs{s}\leq 2\right\}},\\
  \abs{\int_{0}^{\sgn\left(s\right)\infty}\del_{s}\left(\frac{\cut\left(s+t\right)\ee^{\im t\xi}}{\left(s+t\right)^{1+\kappa}\left(1+\im\left(s+t\right)\right)}\right)\ee^{\Phi\left(\im\left(s+t\right)\right)-\Phi\left(\im s\right)}\dt}&\leq C\left(\frac{\abs{s}^{\alpha-1-\kappa}}{\eps}+\abs{s}^{-1-\kappa}\ee^{-\frac{B\eps}{\abs{s}^{\alpha}}}\right)\chi_{\left\{\abs{s}\leq 2\right\}},\\
  \abs{\int_{0}^{\sgn\left(s\right)\infty}\frac{\cut\left(s+t\right)\ee^{\im t\xi}\left(\Phi'\left(\im\left(s+t\right)\right)-\Phi'\left(\im s\right)\right)}{\left(s+t\right)^{1+\kappa}\left(1+\im\left(s+t\right)\right)}\ee^{\Phi\left(\im\left(s+t\right)\right)-\Phi\left(\im s\right)}\dt}&\leq C\left(\frac{\abs{s}^{\alpha-1-\kappa}}{\eps}+\frac{\eps\ee^{-\frac{B\eps}{\abs{s}^{\alpha}}}}{\abs{s}^{1+\alpha+\delta}}\right)\chi_{\left\{\abs{s}\leq 2\right\}}
 \end{align*}
 for $\eps$ sufficiently small.
\end{lemma}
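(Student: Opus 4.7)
The proof rests on the pointwise estimate
\begin{equation*}
\bigl|\ee^{\Phi(\im u)-\Phi(\im s)}\bigr| \leq C\exp\bigl(-c\eps(|s|^{-\alpha}-u^{-\alpha})\bigr), \quad u\geq|s|,\ |s|,u\leq 2,
\end{equation*}
which follows from Lemma~\ref{Lem:Asymp:Phi} applied to both arguments, together with $\Re((\im u)^{-\alpha})=u^{-\alpha}\cos(\alpha\pi/2)>0$. Since $\sgn(t)=\sgn(s)$ along the contour, $u=|s+t|$ is a monotone function of $t$ with $|\du|=|\dt|$, and each of the three integrals in the statement reduces (up to pointwise bounds) to a one-dimensional real integral of the form
\begin{equation*}
J_\beta \vcc= \int_{|s|}^{2}\frac{\exp(-c\eps(|s|^{-\alpha}-u^{-\alpha}))}{u^{\beta}(1+u)}\du,
\end{equation*}
with $\beta=1+\kappa$ for the first estimate and $\beta=2+\kappa$ for the second. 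The latter uses the elementary pointwise bound $|\del_{s}[\cut(s+t)/((s+t)^{1+\kappa}(1+\im(s+t)))]|\leq C|s+t|^{-2-\kappa}+C\chi_{[1,2]}(|s+t|)$, the second summand (supported where $\cut'\neq 0$) being trivially bounded.

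To estimate $J_\beta$ I would apply the change of variables $w=c\eps(|s|^{-\alpha}-u^{-\alpha})$, $\du=u^{1+\alpha}\dd{w}/(c\alpha\eps)$, which transforms it into $\frac{1}{c\alpha\eps}\int_0^{W_{\max}}g(w)\ee^{-w}\dd{w}$ with $g(w)=u(w)^{1+\alpha-\beta}/(1+u(w))$ and $W_{\max}=c\eps(|s|^{-\alpha}-2^{-\alpha})$. When $W_{\max}\gg 1$ (i.e.\ $\eps/|s|^{\alpha}\gg 1$), the factor $\ee^{-w}$ concentrates the mass on the set where $u(w)\approx|s|$ (effective width $|s|^{1+\alpha}/\eps$), so that $g(w)\approx|s|^{1+\alpha-\beta}/(1+|s|)$ and $J_\beta\leq C|s|^{1+\alpha-\beta}/\eps$; the complementary part $w\in[a/2,W_{\max}]$ (with $a=c\eps|s|^{-\alpha}$) contributes $C|s|^{1-\beta}\ee^{-c\eps/(2|s|^{\alpha})}$. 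When $W_{\max}\lesssim 1$, the exponential is bounded below and the direct estimate gives $J_\beta\leq C|s|^{1-\beta}$, absorbed into $C|s|^{-(\beta-1-\kappa+\delta)}\ee^{-B\eps/|s|^{\alpha}}$ once one invokes $\kappa\leq\delta$ (valid for $\eps$ small by Remark~\ref{Rem:smallness:kappa}). This produces the first two estimates of the lemma.

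For the third estimate I would split the integration range into a near region $u\in[|s|,2|s|]$ and a far region $u\in(2|s|,2]$. In the near region, the bound $|\Phi''(z)|\leq C\eps|z|^{-2-\alpha}$ from Lemma~\ref{Lem:properties:Phi} together with the fundamental theorem of calculus gives $|\Phi'(\im(s+t))-\Phi'(\im s)|\leq(u-|s|)C\eps|s|^{-2-\alpha}$; after the same substitution, $u-|s|\approx w|s|^{1+\alpha}/(c\alpha\eps)$ produces a supplementary linear-in-$w$ weight which, combined with $\int_0^\infty w\ee^{-w}\dd{w}=1$, yields the leading contribution $C|s|^{\alpha-1-\kappa}/\eps$. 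In the far region the crude bound $|\Phi'(\im(s+t))-\Phi'(\im s)|\leq C\eps|s|^{-1-\alpha}$ multiplied by the exponential factor $\ee^{-B\eps/|s|^{\alpha}}$ (available because $u>2|s|$ forces $|s|^{-\alpha}-u^{-\alpha}\geq(1-2^{-\alpha})|s|^{-\alpha}$) and by $\int_{2|s|}^{2}u^{-1-\kappa}(1+u)^{-1}\du\leq C|s|^{-\kappa}$ produces the claimed exponential correction $C\eps|s|^{-1-\alpha-\delta}\ee^{-B\eps/|s|^{\alpha}}$.

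The main technical obstacle is the careful bookkeeping between the two regimes $\eps/|s|^{\alpha}\gg 1$ and $\eps/|s|^{\alpha}\lesssim 1$: in each regime only one of the two summands in the stated right-hand side is effective, and one must verify that the transitional regime $\eps/|s|^{\alpha}\sim 1$ is covered. Uniformity in $\xi$ is essentially automatic, as the oscillatory factor satisfies $|\ee^{\im t\xi}|=1$ and is therefore discarded pointwise; in particular no integration by parts in $t$ is performed (which would produce an undesired $\xi$-dependent contribution through $\del_{t}\ee^{\im t\xi}=\im\xi\ee^{\im t\xi}$).
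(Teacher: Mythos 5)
Your argument is correct and follows essentially the same strategy as the paper's proof: it extracts exponential decay of $\Re(\Phi(\im(s+t))-\Phi(\im s))$ from the asymptotics of Lemma~\ref{Lem:Asymp:Phi}, isolates a leading $1/\eps$ contribution via a change of variables along the decaying exponential, and absorbs the residual $|s|^{-\kappa}$ power into the $|s|^{-\delta}$ exponential tail term using $|\kappa|\le\delta$ from Remark~\ref{Rem:smallness:kappa}. The only difference is bookkeeping: you substitute the full exponent $w=c\eps(|s|^{-\alpha}-u^{-\alpha})$ and then distinguish cases by the size of $W_{\max}$, whereas the paper first splits the $t$-integral at $t=|s|$, linearizes the exponent to $-\sigma\eps t/|s|^{1+\alpha}$ on the near part and bounds it by the constant $-B\eps/|s|^\alpha$ on the far part, before rescaling — two routes to the same estimates.
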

\begin{proof}
Note first that due to the cut-off $\cut$ it suffices to consider $\abs{s},\abs{t}\leq 2$. We will assume in the following always $s,t>0$ while the case $s,t<0$ can be treated analogously. We first rewrite and change to real variables, i.e.\ we have
 \begin{equation*}
  \Re\left(\Phi\left(\im \left(s+t\right)\right)-\Phi\left(\im s\right)\right)=\Re\int_{\im s}^{\im\left(s+t\right)}\Phi'\left(\xi\right)\dxi=\int_{s}^{s+t}\Re\left(\psi'\left(z\right)\right)\dz,
 \end{equation*}
where we define $\psi\left(z\right)\vcc=\Phi\left(\im z\right)$. Defining furthermore $\lambda\left(z\right)\vcc=\Re\left(\psi'\left(z\right)\right)$ we have due to Lemma~\ref{Lem:Asymp:Phi} that
\begin{equation*}
 \lambda\left(z\right)\sim -\frac{a\eps}{z^{1+\alpha}}\quad \text{as } z\to 0 
\end{equation*}
for some constant $a>0$. We furthermore set
\begin{equation*}
 \Lambda\left(s\right)\vcc=\int_{s}^{1}\lambda\left(z\right)\dz\sim -\frac{A\eps}{s^{\alpha}}\quad \text{for } s\to 0,
\end{equation*}
with $A>0$. Using the asymptotic behaviour of $\Lambda'=-\lambda$ close to zero, we then obtain 
\begin{equation*}
  \Re\left(\Phi\left(\im\left(s+t\right)\right)-\Phi\left(\im s\right)\right)=\Lambda\left(s\right)-\Lambda\left(s+t\right)\leq \begin{cases}
                                                    -\frac{B\eps}{s^{\alpha}}+C & s\leq t\\
                                                    -\frac{\sigma\eps}{s^{\alpha+1}}t +C & s\geq t
                                                   \end{cases}
\end{equation*}
where $A,C,\sigma>0$ are constants.

With this we obtain, using also that due to $\cut$ the integrand is supported in $\left[0,2\right]$, that
\begin{equation}\label{eq:integral:asymp}
 \begin{split}
  &\quad \abs{\int_{0}^{\sgn\left(s\right)\infty}\frac{\cut\left(s+t\right)\ee^{\im t\xi}}{\left(s+t\right)^{1+\kappa}\left(1+\im\left(s+t\right)\right)}\ee^{\Phi\left(\im\left(s+t\right)\right)-\Phi\left(\im s\right)}\dt}\leq C\int_{0}^{2}\frac{\ee^{\Lambda\left(s\right)-\Lambda\left(s+t\right)}}{\left(s+t\right)^{1+\kappa}}\dt\\
  &\leq C\int_{0}^{s}\frac{\ee^{-\frac{\sigma\eps}{s^{\alpha+1}}t}}{\left(s+t\right)^{1+\kappa}}\dt+C\int_{s}^{2}\frac{\ee^{-\frac{B\eps}{s^{\alpha}}}}{\left(s+t\right)^{1+\kappa}}\dt.
 \end{split}
\end{equation}
Changing variables we obtain for the first integral on the right-hand side
\begin{equation*}
 \begin{split}
  C\int_{0}^{s}\frac{\ee^{-\frac{\sigma\eps}{s^{\alpha+1}}t}}{\left(s+t\right)^{1+\kappa}}\dt\leq C\frac{s^{1+\alpha}}{\sigma\eps}\int_{0}^{\frac{\sigma\eps}{s^{\alpha}}}\ee^{-t}\left(s+\frac{ts^{1+\alpha}}{\sigma\eps}\right)^{-1-\kappa}\dt\leq \frac{s^{\alpha-\kappa}}{\sigma\eps}\int_{0}^{\infty}\ee^{-t}\dt\leq Cs^{\alpha-\kappa}.
 \end{split}
\end{equation*}
On the other hand we get for the second integral on the right-hand side of~\eqref{eq:integral:asymp} that
\begin{equation*}
 \begin{split}
  \int_{s}^{2}\frac{\ee^{-\frac{B\eps}{s^{\alpha}}}}{\left(s+t\right)^{1+\kappa}}\dt\leq \ee^{-\frac{B\eps}{s^{\alpha}}}\int_{s}^{2}\frac{1}{\left(s+t\right)^{1+\kappa}}\dt\leq \ee^{-\frac{B\eps}{s^{\alpha}}}\int_{s}^{2}\frac{1}{\left(s+t\right)^{1+\delta}}\dt\leq \frac{C}{s^{\delta}}\ee^{-\frac{B\eps}{s^{\alpha}}},
 \end{split}
\end{equation*}
where we used $\abs{\kappa}\leq \delta$ for $\eps$ sufficiently small. Noting that the support in $s$ of the integral on the left-hand side in~\eqref{eq:integral:asymp} is clearly contained in $\left(-2,2\right)$ by the choice of $\cut$, the first estimate of the Lemma follows.

The second estimate can be shown in the same way noting that we have
\begin{equation*}
 \abs{\del_{s}\left(\frac{\cut\left(s+t\right)\ee^{\im t\xi}}{\left(s+t\right)^{1+\kappa}\left(1+\im\left(s+t\right)\right)}\right)}\leq \frac{C}{\left(s+t\right)^{2+\kappa}},
\end{equation*}
while for the third one we have to use that for $s+t\leq 2$ it holds
\begin{equation*}
 \begin{split}
  \abs{\Phi'\left(\im\left(s+t\right)\right)-\Phi'\left(\im s\right)}=\abs{\int_{\im s}^{\im\left(s+t\right)}\Phi''\left(\xi\right)\dxi}\leq C\eps\int_{s}^{s+t}\xi^{-2-\alpha}\dxi\leq C\eps\frac{t}{s^{2+\alpha}}.
 \end{split}
\end{equation*}
\end{proof}

\begin{lemma}\label{Lem:asymptotics:p:leq:xi}
 For $\coup$ as given in the Proof of Proposition~\ref{Prop:Q0:estimate} (see~\eqref{eq:cutoff}) and each $\delta>0$ there exists some constant $C>0$ such that it holds
 \begin{align*}
  \abs{\int_{0}^{\sgn\left(s\right)\infty}\frac{\left(1-\coup\left(s+t\right)\right)}{\left(s+t\right)^{1+\kappa}\left(1+\im\left(s+t\right)\right)}\ee^{\im t\xi}\ee^{\Phi\left(\im\left(s+t\right)\right)-\Phi\left(\im s\right)}\dt}&\leq C\left(\frac{\abs{s}^{\alpha-\kappa}}{\eps}+\frac{\ee^{-\frac{B\eps}{\abs{s}^{\alpha}}}}{\abs{s}^{\delta}}\right)\chi_{\left\{\abs{s}\leq 2\right\}},\\
  \abs{\int_{0}^{\sgn\left(s\right)\infty}\del_{s}\left(\frac{\left(1-\coup\left(s+t\right)\right)}{\left(s+t\right)^{1+\kappa}\left(1+\im\left(s+t\right)\right)}\ee^{\im t\xi}\right)\ee^{\Phi\left(\im\left(s+t\right)\right)-\Phi\left(\im s\right)}\dt}&\leq C\left(\frac{\abs{s}^{\alpha-1-\kappa}}{\eps}+\frac{\ee^{-\frac{B\eps}{\abs{s}^{\alpha}}}}{\abs{s}^{1+\kappa}}\right)\chi_{\left\{\abs{s}\leq 2\right\}}
  \end{align*}
  and
  \begin{multline*}
  \abs{\int_{0}^{\sgn\left(s\right)\infty}\frac{\left(1-\coup\left(s+t\right)\right)\ee^{\im t\xi}\left(\Phi'\left(\im\left(s+t\right)\right)-\Phi'\left(\im s\right)\right)}{\left(s+t\right)^{1+\kappa}\left(1+\im\left(s+t\right)\right)}\ee^{\Phi\left(\im\left(s+t\right)\right)-\Phi\left(\im s\right)}\dt}\\*
  \leq C\left(\frac{\abs{s}^{\alpha-1-\kappa}}{\eps}+\frac{\eps\ee^{-\frac{B\eps}{\abs{s}^{\alpha}}}}{\abs{s}^{1+\alpha+\delta}}\right)\chi_{\left\{\abs{s}\leq 2\right\}},
 \end{multline*}
 for $\eps$ sufficiently small.
\end{lemma}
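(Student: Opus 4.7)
The proof will proceed in exact parallel with that of Lemma~\ref{Lem:asymptotics:xi:leq:p}, since the cut-off $1-\coup(s+t)$ plays the same role as $\cut(s+t)$ did there: both confine the integrand to a bounded set in the variable $s+t$ that contains the origin. Here $1-\coup$ vanishes for $|s+t|\geq 1$, so we may assume $|s|,|s+t|\leq 1$ throughout and it suffices to treat $s>0$ (the case $s<0$ is symmetric and the cut-off $\chi_{\{|s|\leq 2\}}$ is for free). The first step is to reduce the complex exponential difference to a real one by writing
\begin{equation*}
\Re\bigl(\Phi(\im(s+t))-\Phi(\im s)\bigr)=\Lambda(s)-\Lambda(s+t),\qquad \Lambda(z)=\int_{z}^{1}\lambda,\quad \lambda(z)=\Re\bigl(\tfrac{\dd}{\dz}\Phi(\im z)\bigr),
\end{equation*}
and noting, via Lemma~\ref{Lem:Asymp:Phi} and $\Phi'(x)=-\eps\beta_W(x)x^{-1}\ee^{-x}$, that $\lambda(z)\sim -a\eps/z^{1+\alpha}$ as $z\to 0^+$ for a suitable $a>0$. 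From this one derives the two-regime bound
\begin{equation*}
\Lambda(s)-\Lambda(s+t)\leq\begin{cases}-B\eps/s^{\alpha}+C,&s\leq t,\\ -\sigma\eps t/s^{1+\alpha}+C,&s\geq t,\end{cases}
\end{equation*}
exactly as in the proof of Lemma~\ref{Lem:asymptotics:xi:leq:p}.

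For the first inequality, using $|\ee^{\im t\xi}|=1$ and $|1+\im(s+t)|\geq 1$ bounds the modulus of the integral by $\int_{0}^{1}(s+t)^{-1-\kappa}\ee^{\Lambda(s)-\Lambda(s+t)}\dt$. Splitting this at $t=s$, the integral on $(0,s)$ is handled by the $s\geq t$ regime and the change of variable $u=\sigma\eps t/s^{1+\alpha}$, producing the bound $Cs^{\alpha-\kappa}/\eps$; the integral on $(s,1)$ is handled by the $s\leq t$ regime, yielding $C s^{-\delta}\ee^{-B\eps/s^{\alpha}}$ (since $|\kappa|\leq\delta$ for $\eps$ small). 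For the second inequality the derivative satisfies
\begin{equation*}
\Bigl|\del_{s}\Bigl(\tfrac{1-\coup(s+t)}{(s+t)^{1+\kappa}(1+\im(s+t))}\ee^{\im t\xi}\Bigr)\Bigr|\leq \tfrac{C}{(s+t)^{2+\kappa}}
\end{equation*}
(the contribution of $\coup'$ is bounded, supported in $|s+t|\in[\tfrac{1}{2},1]$, hence absorbed), and the identical splitting produces the claimed bound with one extra factor of $1/s$ relative to the first inequality.

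For the third inequality, the new ingredient is the difference $\Phi'(\im(s+t))-\Phi'(\im s)$. For $t\leq s$, the estimate $|\Phi''(\xi)|\leq C\eps|\xi|^{-2-\alpha}$ from Lemma~\ref{Lem:properties:Phi} gives $|\Phi'(\im(s+t))-\Phi'(\im s)|\leq C\eps t/s^{2+\alpha}$; inserted into the inner integral and combined with the same change of variable $u=\sigma\eps t/s^{1+\alpha}$ in the $s\geq t$ regime, the extra factor $t/s^{2+\alpha}$ produces $C s^{\alpha-1-\kappa}/\eps$ after cancellation of the prefactor $\eps$. For $t\geq s$, the crude bound $|\Phi'(\im(s+t))|+|\Phi'(\im s)|\leq C\eps|s|^{-1-\alpha}$ together with $\int_{s}^{1}(s+t)^{-1-\kappa}\dt\leq C|s|^{-\delta}$ yields the second term $\eps\ee^{-B\eps/s^{\alpha}}/|s|^{1+\alpha+\delta}$.

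\textbf{Main obstacle.} There is essentially no new difficulty beyond what was already present in Lemma~\ref{Lem:asymptotics:xi:leq:p}; the one point that requires care is the precise bookkeeping of powers of $\eps$ in the third estimate, in particular making sure that the regime $t\geq s$ does not improve to a worse negative power of $\eps$ that would spoil the stated right-hand side. Once the two-regime decomposition and the change of variable are set up correctly, each of the three inequalities reduces to a one-line computation.
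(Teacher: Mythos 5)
Your proposal is correct and takes the same route as the paper: the paper's own proof of Lemma~\ref{Lem:asymptotics:p:leq:xi} is a one-line remark that it follows from the proof of Lemma~\ref{Lem:asymptotics:xi:leq:p} because $1-\coup$ is supported in $[-1,1]$, and your argument is a faithful unwinding of that — the same two-regime bound on $\Lambda(s)-\Lambda(s+t)$, the same split at $t=s$, the same change of variable, with $1-\coup(s+t)$ playing the role $\cut(s+t)$ played before. The bookkeeping of the powers of $s$ and $\varepsilon$ in all three inequalities checks out.
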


\begin{proof}
 This follows in the same way as Lemma~\ref{Lem:asymptotics:xi:leq:p} noting that $1-\coup$ is supported in $\left[-1,1\right]$.
\end{proof}

\begin{remark}\label{Rem:generalisation}
Note that for any kernel $W$ that allows to obtain the estimates given in Lemmas~\ref{Lem:asymptotics:xi:leq:p} and~\ref{Lem:asymptotics:p:leq:xi} we could also show uniqueness of self-similar profiles to~\eqref{eq:selfsim}. 
\end{remark}

\section{Appendix}

\subsection{Elementary properties of the norm}\label{Sec:prop:norm}
In this section we collect some elementary properties of the (semi-) norms $\snorm{k,\chi}{\cdot}$ while the first Lemma shows that the semi-norms $\snorm{k,\chi}{\cdot}$ are in fact norms and they are equivalent to the norms $\fnorm{k,\chi}{\cdot}$ on the space $\esp_{k,\chi}$ for $k=0,1,2$.

\begin{lemma}\label{Lem:norm:equ}
 Let $\omega\in \esp_{k,\chi}$ for $\chi>0$ and $k\in\left\{0,1,2\right\}$. Then there exists some constant $C>0$ such that $\fnorm{k,\chi}{\omega}\leq C\snorm{k,\chi}{\omega}<\infty$ for $k=0,1,2$.
\end{lemma}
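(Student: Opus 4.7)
The plan is to prove by downward induction on $\ell$ that for each $\ell \in \{0,1,\dots,k-1\}$ there is a constant $C_\ell>0$ with
\[
\snorm{\ell,\chi}{\omega} \leq C_\ell\, \snorm{\ell+1,\chi}{\omega},
\]
after which summation yields $\fnorm{k,\chi}{\omega} \leq C\snorm{k,\chi}{\omega}$, and finiteness follows from $\omega\in\esp_{k,\chi}$ by definition. The key observation is that since $\chi>0$, the bound
\[
\abs{\del_p^{\ell+1}\Omega(p)} \leq \snorm{\ell+1,\chi}{\omega}\,\frac{(1+p)^{1-\chi}}{p^{2+\ell}}
\]
behaves like $p^{-1-\ell-\chi}$ as $p\to\infty$, so $\del_p^{\ell}\Omega(p)\to 0$ as $p\to\infty$ for every $\ell\leq k$. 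Therefore we have the representation
\[
\del_p^{\ell}\Omega(p) = -\int_p^{\infty} \del_s^{\ell+1}\Omega(s)\ds,
\]
which is the tool that passes from the $(\ell{+}1)$-th to the $\ell$-th semi-norm.

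I would then split the estimate of this integral into $p\geq 1$ and $p<1$. For $p\geq 1$, using $(1+s)^{1-\chi}\leq 2^{1-\chi}s^{1-\chi}$, I would obtain
\[
\abs{\del_p^\ell \Omega(p)} \leq C\,\snorm{\ell+1,\chi}{\omega}\int_p^{\infty} s^{-1-\ell-\chi}\ds = C\,\snorm{\ell+1,\chi}{\omega}\,p^{-\ell-\chi},
\]
and multiplication by $p^{1+\ell}/(1+p)^{1-\chi}$ gives the desired bound because $p^{1-\chi}/(1+p)^{1-\chi}\leq 1$. For $0<p<1$ I would split the integral as $\int_p^1+\int_1^{\infty}$: the tail $\int_1^{\infty}$ is uniformly bounded by $C\snorm{\ell+1,\chi}{\omega}$ by the same estimate, while on $(p,1)$ the factor $(1+s)^{1-\chi}\leq 2^{1-\chi}$ gives
\[
\int_p^{1}\frac{(1+s)^{1-\chi}}{s^{2+\ell}}\ds \leq C\, p^{-1-\ell}.
\]
Multiplying again by $p^{1+\ell}/(1+p)^{1-\chi}$ and noting $(1+p)^{\chi-1}\leq 1$ yields the bound $C\snorm{\ell+1,\chi}{\omega}$ uniformly in $p\in(0,1)$.

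Combining the two regions gives $\snorm{\ell,\chi}{\omega}\leq C_\ell\snorm{\ell+1,\chi}{\omega}$; iterating from $\ell=k-1$ down to $\ell=0$ produces $\snorm{\ell,\chi}{\omega}\leq \widetilde{C}_\ell\snorm{k,\chi}{\omega}$ for every $\ell<k$, and therefore
\[
\fnorm{k,\chi}{\omega} = \sum_{\ell=0}^{k}\snorm{\ell,\chi}{\omega}\leq C\,\snorm{k,\chi}{\omega}.
\]
There is no real obstacle here; the only point that requires a little attention is the justification that $\del_p^{\ell}\Omega(p)\to 0$ as $p\to\infty$, which is what allows the fundamental-theorem-of-calculus identity to be applied — and this in turn uses crucially that $\chi>0$, so that $(1+p)^{1-\chi}/p^{1+\ell}$ decays.
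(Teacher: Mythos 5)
Your proof is correct and takes essentially the same route as the paper: write $\partial_p^\ell\Omega(p)$ as an integral of $\partial_p^{\ell+1}\Omega$ over $(p,\infty)$ (justified by the decay at infinity that comes from $\chi>0$), then bound the integral using $\snorm{\ell+1,\chi}{\omega}$ and iterate downward. The only difference is cosmetic: the paper compresses the case split $p\le 1$ versus $p\ge 1$ into the shorthand $\Lambda_\chi$ from~\eqref{eq:Lambda}, whereas you carry out the two regions explicitly, and you additionally spell out the vanishing of $\partial_p^\ell\Omega$ at infinity that the paper leaves implicit. (One small caveat: your bound $(1+s)^{1-\chi}\le 2^{1-\chi}$ on $(p,1)$ reads backwards when $\chi>1$; writing $\le C$ there avoids the issue, though the relevant weights in the paper always have $\chi\le 1$ anyway.)
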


\begin{proof}
 Assuming $k=2$ and $\snorm{2,\chi}{\omega}<\infty$ and denoting by $\Omega$ the Laplace transform of $\omega$ we have
 \begin{equation*}
  \begin{split}
   \abs{\Omega'\left(p\right)}&=\abs{\int_{p}^{\infty}\Omega''\left(s\right)\ds}\leq C\snorm{2,\chi}{\omega}\int_{p}^{\infty}\frac{\left(s+1\right)^{1-\chi}}{s^3}\ds\leq C\snorm{2,\chi}{\omega}\frac{\Lambda_{\chi}\left(p\right)}{p}.
  \end{split}
 \end{equation*}
 Together with~\eqref{eq:Lambda} this then shows $\snorm{1,\chi}{\omega}\leq C\snorm{2,\chi}{\omega}$. In the same way we also obtain $\fnorm{0,\chi}{\omega}\leq C\snorm{1,\chi}{\omega}$ and this finishes the proof.
\end{proof}

We also have the following interpolation inequality which is a consequence of the Landau-inequality.

\begin{lemma}\label{Lem:interpolation}
 Let $\delta>0$. Then there exists some constant $C>0$ such that for every $\omega\in\esp$ it holds
 \begin{equation*}
  \fnorm{1}{\omega}\leq C\fnorm{0}{\omega}+\delta\snorm{2}{\omega}.
 \end{equation*}
\end{lemma}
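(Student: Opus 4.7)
Since $\fnorm{1}{\omega}=\snorm{0}{\omega}+\snorm{1}{\omega}=\fnorm{0}{\omega}+\snorm{1}{\omega}$, it suffices to establish the weighted Landau-type estimate
\begin{equation*}
 \snorm{1}{\omega}\leq C_{\delta}\,\fnorm{0}{\omega}+\delta\,\snorm{2}{\omega},
\end{equation*}
and then the lemma follows by adding $\fnorm{0}{\omega}$ to both sides. The plan is to represent $\Omega'$ as a finite-difference plus Taylor remainder and balance the two scales via the parameter in the finite difference.

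More precisely, let $\Omega=\T\omega$. Writing out Taylor's formula for $\Omega(p+h)$ with integral remainder yields, for any $p>0$ and $h>0$,
\begin{equation*}
 h\,\Omega'(p)=\Omega(p+h)-\Omega(p)-\int_{p}^{p+h}(p+h-s)\,\Omega''(s)\ds,
\end{equation*}
and therefore
\begin{equation*}
 \abs{\Omega'(p)}\leq\frac{\abs{\Omega(p+h)}+\abs{\Omega(p)}}{h}+\frac{h}{2}\sup_{s\in[p,p+h]}\abs{\Omega''(s)}.
\end{equation*}
Now I would substitute the pointwise bounds that are built into the seminorms, namely
\begin{equation*}
 \abs{\Omega(q)}\leq\snorm{0}{\omega}\,\frac{(1+q)^{1-\theta}}{q},\qquad \abs{\Omega''(s)}\leq\snorm{2}{\omega}\,\frac{(1+s)^{1-\theta}}{s^{3}}.
\end{equation*}
The key idea is to choose $h=\eps p$ with $\eps\in(0,1]$ to be optimised. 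With this choice $\frac{1+p+h}{1+p}\leq 1+\eps\leq 2$, so the weight $(1+p+h)^{1-\theta}$ is comparable to $(1+p)^{1-\theta}$ up to the absolute constant $2^{1-\theta}$. Multiplying through by $\tfrac{p^{2}}{(1+p)^{1-\theta}}$ one obtains
\begin{equation*}
 \frac{p^{2}}{(1+p)^{1-\theta}}\abs{\Omega'(p)}\leq \frac{C}{\eps}\,\snorm{0}{\omega}+C\eps\,\snorm{2}{\omega},
\end{equation*}
with a constant $C$ depending only on $\theta$. Taking the supremum over $p>0$ gives $\snorm{1}{\omega}\leq \tfrac{C}{\eps}\snorm{0}{\omega}+C\eps\,\snorm{2}{\omega}$, and finally choosing $\eps=\delta/C$ yields the desired inequality.

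There is no real obstacle here; the only point that requires care is the bookkeeping of the weight $(1+\cdot)^{1-\theta}$ under the shift $p\mapsto p+h$, which is precisely why the choice $h=\eps p$ (rather than, say, $h$ independent of $p$) is made, so that the same weight controls all three quantities uniformly in $p$ both near zero and at infinity.
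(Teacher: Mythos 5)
Your proof is correct, and it reaches the same Landau-type interpolation as the paper but by a somewhat different route. The paper localizes to dyadic blocks $[R,2R]$, applies the classical (unweighted) Landau inequality on each block, observes that the weight $p^{1+k}/(1+p)^{1-\theta}$ is comparable to a constant across such a block uniformly in $R$, and thereby obtains the multiplicative estimate $\snorm{1}{\omega}\leq 2\fnorm{0}{\omega}^{1/2}\snorm{2}{\omega}^{1/2}$, from which the additive form of the lemma follows by Young's inequality. You instead reprove the Landau estimate from scratch via Taylor's formula with integral remainder, choosing the finite-difference step $h=\eps p$ so that the weight stays comparable over the window $[p,p+h]$; optimizing in $\eps$ gives the additive estimate directly. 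Both arguments hinge on the same scaling observation (the weight $(1+p)^{1-\theta}/p^{k+1}$ varies only by an absolute factor over a multiplicative window around $p$), and both exploit that $(1+s)^{1-\theta}/s^3$ is monotone so the sup over the window sits at its left endpoint. Your version is marginally more self-contained, since it does not cite Landau's inequality as a black box; the paper's version yields the stronger multiplicative interpolation as an intermediate step, which could be useful elsewhere, but is not needed for the statement as given. One small remark: the final choice $\eps=\delta/C$ implicitly assumes $\delta\leq C$, which is harmless since the conclusion for small $\delta$ implies it for all larger $\delta$, but it is worth stating.
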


\begin{proof}
 It suffices to show that $\snorm{1}{\omega}\leq C\fnorm{0}{\omega}+\delta\snorm{2}{\omega}$. Denoting by $\Omega$ the Laplace transform of $\omega$ it holds from the Landau-inequality for every $R>0$ that
\begin{equation}\label{eq:Landau}
 \sup_{R\leq p\leq 2R}\abs{\Omega'\left(p\right)}^2\leq 4\sup_{R\leq p\leq 2R}\abs{\Omega\left(p\right)}\sup_{R\leq p\leq 2R}\abs{\Omega''\left(p\right)}.
\end{equation}
On the other hand there exists some constant $C>0$ independent of $R$ such that
\begin{equation*}
 \sup_{R\leq p\leq 2R}\left(\frac{p^2}{\left(1+p\right)^{1-\theta}}\right)^2\leq C\inf_{R\leq p\leq 2R}\left(\frac{p}{\left(1+p\right)^{1-\theta}}\right)\inf_{R\leq p\leq 2R}\left(\frac{p^3}{\left(1+p\right)^{1-\theta}}\right).
\end{equation*}
Using this in~\eqref{eq:Landau} it follows
\begin{equation*}
 \sup_{R\leq p\leq 2R}\left(\frac{p^2}{\left(1+p\right)^{1-\theta}}\abs{\Omega'\left(p\right)}\right)^2\leq C\sup_{R\leq p\leq 2R}\left(\frac{p}{\left(1+p\right)^{1-\theta}}\abs{\Omega\left(p\right)}\right)\sup_{R\leq p\leq 2R}\left(\frac{p^3}{\left(1+p\right)^{1-\theta}}\abs{\Omega''\left(p\right)}\right)
\end{equation*}
and thus by taking the supremum in $R$ on both sides we get $\snorm{1}{\omega}\leq 2\fnorm{0}{\omega}^{1/2}\snorm{2}{\omega}^{1/2}$. The claim now follows from Young's inequality.
\end{proof}

We also mention the following elementary estimate.
\begin{lemma}\label{Lem:elem:est:norm}
 Let $k\in\left\{0,1,2\right\}$, $n\in\N$ and $\omega\in\esp_k$. Then it holds
 \begin{equation*}
  \snorm{k}{\ee^{-n\cdot} \omega}\leq \snorm{k}{\omega} \quad \text{and} \quad \snorm{k}{\left(1-\ee^{-n\cdot}\right)\omega}\leq 2\snorm{k}{\omega}.
 \end{equation*}
\end{lemma}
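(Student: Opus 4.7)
The strategy is to translate the claimed inequalities into statements about Laplace transforms using the isometry~\eqref{eq:isometry}, and then estimate pointwise in $p$. Denote by $\Omega$ the Laplace transform of $\omega$. Then $\T(\ee^{-n\cdot}\omega)(p)=\Omega(p+n)$ and $\T((1-\ee^{-n\cdot})\omega)(p)=\Omega(p)-\Omega(p+n)$. Differentiating in $p$ commutes with the shift, so for both claims it suffices to control $\Omega^{(k)}(p+n)$ by $\snorm{k}{\omega}$ in the weighted norm.

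For the first inequality, I would apply the definition of $\snorm{k}{\omega}$ at the point $p+n$ to get the pointwise bound
\begin{equation*}
\abs{\Omega^{(k)}(p+n)}\leq \snorm{k}{\omega}\,\frac{(1+p+n)^{1-\theta}}{(p+n)^{1+k}}.
\end{equation*}
Multiplying by the weight $p^{1+k}/(1+p)^{1-\theta}$ reduces the claim to the pointwise inequality
\begin{equation*}
\frac{p^{1+k}}{(p+n)^{1+k}}\cdot\frac{(1+p+n)^{1-\theta}}{(1+p)^{1-\theta}}\leq 1 \qquad\text{for all }p>0,\ n\in\N.
\end{equation*}
The key elementary step is to verify $\frac{1+p+n}{1+p}\leq \frac{p+n}{p}$, which is equivalent to $p(1+p+n)\leq (1+p)(p+n)$, i.e.\ to $0\leq n$. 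Since $1-\theta>0$, raising this to the power $1-\theta$ preserves the inequality, so the weighted ratio is bounded by $(p/(p+n))^{k+\theta}\leq 1$ because $k+\theta\geq 0$. Taking the supremum over $p>0$ yields $\snorm{k}{\ee^{-n\cdot}\omega}\leq\snorm{k}{\omega}$.

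For the second inequality, I would use the triangle inequality on $\Omega^{(k)}(p)-\Omega^{(k)}(p+n)$ to split
\begin{equation*}
\snorm{k}{(1-\ee^{-n\cdot})\omega}\leq \sup_{p>0}\frac{p^{1+k}}{(1+p)^{1-\theta}}\abs{\Omega^{(k)}(p)} + \sup_{p>0}\frac{p^{1+k}}{(1+p)^{1-\theta}}\abs{\Omega^{(k)}(p+n)},
\end{equation*}
where the first summand is exactly $\snorm{k}{\omega}$ and the second summand is $\snorm{k}{\ee^{-n\cdot}\omega}\leq\snorm{k}{\omega}$ by the first part. This gives the factor of $2$.

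There is no serious obstacle; the only thing to check is the elementary pointwise inequality $(1+p+n)/(1+p)\leq (p+n)/p$, after which everything reduces to the definition of $\snorm{k}{\cdot}$ and the triangle inequality.
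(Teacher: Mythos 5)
Your proof is correct and follows essentially the same route as the paper: the paper observes that the weight $p^{k+1}/(1+p)^{1-\theta}$ is non-decreasing in $p$ (equivalent to your elementary inequality $(1+p+n)/(1+p)\leq(p+n)/p$ together with the exponent bookkeeping) and concludes the first estimate, with the second following by the triangle inequality.
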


\begin{proof}
 Using that the Laplace transform of $\ee^{-n\cdot} \omega$ is given by $\Omega\left(\cdot+n\right)$ and that $\frac{p^{k+1}}{\left(1+p\right)^{1-\theta}}$ is non-decreasing, the first estimate follows from the definition of the norm. The second estimate then follows directly from the first one.
\end{proof}

\begin{remark}\label{Rem:elem:norm:est}
 Although we need this estimate for general $n\in\N$, we emphasize here in particular the case $n=1$ which reads as
\begin{equation*}
 \fnorm{k}{\zeta \omega}\leq \fnorm{k}{\omega} \quad \text{and} \quad \fnorm{k}{\left(1-\zeta \right)\omega}\leq 2\fnorm{k}{\omega} \quad \text{for } k=0,1,2
\end{equation*}
and correspondingly for the semi-norms. We also note that formulated in terms of the $\lnorm{k}{\cdot}$-norm this reads as
\begin{equation*}
 \lnorm{k}{\left(\T\omega\right)\left(\cdot+1\right)}\leq \fnorm{k}{\omega}\quad \text{and} \quad \lnorm{k}{\left(\T\omega\right)\left(\cdot\right)-\left(\T\omega\right)\left(\cdot+1\right)}\leq 2\fnorm{k}{\omega}\quad \text{for } k=0,1,2.
\end{equation*}
\end{remark}

Actually we can estimate the quantity $\snorm{\ell}{\left(1-\ee^{-n\cdot}\right)\omega}$ in terms of $\snorm{\ell}{\left(1-\zeta\right)\omega}$.

\begin{lemma}\label{Lem:norm:est:split}
 For any $n\in\N$ and any $\omega\in\esp_{\ell}$ for $\ell\leq 2$ it holds
 \begin{equation*}
  \snorm{\ell}{\left(1-\ee^{-n\cdot}\right)\omega}\leq n^{2-\theta}\snorm{\ell}{\left(1-\zeta\right)\omega}.
 \end{equation*}
\end{lemma}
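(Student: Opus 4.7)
The strategy is to exploit the telescoping identity
\[
 1-\ee^{-nx}=(1-\ee^{-x})\sum_{k=0}^{n-1}\ee^{-kx},
\]
which converts the problem into controlling a sum of shifted copies of $(1-\zeta)\omega$ that have already been handled by Lemma~\ref{Lem:elem:est:norm}.

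Multiplying the identity by $\omega(x)$ and writing $\psi\vcc=(1-\zeta)\omega$ gives the pointwise decomposition
\[
 (1-\ee^{-n\cdot})\omega=\sum_{k=0}^{n-1}\ee^{-k\cdot}\psi.
\]
Since the Laplace transform and differentiation are linear, the semi-norm $\snorm{\ell}{\cdot}$ (defined as the supremum of the modulus of a derivative of the Laplace transform) is subadditive, so the triangle inequality together with Lemma~\ref{Lem:elem:est:norm} applied to each shifted term yields
\[
 \snorm{\ell}{(1-\ee^{-n\cdot})\omega}\le\sum_{k=0}^{n-1}\snorm{\ell}{\ee^{-k\cdot}\psi}\le\sum_{k=0}^{n-1}\snorm{\ell}{\psi}=n\snorm{\ell}{(1-\zeta)\omega}.
\]
The use of Lemma~\ref{Lem:elem:est:norm} is legitimate because its second assertion guarantees $\psi\in\esp_{\ell}$ whenever $\omega\in\esp_{\ell}$, and its first assertion (valid for every $k\in\N$) then gives the bound $\snorm{\ell}{\ee^{-k\cdot}\psi}\le\snorm{\ell}{\psi}$.

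Finally, since $\theta<\tfrac12<1$, the elementary inequality $n\le n^{2-\theta}$ holds for every $n\in\N$, which yields the stated bound. There is no real obstacle here: the proof reduces to the telescoping identity plus the already-established monotonicity of $p\mapsto p^{\ell+1}/(1+p)^{1-\theta}$ that underlies Lemma~\ref{Lem:elem:est:norm}.
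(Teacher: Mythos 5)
Your proof is correct. You use the same telescoping identity $1-\ee^{-nz}=(1-\ee^{-z})\sum_{k=0}^{n-1}\ee^{-kz}$ as the paper, but then you invoke Lemma~\ref{Lem:elem:est:norm} directly on each shifted term $\ee^{-k\cdot}\psi$, whereas the paper recomputes the Laplace-side estimate from scratch: it bounds each term by $\snorm{\ell}{(1-\zeta)\omega}\frac{(k+p+1)^{1-\theta}}{(k+p)^{\ell+1}}$, factors out $\frac{(1+p)^{1-\theta}}{p^{\ell+1}}$, and then uses the (loose) estimate $\left(\frac{k+p+1}{1+p}\right)^{1-\theta}\frac{p}{k+p}\leq(1+k)^{1-\theta}$ together with $\sum_{k=0}^{n-1}(1+k)^{1-\theta}\leq n^{2-\theta}$. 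Your route is cleaner and in fact yields the sharper bound $n\snorm{\ell}{(1-\zeta)\omega}$, since the monotonicity of the weight $p\mapsto p^{\ell+1}/(1+p)^{1-\theta}$ makes each shifted term contribute at most $\snorm{\ell}{\psi}$ with constant $1$; you then slacken $n\le n^{2-\theta}$ to match the stated conclusion. The only cosmetic slip is invoking the first assertion of Lemma~\ref{Lem:elem:est:norm} "for every $k\in\N$" while the $k=0$ term requires no lemma at all ($\ee^{-0\cdot}\psi=\psi$); this does not affect the argument.
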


\begin{proof}
 For $n\in\N$ we can write $1-\ee^{-zn}=\left(1-\ee^{-z}\right)\sum_{k=0}^{n-1}\ee^{-kz}$. Thus we obtain
\begin{equation*}
 \begin{split}
  &\quad\int_{0}^{\infty}z^{\ell}\omega\left(z\right)\left(1-\ee^{-nz}\right)\ee^{-pz}\dz=\sum_{k=0}^{n-1}\int_{0}^{\infty}\left(1-\zeta\left(z\right)\right)z^{\ell}\omega\left(z\right)\ee^{-\left(k+p\right)z}\dz.
 \end{split}
\end{equation*}
Then we find
\begin{equation*}
 \begin{split}
  &\quad\abs{\del_p^{\ell}\int_{0}^{\infty}\left(1-\ee^{-nz}\right)\omega\left(z\right)\ee^{-pz}\dz}\leq \sum_{k=0}^{n-1}\snorm{\ell}{\left(1-\zeta\right)\omega}\frac{\left(k+p+1\right)^{1-\theta}}{\left(k+p\right)^{\ell+1}}\\
  &\leq\snorm{\ell}{\left(1-\zeta\right)\omega}\frac{\left(1+p\right)^{1-\theta}}{p^{\ell+1}}\sum_{k=0}^{n-1}\left(\frac{k+p+1}{1+p}\right)^{1-\theta}\frac{p}{k+p}\leq \snorm{\ell}{\left(1-\zeta\right)\omega}\frac{\left(1+p\right)^{1-\theta}}{p^{\ell+1}}\sum_{k=0}^{n-1}\left(1+k\right)^{1-\theta}\\
  &\leq n^{2-\theta}\snorm{\ell}{\left(1-\zeta\right)\omega}\frac{\left(1+p\right)^{1-\theta}}{p^{\ell+1}}.
 \end{split}
\end{equation*}
This then shows the claim.
\end{proof}

\begin{lemma}\label{Lem:reg:weight}
 For all $\chi>0$ there exists some constant $C>0$ such that
 \begin{equation*}
  \fnorm{k,\chi+1}{\left(1-\zeta\right)\omega}\leq C\fnorm{k+1,\chi}{\omega} 
 \end{equation*}
 holds for $k=0,1$ and all $\omega\in\esp_{k+1,\chi}$.
\end{lemma}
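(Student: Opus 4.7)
The plan is to express everything on the Laplace transform side and exploit the fact that multiplication by $(1-\zeta)=1-\ee^{-x}$ corresponds to taking the difference $\Omega(p)-\Omega(p+1)$, which by the fundamental theorem of calculus equals $-\int_p^{p+1}\Omega'(s)\ds$ and thus gains one extra power of $s^{-1}$ in the decay at infinity compared with $\Omega$ itself. This gain of one power is precisely what is needed to pass from the weight $\chi$ on the right-hand side to the weight $\chi+1$ on the left-hand side, at the cost of one extra derivative (which is why $\fnorm{k+1,\chi}{\omega}$ appears on the right).

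More concretely, by the isometry~\eqref{eq:isometry} it suffices to show that for every $\ell\in\{0,\dots,k\}$
\begin{equation*}
 \sup_{p>0}\frac{p^{\ell+1}}{(1+p)^{-\chi}}\Bigl|\tfrac{\dd^{\ell}}{\dd p^{\ell}}\bigl(\Omega(p)-\Omega(p+1)\bigr)\Bigr|\leq C\lnorm{k+1,\chi}{\Omega},
\end{equation*}
where $\Omega=\T\omega$. Writing
\begin{equation*}
 \tfrac{\dd^{\ell}}{\dd p^{\ell}}\bigl(\Omega(p)-\Omega(p+1)\bigr)=-\int_{p}^{p+1}\Omega^{(\ell+1)}(s)\ds
\end{equation*}
and using the bound $|\Omega^{(\ell+1)}(s)|\leq\lnorm{k+1,\chi}{\Omega}\cdot(1+s)^{1-\chi}/s^{\ell+2}$ (which is valid because $\ell+1\leq k+1$), I would estimate the integral separately for $p\geq 1$ and $p\leq 1$.

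For $p\geq 1$ the integrand is essentially constant on $[p,p+1]$, giving
\begin{equation*}
 \Bigl|\int_p^{p+1}\Omega^{(\ell+1)}(s)\ds\Bigr|\leq C\lnorm{k+1,\chi}{\Omega}\frac{(1+p)^{1-\chi}}{p^{\ell+2}},
\end{equation*}
and after multiplication by $p^{\ell+1}(1+p)^{\chi}$ the right-hand side becomes $C\lnorm{k+1,\chi}{\Omega}\cdot(1+p)/p\leq 2C\lnorm{k+1,\chi}{\Omega}$. For $p\leq 1$ the factor $(1+s)^{1-\chi}$ is bounded and an explicit integration of $s^{-\ell-2}$ on $[p,p+1]$ yields a bound of order $p^{-\ell-1}$, so that after multiplication by $p^{\ell+1}(1+p)^{\chi}$ one obtains the bound $C\cdot 2^{\chi}\lnorm{k+1,\chi}{\Omega}$. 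Summing over $\ell\in\{0,\dots,k\}$ then gives the claim.

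There is no serious obstacle: the argument is a clean application of the representation $\T((1-\zeta)\omega)(p)=\Omega(p)-\Omega(p+1)$ together with the mean value estimate on $[p,p+1]$. The only subtlety is the split between small and large $p$, which is handled exactly as in the first step of the proof of Proposition~\ref{Prop:Lin:inv}, and the routine but slightly tedious verification that the weights match in each regime.
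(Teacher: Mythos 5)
Your proof is correct and takes essentially the same route as the paper: it passes to the Laplace transform, writes $\Omega(p)-\Omega(p+1)=-\int_p^{p+1}\Omega'(s)\ds$ (and likewise after differentiating $\ell$ times), and then splits into $p\le 1$ and $p\ge 1$. The only minor cosmetic difference is that for $p\le 1$ the paper bounds $\Omega(p)$ and $\Omega(p+1)$ separately through $\snorm{0,\chi}{\omega}$ instead of using the integral of $\Omega'$, but both routes give the same bounded contribution.
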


\begin{proof}
 Denoting as before by $\Omega$ the Laplace transform of $\omega$ we have
 \begin{equation*}
  \begin{split}
   \fnorm{0,\chi+1}{\left(1-\zeta\right)\omega}=\sup_{p>0}p\left(1+p\right)^{\chi}\abs{\Omega\left(p\right)-\Omega\left(p+1\right)}.
  \end{split}
 \end{equation*}
We first consider the region $p\leq 1$ and get
\begin{equation*}
 \begin{split}
  &\quad\sup_{0<p\leq 1}p\left(1+p\right)^{\chi}\abs{\Omega\left(p\right)-\Omega\left(p+1\right)}\leq C\sup_{0<p\leq 1}p\left(\abs{\Omega\left(p\right)}+\abs{\Omega\left(p+1\right)}\right)\\
  &\leq C\fnorm{0,\chi}{\omega}\sup_{0<p\leq 1}\left(\left(1+p\right)^{1-\chi}+\frac{p\left(2+p\right)^{1-\chi}}{1+p}\right)\leq C\fnorm{0,\chi}{\omega}.
 \end{split}
\end{equation*}
On the other hand, for $p>1$ we first notice that
\begin{equation*}
 \begin{split}
  &\quad\abs{\Omega\left(p+1\right)-\Omega\left(p\right)}\leq \int_{p}^{p+1}\abs{\Omega'\left(s\right)}\ds\leq \fnorm{1,\chi}{\omega}\int_{p}^{p+1}\frac{\left(1+s\right)^{1-\chi}}{s^2}\ds\leq C\frac{\fnorm{1,\chi}{\omega}}{p^{1+\chi}}.
 \end{split}
\end{equation*}
Using this we obtain for $p> 1$
\begin{equation*}
 \begin{split}
  \sup_{p>1}p\left(1+p\right)^{\chi}\abs{\Omega\left(p\right)-\Omega\left(p+1\right)}\leq C\fnorm{1,\chi}{\omega}\sup_{p>1}\frac{p\left(1+p\right)^{\chi}}{p^{1+\chi}}\leq C\fnorm{1,\chi}{\omega},
 \end{split}
\end{equation*}
showing the claim for $k=0$. The case $k=1$ is similar.
\end{proof}

\begin{remark}\label{Rem:reg:weight}
 Reformulated for the space $X_{k,\chi}$, Lemma~\ref{Lem:reg:weight} reads as
 \begin{equation*}
  \lnorm{k,\chi+1}{\Omega\left(\cdot\right)-\Omega\left(\cdot+1\right)}\leq C\lnorm{k+1,\chi}{\Omega}
 \end{equation*}
 for all $\Omega\in X_{k+1,\chi}$ and $\chi>0$, where the constant may depend on $\chi$.
\end{remark}

 We furthermore have the following Lemma which holds for non-negative functions $\omega$.
\begin{lemma}\label{Lem:norm:est:pos}
 Assume that $\omega\in\mathcal{M}_{+}$ and $\fnorm{0}{\omega}<\infty$. Then it holds $\fnorm{2}{\omega}\leq C\fnorm{0}{\omega}$.
\end{lemma}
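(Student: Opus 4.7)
The key point is that for a non-negative measure $\omega\in\mathcal{M}_{+}$, one has the elementary bounds
\[
 |\Omega'(p)|\leq \int_0^\infty x\,\omega(x)\ee^{-px}\dx,\qquad |\Omega''(p)|\leq \int_0^\infty x^2\,\omega(x)\ee^{-px}\dx,
\]
so the semi-norms $\snorm{1}{\omega}$ and $\snorm{2}{\omega}$ can be controlled by the Laplace transform of $\omega$ evaluated at a shifted point, which in turn is controlled by $\snorm{0}{\omega}=\fnorm{0}{\omega}$.

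The plan is to exploit the elementary inequality $x^k\ee^{-px/2}\leq C_k/p^k$ for all $x,p>0$ and $k\in\mathbb{N}$. Multiplying by $\ee^{-px/2}$ gives $x^k\ee^{-px}\leq C_k p^{-k}\ee^{-px/2}$, and therefore, using non-negativity of $\omega$,
\[
 |\Omega'(p)|\leq \frac{C}{p}\int_0^\infty \omega(x)\ee^{-px/2}\dx=\frac{C}{p}\,\Omega(p/2),\qquad |\Omega''(p)|\leq \frac{C}{p^2}\,\Omega(p/2).
\]
From the definition of $\snorm{0}{\omega}=\fnorm{0}{\omega}$, the Laplace transform at $p/2$ satisfies
\[
 |\Omega(p/2)|\leq \fnorm{0}{\omega}\,\frac{(1+p/2)^{1-\theta}}{p/2}\leq C\,\fnorm{0}{\omega}\,\frac{(1+p)^{1-\theta}}{p}.
\]

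Combining these estimates, for every $p>0$,
\[
 \frac{p^2}{(1+p)^{1-\theta}}|\Omega'(p)|\leq \frac{p^2}{(1+p)^{1-\theta}}\cdot\frac{C}{p}\cdot C\,\fnorm{0}{\omega}\,\frac{(1+p)^{1-\theta}}{p}=C\,\fnorm{0}{\omega},
\]
and likewise
\[
 \frac{p^3}{(1+p)^{1-\theta}}|\Omega''(p)|\leq \frac{p^3}{(1+p)^{1-\theta}}\cdot\frac{C}{p^2}\cdot C\,\fnorm{0}{\omega}\,\frac{(1+p)^{1-\theta}}{p}=C\,\fnorm{0}{\omega}.
\]
Taking suprema in $p$ yields $\snorm{1}{\omega}\leq C\fnorm{0}{\omega}$ and $\snorm{2}{\omega}\leq C\fnorm{0}{\omega}$, and summing gives $\fnorm{2}{\omega}=\snorm{0}{\omega}+\snorm{1}{\omega}+\snorm{2}{\omega}\leq C\fnorm{0}{\omega}$. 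There is no serious obstacle here — non-negativity replaces the cancellations one would need for signed measures, and the proof is entirely routine Laplace-transform estimation once that observation is made.
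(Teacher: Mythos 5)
Your proof is correct and follows essentially the same route as the paper: use non-negativity of $\omega$ to bound $|\Omega^{(k)}(p)|$ by $C p^{-k}\Omega(p/2)$ via the elementary inequality $y^k\ee^{-y/2}\leq C_k$, then feed in the definition of $\fnorm{0}{\cdot}$. The paper's proof differs only cosmetically, writing out the $k=1$ case and noting that $k=2$ is analogous.
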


\begin{proof}
 Using the non-negativity of $\omega$ we can write
 \begin{equation*}
  \abs{\Omega'\left(p\right)}=\int_{0}^{\infty}x\omega\left(x\right)\ee^{-px}\dx=\frac{1}{p}\int_{0}^{\infty}px\ee^{-\frac{px}{2}}\omega\left(x\right)\ee^{-\frac{p}{2}x}\dx.
 \end{equation*}
Furthermore as $y\ee^{-y/2}\leq 1$ for all $y\geq 0$ we obtain
\begin{equation*}
 \abs{\Omega'\left(p\right)}\leq \frac{1}{p}\int_{0}^{\infty}\omega\left(x\right)\ee^{-\frac{p}{2}x}\dx=\frac{\Omega\left(\frac{p}{2}\right)}{p}\leq 2\fnorm{0}{\omega}\frac{\left(1+\frac{p}{2}\right)^{1-\theta}}{p^2}\leq 2\fnorm{0}{\omega}\frac{\left(1+p\right)^{1-\theta}}{p^2}.
\end{equation*}
Whence $\snorm{1}{\omega}\leq C\fnorm{0}{\omega}$. In the same way we can show $\snorm{2}{\omega}\leq C\fnorm{0}{\omega}$.
\end{proof}

\subsection{Proof of Proposition~\ref{P.repkernels} and some useful estimates on $\Gamma$}\label{Sec:appendix:rep:ker}

\begin{proof}[Proof of Proposition~\ref{P.repkernels}]
Denoting $G(z)\vcc=W(z,1)$, the general approach will be, to construct a (continuous) function $\phi\colon(0,\infty)\to\R$ satisfying
\begin{equation}\label{eq:prob:G}
 \frac{G(z)-G(-1)}{1+z}=\int_{0}^{\infty}\frac{\phi(\eta)}{\eta+z}\deta\quad \text{for all }z\in\C.
\end{equation}
Note that $G(-1)$ is a well-defined constant due to Remark~\ref{Rem:W:minus1} and the reason for subtracting it on the right-hand side of~\eqref{eq:prob:G} is to remove the pole at $z=-1$. Furthermore, it will turn out that this will lead to the occurrence of the singular part of the measure $\Gamma$ (in the statement of Proposition~\ref{P.repkernels}) as the solution to the problem
\begin{equation}\label{eq:prob:Dirac}
 \frac{1}{1+z}=\int_{0}^{\infty}\frac{\widehat{\phi}(\eta)}{\eta+z}\deta
\end{equation}
is explicitly given by $\widehat{\phi}=\delta(\cdot-1)$. 

Using $G(z)=W(z,1)$ we have from~\eqref{eq:W:decay} that 
\begin{equation}\label{eq:growth:G}
 \abs{G(z)}\leq C\max\left\{\abs{z}^{-\alpha},\abs{z}^{\alpha}\right\}.
\end{equation}
Additionally, the function $G$ is analytic in $\C\setminus\left(-\infty,0\right]$ and we denote by $G_{\pm}(z)\vcc= W_{\pm}(z)$ the restrictions to the half-spaces $\left\{\Im(\xi)\geq 0\right\}\setminus\{0\}$ and $\left\{\Im(\xi)\leq 0\right\}\setminus\{0\}$ that exist due to~\eqref{kernel0b}.

Assuming that $\phi$ is continuous and solves~\eqref{eq:prob:G} we obtain by the Sokhotski-Plemelj formula of complex analysis that
\begin{equation*}
 \begin{split}
  \lim_{\nu\to 0^{+}}\frac{G\left(z_0+\nu\im\right)-G\left(-1\right)}{z_0+\nu\im +1}&=-\pi\im\phi\left(\abs{z_0}\right)+PV\int_{0}^{\infty}\frac{\phi\left(\eta\right)}{\eta-\abs{z_0}}\deta\\
  \lim_{\nu\to 0^{+}}\frac{G\left(z_0-\nu\im\right)-G\left(-1\right)}{z_0-\nu\im +1}&=\pi\im\phi\left(\abs{z_0}\right)+PV\int_{0}^{\infty}\frac{\phi\left(\eta\right)}{\eta-\abs{z_0}}\deta
 \end{split}
\end{equation*}
for $z_{0}\in\left(-\infty,0\right)$. Here we use that $\frac{1}{\eta-\abs{z_0}+\nu\im}\to -\pi\im \delta_{\abs{z_0}}+PV\left(\frac{1}{\eta-\abs{z_0}}\right)$.

This motivates to define for $s\in\left(0,\infty\right)$ the function $\phi$ by
\begin{equation*}
 \phi\left(s\right)\vcc=\frac{1}{2\pi\im\left(1-s\right)}\left[\lim_{\nu\to 0^{+}}\left(G\left(-s-\nu\im\right)-G\left(-1\right)\right)-\lim_{\nu\to 0^{+}}\left(G\left(-s+\nu\im\right)-G\left(-1\right)\right)\right].
\end{equation*}
Together with $G_{+}$ and $G_{-}$ and using that it holds $G_{+}(-1)=G_{-}(-1)=G(-1)$ due to Remark~\ref{Rem:W:minus1}, we can rewrite $\phi$ as
\begin{equation*}
 \phi\left(s\right)=\frac{G_{-}\left(-s\right)-G_{-}\left(-1\right)-\left(G_{+}\left(-s\right)-G_{+}\left(-1\right)\right)}{2\pi\im \left(1-s\right)}=\frac{G_{-}\left(-s\right)-G_{+}\left(-s\right)}{2\pi\im \left(1-s\right)}.
\end{equation*}
From this it immediately follows $\phi(1)=\frac{1}{2\pi\im}\left(G'_{-}(-1)-G'_{+}(-1)\right)$. We further note that due to~\eqref{eq:W:decay} and~\eqref{eq:growth:G} we get immediately that
\begin{equation}\label{eq:phi:decay}
 \abs{\phi(s)}\leq C\begin{cases}
                     s^{-\alpha} & s\leq 1\\
                     s^{\alpha-1} & s\geq 1.
                    \end{cases}
\end{equation}
Hence for $z\in\C\setminus\left(-\infty,0\right]$ the integral $\int_{0}^{\infty}\frac{\phi\left(\eta\right)}{z+\eta}\deta$ is well-defined and defines an analytic function.

Next we will show that $\phi$ is locally uniformly Hölder-continuous of exponent $\gamma$, while this precisely means that
\begin{equation}\label{eq:phi:hoel}
 \frac{\abs{\phi(s)-\phi(t)}}{\abs{s-t}^{\gamma}}\leq C\begin{cases}
                                                        \min\left\{s,t\right\}^{-\alpha-\gamma} & \min\left\{s,t\right\}\leq 1\\
                                                        \min\left\{s,t\right\}^{\alpha-1-\gamma} & \min\left\{s,t\right\}\geq 1
                                                       \end{cases}
\end{equation}
for $\abs{s-t}\leq \frac{1}{2}\min\left\{s,t\right\}$ and $s,t>0$. Using this it then follows that we can also extend the integral $\int_{0}^{\infty}\frac{\phi\left(\eta\right)}{z+\eta}\deta$ to $z\in\left(-\infty,0\right)$ either in $\left\{\Im(\xi)\geq 0\right\}\setminus\{0\}$ or $\left\{\Im(\xi)\leq 0\right\}\setminus\{0\}$ by means of the Sokhotski-Plemelj formula.

To show~\eqref{eq:phi:hoel} we first notice that it suffices to consider $G_{+}$ and $G_{-}$ separately. Let $0<t<s$ be such that $s-t\leq t/2$ and $s,t\neq 1$. The two special cases $t=1$ and $s=1$ will be considered separately. We have to distinguish several cases for the different values of $s$ and $t$ and also mention that we can rewrite 
\begin{equation*}
 \frac{G_{+}\left(-s\right)-G_{+}\left(-1\right)}{1-s}-\frac{G_{+}\left(-t\right)-G_{+}\left(-1\right)}{1-t}=\int_{t}^{s}\frac{\int_{-1}^{-\tau}G_{+}'\left(\xi\right)-G_{+}'\left(-\tau\right)\dxi}{\left(1-\tau\right)^2}\dtau.
\end{equation*}
\begin{enumerate}[label=\arabic*.]
 \item $0<t\leq 4/5$:
 \begin{enumerate}[label*=\alph*.]
  \item \label{It:far:one} Consider first $s-t<1-s$. Then it holds $1-s\geq 1/10$ and thus both $s$ and $t$ are separated from $1$. Using~\eqref{eq:W:decay} we can then estimate
  \begin{equation*}
   \begin{split}
    &\quad\abs{\int_{t}^{s}\frac{\int_{-1}^{-\tau}G_{+}'\left(\xi\right)-G_{+}'\left(-\tau\right)\dxi}{\left(1-\tau\right)^2}\dtau}\leq C\abs{\int_{t}^{s}G_{+}\left(-\tau\right)-G_{+}\left(-1\right)-G_{+}'\left(-\tau\right)\left(1-\tau\right)\dtau}\\
    &\leq C\left(t^{-\alpha}\left(s-t\right)+\int_{t}^{s}\tau^{-\alpha-1}\dtau\right)\leq Ct^{-\alpha-\gamma}\left(\left(s-t\right)+s^{\gamma}-t^{\gamma}\right)\leq Ct^{-\alpha-\gamma}\abs{s-t}^{\gamma}.
   \end{split}
  \end{equation*}
  \item \label{It:large:dist} Assume now $1-s\leq s-t$. Then we have $s-t\geq 1/10$. Furthermore, due to~\eqref{eq:W:decay} it easily follows
  \begin{equation}\label{eq:G:pm:est}
   \abs{\frac{G_{\pm}\left(\xi\right)-G_{\pm}\left(-1\right)}{1-\xi}}\leq C\left(\abs{\xi}^{-\alpha}+\abs{\xi}^{\alpha-1}\right).
  \end{equation}
  Using this we immediately get
  \begin{equation*}
   \abs{\frac{G_{+}\left(-s\right)-G_{+}\left(-1\right)}{1-s}-\frac{G_{+}\left(-t\right)-G_{+}\left(-1\right)}{1-t}}\leq Ct^{-\alpha}\leq Ct^{-\alpha-\gamma}\abs{s-t}^{\gamma}.
  \end{equation*}
 \end{enumerate}
 \item $t\in\left[4/5,6/5\right]$:
 \begin{enumerate}[label*=\alph*.]
  \item Consider first $s\in\left(4/5,\min\left\{3/2t,3/2\right\}\right]$. Using~\eqref{eq:hoelder} we then get
  \begin{equation*}
   \begin{split}
    \abs{\int_{t}^{s}\frac{\int_{-1}^{-\tau}G_{+}'\left(\xi\right)-G_{+}\left(-\tau\right)\dxi}{\left(1-\tau\right)^2}\dtau}&\leq C\int_{t}^{s}\frac{\abs{\int_{-1}^{-\tau}\abs{\tau-\xi}^{\gamma}\dxi}}{\left(1-\tau\right)^2}\dtau\\
    &\leq C\int_{t}^{s}\abs{\tau-1}^{\gamma-1}\dtau\leq C\abs{s-t}^{\gamma}.
   \end{split}
  \end{equation*}
  \item For $s\geq \min\left\{3/2t,3/2\right\}$ we get $s-t\geq 2/5$ and thus this case can be treated similarly to~\ref{It:large:dist}. 
 \end{enumerate}
 \item For $t>5/6$ it follows similarly to~\ref{It:far:one} and using also that we always have $s-t\leq t/2$ that
 \begin{equation*}
  \begin{split}
   &\quad\abs{\int_{t}^{s}\frac{\int_{-1}^{-\tau}G_{+}'\left(\xi\right)-G_{+}\left(-\tau\right)\dxi}{\left(1-\tau\right)^2}\dtau}\leq \frac{C}{\left(1-t\right)^2}\abs{\int_{t}^{s}G_{+}\left(-\tau\right)-G_{+}\left(-1\right)-G_{+}'\left(-\tau\right)\left(1-\tau\right)\dtau}\\
   &\leq \frac{C t^{\alpha}}{t^2}\left(s-t\right)+Ct^{\alpha-1}\left(s-t\right)\frac{1}{t}\leq Ct^{\alpha-1}\left(s-t\right)^{\gamma}\frac{\left(s-t\right)^{1-\gamma}}{t}\leq Ct^{\alpha-1-\gamma}\left(s-t\right)^{\gamma}
  \end{split}
 \end{equation*}
 \item \label{It:t:one} For $t=1$ we get similarly as before
 \begin{equation*}
  \begin{split}
   &\quad \abs{\frac{G_{+}\left(-s\right)-G_{+}\left(-1\right)}{1-s}-G_{+}\left(-1\right)}=\abs{\frac{\int_{-1}^{-s}G_{+}'\left(\xi\right)-G_{+}'\left(-1\right)\dxi}{1-s}}\leq C\abs{\frac{\int_{-1}^{-s}\left(-\xi-1\right)^{\gamma}\dxi}{s-1}}\\
   &\leq C\abs{s-1}^{\gamma}.
  \end{split}
 \end{equation*}
 \item The case $s=1$ can be treated similarly as~\ref{It:t:one}
 \end{enumerate}
 The function $G_{-}$ can be treated in exactly the same way.

Next, we will show that $\phi$ in fact satisfies~\eqref{eq:prob:G} and we therefore consider the function
\begin{equation*}
 \Phi\left(z\right)\vcc=\frac{G\left(z\right)-G\left(-1\right)}{1+z}-\int_{0}^{\infty}\frac{\phi\left(\eta\right)}{z+\eta}\deta.
\end{equation*}
Precisely, we will show that $\Phi$ is analytic on $\C$ and satisfies $\abs{\Phi\left(z\right)}\leq C\max\left\{\abs{z}^{-\alpha},\abs{z}^{\alpha-1}\right\}$. Then due to Liouville's Theorem it follows that $\frac{G\left(z\right)}{1+z}=\int_{0}^{\infty}\frac{\phi\left(\eta\right)}{z+\eta}\deta+\frac{G\left(-1\right)}{z+1}$.

It is readily seen that $\Phi$ is analytic in $\C\setminus\left(-\infty,0\right]$ and that $\frac{G(z)-G(-1)}{1+z}$ is analytic in $\C\setminus\left\{0\right\}$. Furthermore, denoting in analogy to $G$ and $W$ by $\Phi_{\pm}$ the restriction of $\Phi$ to $\left\{\Im(z)> 0\right\}\setminus\{0\}$ and $\left\{\Im(z)< 0\right\}\setminus\{0\}$ we have that $\Phi_{\pm}$ can be extended to $\left\{\Im(z)\geq 0\right\}\setminus\{0\}$ and $\left\{\Im(z)\leq 0\right\}\setminus\{0\}$ as for both, $\frac{G(z\pm \nu\im)-G(-1)}{1+z\pm\im\nu}$ and $\int_{0}^{\infty}\frac{\phi(\eta)}{z\pm\im\nu+\eta}\deta$ the limit $\nu\to 0^{+}$ exists for $z\in\left(-\infty,0\right)$. From the construction of $\phi$ it follows additionally that $\Phi_{+}(z)=\Phi_{-}(z)$ for all $z\in\left(-\infty,0\right)$. Thus, by Morera's Theorem we have that $\Phi$ is analytic in $\C\setminus\left\{0\right\}$. It thus remains to show that $\Phi(z)\leq C\max\left\{\abs{z}^{-\alpha},\abs{z}^{\alpha-1}\right\}$ because this together with Riemann's Theorem and Liouville's Theorem implies that $\Phi\equiv 0$. To show the latter estimate we first observe that
\begin{equation*}
 \abs{\frac{G\left(z\right)-G\left(-1\right)}{1+z}}\leq C\max\left\{\abs{z}^{-\alpha},\abs{z}^{\alpha-1}\right\}
\end{equation*}
as a direct consequence of~\eqref{eq:G:pm:est}. It thus remains to show
\begin{equation}\label{eq:int:phi:decay}
 \abs{\int_{0}^{\infty}\frac{\phi\left(\eta\right)}{z+\eta}\deta}\leq C\max\left\{\abs{z}^{-\alpha},\abs{z}^{\alpha-1}\right\}.
\end{equation}
To see this we first consider $\Re(z)\geq 0$. Using~\eqref{eq:phi:decay} we find
\begin{equation*}
 \begin{split}
  \abs{\int_{0}^{\infty}\frac{\phi\left(\eta\right)}{z+\eta}\deta}\leq \frac{1}{\abs{z}}\int_{0}^{\abs{z}}\abs{\phi\left(\eta\right)}\deta+\int_{\abs{z}}^{\infty}\frac{\phi\left(\eta\right)}{\eta}\deta\leq C\max\left\{\abs{z}^{-\alpha},\abs{z}^{\alpha-1}\right\}.
 \end{split}
\end{equation*}
The case $\Re(z)<0$ is slightly more complicated. We can split $z=-\xi+\kappa\im$ with $\xi\in\left(0,\infty\right)$ and $\kappa\in\R$. We first consider the case $\xi<\abs{\kappa}$. For this we get similarly as for $\Re(z)\geq 0$ that
\begin{equation*}
 \begin{split}
  &\quad \abs{\int_{0}^{\infty}\frac{\phi(\eta)}{z+\eta}\deta}\leq \int_{0}^{\infty}\frac{\abs{\phi(\eta)}}{\abs{(\eta-\xi)+\kappa\im}}\deta\leq \frac{1}{\abs{\kappa}}\int_{0}^{2\abs{\kappa}}\abs{\phi(\eta)}\deta+\int_{2\abs{\kappa}}^{\infty}\frac{\abs{\phi(\eta)}}{\eta-\xi}\deta\\
  &\leq C\max\left\{\abs{\kappa}^{-\alpha},\abs{\kappa}^{\alpha-1}\right\}\leq C\max\left\{\abs{z}^{-\alpha},\abs{z}^{\alpha-1}\right\}.
 \end{split}
\end{equation*}
It remains to consider $\xi>\abs{\kappa}$. For this we will use~\eqref{eq:phi:hoel} and also recall that for $z\in\left(-\infty,0\right)$ the integral in~\eqref{eq:int:phi:decay} has to be understood as a limit using $z=\lim_{\nu\to 0^{+}}\left(z\pm \im\nu\right)$. We then get
\begin{equation*}
 \begin{split}
  \int_{0}^{\infty}\frac{\phi\left(\eta\right)}{\eta+z}\deta=\int_{0}^{\infty}\frac{\eta-\xi}{\left(\eta-\xi\right)^2+\kappa^2}\phi\left(\eta\right)\deta-\kappa\im\int_{0}^{\infty}\frac{\phi\left(\eta\right)}{\left(\eta-\xi\right)^2+\kappa^2}\deta=\vcc (I)+(II)
 \end{split}
\end{equation*}
We first consider $(II)$ and obtain by splitting the integral and using~\eqref{eq:phi:decay} that
\begin{equation*}
 \begin{split}
  \abs{(II)}\leq C\abs{\kappa}\int_{0}^{1}\frac{\eta^{-\alpha}}{\left(\eta-\xi\right)^2+\kappa^2}\deta+C\abs{\kappa}\int_{1}^{\infty}\frac{\eta^{\alpha-1}}{\left(\eta-\xi\right)^2+\kappa^2}\deta=\vcc (II)_a+(II)_b.
 \end{split}
\end{equation*}
For $\xi\leq 2$ we obtain by changing variables $\eta\to\xi\eta$ that
\begin{equation*}
 \begin{split}
  \abs{(II)_a}\leq C\xi^{-\alpha}\frac{\abs{\kappa}}{\xi}\int_{0}^{\infty}\frac{1}{\eta^{\alpha}\left(\left(\eta-1\right)^2+\left(\frac{\abs{\kappa}}{\xi}\right)^2\right)}\deta\leq C\xi^{-\alpha}.
 \end{split}
\end{equation*}
On the other hand we obtain for $\xi\geq 2$ that
\begin{equation*}
 \begin{split}
  \abs{(II)_a}\leq C\frac{\abs{\kappa}}{\left(\xi-1\right)^2+\kappa^2}\leq C\xi^{\alpha-1}.
 \end{split}
\end{equation*}
The term $(II)_b$ can be estimated similarly, i.e. we obtain for $\xi\geq 1/2$ that
\begin{equation*}
 \begin{split}
  \abs{(II)_b}\leq \frac{C}{\xi^{1-\alpha}}\frac{\abs{\kappa}}{\xi}\int_{0}^{\infty}\frac{\eta^{\alpha-1}}{\left(\eta-1\right)^2+\left(\frac{\kappa}{\xi}\right)^2}\deta\leq C\xi^{\alpha-1}.
 \end{split}
\end{equation*}
For $\xi\leq 1/2$ we find
\begin{equation*}
 \begin{split}
  \abs{(II)_b}\leq \abs{\kappa}\int_{1}^{\infty}\frac{\eta^{\alpha-1}}{\left(\eta-\frac{1}{2}\right)^2+\kappa^2}\deta\leq C.
 \end{split}
\end{equation*}
It thus remains to estimate $(I)$. For this we introduce a cut-off function $\zeta\colon\R\to [0,1]$ such that $\zeta=1$ on $(-\infty,2)$ and $\zeta=0$ on $[4,\infty]$. We can then rewrite $(I)$ as
\begin{equation*}
 \begin{split}
  (I)=\int_{0}^{\infty}\frac{\eta-\xi}{\left(\eta-\xi\right)^2+\kappa^2}\left(\phi\left(\eta\right)-\zeta\left(\frac{\eta}{\xi}\right)\phi\left(\xi\right)\right)\deta+\int_{0}^{\infty}\frac{\eta-\xi}{\left(\eta-\xi\right)^2+\kappa^2}\zeta\left(\frac{\eta}{\xi}\right)\phi\left(\xi\right)\deta=\vcc (I)_a+(I)_b.
 \end{split}
\end{equation*}
Then on the one hand we obtain
\begin{equation*}
 \begin{split}
  (I)_a&=\int_{0}^{2\xi}\frac{\eta-\xi}{\left(\eta-\xi\right)^2+\kappa^2}\left(\phi(\eta)-\phi(\xi)\right)\deta+\int_{2\xi}^{\infty}\frac{\eta-\xi}{(\eta-\xi)^2+\kappa^2}\Biggl(\phi(\eta)-\zeta\left(\frac{\eta}{\xi}\right)\phi(\xi)\Biggr)\deta\\
  &=\vcc(I)_{a,1}+(I)_{a,2}.
 \end{split}
\end{equation*}
Considering the two terms separately we first find
\begin{equation*}
 \begin{split}
  \abs{(I)_{a,2}}\leq C\int_{2\xi}^{\infty}\frac{\abs{\phi(\eta)}}{\eta}\deta+C\abs{\phi(\xi)}\leq C\max\left\{\xi^{-\alpha},\xi^{\alpha-1}\right\}.
 \end{split}
\end{equation*}
To estimate $(I)_{a,1}$ we split the integral and use~\eqref{eq:phi:hoel} to obtain
\begin{equation*}
 \begin{split}
  &\quad \abs{(I)_{a,1}}=\abs{\int_{0}^{2\xi}\frac{\eta-\xi}{\left(\eta-\xi\right)^2+\kappa^2}\left(\phi\left(\eta\right)-\phi\left(\xi\right)\right)\deta}\leq \abs{\int_{0}^{\frac{2}{3}\xi}\left(\cdots\right)\deta}+\abs{\int_{\frac{2}{3}\xi}^{\frac{3}{2}\xi}\left(\cdots\right)\deta}+\abs{\int_{\frac{3}{2}\xi}^{2\xi}\left(\cdots\right)\deta}\\
  &\leq \frac{C}{\xi}\int_{0}^{\frac{2}{3}\xi}\max\left\{\eta^{-\alpha},\eta^{\alpha-1}\right\}\deta+C\max\left\{\xi^{-\alpha-\gamma},\xi^{\alpha-\gamma-1}\right\}\int_{\frac{2}{3}\xi}^{\frac{3}{2}\xi}\abs{\eta-\xi}^{\gamma-1}\deta\leq C\max\left\{\xi^{-\alpha},\xi^{\alpha-1}\right\}.
 \end{split}
\end{equation*}
Finally we get by changing variables
\begin{equation*}
 \begin{split}
  \abs{(I)_b}&=\abs{\phi(\xi)\int_{0}^{4\xi}\frac{\eta-\xi}{\left(\eta-\xi\right)^2+\kappa^2}\zeta\left(\frac{\eta}{\xi}\right)\deta}=\abs{\phi(\xi)\int_{0}^{4}\frac{\eta-1}{\left(\eta-1\right)^2+\left(\frac{\kappa}{\xi}\right)^2}\zeta\left(\eta\right)\deta}\\
  &\leq \abs{\phi(\xi)}\abs{\left.\log\left(\left(\eta-1\right)^2+\left(\frac{\eta}{\xi}\right)^2\right)\right|_{0}^2}+\abs{\phi(\xi)}\int_{2}^{4}\frac{\eta-1}{\left(\eta-1\right)^2+\left(\frac{\kappa}{\xi}\right)^2}\deta\leq C\max\left\{\xi^{-\alpha},\xi^{\alpha-1}\right\}.
 \end{split}
\end{equation*}
Summarizing, this then shows $\abs{\int_{0}^{\infty}\frac{\phi(\eta)}{z+\eta}\deta}\leq C\max\left\{\abs{\Re(z)}^{-\alpha},\abs{\Re(z)}^{\alpha-1}\right\}$ for $\xi=-\Re(z)>\abs{\Im(z)}=\abs{\kappa}$. From this it follows immediately that we also have
\begin{equation*}
 \abs{\int_{0}^{\infty}\frac{\phi(\eta)}{\eta+z}\deta}\leq C\max\left\{\abs{z}^{-\alpha},\abs{z}^{\alpha-1}\right\}.
\end{equation*}
 Then as already explained it follows by Liouville's Theorem together with Riemann's Theorem that $\Phi\equiv 0$, i.e.
 \begin{equation}\label{eq:rep:G}
  \frac{G\left(z\right)}{1+z}=\int_{0}^{\infty}\frac{\phi\left(\eta\right)}{\eta+z}\deta+\frac{G\left(-1\right)}{1+z}
 \end{equation}
 We define now
 \begin{equation*}
  \widetilde{\Gamma}\left(\xi,\eta\right)\vcc=\frac{\phi\left(\frac{\xi}{\eta}\right)}{\eta}\qquad \text{and}\qquad \Gamma\left(\xi,\eta\right)\vcc=\widetilde{\Gamma}\left(\xi,\eta\right)+W_{\pm}\left(-1\right)\delta(\xi-\eta).
 \end{equation*}
 From~\eqref{eq:phi:decay} one can then easily deduce that
 \begin{equation*}
  \abs{\widetilde{\Gamma}\left(\xi,\eta\right)}\leq C\frac{1}{\left(\eta+\xi\right)^{1-\alpha}}\left(\frac{1}{\xi^{\alpha}}+\frac{1}{\eta^{\alpha}}\right).
 \end{equation*}
 We conclude now the proof by computing the integral
 \begin{equation*}
  \int_{0}^{\infty}\int_{0}^{\infty}\Gamma\left(\xi,\eta\right)\ee^{-\xi y-\eta z}\deta\dxi=\int_{0}^{\infty}\int_{0}^{\infty}\frac{\phi\left(\frac{\xi}{\eta}\right)}{\eta}\ee^{-y\xi-z\eta}\deta\dxi+W_{\pm}\left(-1\right)\int_{0}^{\infty}\ee^{-\xi\left(y+z\right)}\dxi.
 \end{equation*}
 Changing variables in the first integral on the right-hand side and using that $W_{\pm}(-1)=G(-1)$ we further obtain
 \begin{equation*}
  \begin{split}
   \int_{0}^{\infty}\int_{0}^{\infty}\Gamma\left(\xi,\eta\right)\ee^{-\xi y-\eta z}\deta\dxi=\int_{0}^{\infty}\int_{0}^{\infty}\frac{\phi\left(\frac{1}{\eta}\right)}{\eta}\ee^{-\xi \left(y+\eta z\right)}\dxi\deta+\frac{G\left(-1\right)}{y+z}=\int_{0}^{\infty}\frac{\phi\left(\frac{1}{\eta}\right)}{\eta\left(y+\eta z\right)}\deta+\frac{G\left(-1\right)}{y+z}
  \end{split}
 \end{equation*}
Changing variables once more, rearranging and using the relation~\eqref{eq:rep:G} we get
\begin{equation*}
 \begin{split}
  \int_{0}^{\infty}\int_{0}^{\infty}\Gamma\left(\xi,\eta\right)\ee^{-\xi y-\eta z}\deta\dxi=\int_{0}^{\infty}\frac{\phi\left(\eta\right)}{z+\eta y}\deta+\frac{G\left(-1\right)}{y+z}=\frac{1}{y}\int_{0}^{\infty}\frac{\phi\left(\eta\right)}{\frac{z}{y}+\eta}\deta+\frac{G\left(-1\right)}{y+z}=\frac{G\left(\frac{z}{y}\right)}{y+z}
 \end{split}
\end{equation*}
Noting that due to homogeneity it holds $G\bigl(\frac{z}{y}\bigr)=W\left(y,z\right)$ the claim follows.
 \end{proof}

Next we prove some estimates on integrals involving $\Gamma$ that occur in several proofs.

\begin{lemma}\label{Lem:est:Gamma:int}
 For $k,\ell\in\N_0$ such that $1\leq k+\ell\leq 2$ it holds
 \begin{equation*}
  \int_{0}^{\infty}\int_{0}^{\infty}\abs{\Gamma\left(\xi,\eta\right)}\frac{1}{\left(1+\xi\right)^{k+\theta}}\frac{1}{\left(1+\eta\right)^{\ell+\theta}}\deta\dxi\leq C.
 \end{equation*}
\end{lemma}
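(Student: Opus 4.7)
The plan is to use the splitting $\Gamma(\xi,\eta)=\widetilde{\Gamma}(\xi,\eta)+W_{\pm}(-1)\delta(\xi-\eta)$ from Proposition~\ref{P.repkernels} and treat the two contributions separately. The singular part reduces to
\[
|W_{\pm}(-1)|\int_{0}^{\infty}\frac{\dxi}{(1+\xi)^{k+\ell+2\theta}},
\]
which is bounded because $k+\ell\geq 1$ and $\theta>0$ imply $k+\ell+2\theta>1$.

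For the regular part, I would plug in the pointwise bound
\[
|\widetilde{\Gamma}(\xi,\eta)|\leq \frac{C}{(\xi+\eta)^{1-\alpha}}\Bigl(\frac{1}{\xi^{\alpha}}+\frac{1}{\eta^{\alpha}}\Bigr),
\]
and by the symmetry $k\leftrightarrow\ell$ between the two summands it suffices to treat the term with $\xi^{-\alpha}$. Then I would split the domain $(0,\infty)^2$ into the four rectangles obtained by separating $\xi\leq 1$ from $\xi\geq 1$ (and similarly for $\eta$) and estimate each piece with elementary integrals.

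On $\{\xi\leq 1,\eta\leq 1\}$ the weights $(1+\xi)^{-k-\theta}$ and $(1+\eta)^{-\ell-\theta}$ are harmless, and one first integrates $\int_{0}^{1}(\xi+\eta)^{\alpha-1}\deta\leq C$ and then $\int_{0}^{1}\xi^{-\alpha}\dxi<\infty$. On $\{\xi\leq 1,\eta\geq 1\}$ one uses $(\xi+\eta)^{\alpha-1}\leq \eta^{\alpha-1}$ and $\int_{0}^{1}\xi^{-\alpha}\dxi<\infty$, reducing to $\int_{1}^{\infty}\eta^{\alpha-1-\ell-\theta}\deta$, which converges because $\theta>\alpha$. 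On $\{\xi\geq 1,\eta\leq 1\}$ one uses $(\xi+\eta)^{\alpha-1}\leq \xi^{\alpha-1}$ and trivially integrates in $\eta$, leaving $\int_{1}^{\infty}\xi^{-1-k-\theta}\dxi$, which is finite since $k+\theta>0$.

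The delicate case is $\{\xi\geq 1,\eta\geq 1\}$, especially when one of $k,\ell$ is zero: here the plain bound $(\xi+\eta)^{\alpha-1}\leq\xi^{\alpha-1}$ leads to $\int_{1}^{\infty}\eta^{-\theta}\deta$, which diverges. I would handle this by interpolating
\[
\frac{1}{(\xi+\eta)^{1-\alpha}}\leq\frac{1}{\xi^{(1-\alpha)\sigma}\,\eta^{(1-\alpha)(1-\sigma)}}
\]
for a parameter $\sigma\in(0,1)$ to be chosen. The resulting integral factorises and converges provided both $(1-\alpha)\sigma+\alpha+k+\theta>1$ and $(1-\alpha)(1-\sigma)+\ell+\theta>1$; since $\theta>\alpha$ and $k+\ell\geq 1$, such a $\sigma$ exists in every one of the remaining cases $(k,\ell)\in\{(1,0),(0,1),(2,0),(0,2),(1,1)\}$. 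This interpolation step is the only subtlety; everything else is routine power-counting using $\theta\in(\alpha,1/2)$ and $\alpha\in[0,1)$.
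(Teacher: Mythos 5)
Your proposal is correct. The core idea --- distributing the power of $(\xi+\eta)^{-(1-\alpha)}$ between $\xi$ and $\eta$ so that the product factorises with convergent one-variable integrals --- is exactly the idea in the paper, but the bookkeeping is organised differently. The paper first reduces to $k\leq\ell$ by symmetry (which forces $\ell\geq 1$) and then splits $(\xi+\eta)^{-(1-\alpha)}=(\xi+\eta)^{-(1-\frac{\theta+\alpha}{2})}(\xi+\eta)^{-\frac{\theta-\alpha}{2}}$ once and for all, estimating the first factor by $\xi^{-(1-\frac{\theta+\alpha}{2})}$ and the second by $\eta^{-\frac{\theta-\alpha}{2}}$; this fixed choice works precisely because $\ell\geq 1$. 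You instead split the domain into the four dyadic rectangles and use elementary bounds in three of them, keeping the interpolation $\frac{1}{(\xi+\eta)^{1-\alpha}}\leq \xi^{-(1-\alpha)\sigma}\eta^{-(1-\alpha)(1-\sigma)}$ with a case-dependent $\sigma\in(0,1)$ only on $\{\xi,\eta\geq 1\}$. Your verification that the two power-counting inequalities for $\sigma$ are compatible (their sum condition $k+\ell+2\theta>1$ holds) is the same trade-off that makes the paper's fixed $\sigma=\frac{1-\frac{\theta+\alpha}{2}}{1-\alpha}$ work once $\ell\geq 1$. Both arguments are correct; the paper's is shorter, yours makes the freedom in the choice of $\sigma$ and the role of $k+\ell\geq 1$ more transparent, and in particular would extend immediately without first invoking symmetry.

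One small remark: you write $\alpha\in[0,1)$ at the end, but the decomposition of $\Gamma$ with the stated pointwise bound comes from Proposition~\ref{P.repkernels}, which requires $\alpha\in(0,1)$; for $\alpha=0$ the paper reduces to a small $\tilde\alpha>0$ (Remark~\ref{Rem:alpha:pos}), so one may as well take $\alpha>0$ throughout, which also makes your estimate $\int_0^1(\xi+\eta)^{\alpha-1}\deta\leq C$ in the first rectangle uniform.
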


\begin{proof}
 By symmetry it suffices to consider only the case $k\leq \ell$. Using the estimate from Proposition~\ref{P.repkernels} we obtain
 \begin{equation*}
  \begin{split}
   &\quad \int_{0}^{\infty}\int_{0}^{\infty}\abs{\Gamma\left(\xi,\eta\right)}\frac{1}{\left(1+\xi\right)^{k+\theta}}\frac{1}{\left(1+\eta\right)^{\ell+\theta}}\deta\dxi\\
   &\leq C\int_{0}^{\infty}\int_{0}^{\infty}\frac{1}{\left(\xi+\eta\right)^{1-\alpha}}\left(\frac{1}{\xi^{\alpha}}+\frac{1}{\eta^{\alpha}}\right)\frac{1}{\left(1+\xi\right)^{k+\theta}}\frac{1}{\left(1+\eta\right)^{\ell+\theta}}\deta\dxi+C\int_{0}^{\infty}\frac{1}{\left(\xi+1\right)^{k+\ell+2\theta}}\dxi\\\
   &=C\int_{0}^{\infty}\int_{0}^{\infty}\frac{1}{\xi^{\alpha}}\frac{1}{\left(\xi+\eta\right)^{1-\frac{\theta+\alpha}{2}}}\frac{1}{\left(\xi+\eta\right)^{\frac{\theta-\alpha}{2}}}\frac{1}{\left(1+\xi\right)^{k+\theta}}\frac{1}{\left(1+\eta\right)^{\ell+\theta}}\deta\dxi\\
   &\qquad+C\int_{0}^{\infty}\int_{0}^{\infty}\frac{1}{\eta^{\alpha}}\frac{1}{\left(1+\eta\right)^{\ell+\theta}}\frac{1}{\left(\xi+\eta\right)^{\frac{\theta-\alpha}{2}}}\frac{1}{\left(\xi+\eta\right)^{1-\frac{\theta+\alpha}{2}}}\frac{1}{\left(1+\xi\right)^{k+\theta}}\dxi\deta+C.
  \end{split}
 \end{equation*}
 Estimating $\left(\xi+\eta\right)^{-a}\leq \xi^{-a}$ and $\left(\xi+\eta\right)^{-a}\leq \eta^{-a}$ for $a>0$ it further follows
 \begin{equation*}
  \begin{split}
   &\quad \int_{0}^{\infty}\int_{0}^{\infty}\frac{\abs{\Gamma\left(\xi,\eta\right)}}{\left(1+\xi\right)^{k+\theta}\left(1+\eta\right)^{\ell+\theta}}\deta\dxi\leq C\int_{0}^{\infty}\frac{1}{\xi^{\ell-\frac{\theta-\alpha}{2}}}\frac{1}{\left(1+\xi\right)^{k+\theta}}\dxi\int_{0}^{\infty}\frac{1}{\eta^{\frac{\theta-\alpha}{2}}}\frac{1}{\left(1+\eta\right)^{\ell+\theta}}\deta\\
   &\quad+C\int_{0}^{\infty}\frac{1}{\eta^{\frac{\theta+\alpha}{2}}}\frac{1}{\left(1+\eta\right)^{\ell+\theta}}\deta\int_{0}^{\infty}\frac{1}{\xi^{1-\frac{\theta+\alpha}{2}}}\frac{1}{\left(1+\xi\right)^{k+\theta}}\dxi+C\leq C.
  \end{split}
 \end{equation*}
 \end{proof}

\begin{lemma}\label{Lem:est:Gamma:int:2}
 For $k\in\N$ it holds
 \begin{equation*}
  \int_{0}^{\infty}\int_{0}^{\infty}\abs{\Gamma\left(\xi,\eta\right)}\frac{1}{\eta^{\theta}}\frac{1}{\left(1+\xi\right)^{k+\theta}}\dxi\deta\leq C.
 \end{equation*}
By symmetry the same is true with $\xi$ and $\eta$ interchanged.
\end{lemma}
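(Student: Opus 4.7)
The plan is to reduce the estimate to the structural bound on $\Gamma$ from Proposition~\ref{P.repkernels} and then carry out a two-variable scaling argument, just as in the proof of Lemma~\ref{Lem:est:Gamma:int}. First I would split
\[
\Gamma(\xi,\eta)=\widetilde{\Gamma}(\xi,\eta)+W_{\pm}(-1)\,\delta(\xi-\eta).
\]
The contribution of the Dirac part is immediate: it equals
\[
|W_{\pm}(-1)|\int_{0}^{\infty}\frac{1}{\eta^{\theta}(1+\eta)^{k+\theta}}\deta,
\]
which is finite because $\theta\in(\alpha,1/2)$ so the singularity at $0$ is integrable and the decay at infinity is $\eta^{-(k+2\theta)}$ with $k\geq 1$.

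For the regular part, plug in the bound $|\widetilde{\Gamma}(\xi,\eta)|\leq C(\xi+\eta)^{\alpha-1}(\xi^{-\alpha}+\eta^{-\alpha})$ from Proposition~\ref{P.repkernels} and split the integral into the two pieces corresponding to $\xi^{-\alpha}$ and $\eta^{-\alpha}$. For the first piece I would evaluate the inner $\eta$-integral by the change of variable $\eta=\xi s$:
\[
\int_{0}^{\infty}\frac{\deta}{(\xi+\eta)^{1-\alpha}\eta^{\theta}}=\xi^{\alpha-\theta}\int_{0}^{\infty}\frac{\ds}{(1+s)^{1-\alpha}s^{\theta}},
\]
where the $s$-integral converges because $\theta<1$ (integrability at $0$) and $(1-\alpha)+\theta>1$, i.e.\@ $\theta>\alpha$ (integrability at $\infty$). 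The remaining $\xi$-integral is then
\[
\int_{0}^{\infty}\frac{\xi^{-\theta}}{(1+\xi)^{k+\theta}}\dxi,
\]
which is finite since $\theta<1$ at $0$ and we get decay $\xi^{-(k+2\theta)}$ with $k\geq 1$ at infinity. For the second piece the same scaling in $\eta$ gives
\[
\int_{0}^{\infty}\frac{\deta}{(\xi+\eta)^{1-\alpha}\eta^{\alpha+\theta}}=\xi^{-\theta}\int_{0}^{\infty}\frac{\ds}{(1+s)^{1-\alpha}s^{\alpha+\theta}},
\]
where the $s$-integral is finite because $\alpha+\theta<1$ (using $\alpha<1/2$ and $\theta<1/2$) at $0$ and $1+\theta>1$ at infinity. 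What remains is the same $\xi$-integral as above, which we already know is finite.

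There is essentially no hard step here; the only thing to watch is the simultaneous integrability of the scaled $s$-integrals at $0$ and at $\infty$, which is precisely what forces the condition $\theta\in(\alpha,1/2)$ (together with $\alpha<1/2$). The symmetric statement with $\xi$ and $\eta$ interchanged follows by inspection, since the integrand of the regular part is symmetric in $(\xi,\eta)$ up to the different weights, and the same scaling argument applies to the $\xi$-integral.
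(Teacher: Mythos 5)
Your proof is correct and follows essentially the same route as the paper: decompose $\Gamma$ via Proposition~\ref{P.repkernels} into the regular part $\widetilde\Gamma$ and the Dirac part, dispatch the Dirac contribution by a direct one-dimensional integral, split the bound on $\widetilde\Gamma$ according to $\xi^{-\alpha}$ and $\eta^{-\alpha}$, and evaluate the inner $\eta$-integrals by the scaling $\eta=\xi s$ to reduce to $\int_0^\infty \xi^{-\theta}(1+\xi)^{-k-\theta}\,\dxi<\infty$. The convergence conditions you identify ($\theta<1$, $\theta>\alpha$, $\alpha+\theta<1$) are exactly the ones exploited in the paper's proof, and the symmetry remark is justified because $\Gamma$ itself is symmetric in $(\xi,\eta)$.
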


\begin{proof}
 From the estimate of Proposition~\ref{P.repkernels} we obtain
 \begin{equation*}
  \begin{split}
   &\quad\int_{0}^{\infty}\int_{0}^{\infty}\frac{\abs{\Gamma\left(\xi,\eta\right)}}{\eta^{\theta}\left(1+\xi\right)^{k+\theta}}\deta\dxi\\
   &\leq C\int_{0}^{\infty}\int_{0}^{\infty}\frac{1}{\left(\xi+\eta\right)^{1-\alpha}}\left(\frac{1}{\xi^{\alpha}}+\frac{1}{\eta^{\alpha}}\right)\frac{1}{\eta^{\theta}}\frac{1}{\left(1+\xi\right)^{k+\theta}}\deta\dxi+C\int_{0}^{\infty}\frac{1}{\xi^{\theta}\left(1+\xi\right)^{k+\theta}}\dxi\\
   &\leq C\int_{0}^{\infty}\frac{1}{\left(1+\xi\right)^{k+\theta}}\frac{1}{\xi^{\alpha}}\int_{0}^{\infty}\frac{1}{\left(\eta+\xi\right)^{1-\alpha}}\frac{1}{\eta^{\theta}}\deta\dxi+C\int_{0}^{\infty}\frac{1}{\left(1+\xi\right)^{k+\theta}}\int_{0}^{\infty}\frac{1}{\eta^{\alpha+\theta}}\frac{1}{\left(\xi+\eta\right)^{1-\alpha}}\deta\dxi+C\\
   &\leq C\int_{0}^{\infty}\frac{1}{\left(1+\xi\right)^{k+\theta}}\frac{1}{\xi^{\theta}}\dxi+C\leq C,
  \end{split}
 \end{equation*}
where we changed variables in the $\eta$-integrals in the next to last step.
\end{proof}

\begin{lemma}\label{Lem:Gamma:eta:int}
 There exists a constant $C>0$ such that
 \begin{align*}
    \int_{0}^{\infty}\abs{\Gamma\left(\xi,\eta\right)}\frac{1}{\left(\eta+1\right)^{\theta}}\deta&\leq C \left(\xi^{-\alpha}+\log\left(\xi^{-1}\right)\right)\chi_{(0,1]}(\xi)+\xi^{-\theta}\chi_{[1,\infty)}(\xi)\\
    \int_{0}^{\infty}\abs{\Gamma\left(\xi,\eta\right)}\frac{1}{\left(\eta+1\right)^{k+\theta}}\deta&\leq C \left(\xi^{-\alpha}+\log\left(\xi^{-1}\right)\right)\chi_{(0,1]}(\xi)+\xi^{\alpha-1}\chi_{[1,\infty)}(\xi)\quad \text{for all } k\in\N.
 \end{align*}
\end{lemma}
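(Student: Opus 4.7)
The plan is to reduce the two estimates to a direct computation based on Proposition~\ref{P.repkernels}. Write $\Gamma = \widetilde{\Gamma} + W_\pm(-1)\delta(\xi-\eta)$. The Dirac part contributes
\begin{equation*}
 \abs{W_\pm(-1)}(1+\xi)^{-k-\theta},
\end{equation*}
which is already consistent with the claimed bounds: for $\xi\leq 1$ it is bounded by a constant (hence by $\xi^{-\alpha}+\log(\xi^{-1})$), and for $\xi\geq 1$ it is bounded by $\xi^{-\theta}$ (when $k=0$) or by $\xi^{-1-\theta}\leq \xi^{\alpha-1}$ (when $k\geq 1$). So the real work is bounding
\begin{equation*}
 J_k(\xi)\vcc=\int_0^\infty \frac{1}{(\xi+\eta)^{1-\alpha}}\Bigl(\frac{1}{\xi^\alpha}+\frac{1}{\eta^\alpha}\Bigr)\frac{1}{(1+\eta)^{k+\theta}}\,\deta,
\end{equation*}
which I split into the $\xi^{-\alpha}$-piece $A_k(\xi)$ and the $\eta^{-\alpha}$-piece $B_k(\xi)$.

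For $A_k(\xi)=\xi^{-\alpha}\int_0^\infty (\xi+\eta)^{\alpha-1}(1+\eta)^{-k-\theta}\deta$, I would split the range at $\eta=1$. On $(0,1)$ the factor $(1+\eta)^{-k-\theta}$ is harmless and $\int_0^1(\xi+\eta)^{\alpha-1}\deta\leq C$ uniformly in $\xi$; on $(1,\infty)$ the integral $\int_1^\infty \eta^{\alpha-1-k-\theta}\deta$ converges because $\theta>\alpha$ (and $k\geq 0$). This gives $A_k(\xi)\leq C\xi^{-\alpha}$ for $\xi\leq 1$. For $\xi\geq 1$ I further split the range $(1,\infty)$ at $\eta=\xi$ to get the sharper bound $A_0(\xi)\leq C\xi^{-\theta}$ and $A_k(\xi)\leq C\xi^{-1}$ for $k\geq 1$ (after multiplying the inner integral by $\xi^{-\alpha}$ and using $\alpha+\theta<1$).

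For $B_k(\xi)=\int_0^\infty (\xi+\eta)^{\alpha-1}\eta^{-\alpha}(1+\eta)^{-k-\theta}\deta$, the main subtlety appears. For $\xi\leq 1$ the substitution $\eta=\xi t$ in the piece over $(0,1)$ gives
\begin{equation*}
 \int_0^{1/\xi}(1+t)^{\alpha-1}t^{-\alpha}\dt,
\end{equation*}
whose integrand behaves like $t^{-1}$ at infinity, so it grows like $\log(\xi^{-1})$; this is precisely the logarithmic term in the claim. The complementary piece over $(1,\infty)$ is bounded by a constant thanks to the weight $(1+\eta)^{-k-\theta}$. For $\xi\geq 1$, the integrand has no singularity in $\eta$ near $\xi$; splitting at $\eta=1$ and then at $\eta=\xi$ gives $B_0(\xi)\leq C\xi^{-\theta}$ (using $\alpha+\theta<1$) and $B_k(\xi)\leq C\xi^{\alpha-1}$ for $k\geq 1$, where the latter gain comes from the improved decay $(1+\eta)^{-k-\theta}$ with $k\geq 1$ making the inner integral converge without having to pay $\xi^{1-\alpha-\theta}$.

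The only potential obstacle is keeping track of the different regimes to make sure that the $k=0$ and $k\geq 1$ bounds coincide for $\xi\leq 1$ but differ for $\xi\geq 1$ exactly as claimed; the reason is that for $\xi\leq 1$ the small-$\eta$ region dominates (producing the $\xi^{-\alpha}+\log(\xi^{-1})$ singularity independent of the tail weight), whereas for $\xi\geq 1$ the large-$\eta$ decay is controlled by $k+\theta$ and so does depend on $k$. Since every integral I need to estimate is elementary once the appropriate splitting is chosen and $\alpha,\theta\in(0,1/2)$ rules out all borderline divergences, no further complication arises.
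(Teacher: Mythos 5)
Your proposal is correct and uses essentially the same decomposition and splitting strategy as the paper: peel off the Dirac part via Proposition~\ref{P.repkernels}, split the $\widetilde\Gamma$ contribution into the $\xi^{-\alpha}$- and $\eta^{-\alpha}$-pieces, and estimate each by cutting the $\eta$-integral at $1$ (and, where needed, at $\xi$), distinguishing $\xi\leq 1$ from $\xi\geq 1$ and $k=0$ from $k\geq 1$. The only cosmetic difference is that you reveal the logarithmic term via the substitution $\eta=\xi t$, whereas the paper bounds $\eta^{-\alpha}(\xi+\eta)^{\alpha-1}\leq\eta^{-1}$ directly on $(\xi,1)$; both yield the same $\log(\xi^{-1})$ for $\xi\leq 1$.
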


\begin{proof}
 We first consider $\xi\leq 1$ and observe together with Proposition~\ref{P.repkernels} for all $k\in\N_0$
 \begin{equation*}
  \begin{split}
   &\quad\int_{0}^{\infty}\abs{\Gamma\left(\xi,\eta\right)}\frac{1}{\left(\eta+1\right)^{k+\theta}}\deta\leq C\int_{0}^{\infty}\frac{1}{\left(\eta+1\right)^{k+\theta}\left(\xi+\eta\right)^{1-\alpha}}\left(\frac{1}{\xi^{\alpha}}+\frac{1}{\eta^{\alpha}}\right)\deta+\frac{C}{\left(1+\xi\right)^{k+\theta}}\\
   &\leq \frac{C}{\xi^{\alpha}}\left(\int_{0}^{1}\frac{1}{\left(\xi+\eta\right)^{1-\alpha}}\deta+\int_{1}^{\infty}\frac{1}{\eta^{1+k+\theta-\alpha}}\deta\right)\\
   &\quad+C\left(\int_{0}^{\xi}\frac{1}{\eta^{\alpha}\left(\xi+\eta\right)^{1-\alpha}}\deta+\int_{\xi}^{1}\frac{1}{\eta}\deta+\int_{1}^{\infty}\frac{1}{\eta^{1+k+\theta}}\deta\right)+C\\
   &\leq \frac{C}{\xi^{\alpha}}\left(\frac{\left(\xi+1\right)^{\alpha}-\xi^{\alpha}}{\alpha}+C\right)+C+C\log\left(\frac{1}{\xi}\right)\leq C\left(\xi^{-\alpha}+\log\left(\xi^{-1}\right)\right).
  \end{split}
 \end{equation*}
 For $\xi\geq 1$ we have to treat the two cases $k=0$ and $k\in\N$ separately. For $k=0$ we find
 \begin{equation*}
  \begin{split}
   &\quad\int_{0}^{\infty}\abs{\Gamma\left(\xi,\eta\right)}\frac{1}{\left(\eta+1\right)^{\theta}}\deta\leq C\int_{0}^{\infty}\frac{1}{\left(\eta+1\right)^{\theta}\left(\xi+\eta\right)^{1-\alpha}}\left(\frac{1}{\xi^{\alpha}}+\frac{1}{\eta^{\alpha}}\right)\deta+\frac{C}{\left(1+\xi\right)^{\theta}}\\
   &\leq \frac{C}{\xi^{\alpha}}\int_{0}^{\infty}\frac{1}{\left(\eta+\xi\right)^{1-\alpha}\eta^{\theta}}\deta+\int_{0}^{\infty}\frac{1}{\left(\eta+\xi\right)^{1-\alpha}\eta^{\theta+\alpha}}\deta+\frac{C}{\xi^{\theta}}\\
   &=\frac{C}{\xi^{\theta}}\left(\int_{0}^{\infty}\frac{1}{\left(1+\eta\right)^{1-\alpha}\eta^{\theta}}\deta+\int_{0}^{\infty}\frac{1}{\left(1+\eta\right)^{1-\alpha}\eta^{\theta+\alpha}}\deta+1\right)\leq \frac{C}{\xi^{\theta}}.
  \end{split}
 \end{equation*}
On the other hand, for $k\in\N$ we obtain similarly
\begin{equation*}
 \begin{split}
  &\quad\int_{0}^{\infty}\abs{\Gamma\left(\xi,\eta\right)}\frac{1}{\left(\eta+1\right)^{k+\theta}}\deta\leq C\int_{0}^{\infty}\frac{1}{\left(\eta+1\right)^{k+\theta}\left(\xi+\eta\right)^{1-\alpha}}\left(\frac{1}{\xi^{\alpha}}+\frac{1}{\eta^{\alpha}}\right)\deta+\frac{C}{\left(\xi+1\right)^{k+\theta}}\\
   &\leq \frac{C}{\xi}\int_{0}^{\infty}\frac{1}{\left(\eta+1\right)^{k+\theta}}\deta+\frac{C}{\xi^{1-\alpha}}\int_{0}^{\infty}\frac{1}{\left(\eta+1\right)^{k+\theta}\eta^{\alpha}}\deta+\frac{C}{\xi^{k+\theta}}\leq \frac{C}{\xi^{1-\alpha}}.
 \end{split}
\end{equation*}
\end{proof}
  
\subsection{Proof or Lemma~\ref{L.uniqueprefactor}}\label{Sec:proof:prefactor}

We first recall that $K( y,z) =2+\varepsilon W (y,z)$, as well as the representation formula for $W$ in terms of $\Gamma$ and the corresponding estimate given in Proposition~\ref{P.repkernels}. We define the bilinear functional
\begin{equation*}
 \B_{K}\left(\omega_1,\omega_2\right)\left(x\right)\vcc=\frac{1}{2x}\int_{0}^{x}K\left(y,x-y\right)\omega_1\left(y\right)\omega_2\left(x-y\right)\dy.
\end{equation*}

There exists a non-trivial constant solution of \eqref{S1E4} given by $\mu=c_{\varepsilon }$ with
\[
1=\frac{c_{\varepsilon }}{2}\int_{0}^{1}K( s,1{-}s)
\ds=c_{\varepsilon }\left[ 1+\frac{\varepsilon }{2}\int_{0}^{1}W(s,1{-}s) \ds\right].
\]

Notice that $c_{\varepsilon }\rightarrow 1$ if $\varepsilon \rightarrow 0$. We have proved in \cite{NV13a} that the non-trivial
solutions of (\ref{S1E4}) are close to $c_{\varepsilon }$ if $\varepsilon $ is small enough
in the sense that 
\begin{equation}\label{eq:pclose}
 \pnorm{0}{\mu -c_{\varepsilon }}\leq \delta  \quad \text{where } \delta \text{ is small if } \varepsilon \text{ is small}. 
\end{equation}
Note the definition of $\pnorm{0}{\cdot}$ in \eqref{norm1def}.

We recall that \eqref{S1E4} is equivalent to $\mu =\B_{K}( \mu ,\mu )$ and write $\mu =c_{\varepsilon }+m.$ Then
\begin{equation}\label{S8E1}
m-2c_{\varepsilon }\B_{2}\left( 1,m\right) =2\varepsilon c_{\varepsilon }\B_{W}\left( 1,m\right) +\B_{K}\left(m,m\right).
\end{equation}
We multiply \eqref{S8E1} by $x$ and take
the Laplace transform to obtain $-\frac{\dd{M}}{\dd{p}}-J_{1}=J_{2}+J_{3}$
with
\begin{align*}
J_{1} &=\int_{0}^{\infty }2xc_{\varepsilon }\B_{2}(1,m) \ee^{-xp}\dx\,, \qquad 
J_{2} =\int_{0}^{\infty }2x\varepsilon c_{\varepsilon }\B_{W}( 1,m) \ee^{-xp}\dx \\
J_{3} &=\int_{0}^{\infty }x\B_{K}( m,m) \ee^{-xp}\dx.
\end{align*}

We now compute these quantities. We have
\begin{align*}
J_{1} &=\int_{0}^{\infty }2xc_{\varepsilon }\B_{2}(1,m) \ee^{-xp}\dx =2c_{\varepsilon }\int_{0}^{\infty }\ee^{-xp}\int_{0}^{x}m\left(x-y\right)\dy\dx  \\
&=2c_{\varepsilon }\int_{0}^{\infty }\int_{y}^{\infty }m(x{-}y)
\ee^{-xp}\dx\dy = 2c_{\varepsilon }\int_{0}^{\infty }\ee^{-py}\int_{0}^{\infty }m(x) \ee^{-xp}\dx\dy =\frac{2c_{\varepsilon }}{p}M(p) \,.
\end{align*}

Concerning $J_{2}$ we find similarly $J_{2}=\varepsilon c_{\varepsilon }\int_{0}^{\infty }\int_{0}^{\infty}W( x,y) m( x) \ee^{-( x+y) p}\dx\dy$.
We then use the representation formula from Proposition~\ref{P.repkernels} for $\frac{W( x,y) }{ y+x }$ to obtain
\begin{align*}
J_{2} &=\varepsilon c_{\varepsilon }\int_{0}^{\infty }\int_{0}^{\infty }\frac{W( x,y) }{x+y }m( x) (x+y) \ee^{-( x+y) p}\dx\dy \\
&=-\varepsilon c_{\varepsilon }\frac{\dd}{\dd p}\int_{0}^{\infty }\int_{0}^{\infty } \Gamma ( \xi ,\eta) \int_{0}^{\infty}\int_{0}^{\infty }m(x) \ee^{-( x+y) p}\ee^{-\xi x-\eta y}\dx\dy\deta\dxi \\
&=-\varepsilon c_{\varepsilon }\frac{\dd}{\dd p}\left( \int_{0}^{\infty }M(\xi +p) \int_{0}^{\infty } \frac{\Gamma( \xi ,\eta) }{\eta +p}\right)\deta\dxi \\
&=-\varepsilon c_{\varepsilon }\int_{0}^{\infty }\frac{\dd M}{\dd p}( \xi+p) \int_{0}^{\infty } \frac{\Gamma ( \xi ,\eta ) 
}{\eta +p}\deta\dxi+\varepsilon c_{\varepsilon }\int_{0}^{\infty }M( \xi+p) \int_{0}^{\infty } \frac{\Gamma ( \xi ,\eta ) 
}{( \eta +p) ^{2}}\deta\dxi.
\end{align*}

Therefore, using the definition of $\psnorm{\ell}{m}$ we obtain
\begin{equation*}
 \begin{split}
  \abs{J_{2}}\leq C \varepsilon\int_{0}^{\infty }\int_{0}^{\infty }\abs{\Gamma ( \xi,\eta )} \left[ \frac{\psnorm{1}{m}}{( \xi +p)^{2}(\eta +p) }+\frac{\psnorm{0}{m}}{( \xi +p) ( \eta +p) ^{2}}\right]\dxi\deta.
 \end{split}
\end{equation*}
Changing variables we find together with the homogeneity of $\Gamma$
\begin{equation*}
 \begin{split}
  \abs{J_2}&\leq C\pnorm{1}{m}\eps\int_{0}^{\infty}\int_{0}^{\infty}\abs{\Gamma\left(\xi,\eta\right)}\left(\frac{1}{( \xi +p)^{2}(\eta +p) }+\frac{1}{( \xi +p) ( \eta +p) ^{2}}\right)\dxi\deta\\
  &=C\eps\frac{\pnorm{1}{m}}{p^2}\int_{0}^{\infty}\int_{0}^{\infty}\abs{\Gamma\left(\xi,\eta\right)}\left(\frac{1}{( \xi +1)^{2}(\eta +1) }+\frac{1}{( \xi +1) ( \eta +1) ^{2}}\right)\dxi\deta\leq C\eps\frac{\pnorm{1}{m}}{p^2},
 \end{split}
\end{equation*}
where we used that the last integral is bounded due to Lemma~\ref{Lem:est:Gamma:int}. We finally estimate $J_3$. We have
\begin{equation*}
 \begin{split}
  J_{3}&=\frac{1}{2}\int_{0}^{\infty}\ee^{-px}\int_{0}^{x}K\left(y,x-y\right)m\left(y\right)m\left(x-y\right)\dy\dx\\
  &=\frac{1}{2}\int_{0}^{\infty}\int_{0}^{\infty}K\left(y,z\right)m\left(y\right)m\left(z\right)\ee^{-\left(y+z\right)p}\dy\dz\\
  &=\int_{0}^{\infty}\int_{0}^{\infty}m\left(y\right)m\left(z\right)\ee^{-\left(y+z\right)p}\dz\dy+\frac{\eps}{2}\int_{0}^{\infty}\int_{0}^{\infty}W\left(y,z\right)m\left(y\right)m\left(z\right)\ee^{-\left(y+z\right)p}\dz\dy\\
  &=\left(M\left(p\right)\right)^2+\frac{\eps}{2}\int_{0}^{\infty}\int_{0}^{\infty}W\left(y,z\right)m\left(y\right)m\left(z\right)\ee^{-\left(y+z\right)p}\dz\dy=\vcc J_{3,1}+J_{3,2}.
 \end{split}
\end{equation*}
Obviously we have $\abs{J_{3,1}}\leq \frac{\pnorm{0}{m}^2}{p^2}$. Furthermore
\begin{equation*}
 \begin{split}
  J_{3,2}&=\frac{\eps}{2}\int_{0}^{\infty}\int_{0}^{\infty}\Gamma\left(\xi,\eta\right)\int_{0}^{\infty}\int_{0}^{\infty}m\left(y\right)m\left(z\right)\left(y+z\right)\ee^{-\left(y+z\right)p}\ee^{-\xi y-\eta z}\dz\dy\deta\dxi\\
  &=-\frac{\eps}{2}\frac{\dd}{\dd p}\int_{0}^{\infty}\int_{0}^{\infty}\Gamma\left(\xi,\eta\right)\int_{0}^{\infty}\int_{0}^{\infty}m\left(y\right)m\left(z\right)\ee^{-\left(y+z\right)p}\ee^{-\xi y-\eta z}\dz\dy \deta\dxi\\
  &=-\frac{\eps}{2}\frac{\dd}{\dd p}\int_{0}^{\infty}\int_{0}^{\infty}\Gamma\left(\xi,\eta\right)M\left(\xi+p\right)M\left(\eta+p\right)\deta\dxi\\
  &=-\eps \int_{0}^{\infty}\int_{0}^{\infty}\Gamma\left(\xi,\eta\right)M'\left(\xi+p\right)M\left(\eta+p\right)\deta\dxi,
 \end{split}
\end{equation*}
where we used the symmetry of $\Gamma$ in the last step. Then, using the definition of $\psnorm{k}{\cdot}$, changing variables, using the homogeneity of $\Gamma$ and Lemma~\ref{Lem:est:Gamma:int} we obtain
\begin{equation*}
 \begin{split}
  \abs{J_{3,2}}\leq C\eps \pnorm{0}{m}\psnorm{1}{m}\int_{0}^{\infty}\int_{0}^{\infty}\frac{\abs{\Gamma\left(\xi,\eta\right)}}{\left(\xi+p\right)^2\left(\eta+p\right)}\deta\dxi\leq C\eps\frac{\pnorm{0}{m}\psnorm{1}{m}}{p^2}.
 \end{split}
\end{equation*}
Consequently we have
\begin{equation*}
 \abs{J_3}\leq \frac{C}{p^2}\left(\pnorm{0}{m}^2+\eps\pnorm{0}{m}\psnorm{1}{m}\right).
\end{equation*}
Thus we obtain the equation
\begin{equation}\label{eq:pM}
 M'\left(p\right)+\frac{2c_{\eps}}{p}M\left(p\right)=R\left(p\right) 
\end{equation}
with 
\begin{equation*}
 \abs{R\left(p\right)}\leq C\frac{\pnorm{0}{m}^2+\eps\left(\pnorm{1}{m}+\pnorm{0}{m}\psnorm{1}{m}\right)}{p^2}.
\end{equation*}
Using $m=\mu-c_{\eps}$ as well as~\eqref{eq:pclose} we also obtain for $\delta$ and $\eps$ sufficiently small that
\begin{equation}\label{eq:pR}
  \abs{R\left(p\right)}\leq \frac{C}{p}\left(\delta\pnorm{0}{m}+\eps\psnorm{1}{m}\right).
\end{equation}
From~\eqref{eq:pM} we get $\del_{p}\left(p^{2c_{\eps}}M\left(p\right)\right)=p^{2c_{\eps}}R\left(p\right)$ and since $2c_{\eps}$ is close to two and $R\left(p\right)$ is bounded by $p^{-2}$ we obtain that the right-hand side is integrable. Moreover $p^{2c_{\eps}}M\left(p\right)\to 0$ as $p\to 0$, whence $M\left(p\right)=p^{-2c_{\eps}}\int_{0}^{p}\xi^{2 c_{\eps}}R\left(\xi\right)\dxi$. Thus, we obtain together with~\eqref{eq:pR} that
\begin{equation}\label{eq:pM:1}
 \abs{M\left(p\right)}\leq \frac{C}{p}\left(\delta\pnorm{0}{m}+\eps\psnorm{1}{m}\right).
\end{equation}
With this it follows from~\eqref{eq:pM} and~\eqref{eq:pR} that
\begin{equation*}
 \abs{M'\left(p\right)}\leq \frac{C}{p}\abs{M\left(p\right)}+\abs{R\left(p\right)}\leq \frac{C}{p^2}\left(\delta\pnorm{0}{m}+\eps\psnorm{1}{m}\right),
\end{equation*}
yielding $\psnorm{1}{m}\leq C\delta \pnorm{0}{m}$ for sufficiently small $\eps$. Using this in~\eqref{eq:pM:1} then gives
\begin{equation*}
 \pnorm{0}{m}\leq C\delta \pnorm{0}{m} \quad \text{with } \delta\to 0\text{ for } \eps\to 0. 
\end{equation*}
Thus for $\eps$ sufficiently small it follows $m=0$, i.e.\ $\mu=c_{\eps}$.

\subsection*{Acknowledgements} 
The authors acknowledge support through the CRC 1060 \textit{The mathematics of emergent effects} at the University of Bonn that is funded through the German Science Foundation (DFG).

 \providecommand{\bysame}{\leavevmode\hbox to3em{\hrulefill}\thinspace}
\providecommand{\MR}{\relax\ifhmode\unskip\space\fi MR }
\providecommand{\MRhref}[2]{%
  \href{http://www.ams.org/mathscinet-getitem?mr=#1}{#2}
}
\providecommand{\href}[2]{#2}

\end{document}